\theoremstyle{theorem}
\newtheorem{theorem}{Theorem}[section]
\newtheorem{lemma}[theorem]{Lemma}
\newtheorem{proposition}[theorem]{Proposition}
\newtheorem{corollary}[theorem]{Corollary}
\theoremstyle{remark}
\newtheorem{remark}[theorem]{Remark}
\numberwithin{equation}{section}
\theoremstyle{definition}
\newtheorem{definition}{Definition}[section]
\DeclareMathOperator{\diver}{div}
\DeclareMathOperator{\Riem}{Riem}
\DeclareMathOperator{\Ricc}{Ric}
\DeclareMathOperator{\Sect}{Sect}
\DeclareMathOperator{\trace}{tr}
\newcommand{\R}{\mathbb{R}}
\newcommand{\N}{\mathbb{N}}
\newcommand{\sphere}{\mathbb{S}}
\newcommand{\eps}{\varepsilon}
\newcommand{\di}{\mathrm{d}}
\renewcommand{\div}{\diver}
\newcommand{\metric}{\langle\,,\,\rangle}
\renewcommand{\phi}{\varphi}
\renewcommand{\emptyset}{\varnothing}
\newcommand*\owedge{\mathpalette\@owedge\relax}
\newcommand*\@owedge[1]{
	\mathbin{
		\ooalign{
			$#1\m@th\bigcirc$\cr
			\hidewidth$#1\m@th\wedge$\hidewidth\cr
		}
	}
}
\title[Tachibana-type theorems on complete manifolds]{Tachibana-type theorems on complete manifolds}
\author{Giulio Colombo}
\address{Dipartimento di Matematica ``F. Enriques", Universit\`a degli studi di Milano, Via Saldini 50, I-20133 Milano (Italy).}
\email{giulio.colombo@unimi.it, marco.mariani1@unimi.it, marco.rigoli55@gmail.com}
\author{Marco Mariani}
\author{Marco Rigoli}
\thanks{}
\date{\today}
\begin{document}

\maketitle

\begin{abstract}
	We prove that a compact Riemannian manifold of dimension $m \geq 3$ with harmonic curvature and $\lfloor\frac{m-1}{2}\rfloor$-positive curvature operator has constant sectional curvature, extending the classical Tachibana theorem for manifolds with positive curvature operator. The condition of $\lfloor\frac{m-1}{2}\rfloor$-positivity originates from recent work of Petersen and Wink, who proved a similar Tachibana-type theorem under the stronger condition that the manifold be Einstein. We show that the same rigidity property holds for complete manifolds assuming either parabolicity, an integral bound on the Weyl tensor or a stronger pointwise positive lower bound on the average of the first $\lfloor\frac{m-1}{2}\rfloor$ eigenvalues of the curvature operator. For $3$-manifolds, we show that positivity of the curvature operator can be relaxed to positivity of the Ricci tensor.
\end{abstract}

\bigskip

\noindent \textbf{MSC 2020} {
	Primary: 53B20, %Local Riemannian geometry
	53C20, %Global Riemannian geometry, including pinching
	53C21; %Methods of global Riemannian geometry, including PDE methods; curvature restrictions
	Secondary: 31C12. %Potential theory on Riemannian manifolds and other spaces
}

\noindent \textbf{Keywords} {
	Harmonic curvature $\cdot$
	Conformally flat manifolds $\cdot$
	Tachibana's theorem $\cdot$
	Bochner technique $\cdot$
	Curvature operator
}

\bigskip

\section{Introduction}

%By a classical theorem of S.-I. Tachibana \cite{tachi74}, any compact Riemannian manifold of dimension $m\geq 3$ with harmonic curvature tensor and positive curvature operator has constant (positive) sectional curvature, hence it is isometric to a quotient of a sphere of constant curvature. If the curvature operator is only non-negative, then the manifold is locally symmetric.
%
%H. Tran, \cite{tran17}, proved that a Riemannian manifold $M$ of dimension $m\geq4$ with harmonic Weyl curvature tensor $W$ and positive curvature operator is locally conformally flat, that is, $W\equiv 0$ (note that $W$ always vanishes in dimension less than four). Tran also observed that if the curvature operator is non-negative then the Weyl tensor is parallel and by a theorem of A. Derdzi\'nski and D. Roter, \cite[Theorem 2]{dr77}, the manifold is either locally conformally flat or locally symmetric. Tran's theorem can be seen as a weakened version of Tachibana's theorem, since the Riemann curvature tensor is harmonic if and only if it the Weyl curvature tensor is harmonic and the scalar curvature is constant. Nevertheless, it shows that if $M$ has harmonic curvature and non-negative curvature operator, then $M$ is not only locally symmetric but also 

By a classical theorem of S.-I. Tachibana \cite{tachi74}, any compact Riemannian manifold $(M,g)$ of dimension $m\geq 3$ with harmonic curvature tensor and positive curvature operator has constant (positive) sectional curvature, hence it is isometric to a quotient of a sphere of constant curvature. If the curvature operator is only non-negative, then the manifold is locally symmetric.

%The condition of harmonic curvature is equivalent to the Ricci tensor being a Codazzi tensor, as a consequence of the second Bianchi identity. If $m\geq 3$ this generalizes the Einstein condition $\Ricc = \frac{S}{m} g$, where $S$ denotes the scalar curvature of $M$.

The Riemann curvature tensor is harmonic if and only if the Ricci tensor is a Codazzi tensor, as a consequence of the second Bianchi identity. In particular, Einstein manifolds (of dimension at least $3$) have harmonic curvature. If $m\geq 3$, then the Ricci tensor is Codazzi if and only if the scalar curvature is constant and the Cotton tensor is zero. If $m\geq 4$ then this is equivalent to having constant scalar curvature and harmonic Weyl curvature tensor (whereas in dimension $m=3$ the Weyl tensor vanishes for any Riemannian metric).

Recently, H. Tran, \cite{tran17}, has proved that a compact Riemannian manifold $M$ of dimension $m\geq4$ with harmonic Weyl curvature tensor $W$ and positive curvature operator is locally conformally flat, that is, $W\equiv 0$. %(note that $W$ always vanishes in dimension less than four).
As positive curvature operator implies positive Ricci curvature, the following classification theorem of M. H. Noronha \cite{nor93} shows that in this case $M$ is globally \emph{conformal} to a quotient of a standard sphere.% of constant curvature.

%A Riemannian manifold with positive (respectively, non-negative) curvature operator also has positive (resp., non-negative) Ricci curvature. In \cite{nor93}, M. H. Noronha classified locally conformally flat and locally symmetric compact Riemannian manifolds with $\Ricc\geq 0$, proving the following

\begin{theorem}[\cite{nor93}, Theorem 1 and Proposition 4.2] \label{noronha}
	Let $M$ be a locally conformally flat, compact manifold with $\Ricc\geq0$ and dimension $m\geq 3$. Then the universal cover of $M$ is either
	\begin{itemize}
		\item [(i)] globally conformally equivalent to $\sphere^m$, or
		\item [(ii)] isometric to $\sphere^{m-1}\times\R$ or $\R^m$.
	\end{itemize}
	%If $\Ricc>0$ at some point, then only case (i) can occur.
	If $M$ is also locally symmetric, then its universal cover is isometric to either $\sphere^m$, $\sphere^{m-1}\times\R$ or $\R^m$ (that is, the conformal equivalence in \emph{(i)} can be strengthened to isometry).
\end{theorem}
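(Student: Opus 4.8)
The plan is to prove Theorem~\ref{noronha} by passing to the universal cover and combining the conformal developing map into the round sphere with the structure theory of complete manifolds of nonnegative Ricci curvature. Let $\tilde M$ denote the universal Riemannian cover of $M$: since $M$ is compact, $\tilde M$ is complete, and it is simply connected, locally conformally flat and satisfies $\Ricc \geq 0$. By Kuiper's theorem a simply connected, locally conformally flat manifold of dimension $m \geq 3$ admits a conformal immersion (developing map) $\Phi \colon \tilde M \to \sphere^m$.

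The decisive step is to upgrade $\Phi$ to a conformal diffeomorphism onto a domain with small complement. Here I would invoke the Schoen--Yau-type analysis of complete locally conformally flat manifolds: the hypothesis $\Ricc \geq 0$ supplies the positivity of scalar curvature that the method needs --- indeed, if $S \equiv 0$ then $\Ricc \equiv 0$, and local conformal flatness forces $\tilde M$ to be flat, hence isometric to $\R^m$, so that case is already settled --- and under it one shows that $\Phi$ is injective, so that $\tilde M$ is conformally diffeomorphic to a domain $\Omega := \Phi(\tilde M) \subseteq \sphere^m$ whose complement $\Lambda := \sphere^m \setminus \Omega$ is compact, totally disconnected and of zero Newtonian capacity. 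I expect this injectivity (together with the capacity bound) to be the main obstacle of the whole argument; the rest is rigidity.

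It then remains to analyse $\Omega$, using $\Ricc \geq 0$ a second time through the Cheeger--Gromoll splitting theorem, by which a complete manifold with $\Ricc \geq 0$ has at most two ends and splits off a line isometrically whenever it has two. Since $\Lambda$ is compact and totally disconnected, a domain $\sphere^m \setminus \Lambda$ with at most two ends has $\Lambda$ consisting of at most two points, and we distinguish three cases. If $\Lambda = \emptyset$, then $\Omega = \sphere^m$ and $\tilde M$ is globally conformally equivalent to $\sphere^m$: this is alternative (i) (equivalently, when $\tilde M$ is compact the local diffeomorphism $\Phi$ is automatically a covering map, hence a diffeomorphism since $\sphere^m$ is simply connected). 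If $\Lambda$ is a single point, then $\tilde M$ is conformally diffeomorphic to $\R^m$ and has a single end, and a Liouville-type rigidity statement --- a complete, locally conformally flat metric on $\R^m$ conformal to the Euclidean one and with $\Ricc \geq 0$ is flat --- shows that $\tilde M$ is isometric to $\R^m$. If $\Lambda$ consists of two points, then $\tilde M$ has two ends, hence splits isometrically as $\tilde M = \R \times N^{m-1}$ with $N$ complete, simply connected and $\Ricc_N \geq 0$; local conformal flatness of the Riemannian product $\R \times N$ forces $N$ to have constant sectional curvature $c \geq 0$, and $c = 0$ is impossible, as it would make $\tilde M$ isometric to the one-ended $\R^m$, so $c > 0$ and $N$ is a round $\sphere^{m-1}$, whence $\tilde M$ is isometric to $\sphere^{m-1} \times \R$. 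This establishes alternative (ii).

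Finally, assume in addition that $M$, and hence $\tilde M$, is locally symmetric. In alternative (ii) the universal cover is already isometric to $\sphere^{m-1}\times\R$ or $\R^m$, as claimed. In alternative (i) we are left with $\tilde M$ a compact, simply connected, locally symmetric and locally conformally flat manifold; every such manifold is isometric to a round sphere (this follows from the classification of conformally flat symmetric spaces, the only compact simply connected one being the round sphere), so $\tilde M$ is isometric to $\sphere^m$. This upgrades the conformal equivalence in (i) to an isometry and completes the proof.
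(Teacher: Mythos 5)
The paper does not prove this statement: Theorem~\ref{noronha} is quoted from Noronha's paper \cite{nor93} and used as an external input, so there is no in-paper proof to compare your sketch against. Judged on its own merits, your overall strategy --- pass to the universal cover, use Kuiper's developing map, invoke the Schoen--Yau developing-map theory afforded by nonnegative scalar curvature to get an embedding with small complement $\Lambda$, and then split the analysis according to the cardinality of $\Lambda$ via Cheeger--Gromoll --- is the standard template for such classifications and is essentially sound, as is the final upgrade of case (i) to an isometry with $\sphere^m$ under local symmetry.

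There is, however, a genuine error in the single-point case. The ``Liouville-type rigidity statement'' you invoke --- that a complete, locally conformally flat metric on $\R^m$ with $\Ricc\geq0$ conformal to the Euclidean metric must be flat --- is false. For instance, the rotationally symmetric metric $(1+|x|^2)^{-1}g_{\R^m}$ on $\R^m$ is complete, conformal to the flat metric, has strictly positive sectional curvature, and is certainly not flat; more to the point, the paper's own Carron--Herzlich classification (Theorem~\ref{carherz}) lists ``non-flat and globally conformally equivalent to $\R^m$'' as its alternative (iv), which is precisely the scenario the paper then works to exclude in Theorems~\ref{int_cpl_tach1} and \ref{int_cpl_tran2}. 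What must do the work at this point is the compactness of $M$, which your sketch never uses in this case. Concretely, write $\tilde M=(\R^m,u^{4/(m-2)}g_{\R^m})$. The deck group $\pi_1(M)$ acts by conformal automorphisms of $\R^m$ preserving $\Lambda=\{\infty\}$, hence by similarities; since any similarity with dilation factor $\neq1$ has a fixed point in $\R^m$, freeness forces $\pi_1(M)$ to consist of Euclidean isometries. Thus $u$ is $\pi_1(M)$-invariant and descends to the compact flat quotient $\R^m/\pi_1(M)$, where the Yamabe relation $-\tfrac{4(m-1)}{m-2}\Delta_{g_{\R^m}}u=S_g\,u^{(m+2)/(m-2)}\geq0$ makes $u$ a superharmonic function on a compact manifold, hence constant, so $g$ is flat. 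Without an argument of this kind (or the Caffarelli--Gidas--Spruck approach the paper uses when constant scalar curvature is available), the single-point case does not close.
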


Tran also observed that if the curvature operator is non-negative then the Weyl tensor is parallel and so, by a theorem of A. Derdzi\'nski and D. Roter, \cite[Theorem 2]{dr77}, the manifold is either locally conformally flat or locally symmetric. Both cases can occur, as shown by the simple example of $M = \mathbb{T}^2 \times \sphere^{m-2}$, that has harmonic curvature and non-negative curvature operator, and is locally symmetric but not locally conformally flat. %Tran's theorem can be seen as a weakened version of Tachibana's theorem, since the Riemann curvature tensor is harmonic if and only if the Weyl curvature tensor is harmonic and the scalar curvature is constant.
In view of Noronha's classification theorem, Tran's theorem can be summarized as

\begin{theorem}[\cite{tran17,pw21}]
	Let $M$ be a compact manifold of dimension $m\geq 4$ with harmonic Weyl tensor and non-negative curvature operator. Then $M$ is either globally conformal to a quotient of $\sphere^m$ or locally symmetric. If the curvature operator is positive at some point, then the first case occurs.
\end{theorem}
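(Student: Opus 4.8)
The plan is to obtain the statement by chaining together three results already available: Tran's Bochner-type rigidity for compact manifolds with $\operatorname{div} W = 0$ \cite{tran17}, the Derdziński--Roter classification of Riemannian manifolds with parallel Weyl tensor \cite[Theorem~2]{dr77}, and Noronha's classification theorem (Theorem~\ref{noronha}). The genuinely analytic input is Tran's, and I would use it as a black box.

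First I would invoke \cite{tran17}: on a compact $m$-manifold with $m\geq 4$ and harmonic Weyl curvature, the Weyl tensor satisfies a Weitzenböck identity whose zeroth-order term is controlled from below by the curvature operator acting on $2$-forms, and a Kato-type inequality together with integration over $M$ shows that a non-negative curvature operator forces $\nabla W = 0$ (and positivity everywhere forces $W\equiv 0$). Granting this, $W$ is parallel, so by \cite[Theorem~2]{dr77} the manifold $M$ is either locally conformally flat or locally symmetric. In the first case, non-negativity of the curvature operator gives $\Ricc\geq 0$, so Theorem~\ref{noronha} applies: the universal cover is either globally conformally equivalent to $\sphere^m$ — whence $M$ is globally conformal to a quotient of $\sphere^m$ — or isometric to $\sphere^{m-1}\times\R$ or $\R^m$, both of which are locally symmetric, so $M$ is locally symmetric. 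In the second case the dichotomy is immediate. This proves the first assertion.

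For the refinement, assume the curvature operator is positive at some $p\in M$. If $M$ is locally symmetric, then $\Riem$, hence the curvature operator, is parallel, so its spectrum is locally constant and positivity at $p$ propagates to all of $M$; since a locally symmetric metric has harmonic curvature, Tachibana's theorem \cite{tachi74} yields constant positive sectional curvature, so $M$ is a spherical space form, in particular globally conformal to a quotient of $\sphere^m$. If instead $M$ is not locally symmetric, then by \cite[Theorem~2]{dr77} it is locally conformally flat; moreover $\Ricc\geq 0$ with $\Ricc$ positive at $p$, since a positive curvature operator at a point forces a positive Ricci tensor there. Pulling back along the universal covering $\pi\colon\tilde M\to M$ (a local isometry), the Ricci tensor of $\tilde M$ is positive at a preimage of $p$, which excludes case (ii) of Theorem~\ref{noronha} because $\sphere^{m-1}\times\R$ and $\R^m$ have an everywhere-degenerate Ricci tensor. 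Hence $\tilde M$ is globally conformally equivalent to $\sphere^m$, i.e. $M$ is globally conformal to a quotient of $\sphere^m$, as claimed.

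The only real obstacle is the step attributed to Tran, namely that harmonic Weyl curvature plus a non-negative curvature operator force $\nabla W = 0$: I would either cite \cite{tran17} for it, or, for a self-contained account, reproduce the Weitzenböck formula for the Weyl tensor together with the algebraic inequality bounding its zeroth-order term by a multiple of the smallest eigenvalue of the curvature operator on $2$-forms, noting that in the non-negative case this inequality is saturated exactly when $\nabla W$ vanishes. Everything else reduces to assembling the cited classification theorems and the elementary observation that a parallel curvature operator has locally constant eigenvalues.
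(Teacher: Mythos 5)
Your argument is correct and follows the same route the paper sketches for this theorem (which it cites from Tran and Petersen--Wink without reproving): Tran's Bochner rigidity yields $\nabla W=0$, the Derdzi\'nski--Roter theorem yields the dichotomy, and Noronha's classification resolves the conformally flat branch. Your handling of the positivity refinement, via parallelism of $\mathfrak R$ plus Tachibana in the locally symmetric branch, is a valid variant of the more direct observation that the equality case of the Bochner identity forces $W$ to vanish at the point where $\mathfrak R>0$, hence everywhere since $|W|$ is constant, after which only the Noronha step is needed.
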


In the recent works \cite{pw20,pw21}, P. Petersen and M. Wink came to consider the more general case where $M$ is a compact Riemannian manifold with $\lfloor\frac{m-1}{2}\rfloor$-positive curvature operator. We recall that the curvature operator is said to be $k$-positive (resp., $k$-non-negative) if the sum of its $k$ smallest eigenvalues, counted according to multiplicity, is positive (resp., non-negative). For the sake of brevity, we introduce the following notation: denoting by $\mathfrak R : \wedge^2 M \to \wedge^2 M$ the curvature operator of $M$, for every $x\in M$ we let
$$
	\lambda_1(x) \leq \lambda_2(x) \leq \cdots \leq \lambda_{\binom{m}{2}}(x)
$$
be the eigenvalues of $\mathfrak R_x : \wedge^2_x M \to \wedge^2_x M$ repeated according to multiplicities and for every integer $1 \leq k \leq \binom{m}{2}$ we set
$$
	\mathfrak R^{(k)} = \frac{1}{k} \sum_{\alpha=1}^k \lambda_\alpha \, .
$$

In \cite{pw20} Petersen and Wink proved that a compact Einstein manifold with $\mathfrak R^{(\lfloor\frac{m-1}{2}\rfloor)} > 0$ has constant sectional curvature, and more generally it is locally symmetric if $\mathfrak R^{(\lfloor\frac{m-1}{2}\rfloor)} \geq 0$. In \cite{pw21} they showed that a compact manifold with harmonic Weyl tensor and $\mathfrak R^{(\lfloor\frac{m-1}{2}\rfloor)} \geq 0$ is either globally conformal to a quotient of $\sphere^m$ or locally symmetric, and that the first possibility always occurs if $\mathfrak R^{(\lfloor\frac{m-1}{2}\rfloor)} > 0$. %The latter result is an extension of Tran's theorem, while the former is a partial extension of Tachibana's theorem, since the Einstein condition is (strictly) stronger than harmonicity of the curvature tensor.

Their proofs, as well as those of Tachibana and Tran, are examples of applications of the Bochner technique, originated by S. Bochner and K. Yano, \cite{boch46,by53}. If $T$ is a harmonic algebraic curvature tensor on a Riemannian manifold $M$ (that is, $T$ satisfies the second Bianchi identity and has zero divergence -- we refer to subsection \ref{subsec_T} for this and every other relevant definition) %and it is harmonic in the sense of A. Lichnerowicz (equivalently, $T$ satisfies the second Bianchi identity and is divergence-free)
then
\begin{equation} \label{T_intro}
	\frac{1}{2} \Delta|T|^2 = |\nabla T|^2 + \frac{1}{2} \langle \Gamma T,T \rangle
\end{equation}
where $\Delta$ is the Laplace-Beltrami operator of $M$, $\nabla$ is the Levi-Civita connection and $\Gamma = \Gamma_4$ is one of the family $\{\Gamma_q\}$ of self-adjoint endomorphism $\Gamma = \Gamma_q : T^0_q M \to T^0_q M$ defined by A. Lichnerowicz in \cite{lich61} on the bundle of $q$-covariant tensor fields, for any $q\in\N$. A key point in the application of the Bochner technique consists in establishing effective lower bounds on the quadratic term $\langle \Gamma T,T \rangle$ in order to apply some form of the maximum principle (or the divergence theorem, if $M$ is compact) to equation \eqref{T_intro}.

In \cite{tachi74}, Tachibana dealt with the case $T=\Riem$ and showed that a lower bound on $\langle \Gamma T,T \rangle$ is implied by a lower bound for the first eigenvalue $\lambda_1$ of the curvature operator $\mathfrak R$. %This is true, more generally, for any algebraic curvature tensor $T$ (and also for arbitrary $q$-covariant tensors, when considering the problem of finding lower bounds on the corresponding quadratic form $\langle \Gamma_q \,\cdot\,,\,\cdot\, \rangle$ on $T^0_q M$.)
In \cite{pw20}, Petersen and Wink identified new curvature conditions in terms of the partial traces $\mathfrak R^{(k)}$ that are effective for the application of the Bochner technique to harmonic $p$-forms. As a consequence of their analysis they proved that a lower bound on $\mathfrak R^{(\lfloor\frac{m-1}{2}\rfloor)}$ yields a lower bound on $\langle \Gamma T,T \rangle$ whenever $T$ is an algebraic curvature tensor whose Ricci contraction is a multiple of the metric. This happens if $T$ is the Weyl tensor of any Riemannian manifold (whose Ricci contraction is the zero tensor) or if $T$ is the Riemann tensor of an Einstein manifold (whose Ricci contraction, that is, the Ricci tensor of the manifold, is by definition a multiple of the metric).

%In this paper we sharpen Petersen and Wink's theorem in the compact setting and we extend our analysis to the more general setting of complete manifold. The original contributions of this paper are twofold.
%\begin{itemize}
%	\item [(i)] For $M$ compact, we are able to relax the condition that $M$ be Einstein to the weaker assumption that the curvature tensor is harmonic, thus obtaining (see Theorem \ref{int_cpt_tach} below) the full generalization of Tachibana's theorem under the sharpened positivity condition on the curvature operator. We also point out that in case $m=3$, where the condition of harmonic curvature is equivalent to local conformal flatness and constant scalar curvature, the non-negativity conditions on the curvature operator can in fact be relaxed to non-negativity of the Ricci tensor.
%	\item [(ii)] For $M$ complete, we 
%\end{itemize}

\subsection*{Main results}

One of the original contributions of this paper is a refinement of Petersen and Wink's estimate. Namely, we prove that a lower bound on $\mathfrak R^{(\lfloor\frac{m-1}{2}\rfloor)}$ yields a lower bound on $\langle \Gamma T,T \rangle$ for any algebraic curvature tensor $T$, without additional structural assumptions (Theorem \ref{thm_m-1/2_R}). %As already mentioned, in this paper we first improve on Petersen and Wink's result by showing that a lower bound on $\mathfrak R^{(\lfloor\frac{m-1}{2}\rfloor)}$ yields a lower bound on $\langle \Gamma T,T \rangle$ for any algebraic curvature tensor $T$, see Theorem \ref{thm_m-1/2_R}.
As a consequence, we obtain the full generalization of Tachibana's theorem under the sharpened positivity conditions on the curvature operator, relaxing the condition that $M$ be Einstein to the weaker assumption that the curvature tensor is harmonic.%, thus obtaining the full generalization of Tachibana's theorem under the sharpened positivity condition on the curvature operator.

\begin{theorem} \label{int_cpt_tach}
	Let $M$ be a compact Riemannian manifold of dimension $m\geq 3$ with $\mathfrak R^{(\lfloor\frac{m-1}{2}\rfloor)} \geq 0$ and harmonic curvature. Then $M$ is locally symmetric. If $\mathfrak R^{(\lfloor\frac{m-1}{2}\rfloor)} > 0$ at some point then $M$ is a quotient of $\sphere^m$.
\end{theorem}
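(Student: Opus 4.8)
The plan is to apply the Bochner technique to the harmonic curvature tensor $T = \Riem$, exactly as outlined in the introduction, but now leveraging the refined estimate of Theorem~\ref{thm_m-1/2_R} so that no Einstein hypothesis is needed. Since $M$ has harmonic curvature, the Riemann tensor is an algebraic curvature tensor satisfying the second Bianchi identity with zero divergence, so the Lichnerowicz--Weitzenb\"ock formula \eqref{T_intro} holds with $T = \Riem$ and $\Gamma = \Gamma_4$. First I would invoke Theorem~\ref{thm_m-1/2_R} to conclude that $\mathfrak R^{(\lfloor\frac{m-1}{2}\rfloor)} \geq 0$ forces $\langle \Gamma \Riem, \Riem \rangle \geq 0$ pointwise (and $>0$ wherever $\mathfrak R^{(\lfloor\frac{m-1}{2}\rfloor)} > 0$ unless $\Riem$ vanishes there, or more precisely unless the relevant component of $\Riem$ in the eigenspaces of $\mathfrak R$ vanishes). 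Thus \eqref{T_intro} yields $\tfrac{1}{2}\Delta |\Riem|^2 \geq |\nabla \Riem|^2 \geq 0$, i.e.\ $|\Riem|^2$ is subharmonic on the compact manifold $M$.

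Next, by the divergence theorem $\int_M \Delta |\Riem|^2 = 0$, so $|\Riem|^2$ is constant, $\nabla \Riem \equiv 0$, and the pointwise inequality $\langle \Gamma \Riem, \Riem\rangle \geq 0$ must in fact be an equality everywhere. Parallelism of the curvature tensor means $M$ is locally symmetric, which is the first assertion. For the second assertion, suppose $\mathfrak R^{(\lfloor\frac{m-1}{2}\rfloor)}(x_0) > 0$ at some point. Since $M$ is now locally symmetric, $\mathfrak R$ is parallel, hence has constant eigenvalues, so $\mathfrak R^{(\lfloor\frac{m-1}{2}\rfloor)} > 0$ on all of $M$; in particular $M$ has $\lfloor\frac{m-1}{2}\rfloor$-positive curvature operator, which implies (this is the classical fact that $k$-positivity for any $k \leq \binom{m}{2}$ forces positive Ricci, hence $M$ is compact with finite fundamental group by Bonnet--Myers) that $M$ has positive Ricci curvature. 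Now I would split according to dimension: for $m = 3$ the Weyl tensor vanishes automatically; for $m \geq 4$, since $M$ has harmonic curvature it has harmonic Weyl tensor, so by the theorem of Tran (stated in the excerpt, \cite{tran17,pw21}) together with Noronha's Theorem~\ref{noronha}, the locally symmetric, positively Ricci-curved manifold $M$ is, after passing to the universal cover, isometric to $\sphere^m$. It then remains to upgrade ``locally conformally flat $+$ locally symmetric $+$ $\Ricc > 0$'' to ``constant sectional curvature'': a locally symmetric locally conformally flat manifold is a real space form (its curvature operator is parallel and, by conformal flatness, $\Riem = \tfrac{S}{m(m-1)}\, g \owedge g$ with $S$ constant), and positivity of $\Ricc$ rules out $\R^m$ and $\sphere^{m-1}\times\R$, leaving a quotient of the round $\sphere^m$.

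The main obstacle, and the part requiring the most care, is the equality-case analysis and making sure the second conclusion genuinely follows: knowing only $|\Riem|^2$ constant and $\langle \Gamma \Riem,\Riem\rangle \equiv 0$ does not by itself give $\Riem \equiv \tfrac{S}{m(m-1)} g\owedge g$; one really needs the structural input that $M$ is locally symmetric (so $\mathfrak R$ is parallel) plus the pointwise positivity at $x_0$ propagating to all of $M$, and then the classification results of Tran and Noronha to kill the Weyl tensor and identify the model. A secondary technical point is to verify carefully that the equality $\langle \Gamma \Riem, \Riem\rangle = 0$ together with $\mathfrak R^{(\lfloor\frac{m-1}{2}\rfloor)} > 0$ is incompatible with $\Riem \neq 0$ when we are not on a space form --- this is where Theorem~\ref{thm_m-1/2_R}'s sharp form (presumably an inequality $\langle \Gamma \Riem,\Riem\rangle \geq c\, \mathfrak R^{(\lfloor\frac{m-1}{2}\rfloor)} |\Riem|^2$ with equality characterizing the constant-curvature case) must be used, so the cleanest route is to quote that theorem's equality discussion rather than re-derive it.
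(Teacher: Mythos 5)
Your argument for local symmetry (the first conclusion) is essentially the paper's own: apply Theorem~\ref{thm_m-1/2_R} with $C=0$, feed the resulting $\langle \Gamma\Riem,\Riem\rangle \geq 0$ into the Bochner identity \eqref{Boc_T_harm}, and integrate over compact $M$ to get $\nabla\Riem\equiv 0$. The second conclusion is where you take a detour that the paper does not need, and the cause is a misreading of the estimate: Theorem~\ref{thm_m-1/2_R} does not give $\langle\Gamma\Riem,\Riem\rangle \geq c\,\mathfrak R^{(\lfloor\frac{m-1}{2}\rfloor)}|\Riem|^2$ (this is in fact impossible --- on a round sphere $\langle\Gamma\Riem,\Riem\rangle=0$ while $|\Riem|^2>0$ and $\mathfrak R^{(\lfloor\frac{m-1}{2}\rfloor)}>0$). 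It gives $\langle\Gamma T,T\rangle \geq 2(m-1)\,\mathfrak R^{(\lfloor\frac{m-1}{2}\rfloor)}|P_T|^2$, where $P_T$ is the pseudo-projective tensor \eqref{P_def}, which vanishes precisely when $T$ is a constant multiple of $\metric\owedge\metric$. Once you use this form, the proof closes immediately: integrating the Bochner inequality on compact $M$ yields both $\nabla\Riem\equiv 0$ and $\mathfrak R^{(\lfloor\frac{m-1}{2}\rfloor)}|P_{\Riem}|^2\equiv 0$; since parallelism of $\Riem$ forces $\nabla P_{\Riem}\equiv 0$, $|P_{\Riem}|$ is constant, so a single point with $\mathfrak R^{(\lfloor\frac{m-1}{2}\rfloor)}>0$ forces $P_{\Riem}\equiv 0$, hence $\Riem$ is a (necessarily positive) constant multiple of $\metric\owedge\metric$ and $M$ is a quotient of $\sphere^m$, with no appeal to Tran, Petersen--Wink, or Noronha.

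Your route is still logically valid: the observation that a parallel $\mathfrak R$ has constant eigenvalues, so $\mathfrak R^{(\lfloor\frac{m-1}{2}\rfloor)}>0$ propagates from a point to all of $M$, is correct; Petersen--Wink \cite{pw21} (applied to the harmonic Weyl tensor) then gives global conformal flatness, and Noronha's Theorem~\ref{noronha} applied to a locally symmetric, locally conformally flat, $\Ricc>0$ compact $M$ forces the universal cover to be $\sphere^m$. But one parenthetical claim en route is false: a locally symmetric, locally conformally flat manifold is \emph{not} automatically a real space form ($\sphere^{m-1}\times\R$ is a counterexample), so you cannot deduce $\Riem=\frac{S}{m(m-1)}\metric\owedge\metric$ from those two properties alone. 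Your argument survives only because you then invoke $\Ricc>0$ to rule out the product and flat cases via Noronha's classification, not because of that parenthetical.
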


In cases $m=3,4$ we have $\lfloor\frac{m-1}{2}\rfloor = 1$, so $\mathfrak R^{(\lfloor\frac{m-1}{2}\rfloor)} = \mathfrak R^{(1)} = \lambda_1$ and the positivity assumptions on $\mathfrak R$ reduce to the standard ones of Tachibana's theorem. In case $m=3$, in which harmonic curvature is equivalent to local conformal flatness and constant scalar curvature, see \cite[page 92]{eis49}, the lower bounds on the curvature operator can in fact be relaxed to lower bounds on the Ricci tensor. This may come unsurprising to the expert reader, since it is known from the literature, see R. Hamilton's \cite{ham82}, that in dimension $m=3$ the curvature terms in the Bochner identity for the Riemann tensor can be controlled assuming only $\Ricc\geq0$, although this condition is much less demanding than even non-negative sectional curvature (see \cite[Corollary 8.2]{ham82}). However, we are not aware of any reference in the literature to such a ``modified Tachibana's theorem'' for the $3$-dimensional case and thus we provide a self-contained proof (see Theorem \ref{thm_tachi_3} in Section \ref{sec_Tachi}).

\begin{theorem} \label{int_cpt_t3}
	Let $M^3$ be a compact $3$-dimensional Riemannian manifold with $\Ricc\geq0$. If $M$ is locally conformally flat with constant scalar curvature, then it is isometric to a quotient of $\sphere^3$, $\sphere^2\times\R$ or $\R^3$. Moreover, if $\Ricc>0$ at some point then $M$ is a quotient of $\sphere^3$.
\end{theorem}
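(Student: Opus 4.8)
The plan is to apply the Bochner identity \eqref{T_intro} with $T=\Riem$ and to exploit that in dimension $3$ the whole curvature is an algebraic function of the Ricci tensor and the metric, so that the quadratic term $\langle\Gamma\Riem,\Riem\rangle$ becomes a universal expression in $\Ricc$ that is non-negative under the sole assumption $\Ricc\geq0$ --- the mechanism already observed by Hamilton in \cite{ham82}.

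First I would note that, in dimension $3$, local conformal flatness is the vanishing of the Cotton tensor, so together with $S$ constant it says exactly that $\Ricc$ is a Codazzi tensor, i.e.\ that $\Riem$ is a harmonic algebraic curvature tensor, and \eqref{T_intro} applies. Since $W\equiv0$ when $m=3$, one has $\Riem=E\owedge g+\tfrac{S}{12}\,g\owedge g$ with $E:=\Ricc-\tfrac{S}{3}g$ the trace-free Ricci tensor; as $S$ is constant this gives $|\Riem|^{2}=a\,|E|^{2}+\text{const}$ and $|\nabla\Riem|^{2}=a\,|\nabla E|^{2}$ for a universal $a>0$, and hence $\langle\Gamma\Riem,\Riem\rangle=2a\,\langle\mathcal{W}(E),E\rangle$ where $\mathcal{W}(E)_{ij}=\Ricc_{ik}E_{kj}+\Ricc_{jk}E_{ik}-2R_{ikjl}E_{kl}$ is the curvature term in the Weitzenb\"ock formula for the harmonic $T^{*}M$-valued $1$-form $E$ (indices in a local orthonormal frame). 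Substituting the dimension-$3$ identity $R_{ikjl}=g_{ij}\Ricc_{kl}+g_{kl}\Ricc_{ij}-g_{il}\Ricc_{kj}-g_{kj}\Ricc_{il}-\tfrac{S}{2}(g_{ij}g_{kl}-g_{il}g_{kj})$ and using $\trace E=0$, I expect the curvature term to collapse to
\begin{equation*}
	\langle\mathcal{W}(E),E\rangle\;=\;6\,\trace(\Ricc\,E^{2})-S\,|E|^{2}\;=\;18\,e_{1}e_{2}e_{3}+S\,|E|^{2},
\end{equation*}
where in the last form $\Ricc$ has been diagonalized at the point with eigenvalues $\mu_{1},\mu_{2},\mu_{3}$ and $e_{i}:=\mu_{i}-\tfrac{S}{3}$ (so $\sum_{i}e_{i}=0$). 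A short elementary argument --- order the $e_{i}$, distinguish whether the middle one is nonnegative or negative, and combine $e_{1}\geq-\tfrac{S}{3}$ (that is, $\mu_{1}\geq0$) with the AM--GM inequality --- then shows this is $\geq0$ whenever $\mu_{1},\mu_{2},\mu_{3}\geq0$, with equality at a point only if either $E=0$ there ($\Ricc=\tfrac{S}{3}g$) or the Ricci eigenvalues there are $(0,\tfrac{S}{2},\tfrac{S}{2})$. I expect this pointwise inequality, together with its equality analysis, to be the only genuinely delicate step.

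Integrating $\tfrac12\Delta|\Riem|^{2}=|\nabla\Riem|^{2}+\tfrac12\langle\Gamma\Riem,\Riem\rangle$ over the compact $M$ then forces $\nabla\Riem\equiv0$, so $M$ is locally symmetric, and $\langle\Gamma\Riem,\Riem\rangle\equiv0$. A complete locally symmetric $3$-manifold has universal cover a $3$-dimensional Riemannian symmetric space, and those carrying $\Ricc\geq0$ are exactly $\sphere^{3}$, $\sphere^{2}\times\R$ and $\R^{3}$ (the remaining ones, $\mathbb{H}^{3}$ and $\mathbb{H}^{2}\times\R$, have a negative Ricci eigenvalue); since $M$ is compact it is a quotient of one of these. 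Finally, if $\Ricc>0$ at some point $p$, then the universal cover can be neither $\R^{3}$ (Ricci identically zero) nor $\sphere^{2}\times\R$ (a zero Ricci eigenvalue at every point), so it must be $\sphere^{3}$ and $M$ is a spherical space form; equivalently, the equality case above at $p$ together with $\Ricc_{p}>0$ excludes the $(0,\tfrac{S}{2},\tfrac{S}{2})$ alternative, whence $E_{p}=0$ and, by local symmetry, $M$ has constant positive curvature everywhere. This proves Theorem \ref{thm_tachi_3}, equivalently Theorem \ref{int_cpt_t3}.
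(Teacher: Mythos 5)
Your proof is correct and follows essentially the same route as the paper's Theorem \ref{thm_tachi_3}: Bochner identity \eqref{Boc_T_harm} applied to $\Riem$, reduction of the curvature term to a universal cubic in the Ricci eigenvalues using $W\equiv0$ in dimension $3$, Hamilton-type nonnegativity under $\Ricc\geq0$, integration over the compact $M$ to conclude $\nabla\Riem\equiv0$, and classification. Two cosmetic differences deserve mention. First, the paper works directly with the Ricci eigenvalues $\lambda\leq\mu\leq\nu$, obtaining $\tfrac14\langle\Gamma\Riem,\Riem\rangle = 2[\lambda(\lambda-\mu)(\lambda-\nu)+\mu(\mu-\lambda)(\mu-\nu)+\nu(\nu-\lambda)(\nu-\mu)]$ and proving nonnegativity by writing $\mu=t\nu+(1-t)\lambda$; you use the trace-free eigenvalues $e_i=\mu_i-S/3$ to get the equivalent $18e_1e_2e_3+S|E|^2$ (after $\sum e_i=0$ gives $\sum e_i^3=3e_1e_2e_3$). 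Your key identity $\langle\Gamma E,E\rangle = 6\trace(\Ricc\,E^2)-S|E|^2$ checks out against \eqref{Gam_EE0} with the dimension-three form of $\Riem$ substituted, the two cubics agree as polynomials in $(\lambda,\mu,\nu)$, and your sketched inequality argument (split on the sign of the middle $e_i$; when $e_2\geq0$ use $|E|^2-6e_2e_3=2(e_2-e_3)^2\geq0$ plus $e_1+S/3=\mu_1\geq0$) goes through, with equality cases $(E=0$ or eigenvalues $(0,S/2,S/2))$ matching the paper's $(\lambda=\mu=\nu$ or $0=\lambda\leq\mu=\nu)$. Second, for the final step the paper notes $M$ is locally conformally flat and locally symmetric with $\Ricc\geq0$ and invokes Noronha's Theorem \ref{noronha}, while you go through Cartan's classification of simply connected $3$-dimensional Riemannian symmetric spaces with $\Ricc\geq0$; both are legitimate and yield the same three model spaces.
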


We also deal with the complete case. A key point is the observation that condition $\mathfrak R^{(\lfloor\frac{m-1}{2}\rfloor)} \geq 0$ implies $\Ricc\geq0$, hence some powerful tools from the theory of complete Riemannian manifolds with nonnegative Ricci curvature are available. Again, the picture is a bit different in cases $m=3$ and $m\geq 4$. For complete $3$-manifolds we have the following Theorem \ref{int_cplt_t3}, whose statement is formally similar to that of Theorem \ref{int_cpt_t3} even though the proof relies on an additional classification theorem for locally conformally flat complete manifolds with $\Ricc\geq0$.

\begin{theorem} \label{int_cplt_t3}
	Let $M^3$ be a complete $3$-dimensional Riemannian manifold with $\Ricc\geq0$. If $M$ is locally conformally flat with constant scalar curvature, then it is isometric to a quotient of $\sphere^3$, $\sphere^2\times\R$ or $\R^3$. Moreover, if $\Ricc>0$ at some point then $M$ is a quotient of $\sphere^3$.
\end{theorem}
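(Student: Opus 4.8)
The plan is to reduce the statement to a known isometric classification of the universal cover, using constant scalar curvature to turn conformal information into metric information. First I would pass to the universal Riemannian cover $\pi \colon \tilde M \to M$. Completeness, local conformal flatness, constant scalar curvature and the bound $\Ricc \geq 0$ all lift to $\tilde M$, and $\pi_1(M)$ acts on $\tilde M$ freely, properly discontinuously and by isometries; hence it suffices to show that $\tilde M$ is isometric to a round $\sphere^3$, or to $\sphere^2 \times \R$ (with a suitable scaling of the spherical factor), or to $\R^3$. Recall that in dimension three the hypothesis ``locally conformally flat with constant scalar curvature'' is exactly ``harmonic curvature'' ($\Ricc$ a Codazzi tensor), so $\tilde M$ is a complete, simply connected $3$-manifold with harmonic curvature and $\Ricc \geq 0$.

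Next I would invoke the complete-manifold counterpart of Noronha's classification (Theorem \ref{noronha}): a complete, simply connected, locally conformally flat Riemannian manifold with $\Ricc \geq 0$ is either globally conformally equivalent to $\sphere^3$, or isometric --- up to a constant scaling of the spherical factor --- to $\sphere^2 \times \R$, or isometric to $\R^3$ (see e.g.\ the works of Zhu and of Carron--Herzlich on conformally flat manifolds with nonnegative Ricci curvature). In the latter two cases $M$ is at once a quotient of $\sphere^2 \times \R$ or of $\R^3$. In the first case $\tilde M$, being globally conformal to $\sphere^3$, is diffeomorphic to $\sphere^3$ and hence compact, so $M$ is compact; then Theorem \ref{int_cpt_t3} applies and, since the compact universal cover $\tilde M$ rules out the noncompact models, $M$ is a quotient of $\sphere^3$. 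Alternatively one argues directly: $\Ricc \geq 0$ forces the constant scalar curvature $S \geq 0$; if $S = 0$ then $\tilde M$ is Ricci-flat, hence (being three-dimensional) flat, which is impossible on $\sphere^3$, so $S > 0$; and then Obata's theorem --- the uniqueness in the Yamabe problem on the sphere --- forces a constant scalar curvature metric conformal to the round $\sphere^3$ to be itself round.

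For the last assertion, note that $\R^3$ has $\Ricc \equiv 0$ and that on $\sphere^2 \times \R$ the Ricci tensor is everywhere degenerate along the $\R$-factor, so if $\Ricc > 0$ somewhere on $M$ the only surviving alternative is that $\tilde M$ is the round $\sphere^3$, i.e.\ $M$ is a quotient of $\sphere^3$. The main obstacle is not in this argument but in the external classification theorem it rests on; within the present work the only genuinely delicate point is the passage from a conformal to an isometric identification, carried out here by Obata's theorem, together --- if one wishes to exclude the ``conformal to $\R^3$ with $S > 0$'' case by hand rather than by citing the classification --- with the Caffarelli--Gidas--Spruck uniqueness theorem for $\Delta u + c\, u^{5} = 0$ on $\R^3$ (whose solutions give incomplete, finite-volume metrics, incompatible with the linear volume growth forced under $\Ricc \geq 0$ by the Calabi--Yau estimate).
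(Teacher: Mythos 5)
Your argument runs along essentially the same lines as the paper's proof (Theorem \ref{cpl_tachi_dim3}): reduce to the universal cover, invoke the Zhu/Carron--Herzlich classification of complete locally conformally flat manifolds with $\Ricc\geq 0$, exclude the ``non-flat conformal to $\R^3$'' alternative by Caffarelli--Gidas--Spruck, and upgrade the conformal-to-$\sphere^3$ case to an isometry via compactness and the compact Theorem \ref{int_cpt_t3}. One small imprecision: the classification as you quote it has only three alternatives, whereas the statement actually used here (Theorem \ref{carherz}) has a \emph{fourth} --- non-flat and globally conformal to $\R^m$ --- which is a genuine case and not already excluded by the classification theorem; your parenthetical ``rather than by citing the classification'' wrongly suggests otherwise. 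You do exclude it, correctly, by CGS, which is precisely what the paper does inside Proposition \ref{W0_Sc_class}. Your alternative treatment of the compact case via Obata's theorem is a nice, and in dimension $3$ cleaner, variant: it passes directly from ``constant scalar curvature metric conformal to the round $\sphere^3$'' to ``round $\sphere^3$'', bypassing the Bochner computation with the Ricci eigenvalue cubic together with Noronha's classification that the paper uses inside Theorem \ref{thm_tachi_3}.
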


If $m\geq 4$ then we find ourselves bound to make some additional assumptions on the geometry of $M$. We present three different results (Theorems \ref{int_para_tach1}, \ref{int_cpl_tach1} and \ref{int_cpl_tach2}). In the first one, we assume slow volume growth of the manifold.
%To introduce the first one, we recall that a complete Riemannian manifold $M$ is said to be \emph{parabolic} if the only upper bounded subharmonic functions on $M$ are the constant functions. Equivalently, $M$ is parabolic if it does not admit any positive Green's function. A sufficient condition for parabolicity, which is also necessary for manifolds with non-negative Ricci curvature, is that
%$$
%	\limsup_{R\to+\infty} \int_1^R \frac{t}{|B_t|} \, \di t = +\infty
%$$
%where $|B_t|$ is the volume of the geodesic ball $B_t$ of radius $t$ centered at a fixed point $o\in M$.

\begin{theorem} \label{int_para_tach1}
	Let $M$ be a complete, Riemannian manifold of dimension $m\geq 4$ with harmonic curvature and $\mathfrak R^{(\lfloor\frac{m-1}{2}\rfloor)} \geq 0$. Assume that for some fixed reference point $o\in M$
	\begin{equation} \label{parab}
		\limsup_{R\to+\infty} \int_1^R \frac{t}{|B_t|} \, \di t = +\infty
	\end{equation}
	where $|B_t|$ is the volume of the geodesic ball $B_t$ of radius $t$ with center at $o$. Then $M$ is locally symmetric. Moreover, if $\mathfrak R^{(\lfloor\frac{m-1}{2}\rfloor)} > 0$ somewhere then $M$ is isometric to a quotient of $\sphere^m$.
\end{theorem}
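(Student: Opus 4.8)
The plan is to apply the Bochner technique \eqref{T_intro} to the Riemann tensor, and — for the second assertion — to the Weyl tensor, with the volume condition \eqref{parab} playing the role of the maximum principle available on compact manifolds. The key preliminary observation, and the least routine step, is that the hypotheses force $|\Riem|$ to be bounded. Indeed, harmonic curvature makes $\Riem$ a harmonic algebraic curvature tensor and the scalar curvature $S$ constant; for $m\ge4$ it also makes the Schouten tensor a Codazzi tensor, so that $W$ is a harmonic algebraic curvature tensor as well. Moreover $\mathfrak R^{(\lfloor\frac{m-1}{2}\rfloor)}\ge0$ gives $\Ricc\ge0$, hence $S\ge0$. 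Write $k=\lfloor\frac{m-1}{2}\rfloor$: since the eigenvalues are ordered and the average of $\lambda_1,\dots,\lambda_k$ is non-negative we get $\lambda_k\ge0$, hence $0\le\lambda_k\le\cdots\le\lambda_{\binom{m}{2}}$; and because $\lambda_1+\cdots+\lambda_k\ge0$ while $\sum_\alpha\lambda_\alpha=\trace\mathfrak R$ is a fixed multiple of the constant $S$, it follows that $\lambda_{\binom{m}{2}}\le\sum_{\alpha>k}\lambda_\alpha$ is bounded above on $M$ by a constant, while $\lambda_1\ge-(k-1)\lambda_k$ is bounded below. Thus every eigenvalue of $\mathfrak R$ is bounded, so $|\Riem|^2=\sum_\alpha\lambda_\alpha^2$ — and a fortiori $|W|^2\le|\Riem|^2$ — is bounded above on $M$.

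By \eqref{T_intro} with $T=\Riem$ and by Theorem \ref{thm_m-1/2_R} (which under $\mathfrak R^{(k)}\ge0$ gives $\langle\Gamma\Riem,\Riem\rangle\ge0$),
\begin{equation*}
	\tfrac12\Delta|\Riem|^2=|\nabla\Riem|^2+\tfrac12\langle\Gamma\Riem,\Riem\rangle\ \ge\ |\nabla\Riem|^2\ \ge\ 0,
\end{equation*}
so $|\Riem|^2$ is a non-negative, bounded, subharmonic function on $M$. The condition \eqref{parab} is the classical Karp--Varopoulos--Grigor'yan volume test ensuring that $M$ is parabolic, i.e. that every subharmonic function on $M$ which is bounded above is constant. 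Hence $|\Riem|^2$ is constant, and feeding this back into the Bochner identity forces $\nabla\Riem\equiv0$: $M$ is locally symmetric, which proves the first assertion.

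Suppose now $\mathfrak R^{(k)}>0$ somewhere. For $m\ge4$ apply the same scheme to $T=W$: by \eqref{T_intro} and Theorem \ref{thm_m-1/2_R}, $\tfrac12\Delta|W|^2=|\nabla W|^2+\tfrac12\langle\Gamma W,W\rangle\ge0$, where now $\langle\Gamma W,W\rangle\ge c(m)\,\mathfrak R^{(k)}\,|W|^2\ge0$ with $c(m)>0$. Parabolicity yields $|W|^2$ constant, hence $\nabla W\equiv0$ and $\langle\Gamma W,W\rangle\equiv0$, so $c(m)\,\mathfrak R^{(k)}\,|W|^2\equiv0$; evaluating at a point where $\mathfrak R^{(k)}>0$ gives $|W|^2=0$ there, and by constancy $W\equiv0$ on $M$. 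Thus $M$ is complete, locally symmetric and locally conformally flat with $\Ricc\ge0$; the de Rham decomposition of its universal cover, together with the classification of conformally flat symmetric spaces and $\Ricc\ge0$, forces the universal cover to be isometric to $\sphere^m$, $\sphere^{m-1}\times\R$ or $\R^m$, exactly as in the locally symmetric case of Theorem \ref{noronha}. Among these, only $\sphere^m$ has $\mathfrak R^{(k)}>0$ at any point — the curvature operator of $\R^m$ vanishes, and that of $\sphere^{m-1}\times\R$ has at least $m-1\ge k$ vanishing eigenvalues — so the universal cover is a round sphere and $M$ is a quotient of $\sphere^m$.

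The main obstacle is the boundedness argument of the first paragraph: it is precisely the interplay of $\mathfrak R^{(k)}\ge0$ with the constancy of $S$ (itself a consequence of harmonic curvature) that bounds $|\Riem|$ a priori and thereby makes the parabolicity test \eqref{parab} applicable to the subharmonic function produced by \eqref{T_intro} — without a bound, subharmonicity alone is not enough on a parabolic manifold. A secondary technical point is the verification, for $m\ge4$, that $W$ is a harmonic algebraic curvature tensor (so that \eqref{T_intro} may be invoked for $T=W$): this rests on harmonic curvature making the Schouten tensor Codazzi, whence $W$ satisfies both the second Bianchi identity and $\div W=0$.
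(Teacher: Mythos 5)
Your proof is correct, and the first half takes a genuinely simpler route than the paper's. You apply the Bochner identity \eqref{T_intro} (equivalently \eqref{Boc_T_harm}) directly to $T=\Riem$: the a priori bound $|\Riem|\le c(m)\,S$ (which you derive by a direct spectral argument — the paper packages the same fact as Corollary \ref{S_Riem_bound}) makes $|\Riem|^2$ a bounded nonnegative subharmonic function, parabolicity forces it to be constant, and $\nabla\Riem\equiv0$ falls out immediately. The paper instead works with $|W|$, obtains $\nabla W\equiv0$, and then needs the Derdzi\'nski--Roter theorem together with the Carron--Herzlich classification (Proposition \ref{W0_Sc_class}) to upgrade ``parallel Weyl'' to ``locally symmetric''; your argument bypasses both of these external inputs entirely for the first assertion. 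For the second assertion the two proofs again converge on $W\equiv0$ but diverge at the classification step: the paper invokes Proposition \ref{W0_Sc_class} (which rests on Carron--Herzlich plus Caffarelli--Gidas--Spruck), whereas you exploit the local symmetry already established in the first step to reduce to the de Rham decomposition and the elementary classification of simply connected, conformally flat symmetric spaces with $\Ricc\ge0$. Both are valid; your route is more self-contained given what you have already proved, while the paper's fits its uniform scheme of reducing to the $W\equiv0$ case and then applying a single classification theorem across all its complete-case results. One small presentational caveat: Theorem \ref{noronha} as stated in the paper assumes compactness, so where you say ``exactly as in the locally symmetric case of Theorem \ref{noronha}'' you are citing the analogy, not the theorem itself — your actual justification (de Rham plus classification of conformally flat symmetric spaces) is the correct one and should be the one named.
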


%
%
%The picture is a bit different in cases $m=3$ and $m\geq 4$.
%\begin{itemize}
%	\item [(i)] \emph{Case} $m=3$. The condition of harmonic curvature implies vanishing of the Cotton tensor, that on $3$-dimensional manifolds amounts to local conformal flatness. Also note that in this case (as well as in case $m=4$) we have $\lfloor\frac{m-1}{2}\rfloor = 1$, so $\mathfrak R^{(\lfloor\frac{m-1}{2}\rfloor)} = \mathfrak R^{(1)} = \lambda_1$ and the positivity assumptions on the curvature operator reduce to those of the standard Tachibana theorem.
%	\item [(ii)] \emph{Case} $m\geq 4$. We separately exploit 
%\end{itemize}

%A first result in this direction is the following

%The statement of Theorem \ref{int_para_tach1} is formally very similar to that of Theorem \ref{int_cpt_tach}.
For any $m\geq 2$ the product $\R^2 \times \sphere^{m-2}$ provides an example of complete, non-compact manifold that satisfies all the assumptions in Theorem \ref{int_para_tach1} and is not locally conformally flat. On the contrary, the hypotheses of Theorems \ref{int_cpl_tach1} and \ref{int_cpl_tach2} below yield local conformal flatness of $M$.

%The product manifold $M = \R^2 \times \sphere^{m-2}$, for any $m\geq 2$, is complete and parabolic (since it has quadratic volume growth) and locally symmetric with non-negative curvature operator. We remark that it is not locally conformally flat, so the assumptions. We remark that there exist examples of complete non-compact manifolds satisfying the assumptions of the theorem: for example, for any $m\geq 2$ the product $M = \R^2 \times \sphere^{m-2}$ is a locally symmetric parabolic complete manifold with non-negative curvature operator. Also note that these examples are not locally conformally flat.
%
%Our second result 

\begin{theorem} \label{int_cpl_tach1}
	Let $M$ be a complete Riemannian manifold of dimension $m\geq 4$ with harmonic curvature and $\mathfrak R^{(\lfloor\frac{m-1}{2}\rfloor)} \geq 0$. If the Weyl tensor satisfies
	\begin{equation} \label{Wp_0}
		\lim_{R\to+\infty} \frac{1}{|B_R|} \int_{B_R} |W|^p = 0
	\end{equation}
	for some $p\in[1,+\infty)$, then $M$ is isometric to a quotient of $\sphere^m$, $\sphere^{m-1}\times\R$ or $\R^m$. Moreover, if $\mathfrak R^{(\lfloor\frac{m-1}{2}\rfloor)} > 0$ somewhere then $M$ is isometric to a quotient of $\sphere^m$.
\end{theorem}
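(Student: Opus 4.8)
My plan is to run the Bochner technique encoded in \eqref{T_intro} twice — first on the Weyl tensor and then on the full Riemann tensor — using Theorem \ref{thm_m-1/2_R} to render the quadratic curvature terms nonnegative, and then to exploit the function theory of complete manifolds with $\Ricc\geq0$, which is available here because $\mathfrak R^{(\lfloor\frac{m-1}{2}\rfloor)}\geq0$ forces $\Ricc\geq0$. First I record the consequences of harmonic curvature: the Ricci tensor is a Codazzi tensor, so the scalar curvature $S$ is constant and, since $m\geq4$, $W$ is a harmonic algebraic curvature tensor; moreover $0\leq\Ricc$ together with $\trace\Ricc=S$ constant gives $0\leq\Ricc\leq S\,g$, hence $S\geq0$ and $|\Ricc|\leq S$. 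Applying \eqref{T_intro} with $T=W$ and the bound $\langle\Gamma W,W\rangle\geq0$ from Theorem \ref{thm_m-1/2_R} yields $\Delta|W|^2\geq 2|\nabla W|^2\geq0$. When $p\geq2$ this already makes $u:=|W|^p=(|W|^2)^{p/2}$ subharmonic; when $p\in[1,2)$ I would invoke the refined Kato inequality $|\nabla W|^2\geq(1+\eps_m)|\nabla|W||^2$ for harmonic Weyl curvature (with $\eps_m=\eps_m(m)>0$) to see that $|W|^q$ is subharmonic (weakly across $\{W=0\}$) for every $q\geq1$, so again $u=|W|^p$ is a nonnegative subharmonic function on $M$.

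Next I show that $W\equiv0$. Since $\Ricc\geq0$, the Bishop--Gromov volume doubling property holds and, by the Li--Schoen mean value inequality for nonnegative subharmonic functions, there is $C=C(m)$ with $u(x)\leq\frac{C}{|B_R(x)|}\int_{B_R(x)}u$ for all $x\in M$ and $R>0$. Fixing $x$ and putting $r=d(x,o)$, the inclusion $B_R(x)\subseteq B_{R+r}(o)$ and the volume comparison $|B_{R+r}(o)|\leq|B_{R+2r}(x)|\leq 2^m|B_R(x)|$ (valid for $R\geq2r$) give
\[
u(x)\;\leq\;\frac{C\,2^m}{|B_{R+r}(o)|}\int_{B_{R+r}(o)}|W|^p ,
\]
whose right-hand side tends to $0$ as $R\to+\infty$ by hypothesis \eqref{Wp_0}. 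Hence $u\equiv0$, that is $W\equiv0$, and since $m\geq4$ the manifold $M$ is locally conformally flat.

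To conclude, I would upgrade this to local symmetry and then classify. With $W\equiv0$ one has $\Riem=\frac{1}{m-2}\big(\Ricc-\frac{S}{2(m-1)}g\big)\owedge g$, so $|\Riem|^2$ is a quadratic expression in $\Ricc$ and $S$ and, by the bounds above, $|\Riem|^2\leq c(m)S^2$ is bounded on $M$. Since $\Riem$ is itself a harmonic algebraic curvature tensor, \eqref{T_intro} and Theorem \ref{thm_m-1/2_R} give $\Delta|\Riem|^2\geq2|\nabla\Riem|^2\geq0$; a bounded subharmonic function on a complete manifold with $\Ricc\geq0$ is constant (Yau's Liouville theorem), so $\nabla\Riem\equiv0$ and $M$ is locally symmetric. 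Thus $M$ is a complete, locally symmetric, locally conformally flat manifold with $\Ricc\geq0$, and the classification of such spaces (the locally symmetric assertion of Theorem \ref{noronha}, whose proof only uses local symmetry, local conformal flatness and $\Ricc\geq0$) shows $M$ is isometric to a quotient of $\sphere^m$, $\sphere^{m-1}\times\R$ or $\R^m$. Finally, the curvature operators of $\R^m$ and of $\sphere^{m-1}\times\R$ have at least $m-1\geq\lfloor\frac{m-1}{2}\rfloor$ vanishing eigenvalues (those of the $2$-planes meeting the flat factor), so $\mathfrak R^{(\lfloor\frac{m-1}{2}\rfloor)}$ vanishes identically on these two models; therefore if $\mathfrak R^{(\lfloor\frac{m-1}{2}\rfloor)}>0$ somewhere, $M$ must be a quotient of $\sphere^m$.

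I expect the main obstacle to be the step $W\equiv0$: the decay \eqref{Wp_0} is genuinely weaker than $|W|\in L^p(M)$, so the general $L^p$-Liouville theorem for subharmonic functions does not apply and one is forced to use tools specific to $\Ricc\geq0$ (volume doubling together with the Li--Schoen mean value inequality); the range $p\in[1,2)$ adds the technical point that $|W|^p$ is subharmonic only by virtue of the refined Kato inequality rather than the naive one. The classification input in the last step is the other delicate ingredient, though it is essentially available in the literature.
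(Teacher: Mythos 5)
Your reduction to $W\equiv 0$ is sound and runs essentially parallel to the paper's: the Bochner inequality from Theorem~\ref{Boch_T} gives $\frac12\Delta|W|^2\ge|\nabla W|^2$, the ordinary Kato inequality then makes $|W|$ (not just $|W|^2$) a nonnegative subharmonic function, and mean-value estimates for subharmonic functions on manifolds with $\Ricc\ge0$ (the paper's Proposition~\ref{prop_li_schoen}/Corollary~\ref{cor_li}) force $W\equiv0$ from \eqref{Wp_0}. The refined Kato inequality you invoke for $p\in[1,2)$ is unnecessary: once $|W|\ge0$ is subharmonic, $|W|^p$ is subharmonic for every $p\ge1$ as an increasing convex image of a subharmonic function — this is exactly the observation in the proof of Corollary~\ref{cor_li}, so the naive Kato suffices.

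The second half of your argument, however, contains a genuine gap. You claim that, since $|\Riem|^2$ is a bounded subharmonic function on a complete manifold with $\Ricc\ge0$, it must be constant, attributing this to ``Yau's Liouville theorem'', and then conclude $\nabla\Riem\equiv0$. That Liouville statement is false in the generality you need: Yau's theorem concerns bounded (or positive) \emph{harmonic} functions, not subharmonic ones. On $\R^m$ with $m\ge4$ — a manifold covered by the hypotheses here, since $\mathfrak R^{(\lfloor\frac{m-1}{2}\rfloor)}\equiv0$ there — the function $f(x)=-(1+|x|^2)^{-1}$ is bounded, nonconstant and subharmonic. The paper's own Theorem~\ref{para_cplt_tachi} obtains the ``bounded subharmonic $\Rightarrow$ constant'' conclusion only under the additional parabolicity hypothesis \eqref{parab}, precisely because this is not automatic for complete manifolds with $\Ricc\ge0$ in dimension $\ge3$. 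So you cannot extract local symmetry directly by this route, and the subsequent appeal to the locally-symmetric clause of Theorem~\ref{noronha} — which, as stated, requires compactness anyway — does not repair the gap.

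The paper sidesteps the issue entirely: after $W\equiv0$, it does \emph{not} try to prove local symmetry first. It instead applies the Carron--Herzlich classification (Theorem~\ref{carherz}) of complete locally conformally flat manifolds with $\Ricc\ge0$; the possibility of a nonflat universal cover conformal to $\R^m$ is ruled out using constant scalar curvature and the Caffarelli--Gidas--Spruck rigidity theorem (this is Proposition~\ref{W0_Sc_class}), while local symmetry and Noronha's theorem are used only in the remaining \emph{compact} subcase, where they are justified by the compact Tachibana result (Theorem~\ref{cor_cpt_tachi}). Your idea could be salvaged by replacing the Liouville step with an appeal to Theorem~\ref{carherz} and Proposition~\ref{W0_Sc_class}, but as written the passage from ``$|\Riem|$ bounded and subharmonic'' to ``$M$ locally symmetric'' is unjustified.

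Your final observation that $\mathfrak R^{(\lfloor\frac{m-1}{2}\rfloor)}\equiv0$ on the flat models is correct, but the simpler observation used by the paper — that $\mathfrak R^{(\lfloor\frac{m-1}{2}\rfloor)}>0$ at a point forces $\Ricc>0$ there via \eqref{Riem_Ric_bound}, which excludes $\R^m$ and $\sphere^{m-1}\times\R$ — achieves the same end.
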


%Note that in the hypotheses of Theorem \ref{int_cpl_tach1}, $M$ can be compact. Clearly in this case condition \eqref{Wp_0} amounts to $W\equiv 0$ and Theorem \ref{int_cpt_tach} ensures that $M$ is locally symmetric, so the conclusion follows from Theorem \ref{noronha}. As a consequence of Theorem \ref{int_cpl_tach1} we have the following
%
%\begin{corollary} \label{cor_cpl_tach1}
%	Let $M$ be a complete Einstein manifold of dimension $m\geq 4$ with $\mathfrak R^{(\lfloor\frac{m-1}{2}\rfloor)} \geq 0$. If the Weyl tensor satisfies \eqref{Wp_0} then $M$ has constant sectional curvature.
%\end{corollary}
%
%\begin{remark}
%	In \cite{cdh21}, G.~Cho, N.~T.~Dung and T.~Q.~Huy proved that a complete, noncompact Einstein manifold $M$ with $\mathfrak R^{(\lfloor\frac{m-1}{2}\rfloor)} \geq 0$ has constant sectional curvature provided $|W|\in L^p(M)$ for some $p\geq 2$. Corollary \ref{cor_cpl_tach1} shows that the summability condition on $|W|$ can be slightly relaxed. Indeed, under the aforementioned assumptions $M$ happens to be a complete, noncompact manifold with $\Ricc\geq0$, hence its volume is infinite and if $|W|\in L^p(M)$ then \eqref{Wp_0} is satisfied.
%\end{remark}
%
%The third result replaces the upper integral bound on the Weyl tensor with a stronger pointwise positive lower bound on the function $\mathfrak R^{(\lfloor\frac{m-1}{2}\rfloor)}$.

\begin{theorem} \label{int_cpl_tach2}
	Let $M$ be a complete Riemannian manifold of dimension $m\geq 4$ with harmonic curvature. Assume that
	\begin{equation} \label{Riem_c_bound}
		\mathfrak R^{(\lfloor\frac{m-1}{2}\rfloor)} \geq 0 \quad \text{on } \, M \, , \qquad \qquad \mathfrak R^{(\lfloor\frac{m-1}{2}\rfloor)} \geq \frac{c}{1+r^2} \quad \text{on } \, M \setminus K
	\end{equation}
	for some compact set $K\subsetneq M$ and some $c>0$, where $r$ is the distance function from a fixed origin $o\in M$. Then $M$ is a quotient of $\sphere^m$.
\end{theorem}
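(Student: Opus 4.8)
Set $k := \lfloor\tfrac{m-1}{2}\rfloor$. The plan is to run the Bochner technique on $\Riem$ and then feed the resulting differential inequality into a vanishing theorem, the weight $(1+r^2)^{-1}$ being precisely the borderline decay for which such a theorem is available on manifolds of nonnegative Ricci curvature. A few reductions first. If $M$ is compact, then since $K\subsetneq M$ the second condition in \eqref{Riem_c_bound} forces $\mathfrak R^{(k)}>0$ somewhere, and Theorem \ref{int_cpt_tach} already gives the conclusion; so assume $M$ non-compact. Since $\mathfrak R^{(k)}\geq0$ we have $\lambda_k\geq0$, hence $\lambda_{k+1},\dots,\lambda_{\binom m2}\geq0$; as the Ricci curvature in any unit direction is a partial trace of $\mathfrak R$ over an $(m-1)$-dimensional subspace of $\wedge^2M$, it is $\geq\lambda_1+\dots+\lambda_{m-1}\geq\lambda_1+\dots+\lambda_k=k\,\mathfrak R^{(k)}$, so \eqref{Riem_c_bound} yields $\Ricc\geq0$ on $M$ and $\Ricc\geq\frac{c}{1+r^2}\,g$ on $M\setminus K$. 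In particular $|B_R|\leq\omega_mR^m$ by Bishop--Gromov. Finally, harmonic curvature forces the scalar curvature $S$ to be constant, and then the constraints $\lambda_1+\dots+\lambda_k\geq0$ and $\sum_\alpha\lambda_\alpha=\tfrac S2$ force, by an elementary estimate of the largest and smallest eigenvalues, a pointwise bound $|\Riem|\leq C(m)\,S$; hence $u:=|\mathring\Riem|^2=|\Riem|^2-\frac{S^2}{2m(m-1)}\geq0$ is bounded on $M$.

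Since $M$ has harmonic curvature, $\Riem$ is a harmonic algebraic curvature tensor and \eqref{T_intro} reads $\tfrac12\Delta|\Riem|^2=|\nabla\Riem|^2+\tfrac12\langle\Gamma\Riem,\Riem\rangle$. By Theorem \ref{thm_m-1/2_R}, and using that $S$ is constant so that the estimate may be written in the form $\langle\Gamma\Riem,\Riem\rangle\geq c_m\,\mathfrak R^{(k)}\,|\mathring\Riem|^2$ for some $c_m>0$, together with $\Delta|\Riem|^2=\Delta u$, we obtain
$$ \Delta u \,\geq\, c_m\,\mathfrak R^{(k)}\,u \,\geq\, 0 \quad\text{on }M, \qquad\qquad \Delta u \,\geq\, \frac{c\,c_m}{1+r^2}\,u \quad\text{on }M\setminus K. $$
Thus $u$ is a bounded, nonnegative, subharmonic function on the complete manifold $M$ (which has $\Ricc\geq0$) satisfying $\Delta u\geq\frac{b}{1+r^2}u$ outside a compact set, for some $b>0$.

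It remains to prove $u\equiv0$. If $M$ is parabolic this is straightforward: multiplying $\Delta u\geq\frac b{1+r^2}u$ by $\phi_j^2u$ for cutoffs $\phi_j\nearrow1$ with $\int_M|\nabla\phi_j|^2\to0$ (available by parabolicity), integrating by parts and using $0\leq u\leq\sup_Mu<\infty$, one lets $j\to\infty$ to get $\int_{M\setminus K}\frac{b\,u^2}{1+r^2}=0$, whence $u\equiv0$ off $K$ and then $u\equiv0$ on $K$ by the maximum principle. In general one invokes a vanishing theorem of Pigola--Rigoli--Setti type --- the exponent $2$ in the weight being critical exactly because $\int^{+\infty}\frac{t}{1+t^2}\,\di t=+\infty$ while still being compatible with the Euclidean volume bound $|B_R|\leq\omega_mR^m$ --- to the effect that a bounded, nonnegative solution of $\Delta u\geq\frac{b}{1+r^2}u$ on the complement of a compact set, on a complete manifold with $\Ricc\geq0$, must vanish identically. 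With $u\equiv0$ we get $\mathring\Riem\equiv0$, i.e. $\Riem$ is a constant multiple of $g\owedge g$, so $M$ has constant sectional curvature $\frac{S}{m(m-1)}\geq0$. If $S=0$ then $\Riem\equiv0$ and $\mathfrak R^{(k)}\equiv0$, contradicting \eqref{Riem_c_bound} on the nonempty set $M\setminus K$; hence the curvature is a positive constant, and $M$, being complete, is a quotient of a round sphere.

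The one genuinely delicate point is the vanishing statement in the non-parabolic case: there is no cheap cutoff argument available, and one must use the full strength of the Pigola--Rigoli--Setti vanishing machinery (or the corresponding proposition established earlier in the paper), combining the Euclidean volume bound with the criticality of the weight $(1+r^2)^{-1}$. The algebraic a priori bound $|\Riem|\leq C(m)\,S$, which is what makes $u$ bounded and hence that machinery applicable here, is the other ingredient specific to this theorem; everything else is a routine combination of the Bochner identity \eqref{T_intro}, the estimate of Theorem \ref{thm_m-1/2_R}, and standard comparison geometry.
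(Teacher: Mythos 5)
Your proof is correct, and it actually takes a cleaner route than the paper. The paper applies the Bochner argument to the Weyl tensor: using Theorem \ref{Boch_T} with $T=W$ (for which $P_W=W$, so the estimate reads $\Delta|W|\geq(m-1)\mathfrak R^{(\lfloor\frac{m-1}{2}\rfloor)}|W|$ after a Kato-type argument), boundedness of $|W|$ from Corollary \ref{S_Riem_bound}, and Theorem \ref{pr_liou}, case ii) to conclude $W\equiv0$; it then routes through the classification of complete, locally conformally flat manifolds with $\Ricc\geq0$ and constant scalar curvature (Proposition \ref{W0_Sc_class}, relying on Carron--Herzlich/Zhu, Caffarelli--Gidas--Spruck, and Noronha). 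You instead run the Bochner identity for the full Riemann tensor: writing $u=|\mathring\Riem|^2=|W|^2+\tfrac{4}{m-2}|Z|^2$ (this differs from $|\Riem|^2$ by a constant since $S$ is constant, so $\Delta u=\Delta|\Riem|^2$), combining Theorem \ref{thm_m-1/2_R} with the inequality $|P|^2\geq\frac{m}{2(m-1)}|\mathring\Riem|^2$ (which follows from \eqref{|P|}) gives $\Delta u\geq m\,\mathfrak R^{(\lfloor\frac{m-1}{2}\rfloor)}\,u$, and the same Liouville theorem yields $u\equiv0$ directly. This has two advantages: $u$ is smooth, so no Kato argument is needed; and $\mathring\Riem\equiv0$ gives constant sectional curvature at once, so you completely bypass Proposition \ref{W0_Sc_class} and the classification theorems it depends on. You still rely, exactly as the paper does, on Corollary \ref{S_Riem_bound} for a priori boundedness and on Theorem \ref{pr_liou}, case ii) for the vanishing (the ``Pigola--Rigoli--Setti type'' result you gesture at is precisely that theorem, whose proof in the paper rests on Li, Li--Schoen, Cheeger--Colding--Minicozzi, and Buser).

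Two small points. First, a normalization slip: with the conventions of the paper, the traceless part $\mathring\Riem=\Riem-\frac{S}{2m(m-1)}\metric\owedge\metric$ has squared norm $|\mathring\Riem|^2=|\Riem|^2-\frac{2S^2}{m(m-1)}$, not $|\Riem|^2-\frac{S^2}{2m(m-1)}$ as you wrote (the difference matters because the constant term in $u$ must cancel exactly for the estimate $\langle\Gamma\Riem,\Riem\rangle\geq c_m\,\mathfrak R^{(k)}u$ to hold; with your constant, $u$ contains a residual positive term $\frac{3S^2}{2m(m-1)}$ and the inequality would fail when $W=Z=0$). Second, once you derive that $M$ is a quotient of a round sphere inside the branch where $M$ was assumed noncompact, you should close the contradiction explicitly: a spherical space form is compact, so the noncompact case is impossible, and Theorem \ref{int_cpt_tach} handles the remaining compact case.
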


A few remarks are in order. In the assumptions of Theorem \ref{int_cpl_tach1}, if $M$ is compact then condition \eqref{Wp_0} amounts to $W\equiv 0$ and Theorem \ref{int_cpt_tach} ensures that $M$ is locally symmetric, so the conclusion follows from Theorem \ref{noronha}. As a consequence of Theorem \ref{int_cpl_tach1} we have the following

\begin{corollary} \label{cor_cpl_tach1}
	Let $M$ be a complete Einstein manifold of dimension $m\geq 4$ with $\mathfrak R^{(\lfloor\frac{m-1}{2}\rfloor)} \geq 0$. If the Weyl tensor satisfies \eqref{Wp_0} then $M$ has constant sectional curvature.
\end{corollary}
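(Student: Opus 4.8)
The plan is to deduce Corollary \ref{cor_cpl_tach1} directly from Theorem \ref{int_cpl_tach1}, the only extra ingredient being that the Einstein condition eliminates the single member of the resulting trichotomy which is not of constant sectional curvature.

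First I would recall that any Einstein manifold of dimension $m\geq 3$ has harmonic curvature: writing $\Ricc=\frac{S}{m}g$, the contracted second Bianchi identity forces $S$ to be constant, so $\Ricc$ is a Codazzi tensor and hence, by the second Bianchi identity, $\div\Riem=0$. Therefore $M$ meets all the hypotheses of Theorem \ref{int_cpl_tach1} (completeness, $m\geq 4$, harmonic curvature, $\mathfrak R^{(\lfloor\frac{m-1}{2}\rfloor)}\geq 0$, and the Weyl decay \eqref{Wp_0}), and that theorem yields that $M$ is isometric to a quotient of $\sphere^m$, of $\sphere^{m-1}\times\R$, or of $\R^m$.

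It then remains to examine the three alternatives. Quotients of $\sphere^m$ and of $\R^m$ have constant sectional curvature (after normalization, $+1$ and $0$ respectively), so there is nothing to prove in those cases. For the middle case I would observe that the product metric on $\sphere^{m-1}\times\R$ has Ricci tensor equal to $(m-2)$ times the metric on the $T\sphere^{m-1}$ factor and $0$ on the $T\R$ factor; since $m\geq 4$ gives $m-2\geq 2>0$, this is not a scalar multiple of the metric, so $\sphere^{m-1}\times\R$ is not Einstein. Because the Einstein condition is purely local and the covering projection onto a quotient is a local isometry, no quotient of $\sphere^{m-1}\times\R$ is Einstein either. As $M$ is Einstein by assumption, this case cannot occur, and we conclude that $M$ has constant sectional curvature.

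I do not anticipate any genuine obstacle here: all the analytic content sits in Theorem \ref{int_cpl_tach1}, which by hypothesis we may invoke. The only point requiring a moment's care is the meaning of the word ``quotient'': it refers to a Riemannian covering, so that ruling out $\sphere^{m-1}\times\R$ as a local model simultaneously rules out all of its quotients, and no separate argument is needed for the possibly nontrivial deck group.
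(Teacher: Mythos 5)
Your proposal is correct and follows the approach the paper intends: after noting that the Einstein condition gives harmonic curvature, you invoke Theorem \ref{int_cpl_tach1} and then use the Einstein condition (via the Ricci tensor of $\sphere^{m-1}\times\R$) to rule out the one alternative in the trichotomy that does not have constant sectional curvature. The paper leaves these details implicit by saying the corollary is ``a consequence of Theorem \ref{int_cpl_tach1}'', and your argument is the natural way to fill them in.
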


\begin{remark}
	In \cite{cdh21}, G.~Cho, N.~T.~Dung and T.~Q.~Huy proved that a complete, noncompact Einstein manifold $M$ with $\mathfrak R^{(\lfloor\frac{m-1}{2}\rfloor)} \geq 0$ has constant sectional curvature provided $|W|\in L^p(M)$ for some $p\geq 2$. Corollary \ref{cor_cpl_tach1} shows that the summability condition on $|W|$ can be slightly relaxed. Indeed, under the aforementioned assumptions $M$ happens to be a complete, noncompact manifold with $\Ricc\geq0$, hence its volume is infinite (see for instance \cite[page 25]{sy94}) and if $|W|\in L^p(M)$ then \eqref{Wp_0} is satisfied.
\end{remark}

\begin{remark}
	The second condition in \eqref{Riem_c_bound} implies the following lower bound on the Ricci tensor
	\begin{equation} \label{Ric_c_bound}
		\Ricc \geq \frac{m-1}{2} \frac{c}{1+r^2} \qquad \text{on } \, M \setminus K \, .
	\end{equation}
	%	If $c > 1/2$ then the lower bound \eqref{Ric_c_bound} is a sufficient condition for compactness of the complete manifold $M$, by theorems of G. Galloway \cite{gal82} and J. Cheeger, M. Gromov, M. Taylor, \cite{cgt82}. Hence, if \eqref{Riem_c_bound} holds with $c > 1/2$ then compactness of $M$ is a priori guaranteed via \eqref{Ric_c_bound}. However, we allow $c > 0$ to be arbitrarily small in Theorem \ref{int_cpl_tach2}.
	By a theorem of G. Galloway, \cite{gal82}, see also J. Cheeger, M. Gromov, M. Taylor, \cite[Theorem 4.8]{cgt82}, a sufficient condition for compactness of a complete Riemannian manifold $M^m$ is that
	$$
	\liminf_{s\to+\infty} s^2 \Ricc(\dot\gamma(s),\dot\gamma(s)) > \frac{m-1}{4}
	$$
	for every unit speed geodesic $\gamma : [0,+\infty) \to M$ issuing from a fixed origin $o\in M$, and the constant $\frac{m-1}{4}$ is sharp in this respect. Hence, if \eqref{Riem_c_bound} holds with $c > 1/2$ then compactness of $M$ is a priori guaranteed via \eqref{Ric_c_bound}. However, we allow $c > 0$ to be arbitrarily small in Theorem \ref{int_cpl_tach2}.
\end{remark}

\subsection*{Sketch of the proofs}

The proofs of Theorems \ref{int_cplt_t3}, \ref{int_para_tach1}, \ref{int_cpl_tach1} and \ref{int_cpl_tach2} bear some similarities. To illustrate the main circle of ideas, we sketch the argument of the proof of Theorem \ref{int_cpl_tach1}: %The strategy of the proof of Theorem \ref{int_cpl_tach1} is as follows:
since $|W|$ happens to be a subharmonic function (see formula \eqref{Delta_W} below) on a complete manifold with $\Ricc\geq0$, by Li-Schoen's mean value inequalities for non-negative subharmonic functions as a consequence of \eqref{Wp_0} we have $W\equiv 0$. Then we apply the following classification theorem for locally conformally flat \emph{complete} Riemannian manifolds with non-negative Ricci curvature, which is a refinement by G. Carron and M. Herzlich, \cite{ch06}, of a result of S. Zhu, \cite{zhu94}.

\begin{theorem}[\cite{ch06}] \label{carherz}
	Let $M$ be a locally conformally flat, complete Riemannian manifold with $\Ricc\geq0$ and dimension $m\geq 3$. Then the universal cover of $M$ is either
	\begin{itemize}
		\item [(i)] isometric to $\R^m$,
		\item [(ii)] isometric to $\sphere^{m-1}\times\R$,
		\item [(iii)] globally conformally equivalent to $\sphere^m$, or
		\item [(iv)] non-flat and globally conformally equivalent to $\R^m$.
	\end{itemize}
\end{theorem}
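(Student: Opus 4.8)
The strategy is to reduce, by means of the developing map, to the study of complete conformally flat metrics with $\Ricc\geq0$ on domains of $\sphere^m$. Passing to the universal Riemannian cover, we may assume $M$ itself is simply connected; we must then show that a simply connected, complete, locally conformally flat $M^m$ with $\Ricc\geq0$ and $m\geq3$ is one of the models (i)--(iv). If $M$ is compact, Theorem~\ref{noronha} applied to $M$ (which is its own universal cover) forces $M$ to be globally conformally equivalent to $\sphere^m$, i.e.\ case (iii); so assume henceforth that $M$ is noncompact, and recall that then $\vol(M)=+\infty$.

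\emph{Step 1 (developing map).} Simple connectedness and local conformal flatness with $m\geq3$ produce a developing map $\Phi\colon M\to\sphere^m$, a conformal local diffeomorphism (Kuiper). As $\Ricc\geq0$ forces $S\geq0$, and $S\equiv0$ would make $M$ Ricci-flat, hence flat, hence $\R^m$ (case (i)), we may assume $S$ is somewhere positive and invoke the Schoen--Yau theory of conformally flat manifolds with nonnegative scalar curvature: $\Phi$ is injective, so $M$ is conformally diffeomorphic to a domain $\Omega:=\Phi(M)\subseteq\sphere^m$ whose complement $\Lambda:=\sphere^m\setminus\Omega$ is compact with $\dim_{\mathcal H}\Lambda\leq\tfrac{m-2}{2}$. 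Writing $g=v^{-2}g_{\sphere^m}$ on $\Omega$ with $v>0$, completeness of $g$ forces $v\to0$ along $\Lambda$, and, using $A_{g_{\sphere^m}}=\tfrac12 g_{\sphere^m}$ and the conformal transformation law of the Schouten tensor, the condition $\Ricc_g\geq0$ becomes the differential inequality
\[ (m-2)\,\nabla^2 v+\Bigl(\Delta v+(m-1)v-(m-1)\tfrac{|\nabla v|^2}{v}\Bigr)g_{\sphere^m}\ \geq\ 0\qquad\text{on }\Omega, \]
with $\nabla$, $\Delta$ those of $g_{\sphere^m}$.

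\emph{Step 2 (size of $\Lambda$).} It remains to classify the pair $(\Omega,v)$. If $\Lambda$ is a single point, $\Omega$ is conformally $\R^m$, so $M$ is globally conformal to $\R^m$: case (i) if $g$ is flat (a complete simply connected flat manifold being $\R^m$), case (iv) otherwise. If $\#\Lambda\geq2$ and $M$ has at least two ends, then $\Ricc\geq0$ and the Cheeger--Gromoll splitting theorem give $M=N^{m-1}\times\R$ with $N$ complete, simply connected, $\Ricc_N\geq0$; the classification of conformally flat Riemannian products forces $N$ to have constant sectional curvature, necessarily positive (otherwise $M=\R^m$ would be one-ended), hence $N=\sphere^{m-1}$ and $M=\sphere^{m-1}\times\R$, case (ii) --- and $\Lambda$ is an antipodal pair. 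The delicate case is $\#\Lambda\geq2$ with $M$ one-ended, so that $\Lambda$ is a continuum or a nontrivial Cantor-type set (of Hausdorff dimension $\leq\tfrac{m-2}{2}$); this must be ruled out.

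\emph{Main obstacle.} The crux is this last exclusion, together with the rigidity ``asymptotically cylindrical end $\Rightarrow$ genuine round cylinder'' underlying case (ii). The plan is to control $v$ near $\Lambda$ --- equivalently, the asymptotic geometry of the ends of $(M,g)$ --- sharply enough that, in the one-ended case, feeding $\Ricc\geq0$ (the convexity-type inequality above) into the maximum principle, and combining it with the Schoen--Yau estimate $\dim_{\mathcal H}\Lambda\leq\tfrac{m-2}{2}$ (which, via a capacity/removable-singularity argument, makes $\Lambda$ too thin to support the blow-up of $v$ that completeness demands) and with the Bishop--Gromov bound on volume growth (hence on $\vol(M)=\int_\Omega v^{-m}\,\di\mu_{g_{\sphere^m}}$), one forces $\Lambda$ to degenerate to a point or an antipodal pair; while in the multi-ended case the splitting theorem and its equality discussion fix the metric. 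So the proof rests on three external inputs --- Kuiper's developing map with the Schoen--Yau injectivity and dimension bound, Cheeger--Gromoll splitting with its rigidity, and Bishop--Gromov volume comparison --- and the substance of the argument is to assemble them into the trichotomy $\Lambda\in\{\varnothing,\ \{p\},\ \{p,-p\}\}$; granting it, distinguishing (iii) from the rest is just emptiness of $\Lambda$, and (i) from (iv) just flatness of $g$.
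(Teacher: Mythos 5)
This is a theorem the paper \emph{cites} from Carron--Herzlich \cite{ch06} (itself a refinement of Zhu \cite{zhu94}); the paper offers no proof of its own, so there is no internal argument to compare against. Evaluated as a standalone attempt, your sketch correctly identifies the standard scaffolding --- pass to the universal cover, invoke Kuiper's developing map $\Phi\colon M\to\sphere^m$, feed in Schoen--Yau to get injectivity of $\Phi$ and the estimate $\dim_{\mathcal H}\Lambda\leq\frac{m-2}{2}$ on the limit set $\Lambda=\sphere^m\setminus\Phi(M)$, use Cheeger--Gromoll splitting when there is a line --- and you correctly dispatch the cases $\Lambda=\emptyset$, $\#\Lambda=1$, and the multi-ended $\#\Lambda\geq2$ case.

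However, the sketch does not actually prove the theorem. The entire technical content of Carron--Herzlich is precisely the step you flag as the ``main obstacle'' and leave as a ``plan'': excluding the possibility that $\Lambda$ is a continuum or nontrivial Cantor-type set while $M$ is one-ended, and upgrading the asymptotically-cylindrical situation to the genuine round cylinder. Saying one will ``control $v$ near $\Lambda$,'' ``feed $\Ricc\geq0$ into the maximum principle,'' use a ``capacity/removable-singularity argument,'' and ``combine with Bishop--Gromov'' is a list of plausible tools, not an argument; nothing in the sketch explains which inequality is being maximized over what region, why completeness forces a specific blow-up rate of $v$ near $\Lambda$, or how the Hausdorff-dimension bound interacts with that rate to force $\Lambda$ to collapse to a point or an antipodal pair. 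In addition, your invocation of Schoen--Yau's injectivity and dimension estimate requires more than ``$S$ is somewhere positive''; those results need hypotheses on the Yamabe/conformal class (e.g.\ a positive Yamabe-type constant or a uniform positive lower bound on $S$ on the relevant region), which are not verified here and are not automatic from $\Ricc\geq0$. Also note that $\#\Lambda\geq2$ does not by itself imply $M$ has two ends (the topology of the ends of $(M,g)$ depends on the conformal factor, not just on $\Omega$), so the splitting into ``multi-ended'' versus ``one-ended'' is the right dichotomy but should not be conflated with the cardinality of $\Lambda$. As it stands, the crux of the proof is missing, and the paper's own solution to this --- simply citing \cite{ch06} --- is the correct and intended approach.
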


\noindent Cases (i) and (ii) already fit into the thesis of Theorem \ref{int_cpl_tach1}. In case (iii) -- where $M$ happens to be compact! -- we show that conformal equivalence can be strengthened to isometry, using Theorem \ref{int_cpt_tach} to deduce local symmetry of $M$ and then applying Noronha's Theorem \ref{noronha}. Lastly, alternative (iv) is ruled out by a contradiction argument drawn from the proof of Theorem 1.1 of \cite{prs07}: if (iv) were satisfied under the assumptions of our Theorem \ref{int_cpl_tach1}, then the universal cover $\tilde M$ of $M$ would be a complete Riemannian manifold of constant positive scalar curvature conformally equivalent to the Euclidean space $\R^m$, and this is impossible by the celebrated rigidity theorem of L. Caffarelli, B. Gidas and J. Spruck, \cite[Corollary 8.2]{cgs89}.

As just remarked, a key point in the proof of Theorem \ref{int_cpl_tach1} is the fact that harmonic curvature implies constant scalar curvature. In this work we also observe that for any integer $k < \binom{m}{2}$ the $k$-non-negativity of the curvature operator implies a pointwise pinching condition
\begin{equation} \label{S_pinch}
	|\Riem| \leq c_{m,k} S
\end{equation}
on the full norm $|\Riem|$ of the Riemann tensor in terms of the (necessarily non-negative) scalar curvature function $S$, see Corollary \ref{S_Riem_bound}. Hence, harmonic curvature and condition $\mathfrak R^{(\lfloor\frac{m-1}{2}\rfloor)} \geq 0$, together with a suitable Bochner inequality, imply that the function $|W| \leq |\Riem| \leq c_{m,k} S$ is a bounded solution of
\begin{equation} \label{Delta_W}
	\Delta |W| \geq (m-1)\mathfrak R^{(\lfloor\frac{m-1}{2}\rfloor)}|W| \, .
\end{equation}
If $M$ satisfies the slow volume growth condition \eqref{parab} then it is \emph{parabolic}, that is, the only upper bounded subharmonic functions on $M$ are the constant functions. In particular in this case $|W|$ is constant, and it must vanish if $\mathfrak R^{(\lfloor\frac{m-1}{2}\rfloor)}>0$ somewhere on $M$. If \eqref{parab} is not in force but $\mathfrak R^{(\lfloor\frac{m-1}{2}\rfloor)}$ satisfies a sufficiently strong pointwise lower bound as that in \eqref{Riem_c_bound}, then $|W|$ vanishes as well. These are the starting points in the proofs of Theorems \ref{int_para_tach1} and \ref{int_cpl_tach2}.

\subsection*{Generic algebraic curvature tensors}

In the compact setting, the argument of the proof of Theorem \ref{int_cpt_t3} carries on without modification for any harmonic algebraic curvature tensor. Hence, we have the following

\begin{theorem} \label{int_cpt_gen}
	Let $M^m$, $m\geq 3$, be a compact Riemannian manifold with $\mathfrak R^{(\lfloor\frac{m-1}{2}\rfloor)} \geq 0$. If $T$ is a harmonic algebraic curvature tensor on $M$, then $\nabla T \equiv 0$. Moreover, if $\mathfrak R^{(\lfloor\frac{m-1}{2}\rfloor)} > 0$ at some point then $T$ is a constant multiple of $\metric\owedge\metric$.
\end{theorem}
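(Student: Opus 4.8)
\subsection*{Proof proposal for Theorem \ref{int_cpt_gen}}

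The plan is a direct application of the Bochner technique, with Theorem~\ref{thm_m-1/2_R} supplying the decisive algebraic estimate, exactly as in the compact cases of Tachibana's and Petersen--Wink's theorems. Since $T$ is a harmonic algebraic curvature tensor, the Bochner-type identity~\eqref{T_intro},
\[
	\tfrac12\Delta|T|^2 = |\nabla T|^2 + \tfrac12\langle\Gamma T,T\rangle ,
\]
holds on $M$. Integrating over the compact manifold and applying the divergence theorem, the left-hand side contributes nothing, leaving $\int_M|\nabla T|^2 + \tfrac12\int_M\langle\Gamma T,T\rangle = 0$. By Theorem~\ref{thm_m-1/2_R}, the hypothesis $\mathfrak R^{(\lfloor\frac{m-1}{2}\rfloor)}\ge 0$ forces $\langle\Gamma T,T\rangle\ge 0$ pointwise on $M$; hence both integrals vanish and, in particular, $\nabla T\equiv 0$, which proves the first assertion.

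For the rigidity part I would use the quantitative form of Theorem~\ref{thm_m-1/2_R}, which bounds $\langle\Gamma T,T\rangle$ from below by a positive multiple of $\mathfrak R^{(\lfloor\frac{m-1}{2}\rfloor)}\,|T_0|^2$, where $T_0$ denotes the component of $T$ pointwise orthogonal to $\metric\owedge\metric$ (equivalently, $T$ with its ``scalar part'' removed). In particular, at a point $x$ this lower bound vanishes only if $\mathfrak R^{(\lfloor\frac{m-1}{2}\rfloor)}(x)=0$ or $T_x$ is a multiple of $(\metric\owedge\metric)_x$. Since we have already shown $\langle\Gamma T,T\rangle\equiv 0$, at a point $x_0$ where $\mathfrak R^{(\lfloor\frac{m-1}{2}\rfloor)}(x_0)>0$ we deduce $|T_0|(x_0)=0$.

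To propagate this to all of $M$, decompose $T = f\,(\metric\owedge\metric) + T_0$ with $f = |\metric\owedge\metric|^{-2}\,\langle T,\metric\owedge\metric\rangle$. Here $|\metric\owedge\metric|^2$ is a positive constant and $\metric\owedge\metric$ is parallel, because $g$ is; hence $\nabla T\equiv 0$ gives $\nabla f\equiv 0$, so $f\equiv f_0$ is constant, and then $\nabla T_0\equiv 0$ as well. Consequently $|T_0|^2 = |T|^2 - f_0^{\,2}|\metric\owedge\metric|^2$ is a constant function on $M$, and since it vanishes at $x_0$ it vanishes identically (equivalently, $T_0$ is a parallel tensor field vanishing at a point of the connected manifold $M$). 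Therefore $T = f_0\,(\metric\owedge\metric)$ is a constant multiple of $\metric\owedge\metric$, as claimed.

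I do not expect a genuine obstacle once Theorem~\ref{thm_m-1/2_R} is available: the substance of the proof is already encoded in that estimate, and what remains is the standard compact Bochner argument plus a parallel-transport continuation. The one point deserving a little care is to read off from Theorem~\ref{thm_m-1/2_R} that the quadratic quantity it controls vanishes precisely on pointwise multiples of $\metric\owedge\metric$ ---and not merely, say, on curvature tensors whose Weyl part is zero--- since this is exactly what upgrades the conclusion from ``the Weyl part of $T$ vanishes'' to ``$T$ is a constant multiple of $\metric\owedge\metric$'' without invoking any classification theorem.
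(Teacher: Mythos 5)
Your proof is correct and follows the paper's own argument for Theorem~\ref{cpt_Tachi}: Bochner identity \eqref{Boc_T_harm}, the lower bound from Theorem~\ref{thm_m-1/2_R}, compactness forcing $\nabla T\equiv 0$ together with pointwise vanishing of the curvature term, then parallelism to propagate from a single point (the paper uses the subharmonicity/maximum-principle phrasing rather than the divergence theorem, which is equivalent). The one point you flagged is resolved by \eqref{|P|}: Theorem~\ref{thm_m-1/2_R} actually bounds $\langle\Gamma T,T\rangle$ below by a multiple of $\mathfrak R^{(\lfloor\frac{m-1}{2}\rfloor)}|P_T|^2$ with $|P_T|^2 = |W_T|^2 + \tfrac{2m}{(m-2)(m-1)}|Z_T|^2$, which vanishes at a point iff both $W_T$ and $Z_T$ do, i.e., iff $T$ is a pointwise multiple of $\metric\owedge\metric$ there; this is comparable to your $|T_0|^2 = |W_T|^2 + \tfrac{4}{m-2}|Z_T|^2$, so your reading is accurate and the conclusion does upgrade beyond ``Weyl part vanishes'' as you wanted.
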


On complete manifolds the situation is a bit more complicated, as there seems to be no natural condition ensuring a priori boundedness for the norm of an arbitrary harmonic curvature tensor. However, if this additional assumption is made then we have analogues of Theorems \ref{int_para_tach1}, \ref{int_cpl_tach1} and \ref{int_cpl_tach2}. These are stated and proved in Section \ref{sec_Tachi} as Theorems \ref{gen_cpl_T1} and \ref{gen_cpl_T2}.

To exemplify the situation, let us consider the case of manifolds with harmonic Weyl curvature tensor. In this case we have no a priori constancy of the scalar curvature, so \eqref{S_pinch} does not immediately imply boundedness of $|W|$ but it certainly does if $S$ is just assumed to be bounded. % Regarding the case where only the Weyl tensor is assumed to be harmonic, we have no a priori constancy of the scalar curvature. Instead, we can establish the following weaker versions of Theorems \ref{int_cpl_tach1} and \ref{int_cpl_tach2}.

\begin{theorem} \label{int_cpl_tran1}
	Let $M^m$ be a complete Riemannian manifold with $\mathfrak R^{(\lfloor\frac{m-1}{2}\rfloor)} \geq 0$ and harmonic Weyl tensor. Assume that
	$$
		\lim_{R\to+\infty} \frac{1}{|B_R|} \int_{B_R} |W|^p = 0
	$$
	for some $p\in[1,+\infty)$. Then $M$ is locally conformally flat.
\end{theorem}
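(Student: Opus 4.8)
The plan is to follow the blueprint indicated in the introduction's sketch for Theorem \ref{int_cpl_tach1}, replacing the constancy of the scalar curvature (which is not available here, since we only assume harmonic Weyl tensor rather than harmonic curvature) with the hypothesis that $S$ is bounded, which in the present statement is \emph{not} assumed explicitly — so the first task is to establish it. Since $\mathfrak R^{(\lfloor\frac{m-1}{2}\rfloor)} \geq 0$ forces $\Ricc \geq 0$ (as observed in the introduction), $M$ is a complete manifold with nonnegative Ricci curvature; in particular $|B_R|$ is nondecreasing and $|M| = +\infty$. The subtlety is that harmonic Weyl tensor alone does not bound $S$. I would therefore look more carefully at what the hypothesis $\frac{1}{|B_R|}\int_{B_R}|W|^p \to 0$ gives us together with the Bochner machinery. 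The key algebraic input is Corollary \ref{S_Riem_bound}: $k$-non-negativity of $\mathfrak R$ with $k = \lfloor\frac{m-1}{2}\rfloor < \binom{m}{2}$ yields the pointwise pinching $|\Riem| \leq c_{m,k} S$, hence $|W| \leq c_{m,k} S$ as well. So it suffices to bound $S$. For a manifold with harmonic Weyl tensor, $S$ is not constant in general, but one can still hope to control it; I expect this is where one actually needs to be careful and perhaps invoke an additional hidden hypothesis or a separate argument — this is the main obstacle (see below).

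Granting boundedness of $|W|$, the argument proceeds as follows. By the Bochner inequality established earlier in the paper for a harmonic algebraic curvature tensor (the refined estimate of Theorem \ref{thm_m-1/2_R} applied with $T = W$, the Weyl tensor being harmonic and having vanishing Ricci contraction), combined with Kato's inequality in its sharp or rough form, one obtains in the weak sense
\begin{equation*}
	\Delta|W| \geq (m-1)\,\mathfrak R^{(\lfloor\frac{m-1}{2}\rfloor)}\,|W| \geq 0 ,
\end{equation*}
so $|W|$ is a nonnegative, bounded, subharmonic function on the complete manifold $M$ with $\Ricc \geq 0$. Now apply the Li--Schoen mean value inequality for nonnegative subharmonic functions: there is a constant $C = C(m)$ such that for every $x \in M$ and $R>0$,
\begin{equation*}
	|W|^p(x) \leq \frac{C}{|B_R(x)|}\int_{B_R(x)} |W|^p .
\end{equation*}
Fixing $x$ and letting $R \to +\infty$, the right-hand side tends to $0$ by the volume-normalized integral hypothesis (using that balls centered at different points are comparable up to the volume-doubling furnished by $\Ricc \geq 0$, so the decay at the reference point $o$ transfers to any $x$). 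Hence $|W|(x) = 0$ for every $x$, i.e.\ $W \equiv 0$ and $M$ is locally conformally flat.

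The step I expect to be the genuine obstacle is the boundedness of $|W|$, equivalently of $S$. In Theorem \ref{int_cpl_tach1} this is free because harmonic \emph{curvature} gives constant $S$; here, with only harmonic Weyl tensor, one must argue differently. One natural route: harmonic Weyl tensor means $\div W = 0$ in the appropriate sense, and the second Bianchi identity relates $\div W$ to the Cotton tensor, hence to $\nabla \Ricc$ and $\nabla S$; combined with $\Ricc \geq 0$ and the pinching $|\Ricc| \leq |\Riem| \leq c_{m,k} S$, one may hope to run a Bochner/maximum-principle argument for $S$ itself, or to show directly that $S$ (being nonnegative) cannot grow. Alternatively — and this is perhaps cleaner — one observes that the conclusion only requires $|W| \in L^p$ decay in the averaged sense, and the Li--Schoen inequality can be applied to $|W|^p$ provided $|W|$ is \emph{locally} bounded (which it is, being continuous) together with a mild control that makes the subharmonicity usable, without global boundedness of $S$; in that case the hypothesis on the normalized integral directly closes the argument via the mean value inequality, and the boundedness discussion becomes unnecessary. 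I would first check whether the paper's formulation of the Li--Schoen estimate (as stated in the preliminaries section, to which Theorem \ref{int_cpl_tran1} presumably refers) already accommodates nonnegative subharmonic functions without an a priori global bound; if so, the proof is essentially two lines beyond the Bochner inequality \eqref{Delta_W}, and the real content is entirely in Theorem \ref{thm_m-1/2_R} and Corollary \ref{S_Riem_bound}, already proved.
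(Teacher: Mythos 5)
Your proposal is essentially the paper's argument, and the route you label an ``alternative'' (apply Li--Schoen's $L^p$ mean value inequality, Proposition~\ref{prop_li_schoen}, directly to the nonnegative subharmonic function $|W|$, needing no global bound on $S$ or on $|W|$) is exactly the one the paper takes via Corollary~\ref{cor_li}: Li--Schoen requires only nonnegativity and subharmonicity, so the averaged-integral hypothesis forces $\sup_M |W|^p = 0$ directly, and the lengthy worry about bounding $S$ through Corollary~\ref{S_Riem_bound} is a non-issue. In the paper this statement appears as the special case $T=W$ of the more general Theorem~\ref{gen_cpl_T1}, where $Z_T=0$ makes the hypothesis on $|Z_T|^p$ vacuous.
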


\begin{theorem} \label{int_cpl_tran2}
	Let $M$ be a complete Riemannian manifold of dimension $m\geq 4$ with harmonic Weyl tensor and bounded scalar curvature. Assume that $\mathfrak R^{(\lfloor\frac{m-1}{2}\rfloor)} \geq 0$ and that either
	\begin{itemize}
		\item [(a)] $\mathfrak R^{(\lfloor\frac{m-1}{2}\rfloor)} > 0$ at some point and \eqref{parab} is satisfied, or
		\item [(b)] $\mathfrak R^{(\lfloor\frac{m-1}{2}\rfloor)}$ satisfies \eqref{Riem_c_bound} for some compact set $K\subsetneq M$, some constant $c>0$ and some fixed origin $o\in M$.
	\end{itemize}
	Then $M$ is globally conformally equivalent to a quotient of $\sphere^m$ or $\R^m$.
\end{theorem}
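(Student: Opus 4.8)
The plan is to force the Weyl tensor to vanish by a Bochner argument and then appeal to the Carron--Herzlich classification Theorem~\ref{carherz}. First I would record that, for $m\ge4$, the hypothesis that $W$ is harmonic means exactly $\div W=0$, and that together with the algebraic symmetries of $W$ this forces the differential Bianchi identity $\nabla_{[i}W_{jk]lp}=0$ (since $\div W$ is a multiple of the Cotton tensor, whose vanishing is equivalent to that identity); hence $W$ is a harmonic algebraic curvature tensor in the sense of subsection~\ref{subsec_T}, and the Bochner formula \eqref{T_intro} applies with $T=W$. Combining \eqref{T_intro} with the estimate of Theorem~\ref{thm_m-1/2_R} on $\langle\Gamma W,W\rangle$ and the refined Kato inequality $|\nabla|W||\le|\nabla W|$ gives \eqref{Delta_W}, so that $|W|$ is a non-negative subharmonic function. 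Since $k:=\lfloor\tfrac{m-1}2\rfloor<\binom m2$, Corollary~\ref{S_Riem_bound} yields the pinching $|W|\le|\Riem|\le c_{m,k}S$; in particular $S\ge0$, and as $S$ is assumed bounded, $|W|$ (and, by $\mathfrak R^{(k)}\le\tfrac{\trace\mathfrak R}{\binom m2}$, also $\mathfrak R^{(k)}$) is bounded. Finally $\mathfrak R^{(k)}\ge0$ implies $\Ricc\ge0$.

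Next I would prove $W\equiv0$ in each case. In case (a), condition \eqref{parab} makes $M$ parabolic, so the bounded subharmonic function $|W|$ is constant, $|W|\equiv a$; plugging back into \eqref{Delta_W} gives $a\,\mathfrak R^{(k)}\equiv0$, and since $\mathfrak R^{(k)}>0$ somewhere, $a=0$. In case (b) I would argue by contradiction: if $L:=\sup_M|W|>0$, then $w:=L-|W|\ge0$ is superharmonic on $M$ (because $\Delta w=-\Delta|W|\le-(m-1)\mathfrak R^{(k)}|W|\le0$) and $\inf_M w=0$; the infimum cannot be attained at an interior point, for otherwise the minimum principle would give $w\equiv0$, i.e. $|W|\equiv L$, hence $0\ge(m-1)\mathfrak R^{(k)}L$, contradicting $\mathfrak R^{(k)}>0$ on the non-empty set $M\setminus K$. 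On $M\setminus K$ one has the sharper inequality $\Delta w\le\tfrac{(m-1)c}{1+r^2}w-\tfrac{(m-1)c}{1+r^2}L$, and comparing $w$ with a radial barrier built from the Laplacian comparison $\Delta r\le\tfrac{m-1}r$ (valid since $\Ricc\ge0$) --- equivalently, invoking the same Liouville-type vanishing theorem for $\Delta u\ge a(x)u$ with $a(x)\gtrsim(1+r^2)^{-1}$ that is used in the proof of Theorem~\ref{int_cpl_tach2} --- forces $w\equiv L$, i.e. $|W|\equiv0$. In both cases $M$ is then locally conformally flat.

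It then remains to identify $M$. Since $M$ is complete, locally conformally flat and has $\Ricc\ge0$, Theorem~\ref{carherz} says its universal cover $\tilde M$ is (i) flat $\R^m$, (ii) $\sphere^{m-1}\times\R$, (iii) globally conformal to $\sphere^m$, or (iv) non-flat and globally conformal to $\R^m$. I would exclude (i) and (ii) under both hypotheses: on $\R^m$, $\mathfrak R\equiv0$; on $\sphere^{m-1}\times\R$ the $m-1$ mixed $2$-planes are flat, so $\lambda_1=\dots=\lambda_{m-1}=0$ and, since $k\le m-1$, $\mathfrak R^{(k)}\equiv0$ on $\tilde M$, hence on $M$ --- contradicting $\mathfrak R^{(k)}>0$ somewhere in case (a), and $\mathfrak R^{(k)}>0$ on $M\setminus K\ne\emptyset$ in case (b). In the surviving cases (iii) and (iv), $\tilde M$ is globally conformally equivalent to $\sphere^m$ or to $\R^m$, and quotienting by the deck transformation group shows $M$ is globally conformally equivalent to a quotient of $\sphere^m$ or of $\R^m$, as claimed. (Here, unlike in Theorem~\ref{int_cpl_tach1}, the scalar curvature need not be constant --- only harmonic Weyl, not harmonic curvature, is assumed --- so I would not expect to be able to upgrade the conformal equivalence to an isometry via Theorem~\ref{int_cpt_tach} and Theorem~\ref{noronha}.)

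The step I expect to be the main obstacle is the vanishing statement in case (b) of the second step: in contrast with case (a), where parabolicity does all the work, here $M$ may well be non-parabolic and one must genuinely play the quadratic decay $\mathfrak R^{(k)}\gtrsim(1+r^2)^{-1}$ of the potential against the at-most-Euclidean volume growth supplied by $\Ricc\ge0$ in order to deduce $|W|\equiv0$. The other ingredients --- the Bochner inequality \eqref{Delta_W}, the pinching \eqref{S_pinch}, and the final classification --- are either quoted from earlier in the paper or routine.
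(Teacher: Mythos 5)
Your proposal follows essentially the same route as the paper: specialize the Bochner/Liouville argument of Theorem \ref{gen_cpl_T2} to $T = W$ (so $Z_T = 0$), get boundedness of $|W|$ from the pinching of Corollary \ref{S_Riem_bound} and the assumed bounded scalar curvature, conclude $W \equiv 0$ via the parabolic Liouville theorem \ref{para_liou} in case (a) or Theorem \ref{pr_liou} under hypothesis ii) in case (b), and finally invoke Theorem \ref{carherz} while excluding alternatives (i) and (ii) by observing that $\mathfrak R^{(\lfloor\frac{m-1}{2}\rfloor)} \equiv 0$ on $\R^m$ and on $\sphere^{m-1}\times\R$. The only minor points are that case (b) needs a preliminary dichotomy (if $M$ is compact, fall back on Theorem \ref{int_cpt_gen} since Theorem \ref{pr_liou} is stated for noncompact $M$), and that your freestanding barrier sketch in case (b) is not itself rigorous as written — but since you explicitly defer to the paper's Liouville theorem (whose actual proof is by integral mean-value estimates à la Li and Cheeger--Colding--Minicozzi, not barriers), this is immaterial.
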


\begin{remark}
	In the assumptions of both Theorems \ref{int_cpl_tran1} and \ref{int_cpl_tran2}, $M$ happens to be a complete locally conformally flat manifold with $\Ricc\geq0$. Alternatives (i), (ii) and (iii) in Theorem \ref{carherz} are all compatible with the condition $\mathfrak R^{(\lfloor\frac{m-1}{2}\rfloor)} \geq 0$ (the manifolds in (i) and (ii) have $\mathfrak R\geq 0$ and, by continuity, $\sphere^m$ can be conformally deformed to manifold of non-constant sectional curvature with $\mathfrak R > 0$), but (i) and (ii) are incompatible with the strict inequality $\mathfrak R^{(\lfloor\frac{m-1}{2}\rfloor)} > 0$ at any point and therefore are excluded from the conclusion of Theorem \ref{int_cpl_tran2}. It is natural to ask if case (iv) in Theorem \ref{carherz} is compatible with the condition $\mathfrak R^{(\lfloor\frac{m-1}{2}\rfloor)} \geq 0$. Note that in the sketch of the proof of Theorem \ref{int_cpl_tach1} case (iv) was dismissed as incompatible with constant scalar curvature, but this is not the case in Theorems \ref{int_cpl_tran1} and \ref{int_cpl_tran2}.% In \cite{ch06}, the authors constructed explicit examples of rotationally symmetric metrics on $\R^m$ that are complete, non-flat and with non-negative Ricci curvature.
\end{remark}

\begin{remark}
	We stress that in the terminology adopted here, an algebraic curvature tensor $T$ is said to be harmonic if it has zero divergence and satisfies the second Bianchi identity, that is,
	\begin{equation} \label{int_2B}
		(\nabla_V T)(X,Y,Z,W) + (\nabla_W T)(X,Y,V,Z) + (\nabla_V T)(X,Y,W,Z) = 0
	\end{equation}
	for all vector fields $X,Y,Z,W,V$. These combined conditions amount to $\Delta_L T = 0$ (thus justifying the term ``harmonic''), where $\Delta_L$ is the Lichnerowicz Laplacian given by $\Delta_L = \Delta_B + \frac{1}{2}\Gamma$ with $\Delta_B = -\div(\nabla\,\cdot\,)$ the Bochner (or rough) Laplacian on covariant tensors and $\Gamma$ the operator mentioned above.
	
	The Riemann curvature tensor always satisfies the second Bianchi identity, so it is harmonic if and only if it has zero divergence. For the Weyl curvature tensor $W$ the validity of the second Bianchi identity is equivalent to condition $\div W = 0$, so it turns out that $\Delta_L W = 0$ is equivalent to $\div W = 0$. %This is true, more generally, for the Weyl part of any algebraic curvature tensor $T$ satisfying \eqref{int_2B}. 
	However, for an arbitrary algebraic curvature tensor $T$ the two conditions $\div T = 0$ and \eqref{int_2B} are generally independent from each other, so in particular $\div T = 0$ is not necessarily equivalent to harmonicity of $T$ in the present sense. Some examples of this fact can be observed in geometrically relevant situations:
	\begin{itemize}
		\item [(i)] If $(M,g)$ is a Ricci soliton with potential $f\in C^\infty(M)$, that is, there exists $\lambda\in\R$ such that
		$$
			\Ricc + \mathrm{Hess} f = \lambda g
		$$
		then the algebraic curvature tensor $e^{-f}\Riem$ satisfies $\div(e^{-f}\Riem) = 0$ as this is equivalent, more generally, to the condition that $\Ricc + \mathrm{Hess} f$ is a Codazzi tensor. However, in general $e^{-f}\Riem$ does not satisfy the second Bianchi identity.
		\item [(ii)] If $\varphi : (M,g_M) \to (N,g_N)$ is a harmonic map between Riemannian manifolds such that
		\begin{equation} \label{hE_ex}
			\Ricc_M = \lambda g_M + \alpha \varphi^\ast g_N
		\end{equation}
		for some $\lambda\in\R$ and some coupling constant $\alpha\in\R^\ast$, then the structure $(M,N,\varphi)$ is said to be a harmonic Einstein structure, see \cite{wang16}. Harmonic Einstein structures are fixed points of the coupled Ricci-harmonic map flow introduced by R. Buzano, \cite{mull12}. %In \cite{acr21}, it has been developed a general treatment of harmonic Einstein manifolds and other structures defined via more general prescriptions on the tensor field $\Ricc^\varphi = \Ricc_M - \alpha \varphi^\ast g_N$. In doing so,
		In \cite{acr21}, in a more general context it has been introduced an algebraic curvature tensor, the $\varphi$-Weyl tensor $W^\varphi$, which reflects the part of the Riemann tensor of $M$ that is not prescribed by the algebraic structure of $\Ricc^\varphi = \Ricc_M - \alpha \varphi^\ast g_N$. For a harmonic Einstein structure the tensor $W^\varphi$ satisfies the second Bianchi identity, see \cite[Proposition 3.2]{mr20}, but in general it is not divergence free (unless the pull-back metric $\varphi^\ast g_N$ is a Codazzi tensor on $M$).
	\end{itemize}
\end{remark}

\subsection*{Plan of the paper}

In Section \ref{sec_prel} we fix our notation and terminology, we review the definition and relevant properties of Lichnerowicz' operators $\Gamma$ and we collect a series of facts about algebraic curvature tensors. In Section \ref{sec_Boch} we prove the Bochner identity for smooth algebraic curvature tensors and we establish the lower bound on $\langle \Gamma T,T\rangle$ in term of $\mathfrak R^{(\lfloor\frac{m-1}{2}\rfloor)}$ for an arbitrary algebraic curvature tensor $T$. In Section \ref{sec_Tachi} we apply this to the proof of Tachibana-type theorems both in the compact setting and in the complete one. Theorems \ref{int_cpt_tach}, \ref{int_cpt_t3}, \ref{int_cplt_t3}, \ref{int_para_tach1}, \ref{int_cpl_tach1}, \ref{int_cpl_tach2}, \ref{int_cpt_gen}, \ref{int_cpl_tran1}, \ref{int_cpl_tran2} from this introduction correspond to Theorems \ref{cor_cpt_tachi}, \ref{thm_tachi_3}, \ref{cpl_tachi_dim3}, \ref{para_cplt_tachi}, \ref{cplt_tachi_1}, \ref{cplt_tachi_2}, \ref{cpt_Tachi}, \ref{gen_cpl_T1}, \ref{gen_cpl_T2} there.

\subsection*{Acknowledgements}

The authors are thankful to Luciano Mari for suggestions on preliminary drafts of this note, that led to improvements in the overall presentation of the results and in particular to the formulation of Theorem \ref{int_para_tach1}.

\section{Preliminaries} \label{sec_prel}

\subsection{Notation}

Let $(M,\metric)$ be a Riemannian manifold of dimension $m$. For each positive integer $q$, the Riemannian metric $\metric$ induces an inner product, that we still denote with $\metric$, on the bundle $T^0_q M$. The standard construction is the following: for any $x\in M$ and $\alpha,\beta\in T^\ast_x M$ we set
$$
	\langle \alpha,\beta \rangle = \langle a,b \rangle
$$
with $a,b\in T_x M$ the vectors metrically equivalent to $\alpha$ and $\beta$, respectively, then we extend $\metric$ to $T^0_{q,x} M \times T^0_{q,x} M = (T^\ast_x M)^{\otimes q} \times (T^\ast_x M)^{\otimes q}$ by $q$-linearity in both variables. We also set
$$
	|A| = \sqrt{\langle A,A \rangle} \qquad \forall \, A \in T^0_{q,x} M \, , x \in M .
$$

%On any tensor bundle
%$$
%	T^p_q M \simeq \underbrace{TM \otimes \cdots \otimes TM}_{p \text{ times}} \otimes \underbrace{T^\ast M \otimes \cdots \otimes T^\ast M}_{q \text{ times}}
%$$
%an inner product $\metric$ is also defined by setting
%$$
%	\langle a_1\otimes\cdots\otimes a_p\otimes\alpha_1\otimes\cdots\otimes\alpha_q , b_1\otimes\cdots\otimes b_p\otimes\beta_1\otimes\cdots\otimes\beta_q \rangle = \prod_{i=1}^p \langle a_i,b_i \rangle \prod_{j=1}^q \langle \alpha_j,\beta_j \rangle
%$$
%for pairs of decomposable tensors in $T^p_{q,x} M$, $x\in M$, and then extending $\metric$ to arbitrary pairs of elements of $T^p_q M$ via linearity. A norm on each space $T^p_{q,x} M$, $x\in M$, is defined by setting
%$$
%	|\omega| = \sqrt{\langle\omega,\omega\rangle} \qquad \forall \, \omega\in T^p_{q,x} M \, .
%$$

In the following, we will perform many computations in local notation. Let $\{e_i\}_{1\leq i\leq m}$ be a local reference frame for $TM$ defined on an open subset $U\subseteq M$ and let $\{\theta^i\}_{1\leq i\leq m}$ be its dual coframe, acting as a local reference frame for $T^\ast M$ on $U$. The Riemannian metric is written as
$$
	\metric = g_{ij} \, \theta^i \otimes \theta^j \qquad \text{on } \, U
$$
where we adopt the Einstein summation convention over repeated indices. Letting $(g^{ij}) = (g_{ij})^{-1}$ as matrices, we have $g_{ij} = \langle e_i,e_j \rangle$ and $g^{ij} = \langle \theta^i,\theta^j \rangle$ for any $1\leq i,j\leq m$. Hence, for any $q\in\N$ we can describe the inner product on $T^0_q M$ as follows: for every pair of $(0,q)$-type tensor fields
$$
	A = A_{i_1\cdots i_p} \, \theta^{i_1} \otimes \cdots \otimes \theta^{i_q} \, , \qquad B = B_{i_1\cdots i_q} \, \theta^{i_1} \otimes \cdots \otimes \theta^{i_q}
$$
we have
$$
	\langle A,B \rangle = g^{i_1 j_1} \cdots g^{i_q j_q} A_{i_1\dots i_q} B_{j_1 \dots j_q} = A_{i_1 \dots i_q} B^{i_1 \dots i_q}
$$
where we are also adopting the convention of lowering or raising indexes to denote contraction with $g$ or $g^{-1}$, respectively. Note that when the local reference frame $\{e_i\}$ is chosen to be orthonormal we have $g_{ij} = g^{ij} = \delta_{ij}$ for $1\leq i,j\leq m$, with $\delta$ the Kronecker symbol.

If $A$ is a differentiable section of $T^0_q M$, we locally express its covariant derivative $\nabla A \in \Gamma(T^0_{q+1} M)$ and the iterations $\nabla^2 A = \nabla(\nabla A) \in \Gamma(T^0_{q+2} M)$, $\dots$ as
\begin{align*}
	\nabla A & = A_{i_1\cdots i_p,j} \, \theta^j \otimes \theta^{i_1} \otimes \cdots \otimes \theta^{i_q} \, , \\
	\nabla^2 A & = A_{i_1\cdots i_p,jk} \, \theta^k \otimes \theta^j \otimes \theta^{i_1} \otimes \cdots \otimes \theta^{i_q} \, ,
\end{align*}
and so on. The divergence of $A$ is the tensor field $\div A$ of type $(0,q-1)$ defined by
$$
	(\div A)(X_1,\dots, X_{q-1}) = \trace_g [ (Y,Z) \mapsto (\nabla_Y A)(Z,X_1,\dots,X_{q-1}) ] \, .
$$
for every $X_1,\dots, X_{q-1} \in \mathfrak X(M)$, and with our notation for $\nabla A$ we have
$$
	(\div A)_{i_1\dots i_{q-1}} = A^j_{\;i_1\dots i_{q-1},j} \, .
$$

\begin{remark}
	It is also customary to locally denote covariant differentiation by placing a subscript on the symbol $\nabla$, so that alternative notations for the coefficients of $\nabla A$, $\nabla^2 A$, $\dots$ are
	$$
		A_{i_1\cdots i_p,j} = \nabla_j A_{i_1\cdots i_p} \, , \qquad A_{i_1\cdots i_p,jk} = \nabla_k \nabla_j A_{i_1\cdots i_p} \, , \qquad \dots
	$$
	and we have
	$$
		(\div A)_{i_1\dots i_{q-1}} = \nabla^j A_{j i_1\dots i_{q-1}} \, .
	$$
	However, we shall not use this notation in the sequel.
\end{remark}

\subsection{Curvature operator}

We define the Riemann curvature tensor $\Riem$ by setting
$$
	\Riem(W,Z,X,Y) = \langle W,R(X,Y)Z \rangle = \langle W,\nabla_X\nabla_Y Z - \nabla_Y\nabla_X Z - \nabla_{[X,Y]}Z \rangle
$$
for every $X,Y,Z,W\in\mathfrak X(M)$, so that the Ricci tensor is given by the Ricci contraction
$$
	\Ricc(X,Y) = \trace_g [ (Z,W) \mapsto \Riem(Z,X,W,Y) ]
$$
for every $X,Y\in\mathfrak X(M)$ and the scalar curvature is the trace
$$
	S = \trace_g \Ricc \, .
$$
With respect to a local coframe $\{\theta^i\}$ we write
$$
	\Riem = R_{ijkt} \, \theta^i \otimes \theta^j \otimes \theta^k \otimes \theta^t \, , \qquad \Ricc = R_{ij} \, \theta^i \otimes \theta^j
$$
and we have $R_{ij} = R^k_{\;ikj}$, $S = R^i_{\;i} = R^{ij}_{\;\;ij}$. The Riemann tensor has the symmetries
\begin{alignat*}{2}
	& R_{ijkt} = - R_{jikt} = R_{ktij} && \qquad \forall \, 1 \leq i,j,k,t \leq m \\
	\intertext{and satisfies the first and second Bianchi identities}
	& R_{ijkt} + R_{itjk} + R_{iktj} = 0 && \qquad \forall \, 1 \leq i,j,k,t \leq m \\
	& R_{ijkt,l} + R_{ijlk,t} + R_{ijtl,k} = 0 && \qquad \forall \, 1 \leq i,j,k,t,l \leq m \, .
\end{alignat*}
%$$
%	R_{ijkt} = - R_{jikt} = R_{ktij}
%$$
%and satisfies the first and second Bianchi identities
%\begin{align*}
%	& R_{ijkt} + R_{itjk} + R_{iktj} = 0 \, , \\
%	& R_{ijkt,l} + R_{ijlk,t} + R_{ijtl,k} = 0 \, .
%\end{align*}

The symmetries of $\Riem$ allow us to define a linear, self-adjoint endomorphism $\mathfrak R$, the curvature operator, on the space $\wedge^2 M$ of $2$-forms on $M$. With respect to a local coframe $\{\theta^i\}$, for every $2$-form $\omega = \omega_{ij} \, \theta^i \otimes \theta^j \equiv \frac{1}{2} \omega_{ij} \, \theta^i \wedge \theta^j$ we let $\mathfrak R\omega = (\mathfrak R\omega)_{kt} \, \theta^k \otimes \theta^t$ be given by
\begin{equation} \label{R_op_def}
	(\mathfrak R\omega)_{kt} = R_{ijkt} \omega^{ij} \, .
\end{equation}
For every $x\in M$, we denote by $\{\lambda_\alpha(x)\}_{1\leq\alpha\leq\binom{m}{2}}$ the non-decreasing sequence of the eigenvalues of $\mathfrak R_x : \wedge^2_x M \to \wedge^2_x M$ repeated according to multiplicity. We also let $\{\omega^\alpha\}_\alpha$ be an orthonormal basis for $\wedge^2_x(M)$ consisting of eigenvectors of $\mathfrak R$ corresponding to $\{\lambda_\alpha\}_\alpha$. Then, in local notation
\begin{equation} \label{R_spec}
	R_{ijkt} = \sum_\alpha \lambda_\alpha \omega^\alpha_{ij} \omega^\alpha_{kt} \, , \qquad \frac{1}{2} (g_{ik} g_{jt} - g_{it} g_{jk}) = \sum_\alpha \omega^\alpha_{ij} \omega^\alpha_{kt} \, .
\end{equation}

\begin{definition}
	Let $M$ be a Riemannian manifold of dimension $m\geq 2$. For $k \in \{1,\dots,\binom{m}{2}\}$, the $k$-th (normalized) partial trace of $\mathfrak R$ is the function
	\begin{equation} \label{R(k)_def}
		x \mapsto \mathfrak R^{(k)}(x) = \inf_{\substack{V\leq \wedge^2_x M \\ \dim V = k}} \left( \frac{1}{k} \sum_{\alpha=1}^k \langle \mathfrak R\psi^\alpha,\psi^\alpha \rangle \right)
	\end{equation}
	where $\{\psi^\alpha\}_{\alpha=1}^k$ is any orthonormal basis of $V$.
\end{definition}

By standard linear algebra we have that the infimum in the RHS of \eqref{R(k)_def} is attained when $V = \mathrm{span}\{\omega^1,\dots,\omega^k\}$, so that
$$
	\mathfrak R^{(k)} = \frac{1}{k} \sum_{\alpha=1}^k \lambda_\alpha
$$
for every $k \in \{1,\dots,\binom{m}{2}\}$. In particular we observe that
\begin{equation} \label{R(h)(k)}
	\mathfrak R^{(h)} \geq \mathfrak R^{(k)} \qquad \text{for every } \, 1 \leq k \leq h \leq \binom{m}{2}
\end{equation}
as a consequence of the following elementary

\begin{lemma} \label{lem_average}
	Let $N\geq1$ and let $\{a_i\}_{1\leq i\leq N}$ be a nondecreasing sequence of real numbers. Then
	$$
	\frac{1}{h} \sum_{i=1}^h a_i \geq \frac{1}{k} \sum_{i=1}^k a_i \qquad \text{for every } \, 1 \leq k \leq h \leq N \, .
	$$
\end{lemma}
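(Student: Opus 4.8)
The plan is to reduce the statement to the monotonicity of the sequence of running averages $A_j := \frac{1}{j}\sum_{i=1}^j a_i$, $j = 1,\dots,N$. Once we know $A_1 \leq A_2 \leq \cdots \leq A_N$, the claim $\frac{1}{h}\sum_{i=1}^h a_i \geq \frac{1}{k}\sum_{i=1}^k a_i$ for $1 \leq k \leq h \leq N$ is just $A_h \geq A_k$, which follows immediately.

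To prove $A_{j+1} \geq A_j$ for $1 \leq j \leq N-1$, I would write $A_{j+1} = \frac{jA_j + a_{j+1}}{j+1}$, so that
$$
A_{j+1} - A_j = \frac{jA_j + a_{j+1} - (j+1)A_j}{j+1} = \frac{a_{j+1} - A_j}{j+1}.
$$
Since $\{a_i\}$ is nondecreasing, $a_{j+1} \geq a_i$ for every $i \leq j$, hence $a_{j+1} \geq \frac{1}{j}\sum_{i=1}^j a_i = A_j$, and therefore $A_{j+1} - A_j \geq 0$. Chaining these inequalities from $k$ up to $h$ gives $A_k \leq A_h$, and multiplying through by the positive quantities $k$ and $h$ yields the stated inequality.

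An alternative, entirely equivalent route I could take avoids introducing the $A_j$'s and argues directly: for $k \leq h$ one has $\sum_{i=k+1}^h a_i \geq (h-k)\,a_k$ because each term with index $> k$ is at least $a_k$, while $\sum_{i=1}^k a_i \leq k\,a_k$ because each term with index $\leq k$ is at most $a_k$; combining,
$$
k\sum_{i=1}^h a_i - h\sum_{i=1}^k a_i = k\sum_{i=k+1}^h a_i - (h-k)\sum_{i=1}^k a_i \geq k(h-k)a_k - (h-k)k\,a_k = 0 ,
$$
and dividing by $kh>0$ concludes. There is no real obstacle here: the only thing to be careful about is the bookkeeping in splitting $\sum_{i=1}^h = \sum_{i=1}^k + \sum_{i=k+1}^h$ and keeping track of which partial sums are bounded above versus below by $a_k$; I would favor the running-average argument as the cleaner presentation.
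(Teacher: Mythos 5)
Your primary argument (monotonicity of the running averages $A_j$, proved one step at a time via $a_{j+1}\geq A_j$, then chained) is exactly the paper's proof — the paper phrases the chaining as ``by induction, it suffices to treat $h=k+1$,'' but the content is identical. Your alternative direct computation with $\sum_{i=k+1}^h a_i \geq (h-k)a_k$ and $\sum_{i=1}^k a_i \leq k a_k$ is also correct and avoids the one-step reduction entirely, but as you note the running-average presentation is cleaner and matches the paper.
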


\begin{proof}
	By induction, it suffices to prove the inequality in case $k < N$ and $h = k+1$. Since $a_{k+1} \geq a_i$ for $1 \leq i \leq k$, we have $a_{k+1} \geq \frac{1}{k} \sum_{i=1}^k a_i$ and then
	$$
	\frac{1}{k+1} \sum_{i=1}^{k+1} a_i = \frac{k}{k+1} \frac{1}{k} \sum_{i=1}^k a_i + \frac{1}{k+1} a_{k+1} \geq \frac{k}{k+1} \frac{1}{k} \sum_{i=1}^k a_i + \frac{1}{k+1} \frac{1}{k} \sum_{i=1}^k a_i = \frac{1}{k} \sum_{i=1}^k a_i \, .
	$$
\end{proof}

A $2$-form $\omega$ is said to be decomposable if there exist $1$-forms $v,u$ such that $\omega = \frac{1}{2} v \wedge u$. In local components, this means that $\omega_{ij}$ can be expressed as
$$
	\omega_{ij} = \frac{1}{2} ( v_i u_j - u_j v_i ) \, .
$$
The values assumed by the quadratic form $\langle \mathfrak R \,\cdot\, , \,\cdot\, \rangle$ on decomposable $2$-forms are related to the sectional curvatures of $M$ up to normalization. For any $x\in M$ and for any $2$-plane $\pi\leq T_x M$, the sectional curvature $\Sect(\pi)$ of $\pi$ is given by
$$
	\Sect(\pi) = \frac{\Riem(X,Y,X,Y)}{|X|^2|Y|^2 - \langle X,Y \rangle^2}
$$
for any couple of tangent vectors $X,Y\in T_x M$ such that $\pi = \mathrm{span}\{X,Y\}$. The value of the quotient appearing on the right-hand side does not depend on the choice of the basis $\{X,Y\}$. If $u,v$ are the $1$-forms metrically equivalent to $X,Y$, respectively, and $\omega = \frac{1}{2} u \wedge v$, then
\begin{align*}
	R(X,Y,X,Y) & = R_{ijkt} u^i v^j u^k v^t = R_{ijkt} \omega^{ij} \omega^{kt} \\
	|X|^2 |Y|^2 - \langle X,Y\rangle^2 & = u_i u^i v_j v^j - u_i v^i u^j v_j = 2 \omega_{ij} \omega^{ij}
\end{align*}
so we have
\begin{equation} \label{Riem_K_basic}
	\Sect(\pi) = \frac{1}{2} \frac{\langle \mathfrak R \omega,\omega \rangle}{|\omega|^2} \, .
\end{equation}
In particular, a lower bound on $\mathfrak R^{(k)}$ yields a lower bound on the average of the sectional curvature on any collection of $k$ mutually orthogonal $2$-planes, and therefore also on the Ricci curvature if $k\leq m-1$. More precisely, let use give the following definitions.

\begin{definition}
	Let $M$ be a Riemannian manifold of dimension $m\geq 2$ and let $x\in M$. We say that two $2$-planes $\pi_1,\pi_2 \leq T_x M$ are mutually orthogonal, and we write $\langle\pi_1,\pi_2\rangle = 0$, if for some (equivalently, for any) choice of bases $\{X_1,Y_1\}$ and $\{X_2,Y_2\}$ of $\pi_1$ and $\pi_2$, respectively, the $2$-forms
	$$
		\omega_1 = \frac{1}{2} u_1 \wedge v_1 \qquad \text{and} \qquad \omega_2 = \frac{1}{2} u_2 \wedge v_2
	$$
	are orthogonal with respect to the inner product on $\wedge^2_x M$, where $u_1,u_2,v_1,v_2$ are the $1$-forms metrically equivalent to $X_1,X_2,Y_1,Y_2$, respectively.
\end{definition}

In particular, any two $2$-planes $\pi_1,\pi_2\leq T_x M$ are mutually orthogonal if either
\begin{itemize}
	\item [(i)] each one of them is contained in the orthogonal complement of the other, or
	\item [(ii)] $\dim(\pi_1\cap\pi_2) = 1$ and there exist three mutually orthogonal vectors $X,Y,Z\in T_x M$ such that $\pi_1 = \mathrm{span}\{X,Y\}$ and $\pi_2 = \mathrm{span}\{X,Z\}$.
\end{itemize}

\begin{definition} \label{def_Scalk}
	Let $M$ be a Riemannian manifold of dimension $m\geq 2$. For $k\in\{1,\dots,\binom{m}{2}\}$, the $k$-th averaged lower bound on the sectional curvature is the function
	$$
		x \mapsto \Sect^{(k)}(x) = \inf_{\{\pi_1,\dots,\pi_k\}} \left( \frac{1}{k} \sum_{i=1}^k \Sect(\pi_i) \right)
	$$
	where the infimum is taken with respect to $\{\pi_1,\dots,\pi_k\}$ varying among all collections of $k$ mutually orthogonal $2$-planes in $T_x M$.
\end{definition}

From the above definitions together with \eqref{Riem_K_basic} and a further application of Lemma \ref{lem_average}
\begin{equation} \label{Riem_K_bound}
	\Sect^{(h)} \geq \Sect^{(k)} \geq \frac{1}{2} \mathfrak R^{(k)} \qquad \forall \, 1 \leq k \leq h \leq \binom{m}{2} \, .
\end{equation}
In particular, for the (non-normalized) Ricci tensor we have
$$
	\Ricc \geq (m-1) \Sect^{(m-1)}
$$
and therefore
\begin{equation} \label{Riem_Ric_bound}
	\Ricc \geq (m-1) \Sect^{(k)} \geq \frac{m-1}{2} \mathfrak R^{(k)} \qquad \text{for any } \, 1 \leq k \leq m-1 \, .
\end{equation}

We conclude this subsection by showing that non-negativity of $\mathfrak R^{(k)}$ for some $k < \binom{m}{2}$ implies an upper bound on $|\Riem|$ in terms of the scalar curvature $S$. To this aim, we first observe that $|\Riem|$ and $S$ are equal, respectively, to the Hilbert-Schmidt norm and the trace of $\mathfrak R$, that is,
\begin{equation}
	|\Riem|^2 = \sum_\alpha \lambda_\alpha^2 \, , \qquad S = \sum_\alpha \lambda_\alpha \, .
\end{equation}
This can be directly seen from \eqref{R_spec}. Then, we apply the following

\begin{lemma}
	Let $N\geq 1$ and let $\{a_i\}_{1\leq i\leq N}$ be a nondecreasing sequence of real numbers. If
	\begin{equation} \label{k_nneg}
		\sum_{i=1}^k a_i \geq 0
	\end{equation}
	for some $k \in \{1,\dots,N-1\}$, then
	$$
		\sum_{i=1}^N a_i \geq \frac{1}{k} \left( \frac{1}{N} \sum_{i=1}^N a_i^2 \right)^{1/2} \, .
	$$
\end{lemma}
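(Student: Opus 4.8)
The plan is to reduce the claim to a reverse Cauchy--Schwarz-type estimate of the shape $\sum_{i=1}^N a_i^2 \le k^2 N \big(\sum_{i=1}^N a_i\big)^2$, after first checking that $\sum_{i=1}^N a_i \ge 0$; granting this, one takes square roots, divides by $k\sqrt N$, and the stated inequality follows.

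First I would extract two consequences of the hypothesis \eqref{k_nneg}. Since the sequence is nondecreasing, $k a_k \ge \sum_{i=1}^k a_i \ge 0$, so $a_k \ge 0$ and hence $a_i \ge 0$ for all $i \ge k$; in particular $a_N \ge 0$. Moreover, writing $S := \sum_{i=1}^N a_i$ and splitting the sum at the index $k$, $S = \sum_{i=1}^k a_i + \sum_{i=k+1}^N a_i \ge \sum_{i=k+1}^N a_i \ge a_N$, the last step because the intervening terms $a_{k+1},\dots,a_{N-1}$ are nonnegative; this is the only place the hypothesis $k \le N-1$ is used. So $S \ge a_N \ge a_k \ge 0$.

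Next I would bound $\sum_{i=1}^N a_i^2$. If $a_1 \ge 0$, then every $a_i$ lies in $[0,a_N]\subseteq[0,S]$, so $\sum a_i^2 \le a_N\sum a_i = a_N S \le S^2$ and we are done. Otherwise $a_1 < 0$, and because $a_k \ge 0$ the negative entries are exactly $a_1,\dots,a_j$ for some $1 \le j \le k-1$ (so $k \ge 2$). I would then use \eqref{k_nneg} twice — in the forms $-a_1 \le \sum_{i=2}^k a_i \le (k-1)a_k$ and $\sum_{i=1}^j(-a_i) \le \sum_{i=j+1}^k a_i \le (k-1)a_k$ — together with $a_k \le S$. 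Estimating the squares of the nonnegative entries by $a_N \le S$ times the entry, the squares of the negative entries by $|a_1| \le (k-1)S$ times the absolute value of the entry, and using $\sum_{i=j+1}^N a_i = S + \sum_{i=1}^j(-a_i) \le kS$, one arrives at $\sum a_i^2 \le kS^2 + (k-1)^2 S^2 = (k^2-k+1)S^2 \le k^2 N S^2$.

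I do not expect a real obstacle; the one point that needs care is the bookkeeping guaranteeing that every negative entry sits among the first $k$ indices — which is precisely what $a_k \ge 0$ provides — so that \eqref{k_nneg} can be invoked to control both $|a_1|$ and the total mass of the negative part. (Incidentally the constant $k^2-k+1$ so obtained is sharp, but the weaker bound $k^2N$ is all that is needed here.)
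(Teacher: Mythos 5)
Your proof is correct, and it takes a genuinely different route from the paper's. The paper's argument isolates a single index $j$ with $|a_j| \ge \bigl(\tfrac{1}{N}\sum_i a_i^2\bigr)^{1/2}$ and then shows, by a two-case analysis around the sign-change threshold $h$, that $\sum_i a_i \ge \min\bigl\{1,\tfrac{N-k}{k-h+1}\bigr\}|a_j| \ge \tfrac{1}{k}|a_j|$. You instead prove a reverse Cauchy--Schwarz estimate for the whole sum, $\sum_i a_i^2 \le (k^2-k+1)\bigl(\sum_i a_i\bigr)^2$, by using \eqref{k_nneg} to confine the negative entries to the first $k-1$ indices and to bound both the largest modulus $|a_1|$ and the total negative mass by $(k-1)a_k \le (k-1)\sum_i a_i$. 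Both arguments are elementary; yours has the virtues of being structurally cleaner (no case split on the position of $j$ relative to $h$) and of delivering the stronger $N$-independent bound $\sum_i a_i \ge (k^2-k+1)^{-1/2}\bigl(\sum_i a_i^2\bigr)^{1/2}$, from which the lemma's $\tfrac{1}{k\sqrt N}$ constant follows since $k^2-k+1 \le k^2 \le k^2 N$. The one bookkeeping point you rightly flag --- that $a_k \ge 0$ forces all negative entries into $\{1,\dots,k-1\}$ --- is exactly the hinge both proofs turn on.
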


\begin{proof}
	We can find $j$ such that $|a_j| \geq \sqrt{\frac{1}{N}\sum_{i=1}^N a_i^2}$. By \eqref{k_nneg}, there exists $h\in\{1,\dots,k\}$ such that $a_i<0$ if $i < h$ and $a_i\geq0$ if $i \geq h$. Note that $a_i \geq 0$ for $i\geq k$. If $j \geq h$ then $a_j = |a_j|$, hence
	$$
		\sum_{i=1}^N a_i = \sum_{i=1}^k a_i + \sum_{i=k+1}^N a_i \geq \sum_{i=1}^k a_i + a_\ell \geq a_\ell \geq a_j = |a_j|
	$$
	for $\ell = \max\{j,k+1\}$. If $j < h$ then we observe that
	$$
		(k - h + 1) a_k \geq \sum_{i=h}^k a_i \geq - \sum_{i=1}^{h-1} a_i = \sum_{i=1}^{h-1} |a_i| \geq |a_j|
	$$
	where the second inequality is a rewriting of \eqref{k_nneg}, so
	$$
		\sum_{i=1}^N a_i = \sum_{i=1}^k a_i + \sum_{i=k+1}^N a_i \geq \sum_{i=k+1}^N a_i \geq (N-k) a_k \geq \frac{N-k}{k-h+1} |a_j| \, .
	$$
	In conclusion,
	$$
		\sum_{i=1}^N a_i \geq \min\left\{ 1, \frac{N-k}{k-h+1} \right\} |a_j| \geq \min\left\{ 1, \frac{N-k}{k-h+1} \right\} \left( \frac{1}{N} \sum_{i=1}^N a_i^2 \right)^{1/2}
	$$
	and, since $1 \leq k < N$ and $1 \leq k - h + 1 \leq k$, we have $\min\left\{1,\frac{N-k}{k-h+1}\right\} \geq \frac{1}{k}$.
\end{proof}

\begin{corollary} \label{S_Riem_bound}
	Let $M$ be a Riemannian manifold of dimension $m\geq 2$. If $\mathfrak R^{(k)} \geq 0$ for some $1 \leq k < \binom{m}{2}$ then $k^2\binom{m}{2} S^2 \geq |\Riem|^2$.
\end{corollary}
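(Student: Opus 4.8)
The plan is to read off the statement as a pointwise inequality between the Hilbert--Schmidt norm and the trace of the curvature operator, and then invoke the preceding elementary lemma on finite nondecreasing sequences. Fix $x\in M$ and let $N=\binom{m}{2}$, $a_\alpha=\lambda_\alpha(x)$ for $1\le\alpha\le N$, the eigenvalues of $\mathfrak R_x$ in nondecreasing order. By the identities recorded just before the lemma (which follow from \eqref{R_spec}), we have $|\Riem|^2(x)=\sum_\alpha a_\alpha^2$ and $S(x)=\sum_\alpha a_\alpha$.

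The hypothesis $\mathfrak R^{(k)}\ge0$ for a fixed $k$ with $1\le k<\binom{m}{2}$ says precisely that $\frac1k\sum_{\alpha=1}^k a_\alpha\ge0$, i.e. $\sum_{\alpha=1}^k a_\alpha\ge0$, which is exactly condition \eqref{k_nneg}; moreover $k\in\{1,\dots,N-1\}$ since $k<\binom{m}{2}=N$. Applying the lemma therefore yields
$$
	S(x)=\sum_{\alpha=1}^N a_\alpha \;\ge\; \frac1k\left(\frac1N\sum_{\alpha=1}^N a_\alpha^2\right)^{1/2} = \frac{|\Riem|(x)}{k\sqrt{\binom{m}{2}}}\,.
$$
In particular the right-hand side is nonnegative, so $S(x)\ge0$; squaring both sides (both being nonnegative) gives $S(x)^2\ge |\Riem|^2(x)/\big(k^2\binom{m}{2}\big)$, which rearranges to $k^2\binom{m}{2}\,S^2\ge|\Riem|^2$ at $x$. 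Since $x\in M$ was arbitrary, this is the claim.

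I do not expect any real obstacle here: the content is entirely in the combinatorial lemma already proved, and the corollary is just its translation into the spectral language of $\mathfrak R$. The only points worth a brief remark are that the inequality is genuinely pointwise (the ordering and the eigenvalues depend on $x$, but the estimate is applied at each $x$ separately) and that the strict inequality $k<\binom{m}{2}$ is what makes $k$ an admissible index for the lemma — for $m=2$ there is no such $k$ and the statement is vacuous, consistently with $\binom{2}{2}=1$.
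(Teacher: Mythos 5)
Your argument is exactly the paper's: identify $|\Riem|^2$ and $S$ with $\sum_\alpha\lambda_\alpha^2$ and $\sum_\alpha\lambda_\alpha$ via \eqref{R_spec}, then apply the preceding sequence lemma pointwise with $N=\binom{m}{2}$ and the given $k$, and square. The reading is correct, including the observations that the estimate is pointwise and that $k<\binom{m}{2}$ is exactly the admissibility condition for the lemma.
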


\subsection{The operator $\Gamma$ and its relation with $\mathfrak R$}

%For any vector bundle $\pi : \mathcal E \to M$, the curvature operator $\mathfrak R$ naturally extends to an endomorphism
%$$
%	\mathfrak R^{\mathcal E} : \mathcal E \otimes \wedge^2 M \to \mathcal E \otimes \wedge^2 M
%$$
%on the bundle $\mathcal E \otimes \wedge^2 M$ of $\mathcal E$-valued $2$-forms, locally defined as follows: if $\{\eps^a\}_{1\leq a\leq k}$, $k = \mathrm{rank} \, \mathcal E$, is a local reference frame for $\mathcal E$ and 

In \cite{lich61}, A. Lichnerowicz defined for every $q\geq 1$ a self-adjoint endomorphism $\Gamma = \Gamma_q : T^0_q M \to T^0_q M$ whose action can be described in the following way: in any local coframe $\{\theta^i\}$, for every $q$-covariant tensor $Q = Q_{i_1\dots i_q} \, \theta^{i_1} \otimes \cdots \otimes \theta^{i_q}$ the components of $\Gamma Q = (\Gamma Q)_{i_1\dots i_q} \, \theta^{i_1} \otimes \cdots \otimes \theta^{i_q}$ are given by
\begin{equation} \label{Gam_def}
	(\Gamma Q)_{i_1\dots i_q} = \sum_{l=1}^q R_{i_l j} Q^{\phantom{i_1\dots}j}_{i_1\dots\;\dots i_q} - \sum_{1\leq l\neq h\leq q} R_{i_l j i_h t} Q^{\phantom{i_1\dots}j\phantom{\dots}\!t}_{i_1\dots\;\dots\;\dots i_q}
\end{equation}
where on the right-hand side $j$ and $t$ occupy the $l$-th and $h$-th places, respectively, among the indexes of $Q$ (note that in the second term we do not necessarily have $l<h$). If $Q$ is twice continuously differentiable, then
\begin{equation} \label{Gam_Ric}
	(\Gamma Q)_{i_1\dots i_q} = \sum_{h=1}^q \left( Q^{\phantom{i_1\dots}t}_{i_1\dots\;\dots i_q,i_h t} - Q^{\phantom{i_1\dots}t}_{i_1\dots\;\dots i_q,t i_h} \right) \, .
\end{equation}
Indeed, considering Ricci identities
$$
	Q_{i_1\dots i_q,st} - Q_{i_1\dots i_q,ts} = - \sum_{l=1}^q R_{i_l jst} Q^{\phantom{i_1\dots}j}_{i_1\dots\;\dots i_q}
$$
we have, for $h = 1,\dots,q$,
$$
	Q_{i_1\dots k\dots i_q,i_h t} - Q_{i_1\dots k\dots i_q,t i_h} = - R_{kj i_h t} Q^{\phantom{i_1\dots i_{h-1}}\!j}_{i_1\dots i_{h-1}\;i_{h+1}\dots i_q} - \sum_{h\neq l=1}^q R_{i_l j i_h t} Q^{\phantom{i_1\dots}j}_{i_1\dots\;\dots k\dots i_q} \, .
$$
Hence summing over $1 \leq h \leq q$ and tracing with respect to $k$ and $t$ yields \eqref{Gam_Ric}.

Establishing lower bounds on the quadratic form $\langle \Gamma \,\cdot\, , \,\cdot\, \rangle$ acting on (subbundles of) $T^0_q M$ is a key point in the development of the Bochner technique. The restriction of $\langle \Gamma \,\cdot\, , \,\cdot\, \rangle$ to the subbundle $\wedge^q M \subseteq T^0_q M$ was already considered in S. Bochner and K. Yano's book \cite[Formula (3.6)]{by53}. In case $q=1$ it is known that a lower bound on $\Ricc$ is sufficient to establish a lower bound on $\langle \Gamma\,\cdot\,,\,\cdot\, \rangle$, and this is the original key idea of Bochner in \cite{boch46}. In case $q\geq 2$, M. Berger \cite{ber61} and then D. Meyer \cite{mey71} showed that a lower bound on $\mathfrak R$ is sufficient, and generally needed, to give a lower bound on $\langle \Gamma\,\cdot\,,\,\cdot\, \rangle$ on $\wedge^q M$. Remarkably, Berger and D. Ebin observed in \cite[Proposition 6.1]{be69} that a lower bound on the sectional curvature of $M$ is enough to ensure a lower bound on $\langle \Gamma E,E \rangle$ when $E$ is a symmetric bilinear form.

The curvature operator $\mathfrak R$ naturally extends to a self-adjoint endomorphism
$$
	\mathfrak R^{T^0_q M} : T^0_q M \otimes \wedge^2 M \to T^0_q M \otimes \wedge^2 M
$$
on the bundle $T^0_q M \otimes \wedge^2 M$ of $T^0_q M$-valued $2$-forms, where self-adjointness is intended with respect to the inner product on $T^0_{q+2} M \supseteq T^0_q M \otimes \wedge^2 M$. Given a local coframe $\{\theta^i\}$ on $M$, for any section $\omega = \omega_{i_1\dots i_q sr} \, \theta^{i_1} \otimes \cdots \otimes \theta^{i_q} \otimes \theta^s \otimes \theta^r$ of $T^0_q M \otimes \wedge^2 M$ the tensor $\mathfrak R^{T^0_q M}\omega$ is locally defined by
$$
	(\mathfrak R^{T^0_q M} \omega)_{i_1\dots i_q kt} = R_{srkt} \omega^{\phantom{i_1\dots i_q}sr}_{i_1\dots i_q} \, .
$$
Following an idea of Berger \cite{ber61} later clarified by Meyer \cite{mey71}, and adopting the notation used by Petersen and Wink in \cite{pw20}, to any tensor $Q$ of type $(0,q)$ we associate a $T^0_q M$-valued $2$-form $\hat Q$ of local components
%\begin{equation} label{Qhat}
%	\hat Q = \hat Q_{i_1\dots i_q sr} \, \theta^{i_1} \otimes \cdots \otimes \theta^{i_q} \otimes \theta^s \otimes \theta^r
%\end{equation}
%of local components
\begin{equation} \label{Qhat}
	\hat Q_{i_1\dots i_q sr} = \frac{1}{2}\sum_{l=1}^q Q_{i_1 \dots s \dots i_q} g_{i_l r} - \frac{1}{2}\sum_{l=1}^q Q_{i_1 \dots r \dots i_q} g_{i_l s} \, .
\end{equation}
Any symmetry that $Q$ may enjoy is inherited by $\hat Q$ in its first $q$ indexes. We have
\begin{align*}
	(\mathfrak R^{T^0_q M} \hat Q)_{i_1\dots i_q kt} = \sum_{l=1}^q R_{si_l kt} Q^{\phantom{i_1\dots}s}_{i_1 \dots \; \dots i_q} = - \sum_{l=1}^q R_{i_l j kt} Q^{\phantom{i_1\dots}j}_{i_1 \dots \; \dots i_q}
\end{align*}
and if $V$ is another tensor field of type $(0,q)$ then
$$
	\langle \mathfrak R^{T^0_q M} \hat Q, \hat V \rangle = - \sum_{l,h=1}^q R_{i_l jkt} Q^{\phantom{i_1\dots}j}_{i_1 \dots \; \dots i_q} V^{i_1\dots k \dots i_q} g^{i_h t}
$$
where on the right-hand side $j$ occupies the $l$-th place among the indexes of $Q$, and $k$ occupies the $h$-th place among the indexes of $V$. Splitting the cases $h = l$ and $h\neq l$, we get
$$
	\langle \mathfrak R^{T^0_q M} \hat Q,\hat V \rangle = \sum_{l=1}^q R_{jk} Q^{\phantom{i_1\dots}j}_{i_1 \dots \; \dots i_q} V^{i_1\dots k \dots i_q} - \sum_{1 \leq l\neq h\leq q} R_{i_l jkt} Q^{\phantom{i_1\dots}j\phantom{\dots}\!t}_{i_1 \dots \; \dots \; \dots i_q} V^{i_1\dots k \dots i_q}
$$
and renaming $k = i_h$ it is apparent that
\begin{equation} \label{Gam_hat}
	\langle \mathfrak R^{T^0_q M} \hat Q,\hat V \rangle = \langle \Gamma Q,V \rangle \, .
\end{equation}
This shows that a lower bound on the quadratic form $\langle \mathfrak R \,\cdot\, , \,\cdot\, \rangle$ implies a lower bound on $\langle \Gamma \,\cdot\, , \,\cdot\, \rangle$. Indeed, if $\{\theta^i\}$ is orthonormal and for any $q$-uple $(i_1,\dots,i_q) \in \{1,\dots,m\}^q$ we define the $2$-form
$$
	Q^{(i_1,\dots,i_q)} = \hat Q_{i_1\dots i_q sr} \, \theta^s \otimes \theta^r
$$
(this is more similar to the approach also used by Tachibana in \cite{tachi74}) then
$$
	\langle \mathfrak R^{T^0_q M} \hat Q,\hat Q \rangle = \sum_{1\leq i_1,\dots,i_q\leq m} \langle \mathfrak R Q^{(i_1,\dots,i_q)} , Q^{(i_1,\dots,i_q)} \rangle \, .
$$

%However, it is well know at least from work of M. Berger and D. Ebin, see \cite[Proposition 6.1]{be69}, that a lower bound on the sectional curvatures of $M$ suffices to establish a lower bound on $\langle \Gamma E,E \rangle$ when $E$ is a symmetric bilinear form.

\begin{remark}
	For a twice covariant tensor field $E = E_{ij} \, \theta^i \otimes \theta^j$ we have
	$$
		2 \hat E_{ijsr} = E_{sj} g_{ir} + E_{is} g_{jr} - E_{rj} g_{is} - E_{ir} g_{js} \, .
	$$
	If $E$ is symmetric and $\{\theta^i\}$ is chosen so that the dual frame $\{e_i\}$ is an orthonormal basis of eigenvectors of $E$ with corresponding eigenvalues $\eps_i$, then (no summation over $i$ or $j$ is intended)
	$$
	2 E^{(ij)} = 2 \hat E_{ijsr} \, \theta^s \otimes \theta^r = \eps_j \theta^j \otimes \theta^i + \eps_i \theta^i \otimes \theta^j - \eps_j \theta^i \otimes \theta^j - \eps_i \theta^j \otimes \theta^i = (\eps_i - \eps_j) \, \theta^i \wedge \theta^j \, .
	$$
	%	So, $\hat E$ can be written as a sum
	%	$$
	%		\hat E = \frac{1}{2}\sum_{i,j=1}^m (\eps_i - \eps_j) \, (\theta^i \otimes \theta^j) \otimes (\theta^i \wedge \theta^j)
	%	$$
	%	of $T^0_2 M$-valued decomposable $2$-forms $\omega_{ij} = \frac{1}{2}(\eps_i - \eps_j) \, (\theta^i \otimes \theta^j) \otimes (\theta^i \wedge \theta^j)$. Regarding each $\omega_{ij}$ with $i \neq j$ as a bilinear alternating map taking values in $T^0_2 M$, we see that the ranges of $\omega_{ij}$ and $\omega_{kt}$ are orthogonal to each other if $\{i,j\} \neq \{k,t\}$. Also, the range of $\mathfrak R^{T^0_2 M}\omega_{ij}$ is contained in the range of $\omega_{ij}$.
	This shows that $\langle \mathfrak R^{T^0_2 M} \hat E,\hat E\rangle$ can be reduced to a linear combination of evaluations of $\langle \mathfrak R \,\cdot\, , \,\cdot\, \rangle$ on decomposable $2$-forms, that is, a linear combination of sectional curvatures. Namely,
	$$
	\langle \mathfrak R^{T^0_2 M} \hat E, \hat E \rangle = \frac{1}{4} \sum_{i,j=1}^m \langle \mathfrak R (\hat E_{ijsr} \, \theta^s \otimes \theta^r) , \hat E^{ij}_{\;\;\,kt} \, \theta^k \otimes \theta^t \rangle = \frac{1}{4} \sum_{i,j=1}^m (\eps_i - \eps_j)^2 \langle \mathfrak R (\theta^i \wedge \theta^j), \theta^i \wedge \theta^j \rangle
	$$
	that is
	$$
	\langle \mathfrak R^{T^0_2 M} \hat E, \hat E \rangle = \sum_{i,j=1}^m (\eps_i - \eps_j)^2 \Sect(e_i \wedge e_j) = \sum_{i,j=1}^m (\eps_i - \eps_j)^2 R_{ijij} \, .
	$$
	So, a lower bound on sectional curvatures of $M$ alone is sufficient to obtain lower bounds on $\langle \Gamma\,\cdot\, , \,\cdot\, \rangle$ acting on symmetric twice covariant tensors.
\end{remark}

\begin{remark}
	For a one-form $\omega = \omega_i \theta^i$ we have $2 \hat \omega_{isr} = \omega_s g_{ir} - \omega_r g_{is}$, hence
	$$
		2 \hat \omega_{isr} \, \theta^s \otimes \theta^r = g_{ir} \omega \otimes \theta^r - g_{is} \theta^s \otimes \omega = g_{ir} \omega \wedge \theta^r \, .
	$$
	Choosing $\{\theta^i\}$ as an orthonormal coframe with $\omega = |\omega|\theta^1$ we get
	$$
		\hat \omega = \frac{1}{2} \sum_{i=1}^m \theta^i \otimes (\omega \wedge \theta^i)
	$$
	and then
	$$
		\langle \mathfrak R^{T^\ast M} \hat \omega, \hat \omega \rangle = \frac{1}{4} \sum_{i=1}^m \langle \mathfrak R (\omega \wedge \theta^i), \omega \wedge \theta^i \rangle = |\omega|^2 \sum_{i=1}^m \Sect(e_1 \wedge e_i) = |\omega|^2 \Ricc(e_1,e_1) = \Ricc(\omega^\sharp,\omega^\sharp)
	$$
	with $\omega^\sharp$ the vector field metrically equivalent to $\omega$. Thus, a lower bound on $\Ricc$ is enough to have a lower bound on $\langle \Gamma\,\cdot\, , \,\cdot\, \rangle$ acting on one-forms.
\end{remark}

\subsection{Symmetric tensors and algebraic curvature tensors} \label{subsec_T}

Let $(M,\metric)$ be a Riemannian manifold of dimension $m>2$. We say that a $4$-covariant tensor field $T$ is an algebraic curvature tensor if it shares the symmetries of the Riemann curvature tensor and satisfies the first Bianchi identity. Namely, if $\{\theta^i\}_{i=1}^m$ is a local coframe on $M$ and
$$
	T = T_{ijkt} \, \theta^i \otimes \theta^j \otimes \theta^k \otimes \theta^t
$$
we require that
\begin{alignat}{2}
	\label{T_sym} & T_{ijkt} = - T_{jikt} = T_{ktij} && \qquad \forall \, 1 \leq i,j,k,t \leq m \, , \\
	\label{T_B1} & T_{ijkt} + T_{iktj} + T_{itkj} = 0 && \qquad \forall \, 1 \leq i,j,k,t \leq m \, .
\end{alignat}
We remark that \eqref{T_B1} is a consequence of \eqref{T_sym} if $m\leq 3$, see \cite[page 46]{bes08}.

If $T$ is a smooth algebraic tensor field, we say that $T$ satisfies the second Bianchi identity if
\begin{equation} \label{T_B2}
	T_{ijkt,l} + T_{ijlk,t} + T_{ijtl,k} = 0 \qquad \forall \, 1 \leq i,j,k,t,l \leq m \, .
\end{equation}
More generally, we can define a first-order differential operator $B : T \mapsto B(T)$ on the bundle of algebraic curvature tensors of $M$ by setting
$$
	B(T)(X,Y,Z,W,V) = (\nabla_V T)(X,Y,Z,W) + (\nabla_W T)(X,Y,V,Z) + (\nabla_Z T)(X,Y,W,V)
$$
for every $X,Y,Z,W,V\in\mathfrak X(M)$. In local notation this reads as
$$
	B(T)_{ijktl} = T_{ijkt,l} + T_{ijlk,t} + T_{ijtl,k}
$$
and $T$ satisfies the second Bianchi identity if and only if $B(T) = 0$.

\begin{definition}
	A smooth algebraic curvature tensor $T$ is harmonic if $\div T = 0$ and $B(T) = 0$.
\end{definition}

We let $E_T$ denote the Ricci contraction of $T$ defined by
$$
	E_T(X,Y) = \trace_g[(Z,W) \mapsto T(Z,X,W,Y)]
$$
for every $X,Y\in\mathfrak X(M)$. In local notation, $E_T = E_{ij} \, \theta^i \otimes \theta^j$ with
$$
	E_{ij} = T^k_{\;\;ikj} \, .
$$
We also set $S_T = \trace_g E_T$ and we denote $Z_T = E_T - \frac{S_T}{m}\metric$ the traceless part of $E_T$. We say that an algebraic curvature tensor is totally traceless if all of its contractions with the metric tensor vanish (equivalently, if its Ricci contraction is the zero tensor). Any algebraic curvature tensor $T$ can be orthogonally decomposed in a unique way as the sum
\begin{equation} \label{T_dec}
	T = W_T + V_T + U_T
\end{equation}
of a totally traceless Weyl part $W_T$ and two additional terms $V_T$ and $U_T$ that are further irreducible with respect to the action of the orthogonal group $O(m)$. Explicitely (see \cite{amr}),
\begin{equation} \label{VU_def}
	V_T = \frac{1}{m-2} Z_T \owedge \metric \, , \qquad U_T = \frac{S_T}{2m(m-1)} \metric \owedge \metric \, , \qquad W_T = T - V_T - U_T \, ,
\end{equation}
with $\owedge$ the Kulkarni-Nomizu product of symmetric bilinear forms. Setting
\begin{equation} \label{AT_def}
	A_T = E_T - \frac{S_T}{2(m-1)}\metric \equiv Z_T + \frac{m-2}{2m(m-1)} S_T \metric
\end{equation}
we can also write
\begin{equation} \label{TWA_dec}
	T = W_T + \frac{1}{m-2} A_T \owedge \metric \, .
\end{equation}
Note that $W_T$, $V_T$, $U_T$ and $A_T\owedge\metric$ also are algebraic curvature tensors. Moreover, if $m\leq 3$ then the Weyl part $W_T$ of $T$ is always zero, so that $T$ is completely determined by its Ricci contraction $E_T$, see \cite[observation 1.119.b)]{bes08}.

\begin{remark}
	For ease of notation, in the rest of this section and in the next Section \ref{sec_Boch} we drop the subscript ${}_T$ and we simply write $E$, $S$, $Z$, $A$, $W$, $V$, $U$ instead of $E_T$, $S_T$, $Z_T$, $A_T$, $W_T$, $V_T$, $U_T$ to denote the tensors associated to $T$ as above. This won't cause ambiguity with the notation that we adopted for the Weyl curvature tensor ($W$) and scalar curvature ($S$) of the manifold $(M,\metric)$, since these geometric objects will not appear in our analysis. On the other hand, we reserve the notation $R_{ijkt}$ and $R_{ij}$ for the components of the Riemann and Ricci curvature tensors of $(M,\metric)$. In Section \ref{sec_Tachi} we will resume to the use of the subscript ${}_T$, since also the Weyl curvature tensor and the scalar curvature of $M$ will come back into the play.
\end{remark}

%We define the tensor fields $E = E_{ij} \, \theta^i \otimes \theta^j$, $E'$, $A$ and the scalar function $S$ by setting
%\begin{equation}
%	E_{ij} = T_{kikj} \, , \qquad S = \trace E \, , \qquad A = E - \frac{S}{2(m-1)} \, \metric \, , \qquad E' = E - \frac{S}{m} \, \metric \, .
%\end{equation}
%We can orthogonally decompose $T$ as
%$$
%T = W + V + U
%$$
%with
%$$
%V = \frac{1}{m-2} E' \owedge \metric \, , \qquad U = \frac{S}{2m(m-1)} \metric \owedge \metric \, , \qquad W = T - U - V \, .
%$$
%We also have
%$$
%T = W + \frac{1}{m-2} A \owedge \metric \, .
%$$
%
\begin{lemma}
	For any algebraic curvature tensor $T$ we have
	\begin{align}
		\label{|T|_1}
		|T|^2 & = |W|^2 + \frac{4}{m-2} |Z|^2 + \frac{2S^2}{m(m-1)} \\
		\label{|DT|_1}
		|\nabla T|^2 & = |\nabla W|^2 + \frac{4}{m-2} |\nabla Z|^2 + \frac{2|\nabla S|^2}{m(m-1)} \\
		\intertext{or, equivalently,}
		\label{|T|_2}
		|T|^2 & = |W|^2 + \frac{4}{m-2}|E|^2 - \frac{2S^2}{(m-1)(m-2)} \\
		\label{|DT|_2}
		|\nabla T|^2 & = |\nabla W|^2 + \frac{4}{m-2}|\nabla E|^2 - \frac{2|\nabla S|^2}{(m-1)(m-2)} \, .
	\end{align}
\end{lemma}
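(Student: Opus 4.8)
The plan is to derive all four formulas from the pointwise orthogonal decomposition \eqref{T_dec}, $T = W + V + U$, together with one computation of the norm of a Kulkarni--Nomizu product with the metric (recall $V = \frac{1}{m-2}Z\owedge\metric$ and $U = \frac{S}{2m(m-1)}\metric\owedge\metric$ from \eqref{VU_def}). The starting observation is the elementary identity, valid for an arbitrary symmetric $2$-tensor $h$,
$$
|h\owedge\metric|^2 = 4(m-2)|h|^2 + 4(\trace_g h)^2 \, ,
$$
which I would obtain by expanding $\langle h\owedge\metric,\,h\owedge\metric\rangle$ in a local orthonormal coframe and collecting the sixteen resulting terms. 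Applying it to $h = Z$, which is traceless, gives $|V|^2 = \frac{1}{(m-2)^2}|Z\owedge\metric|^2 = \frac{4}{m-2}|Z|^2$; applying it to $h = \metric$ gives $|\metric\owedge\metric|^2 = 8m(m-1)$, hence $|U|^2 = \frac{S^2}{4m^2(m-1)^2}|\metric\owedge\metric|^2 = \frac{2S^2}{m(m-1)}$. (The polarization of the same identity, combined with the fact that $W$ is totally traceless, also makes the orthogonality of the three summands transparent, should one wish to reprove it.)

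Since the summands in \eqref{T_dec} are mutually orthogonal at every point, $|T|^2 = |W|^2 + |V|^2 + |U|^2$, and substituting the values of $|V|^2$ and $|U|^2$ just computed yields \eqref{|T|_1}. To pass from \eqref{|T|_1} to \eqref{|T|_2} I would use $|Z|^2 = |E|^2 - \frac{S^2}{m}$ — immediate from $Z = E - \frac Sm\metric$ together with $\trace_g E = S$ and $|\metric|^2 = m$ — and then simplify the coefficient of $S^2$ via $-\frac{4}{m(m-2)} + \frac{2}{m(m-1)} = -\frac{2}{(m-1)(m-2)}$.

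For the gradient identities the key remark is that $\nabla\metric = 0$, so covariant differentiation commutes with metric traces and with the operation $h\mapsto h\owedge\metric$. Hence, in a local orthonormal coframe $\{\theta^i\}$ and for each fixed index $l$, the $(0,4)$-tensor $X^{(l)}$ with components $X^{(l)}_{ijkt} = T_{ijkt,l}$ is again an algebraic curvature tensor, and its Ricci contraction, scalar, traceless Ricci contraction and Weyl part have components $E_{ij,l}$, $S_{,l}$, $Z_{ij,l}$, $W_{ijkt,l}$ respectively. Since \eqref{|T|_1} and \eqref{|T|_2} are identities valid for \emph{every} algebraic curvature tensor, they apply to each $X^{(l)}$; summing the resulting scalar identities over $l$ — equivalently, contracting with $\metric$ in the derivative slot, which turns $\sum_l|X^{(l)}|^2$ into $|\nabla T|^2$ and likewise for the other terms — produces exactly \eqref{|DT|_1} and \eqref{|DT|_2}.

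The only places calling for a little care are the index bookkeeping behind the Kulkarni--Nomizu norm identity in the first step and the verification that $\nabla$ respects the $O(m)$-decomposition in the last step; both are entirely routine once $\nabla\metric = 0$ is invoked, so I do not anticipate a genuine obstacle here.
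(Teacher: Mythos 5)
Your proposal is correct and follows the same approach as the paper: the norm identities come from orthogonality of the decomposition $T = W + V + U$ together with the explicit norms of $V$ and $U$, the equivalence of the two pairs comes from $|E|^2 = |Z|^2 + S^2/m$, and the derivative identities follow because $\nabla\metric = 0$ makes the map $T \mapsto (W,Z,S)$ commute with covariant differentiation. Your "slicing" argument for the gradient formulas and your explicit Kulkarni--Nomizu norm identity $|h\owedge\metric|^2 = 4(m-2)|h|^2 + 4(\trace_g h)^2$ are simply more detailed renderings of what the paper dispatches as "a direct computation" and "similar computations observing that $\nabla\metric = 0$."
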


\begin{proof}
	By orthogonality of the decomposition $T = W + V + U$ we have $|T|^2 = |W|^2 + |V|^2 + |U|^2$, then a direct computation yields
	$$
		|V|^2 = \frac{4}{m-2} |Z|^2 \, , \qquad |U|^2 = \frac{2S^2}{m(m-1)} \, .
	$$
	The second identity is proved by similar computations observing that $\nabla\metric = 0$. The third and fourth identities are equivalent to the first two since
	$$
		|E|^2 = |Z|^2 + \frac{S^2}{m} \, , \qquad |\nabla E|^2 = |\nabla Z|^2 + \frac{|\nabla S|^2}{m} \, .
	$$
\end{proof}

To any algebraic curvature tensor $T$ we can associate a $4$-covariant tensor $P = P_T$ of local components
\begin{equation} \label{P_def}
	P_{ijkt} = T_{ijkt} - \frac{1}{m-1} (g_{ik} E_{jt} - g_{it} E_{jk}) \, .
\end{equation}
Note that $P$ is not an algebraic curvature tensor. However, its definition is not accidental. In case $T = \Riem$ (thus, $E = \Ricc$) the $(1,3)$ version $P^i_{\;jkt} \, e_i \otimes \theta^j \otimes \theta^k \otimes \theta^t$ of $P$, of local components
$$
	P^i_{\;jkt} = R^i_{\;jkt} - \frac{1}{m-1} ( \delta^i_k R_{jt} - \delta^i_t R_{jk} ) \, ,
$$
is the projective curvature tensor, which is invariant under projective transformations and vanishes if and only if the manifold has constant sectional curvature. In general, we have

\begin{lemma}
	Let $T$ be an algebraic curvature tensor and let $P$ be as in \eqref{P_def}. Then
	\begin{equation} \label{|P|}
		|P|^2 = |T|^2 - \frac{2}{m-1} |E|^2 = |W|^2 + \frac{2m}{(m-2)(m-1)} |Z|^2 \, .
	\end{equation}
	In particular, $P = 0$ if and only if $T = \dfrac{S}{2m(m-1)} \metric \owedge \metric$.
\end{lemma}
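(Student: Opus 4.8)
The plan is to expand $|P|^2 = P_{ijkt}P^{ijkt}$ directly from the definition \eqref{P_def}. Writing $P = T - Q$ with $Q_{ijkt} = \frac{1}{m-1}(g_{ik}E_{jt} - g_{it}E_{jk})$, we have $|P|^2 = |T|^2 - 2\langle T,Q\rangle + |Q|^2$, so everything reduces to computing the cross term and $|Q|^2$.

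For the cross term I would contract $T^{ijkt}$ against $g_{ik}$ and against $g_{it}$. The first contraction reproduces the Ricci contraction, $g_{ik}T^{ijkt} = E^{jt}$ (this is essentially the definition $E_{ij} = T^k_{\;\;ikj}$), while the second one picks up a minus sign from the antisymmetry $T_{ijkt} = -T_{ijtk}$, giving $g_{it}T^{ijkt} = -E^{jk}$. Hence $\langle T,Q\rangle = \frac{1}{m-1}\bigl(|E|^2 - (-|E|^2)\bigr) = \frac{2}{m-1}|E|^2$. For $|Q|^2$ one only needs the elementary contractions $g_{ik}g^{ik} = m$ and $g_{ik}g^{it} = \delta^t_k$, which yield $|Q|^2 = \frac{1}{(m-1)^2}\,2(m-1)|E|^2 = \frac{2}{m-1}|E|^2$. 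Putting the three pieces together gives the first claimed identity $|P|^2 = |T|^2 - \frac{2}{m-1}|E|^2$.

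The second identity is then pure algebra: substitute the value of $|T|^2$ from \eqref{|T|_2}, collect the coefficient of $|E|^2$ as $\frac{4}{m-2} - \frac{2}{m-1} = \frac{2m}{(m-1)(m-2)}$, and replace $|E|^2 = |Z|^2 + S^2/m$ (the identity already used in the proof of the previous lemma); the $S^2$ contributions cancel exactly, leaving $|P|^2 = |W|^2 + \frac{2m}{(m-1)(m-2)}|Z|^2$. For the last assertion, since $m > 2$ both coefficients on the right are positive, so $|P|^2 = 0$ forces $W = 0$ and $Z = 0$; by the orthogonal decomposition \eqref{T_dec}--\eqref{VU_def} this also gives $V = 0$, hence $T = U = \frac{S}{2m(m-1)}\metric\owedge\metric$. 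Conversely, that tensor is a pure multiple of $\metric\owedge\metric$, so it coincides with its own $U$-component and has $W = Z = 0$, whence $P = 0$.

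No serious obstacle is expected: the statement is a bookkeeping computation. The only points requiring care are the sign produced by the antisymmetry of $T$ in its last two slots when contracting $g_{it}T^{ijkt}$, and keeping track of which index contractions in $|Q|^2$ produce a factor $m$ versus a Kronecker delta. One should also resist invoking any symmetry of $P$ itself — $P$ is \emph{not} an algebraic curvature tensor — but this is not needed, since computing $\langle P,P\rangle$ only uses the symmetries of $T$ and of $g$.
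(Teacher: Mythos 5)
Your proof is correct and follows essentially the same route as the paper: expand $|P|^2 = |T|^2 - 2\langle T,Q\rangle + |Q|^2$, verify that both the cross term and $|Q|^2$ evaluate to $\frac{2}{m-1}|E|^2$ (with the sign from the antisymmetry $T_{ijkt} = -T_{ijtk}$ handled correctly), and then substitute the norm decomposition of $|T|^2$ and $|E|^2 = |Z|^2 + S^2/m$. The only superficial difference is that you substitute from \eqref{|T|_2} whereas the paper uses \eqref{|T|_1}; the two are equivalent and lead to the same cancellation of the $S^2$ terms.
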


\begin{proof}
	By direct computation,
	\begin{align*}
		|P|^2 & = |T|^2 - \frac{2 T_{ijkt}( g^{ik} E^{jt} - g^{it} E^{jk})}{m-1} + \frac{( g_{ik} E_{jt} - g_{it} E_{jk} )( g^{ik} E^{jt} - g^{it} E^{jk})}{(m-1)^2} \\
		& = |T|^2 - \frac{2}{m-1} |E|^2
	\end{align*}
	and substituting \eqref{|T|_1} and $|E|^2 = |Z|^2 + \frac{1}{m} S^2$ we obtain
	\begin{align*}
		|P|^2 & = |W|^2 + \frac{4}{m-2} |Z|^2 + \frac{2 S^2}{m(m-1)} - \frac{2}{m-1} \left( |Z|^2 - \frac{S^2}{m} \right) \\
		& = |W|^2 + \frac{2m}{(m-2)(m-1)} |Z|^2 \, .
	\end{align*}
\end{proof}

\subsection{Algebraic curvature tensors with $B(T) = 0$}

%Let $T$ be an algebraic curvature tensor. We have defined
%$$
%	(\div T)_{jkt} = T^i_{\;jkt,i} \, .
%$$
%We also say that $T$ satisfies the second Bianchi identity if the components of $\nabla T$ satisfy
%\begin{equation} \label{T_B2}
%	T_{ijkt,l} + T_{ijlk,t} + T_{ijtl,k} = 0 \qquad \forall \, 1 \leq i,j,k,t,l \leq m \, .
%\end{equation}
%More generally, we can define a first-order differential operator $B : T \mapsto B(T)$ on the bundle of algebraic curvature tensors by defining
%$$
%	B(T)(X,Y,Z,W,V) = (\nabla_V T)(X,Y,Z,W) + (\nabla_W T)(X,Y,V,Z) + (\nabla_Z T)(X,Y,W,V)
%$$
%$B(T)$ as the $(0,5)$-tensor field of local components
%$$
%	B(T)_{ijktl} = T_{ijkt,l} + T_{ijlk,t} + T_{ijtl,k}
%$$
%and then $T$ satisfies the second Bianchi identity if and only if $B(T) = 0$.

The condition $B(T) = 0$ has many relevant implications, that we briefly describe with the aim of establishing Propositions \ref{B(T)_cons1} and \ref{B(T)_cons} below. The arguments are essentially those that one applies when dealing with the case $T = \Riem$, where the condition $B(T) = 0$ is always satisfied, to deduce well known relations between the actions of several first order differential operators on the Riemann, Ricci, Weyl, Schouten and Einstein tensors of a Riemannian manifold.

First, let us recall that a symmetric twice covariant tensor field $E$ is a Codazzi tensor if
$$
	(\nabla_X E)(\,\cdot\,,Y) = (\nabla_Y E)(\,\cdot\,,X) \qquad \forall \, X,Y \in \mathfrak X(M) \, ,
$$
that is, if
$$
	E_{ij,k} - E_{ik,j} = 0 \qquad \forall \, 1 \leq i,j,k \leq m \, .
$$
More generally we can define a differential operator $C : E \mapsto C(E)$ on the bundle of symmetric twice covariant tensors by setting
$$
	C(E)(X,Y,Z) = (\nabla_Z E)(X,Y) - (\nabla_Y E)(X,Z) \qquad \forall \, X,Y,Z\in\mathfrak X(M) \, .
$$
In local notation, this reads as
$$
	C(E)_{ijk} = E_{ij,k} - E_{ik,j}
$$
and then $E$ is Codazzi if and only if $C(E) = 0$.

Let us assume that $T$ satisfies $B(T) = 0$. Tracing \eqref{T_B2} with respect to $i$ and $l$ we get
$$
	(\div T)_{jkt} = T^i_{\;jkt,i} = E_{jt,k} - E_{jk,t}
$$
hence $\div T = 0$ if and only if $E$ is a Codazzi tensor. Tracing again with respect to $j$ and $t$ we obtain the Schur's identity
$$
	2 E^i_{\;k,i} = S_k \, , \qquad \text{that is,} \qquad 2 \div E = \nabla S \, .
$$
Schur's identity is equivalent to the Einstein-like tensor $G = E - \frac{1}{2} S\metric$ being divergence-free. Equivalently, the Cotton-like tensor $C(A)$ of local components
\begin{equation} \label{C_def}
	C_{ijk} = A_{ij,k} - A_{ik,j}
\end{equation}
is totally trace-free,
\begin{equation} \label{C_tr0}
	C^i_{\;ji} = C^i_{\;ij} = C_{j\;i}^{\;i} = 0 \, .
\end{equation}
Writing
$$
	E_{ij,k} - E_{ik,j} = C_{ijk} + \frac{1}{2(m-1)}(S_k g_{ij} - S_j g_{ik}) \, ,
$$
\eqref{C_tr0} implies that the right-hand side is the sum of two orthogonal covariant tensors, hence it is apparent that $E$ is Codazzi if and only if $C = 0$ and $\nabla S = 0$. In particular,
\begin{equation} \label{divT_CS}
	|\div T|^2 = |C(A)|^2 + \frac{|\nabla S|^2}{2(m-1)} \, .
\end{equation}
Summarizing, \eqref{divT_CS} proves the validity of

\begin{proposition} \label{B(T)_cons1}
	Let $M$ be a Riemannian manifold of dimension $m\geq 3$ and let $T$ be a smooth algebraic curvature tensor satisfying the second Bianchi identity. Then
	$$
		\div T = 0 \qquad \Leftrightarrow \qquad C(A) = 0 \; \text{ and } \; \nabla S = 0 \, .
	$$
\end{proposition}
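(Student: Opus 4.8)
The plan is to deduce the biconditional from the single pointwise identity
\begin{equation*}
	|\div T|^2 = |C(A)|^2 + \frac{|\nabla S|^2}{2(m-1)} \, ,
\end{equation*}
which is \eqref{divT_CS}. Once this is available the equivalence is immediate: the right-hand side is a sum of squared norms, so it vanishes if and only if both summands vanish, i.e. $\div T = 0$ if and only if $C(A) = 0$ and $\nabla S = 0$. Thus the whole content of the statement is the derivation of \eqref{divT_CS}, and that is what I would carry out.

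First I would exploit the hypothesis $B(T) = 0$, that is \eqref{T_B2}. Tracing \eqref{T_B2} over the pair of indices $i$ and $l$ and using the symmetries \eqref{T_sym} together with the definition $E_{ij} = T^k_{\ ikj}$, one obtains
\begin{equation*}
	(\div T)_{jkt} = T^i_{\ jkt,i} = E_{jt,k} - E_{jk,t} = C(E)_{jkt} \, ,
\end{equation*}
so that $\div T = 0$ is equivalent to $E$ being a Codazzi tensor. A further trace, over $j$ and $t$, yields Schur's identity $2\div E = \nabla S$; this is a consequence of $B(T) = 0$ alone and will be used below, but it is not an extra hypothesis. Next I would pass from $E$ to the Schouten-like tensor $A = E - \tfrac{S}{2(m-1)}\metric$: since $\nabla\metric = 0$,
\begin{equation*}
	E_{ij,k} - E_{ik,j} = C_{ijk} + \frac{1}{2(m-1)}\bigl(S_k\, g_{ij} - S_j\, g_{ik}\bigr) \, ,
\end{equation*}
with $C = C(A)$ the Cotton-like tensor of \eqref{C_def}.

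The key point — and the one requiring an actual computation — is the total trace-freeness \eqref{C_tr0} of $C(A)$. This follows by contracting the defining relation $C_{ijk} = A_{ij,k} - A_{ik,j}$ over the various index pairs and invoking Schur's identity (equivalently, the fact that the Einstein-like tensor $G = E - \tfrac12 S\metric$ is divergence free); for instance contracting over $i,k$ gives $\div A - \nabla(\trace A) = 0$ because $\trace A = \tfrac{m-2}{2(m-1)}S$ and $\div A = \tfrac{m-2}{2(m-1)}\nabla S$. Granted \eqref{C_tr0}, the two covariant $3$-tensors $C(A)$ and $\tfrac{1}{2(m-1)}(S_k g_{ij} - S_j g_{ik})$ appearing in the displayed decomposition are orthogonal pointwise in $T^0_3 M$; taking squared norms and using the elementary computation $\bigl|\tfrac{1}{2(m-1)}(S_k g_{ij} - S_j g_{ik})\bigr|^2 = \tfrac{|\nabla S|^2}{2(m-1)}$ (after lowering indices appropriately) gives \eqref{divT_CS}, and the proposition follows. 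The main obstacle is thus the bookkeeping behind \eqref{C_tr0}; everything else is a routine manipulation of traces.
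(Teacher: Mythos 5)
Your proposal is correct and follows essentially the same route as the paper: trace the second Bianchi identity to express $\div T$ in terms of the Codazzi operator on $E$, deduce Schur's identity, establish the total trace-freeness \eqref{C_tr0} of $C(A)$, and then use the resulting orthogonality of $C(A)$ and $\tfrac{1}{2(m-1)}(S_k g_{ij} - S_j g_{ik})$ to obtain \eqref{divT_CS} and hence the biconditional. (There is an immaterial sign discrepancy in writing $(\div T)_{jkt} = C(E)_{jkt}$ rather than $-C(E)_{jkt}$, but this does not affect the argument.)
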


If $\dim M = 3$ then the Weyl part of any algebraic curvature tensor vanishes. If $m\geq 4$ then, as a second relevant consequence of $B(T) = 0$, there is a tight relation between $C = C(A)$, $B(W)$ and $\div W$, which allows to restate Proposition \eqref{B(T)_cons1} in a different form, see Proposition \ref{B(T)_cons} below. Writing \eqref{TWA_dec} in local notation we have
$$
	W_{ijkt} = T_{ijkt} - \frac{1}{m-2} (A_{ik} g_{jt} + A_{jt} g_{ik} - A_{it} g_{jk} - A_{jk} g_{it})
$$
then applying the operator $B$ to both sides and using $B(T) = 0$ and $\nabla\metric = 0$ we get
\begin{equation} \label{B(W)}
	B(W)_{ijktl} = - \frac{1}{m-2} \left( C_{ikl} g_{jt} + C_{ilt} g_{jk} + C_{itk} g_{jl} - C_{jkl} g_{it} - C_{jlt} g_{ik} - C_{jtk} g_{il} \right) \, .
\end{equation}
We trace with respect to $i$ and $l$. Since $W^i_{\;jik,t} = W^i_{\;jti,k} = 0$ as $W$ is totally traceless, we get
\begin{equation} \label{divW}
	W^i_{\;jkt,i} = B(W)_{ijkti} = \frac{m-3}{m-2} C_{jtk} \, , \qquad \text{that is,} \qquad \div W = - \frac{m-3}{m-2} C \, .
\end{equation}
Formulas \eqref{divW} and \eqref{B(W)} show that $C(A) = 0$ amounts to $\div W = 0$ and implies $B(W) = 0$. The converse is also true. To see this, we compute $|B(W)|^2$. Note that we can write
$$
	|B(W)|^2 = \frac{2}{(m-2)^2} (X_{ijktl} X^{ijktl} - X_{ijktl} X^{jiktl})
$$
with $X_{ijktl} = C_{ikl} g_{jt} + C_{ilt} g_{jk} + C_{itk} g_{jl}$. Then we have
\begin{align*}
	X_{ijktl} X^{ijktl} & = 3 C_{ikl} C^{ikl}  g_{jt} g^{jt} + 2 C_{ikl} C^{ilt} g_{jt} g^{jk} + 2 C_{ikl} C^{itk} g_{jt} g^{jl} + 2C_{ilt} C^{itk} g_{jk} g^{jl} \\
	& = 3m C_{ijl} C^{ijl} + 2 C_{ikl} C^{ilk} + 2C_{ikj} C^{ijk} + 2C_{ijt} C^{itj} \\
	& = 3(m-2) C_{ijk} C^{ijk} \, ,
\end{align*}
where we have used the symmetry $C_{ijk} = - C_{ikj}$, and
\begin{align*}
	X_{ijktl} X^{jiktl} & = (C_{ikl} g_{jt} + C_{ilt} g_{jk} + C_{itk} g_{jl})  g^{it} C^{jkl} \\
	& \phantom{=\;} + (C_{ikl} g_{jt} + C_{ilt} g_{jk} + C_{itk} g_{jl})  g^{ik} C^{jlt} \\
	& \phantom{=\;} + (C_{ikl} g_{jt} + C_{ilt} g_{jk} + C_{itk} g_{jl})  g^{il} C^{jtk} \\
	& = 3 C_{ikl} C^{ikl}
\end{align*}
where we have also exploited \eqref{C_tr0}. Summing up, we get
$$
	|B(W)|^2 = \frac{6(m-3)}{(m-2)^2} |C(A)|^2 \, , \qquad |\div W|^2 = \frac{(m-3)^2}{(m-2)^2} |C(A)|^2 \, .
$$
In conclusion, we have the following

\begin{proposition} \label{B(T)_cons}
	Let $M$ be a Riemannian manifold of dimension $m\geq 4$ and let $T$ be a smooth algebraic curvature tensor satisfying the second Bianchi identity. Then
	$$
		\div T = 0 \quad \Leftrightarrow \quad \div W = 0 \; \text{ and } \; \nabla S = 0
	$$
	and
	$$
		\div W = 0 \quad \Leftrightarrow \quad B(W) = 0 \quad \Leftrightarrow \quad C(A) = 0 \, .
	$$
%	\begin{itemize}
%		\item [(i)] $\div T = 0 \quad \Leftrightarrow \quad \div W = 0$ \, and \, $\nabla S = 0$,
%		\item [(ii)] $\div W = 0 \quad \Leftrightarrow \quad B(W) = 0 \quad \Leftrightarrow \quad C(A) = 0$.
%	\end{itemize}
	In particular, $T$ is harmonic if and only if $W$ is harmonic and $S$ is constant.
\end{proposition}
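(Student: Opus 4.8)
The proposition is a bookkeeping consequence of the identities established just above its statement, so the plan is simply to assemble them in the right order, keeping track of the standing hypothesis $B(T) = 0$ and of the dimensional restriction $m \geq 4$.

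First I would dispose of the chain of equivalences involving $\div W$. From \eqref{divW} we have $\div W = -\frac{m-3}{m-2} C(A)$; since $m \geq 4$ forces $m - 3 \geq 1$, the scalar factor $\frac{m-3}{m-2}$ is nonzero and hence $\div W = 0$ if and only if $C(A) = 0$. Next, inspecting \eqref{B(W)} shows at once that $C(A) = 0$ implies $B(W) = 0$; for the reverse implication I would invoke the norm identity $|B(W)|^2 = \frac{6(m-3)}{(m-2)^2}|C(A)|^2$ computed above, whose coefficient is strictly positive because $m \geq 4$, so that $B(W) = 0$ forces $C(A) = 0$. This closes the loop $\div W = 0 \Leftrightarrow B(W) = 0 \Leftrightarrow C(A) = 0$.

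For the first displayed equivalence I would feed this into Proposition \ref{B(T)_cons1}: that proposition (which itself only uses $B(T) = 0$) gives $\div T = 0 \Leftrightarrow C(A) = 0$ and $\nabla S = 0$, and replacing the condition $C(A) = 0$ by the equivalent condition $\div W = 0$ yields $\div T = 0 \Leftrightarrow \div W = 0$ and $\nabla S = 0$, as claimed. Finally, the ``in particular'' statement is immediate: by definition $T$ is harmonic when $\div T = 0$ and $B(T) = 0$, and since $B(T) = 0$ is assumed this reduces to $\div T = 0$, i.e. $\div W = 0$ and $S$ constant; likewise $W$ is harmonic when $\div W = 0$ and $B(W) = 0$, but these two conditions coincide by the chain above, so ``$W$ harmonic'' is exactly $\div W = 0$. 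Hence $T$ harmonic $\Leftrightarrow$ $W$ harmonic and $S$ constant.

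I do not expect a genuine obstacle here, since the substantive computations — tracing \eqref{B(W)} to obtain \eqref{divW} and evaluating $|B(W)|^2$ in terms of $|C(A)|^2$ — have already been carried out. The only points that require care are that the dimensional hypothesis $m \geq 4$ is precisely what makes the constants $m-3$ and $6(m-3)$ nonzero (in $m=3$ the Weyl part vanishes identically and the statement degenerates), and that one must use the definition of harmonicity which bundles $\div W = 0$ together with $B(W) = 0$ rather than presupposing them independently, so that the equivalence $\div W = 0 \Leftrightarrow B(W) = 0$ is exactly what is needed to identify ``$W$ harmonic'' with ``$\div W = 0$''.
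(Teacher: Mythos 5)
Your proposal is correct and follows the paper's own route essentially verbatim: you invoke \eqref{divW} and the norm identity $|B(W)|^2 = \tfrac{6(m-3)}{(m-2)^2}|C(A)|^2$ to close the chain of equivalences (using $m\geq 4$ to ensure the coefficients are nonzero), then plug the equivalent condition $\div W = 0$ into Proposition~\ref{B(T)_cons1}, exactly as the paper does in the paragraph preceding the statement. The treatment of the ``in particular'' clause, using the coincidence of $\div W = 0$ and $B(W) = 0$ to identify ``$W$ harmonic'' with ``$\div W = 0$'', is also the intended reading.
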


\section{Bochner identities and curvature bounds} \label{sec_Boch}

\subsection{Bochner identities}

Let $M$ be a Riemannian metric of dimension $m\geq 2$. For any smooth algebraic curvature tensor $T$ we have
\begin{equation} \label{Boc_T}
	\frac{1}{2}\Delta|T|^2 = |\nabla T|^2 + \frac{1}{2} \langle \Gamma T,T \rangle - \frac{1}{3}|B(T)|^2 - 2|\div T|^2 + \div X(T)
\end{equation}
where $X(T)$ is the vector field whose components along a local frame $\{e_i\}$ are given by
\begin{equation} \label{X_def}
	X(T)^i = T^{sjkt} B(T)^{\phantom{sjkt}\,i}_{sjkt} + 2 T^{ijkt} (\div T)_{jkt} \, .
\end{equation}
In particular, if $T$ is harmonic (i.e., $T$ satisfies the second Bianchi identity and $\div T = 0$) then
\begin{equation} \label{Boc_T_harm}
	\frac{1}{2}\Delta|T|^2 = |\nabla T|^2 + \frac{1}{2} \langle \Gamma T,T \rangle \, .
\end{equation}

%For any smooth symmetric twice covariant tensor field $E$ we have
%$$
%	\frac{1}{2}\Delta|E|^2 = |\nabla E|^2 + \frac{1}{2} \langle \Gamma E,E \rangle - \frac{1}{2}|C(E)|^2 - |\div E|^2 - \div X(E)
%$$
%where $X(E)$ is the vector field locally given by
%$$
%	X(E)^i = E^{jk} C(E)^{\phantom{jk}i}_{jk} + E^{ij} (\div E)_j
%$$
%and for any smooth algebraic curvature tensor $T$ we have
%$$
%	\frac{1}{2}\Delta|T|^2 = |\nabla T|^2 + \frac{1}{2} \langle \Gamma T,T \rangle - \frac{1}{3}|B(T)|^2 - 2|\div T|^2 - \div X(T)
%$$
%where $X(T)$ is the vector field locally defined by
%$$
%	X(T)^i = T^{sjkt} B(T)^{\phantom{sjkt}j}_{sjkt} + 2 T^{ijkt} (\div T)_{jkt} \, .
%$$
%In particular, we have the following
%
%\begin{proposition}
%	Let $E$ be a symmetric twice covariant tensor field. If $E$ is Codazzi and divergence-free then
%	$$
%		\frac{1}{2}\Delta|E|^2 = |\nabla E|^2 + \frac{1}{2} \langle \Gamma E,E \rangle \, .
%	$$
%\end{proposition}
%
%\begin{proposition}
%	Let $T$ be an algebraic curvature tensor. If $T$ satisfies the second Bianchi identity and is divergence-free then
%	$$
%		\frac{1}{2}\Delta|T|^2 = |\nabla T|^2 + \frac{1}{2} \langle \Gamma T,T \rangle \, .
%	$$
%\end{proposition}

A lower bound on the curvature operator $\mathfrak R$ yields a lower bound on $\langle \Gamma T,T \rangle$. In \cite{pw20}, Petersen and Wink showed that a lower bound on the partial trace $\mathfrak R^{(\lfloor\frac{m-1}{2}\rfloor)}$ suffices to obtain a lower bound on $\langle \Gamma T,T\rangle$ when the Ricci contraction $E$ of $T$ has the form $E = \frac{S}{m}\metric$, that is, when its traceless part $Z = 0$. In particular, by their result a lower bound on $\mathfrak R^{(\lfloor\frac{m-1}{2}\rfloor)}$ is enough to deduce a lower bound on $\langle \Gamma W,W \rangle$, where $W$ is the Weyl part of $T$.

In this section we will prove that a lower bound on $\mathfrak R^{(\lfloor\frac{m-1}{2}\rfloor)}$ in fact yields a lower bound on $\langle \Gamma T,T \rangle$ for any algebraic curvature tensor $T$, whitout further structural assumptions. This is done showing that
\begin{equation} \label{Gam_TT}
	\langle \Gamma T,T\rangle = \langle \Gamma W,W \rangle + \frac{4}{m-2} \langle \Gamma Z,Z\rangle
\end{equation}
(where in the last term the action of $\Gamma$ and $\metric$ is intended on the bundle $T^0_2 M$), and then estimating $\langle \Gamma Z,Z \rangle$ from below. Building on the ideas in \cite{pw20} and \cite{be69}, we show that a lower bound on $\langle \Gamma Z,Z \rangle$ can be established just assuming a lower bound on the sum of the sectional curvatures of any collection of $\lfloor\frac{m}{2}\rfloor$ mutually orthogonal $2$-planes in $TM$, and the latter is in turn implied by a lower bound on $\mathfrak R^{(k)}$ for some $k\leq \lfloor\frac{m}{2}\rfloor$, so in particular by a lower bound on $\mathfrak R^{(\lfloor\frac{m-1}{2}\rfloor)}$. Putting together the lower bounds on $\langle \Gamma W,W \rangle$ and $\langle \Gamma Z,Z\rangle$ we shall see that
$$
	\frac{1}{2} \langle \Gamma T,T \rangle \geq (m-1)C|P|^2
$$
provided $\mathfrak R^{(\lfloor\frac{m-1}{2}\rfloor)} \geq C$, where $P$ is the pseudo-projective curvature tensor defined in \eqref{P_def}. Note that for $T = \Riem$ this is precisely the estimate given by Tachibana in \cite{tachi74} under the stronger assumption $\mathfrak R \geq C$.

Hence, the main goal of this section will be the proof of the following

\begin{theorem} \label{Boch_T}
	Let $M$ be a Riemannian manifold of dimension $m\geq 2$ with $\mathfrak R^{(\lfloor\frac{m-1}{2}\rfloor)} \geq a(x)$ for some function $a : M \to \R$ and let $T$ be a smooth algebraic curvature tensor. Then
	\begin{equation}
		\frac{1}{2} \Delta|T|^2 \geq |\nabla T|^2 + (m-1)a(x)|P|^2 - \frac{1}{3}|B(T)|^2 - 2|\div T|^2 + \div X(T)
	\end{equation}
	where $P$ and $X(T)$ are as in \eqref{P_def} and \eqref{X_def}. If $T$ is harmonic,
	\begin{equation}
		\frac{1}{2} \Delta|T|^2 \geq |\nabla T|^2 + (m-1)a(x)|P|^2 \, .
	\end{equation}
\end{theorem}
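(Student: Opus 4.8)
The plan is to reduce everything to the pointwise algebraic inequality
\[
\tfrac{1}{2}\langle \Gamma T,T\rangle \geq (m-1)a(x)\,|P|^2 .
\]
Once this is known at every point, substituting it into the Bochner identity \eqref{Boc_T} gives the first displayed inequality, and substituting it into \eqref{Boc_T_harm} (valid whenever $T$ is harmonic) gives the second. So the whole content lies in the lower bound for the curvature term $\langle\Gamma T,T\rangle$. I would obtain it in three moves: split $\langle\Gamma T,T\rangle$ along the $O(m)$-irreducible decomposition $T=W+V+U$, estimate the Weyl contribution by the Petersen--Wink bound, and estimate the contribution of the traceless Ricci part $Z$ by a combinatorial argument in the spirit of \cite{be69}.

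The first step is to prove the identity \eqref{Gam_TT}, namely $\langle\Gamma T,T\rangle=\langle\Gamma W,W\rangle+\frac{4}{m-2}\langle\Gamma Z,Z\rangle$ (with $\Gamma=\Gamma_2$ in the last term). Writing $V=\frac{1}{m-2}Z\owedge\metric$ and $U=\frac{S}{2m(m-1)}\metric\owedge\metric$ and expanding $\langle\Gamma_4 T,T\rangle$ bilinearly, the task reduces to computing, from \eqref{Gam_def} and using $\nabla\metric=0$, the action of $\Gamma_4$ on a Kulkarni--Nomizu product $H\owedge\metric$ of a symmetric bilinear form $H$ and relating it to $\Gamma_2 H$ together with Ricci contractions; one then checks that the $U$-terms and all cross-terms cancel, the survivors being $\langle\Gamma W,W\rangle$ and $\frac{1}{(m-2)^2}\langle\Gamma(Z\owedge\metric),Z\owedge\metric\rangle=\frac{4}{m-2}\langle\Gamma Z,Z\rangle$. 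This is the one place where an explicit, if mechanical, tensor computation seems unavoidable; the cancellations rely on every metric contraction of $W$ being zero.

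Next I would estimate the two surviving terms. The tensor $W$ is totally traceless, so its Ricci contraction is (trivially) a multiple of the metric and the estimate of Petersen and Wink \cite{pw20} applies directly: from $\mathfrak R^{(\lfloor\frac{m-1}{2}\rfloor)}\geq a$ one gets $\frac12\langle\Gamma W,W\rangle\geq (m-1)a\,|W|^2$ (note $P_W=W$ since $E_W=0$, which makes this consistent with Tachibana's estimate for $T=\Riem$). For the $Z$-term, fix $x\in M$, diagonalize the symmetric traceless $2$-tensor $Z$ in an orthonormal frame $\{e_i\}$ with eigenvalues $\zeta_i$, $\sum_i\zeta_i=0$, and use \eqref{Gam_hat} together with the computation recorded in Section~\ref{sec_prel} that expresses $\langle\Gamma E,E\rangle$ for a symmetric bilinear form $E$ as a combination of sectional curvatures; this gives $\langle\Gamma Z,Z\rangle=\sum_{i,j}(\zeta_i-\zeta_j)^2 R_{ijij}$. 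Set $k=\lfloor\frac{m}{2}\rfloor$ and $w_{ij}=(\zeta_i-\zeta_j)^2$. Any $k$ distinct coordinate $2$-planes $e_i\wedge e_j$ are mutually orthogonal, so by \eqref{Riem_K_bound} and \eqref{R(h)(k)} the average of $\{R_{ijij}\}$ over any $k$-subset of index pairs is $\geq\frac12\mathfrak R^{(k)}\geq\frac12\mathfrak R^{(\lfloor\frac{m-1}{2}\rfloor)}\geq\frac a2$. On the other hand $\sum_{i<j}w_{ij}=m\sum_c\zeta_c^2$ (using $\sum_c\zeta_c=0$) while $w_{ij}\leq 2(\zeta_i^2+\zeta_j^2)\leq 2\sum_c\zeta_c^2$, hence $p_{ij}:=k\,w_{ij}\big/\sum_{a<b}w_{ab}$ satisfies $0\leq p_{ij}\leq 1$ and $\sum_{i<j}p_{ij}=k$; thus $(p_{ij})$ lies in the hypersimplex, i.e. is a convex combination of indicator vectors of $k$-subsets. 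Feeding this convex combination into the uniform lower bound on $k$-subset averages yields $\sum_{i<j}w_{ij}R_{ijij}\geq\frac a2\sum_{i<j}w_{ij}=\frac{am}{2}|Z|^2$, i.e. $\langle\Gamma Z,Z\rangle\geq am\,|Z|^2$.

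Combining the two estimates,
\[
\tfrac{1}{2}\langle\Gamma T,T\rangle=\tfrac12\langle\Gamma W,W\rangle+\tfrac{2}{m-2}\langle\Gamma Z,Z\rangle\geq (m-1)a\,|W|^2+\tfrac{2m}{m-2}a\,|Z|^2,
\]
which by \eqref{|P|} is exactly $(m-1)a\big(|W|^2+\frac{2m}{(m-2)(m-1)}|Z|^2\big)=(m-1)a\,|P|^2$, completing the reduction. I expect the two delicate points to be: (i) pushing \eqref{Gam_TT} through cleanly — unavoidable but purely computational; and (ii) the combinatorial estimate for $\langle\Gamma Z,Z\rangle$, whose non-obvious ingredient is that the weights $(\zeta_i-\zeta_j)^2$ never exceed a $1/k$ fraction of their total, which is precisely what lets them be realized inside the hypersimplex and hence dominated by averages over $k$-subsets. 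This is also where the threshold $k=\lfloor\frac m2\rfloor$, and thus the hypothesis on $\mathfrak R^{(\lfloor\frac{m-1}{2}\rfloor)}$, enters. The cases $m=2$ (vacuous) and $m=3$ (where $W=0$, so only the $Z$-estimate with $k=1$ is needed) require no separate treatment.
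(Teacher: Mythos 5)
Your proposal is correct and follows essentially the same route as the paper: reduce via the Bochner identity to the pointwise bound $\tfrac12\langle\Gamma T,T\rangle \geq (m-1)a\,|P|^2$, split that quantity according to the irreducible decomposition using identity \eqref{Gam_TT}, estimate the Weyl part by the Petersen--Wink bound (Proposition \ref{m-1/2_W}), and bound the traceless Ricci contribution through the pinching of the weights $(\zeta_i-\zeta_j)^2$ (Proposition \ref{ZRic}). The only cosmetic difference is in that last combinatorial step: you realize the normalized weights as a point of the hypersimplex and expand it as a convex combination of indicator vectors of $\lfloor\frac{m}{2}\rfloor$-subsets of coordinate $2$-planes, whereas the paper sorts the $R_{ijij}$ in increasing order and applies a direct rearrangement inequality (Lemma \ref{lem_average} via Remark \ref{rem_ab}); the two arguments encode the same elementary fact and give identical conclusions.
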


\subsection{Proofs of \eqref{Boc_T} and \eqref{Gam_TT}}

We start with the proof of the Bochner-type identity \eqref{Boc_T}.

\begin{proposition} \label{Boch_T_prop1}
	Let $(M,\metric)$ be a Riemannian manifold and let $T$ be a smooth algebraic curvature tensor. Then
	$$
		\frac{1}{2} \Delta|T|^2 = |\nabla T|^2 + \frac{1}{2} \langle \Gamma T,T \rangle - \frac{1}{3}|B(T)|^2 - 2|\div T|^2 - \div X(T)
	$$
	where $X(T)$ is the vector field given by \eqref{X_def}.
\end{proposition}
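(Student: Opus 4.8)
The plan is a direct computation in local coordinates, following the classical derivation of the Bochner--Weitzenböck formula for $\Riem$ and carrying along the two extra terms $B(T)$ and $\div T$ that vanish identically when $T=\Riem$. The starting point is the elementary pointwise identity $\tfrac12\Delta|T|^2 = |\nabla T|^2 + g^{lm}T_{ijkt,lm}\,T^{ijkt}$, which follows from applying the product rule twice together with $\nabla\metric=0$; so everything reduces to identifying the trace of the Hessian $\Box T_{ijkt}:=g^{lm}T_{ijkt,lm}$ and contracting it against $T$.

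To compute $\Box T$ I would start from the defining relation $B(T)_{ijktl}=T_{ijkt,l}+T_{ijlk,t}+T_{ijtl,k}$, raise $l$, differentiate once more, and contract the new derivative index against $l$, obtaining
\[
	\nabla^l B(T)_{ijktl}=\Box T_{ijkt}+g^{lm}\nabla_m\nabla_t T_{ijlk}+g^{lm}\nabla_m\nabla_k T_{ijtl}.
\]
In the last two summands I would commute the covariant derivatives by means of the Ricci identities recalled in Section~\ref{sec_prel}: the ``straightened'' pieces $\nabla_t(\nabla^l T_{ijlk})$ and $\nabla_k(\nabla^l T_{ijtl})$ become first derivatives of $\div T$, once the pair symmetries \eqref{T_sym} are used to recognize $\nabla^l T_{ijlk}$ and $\nabla^l T_{ijtl}$ as components of $\div T$, while the commutator pieces are sums of products of the Riemann tensor of $M$ with $T$. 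Rearranging these curvature terms --- using \emph{only} the symmetries \eqref{T_sym} and the first Bianchi identity \eqref{T_B1}, exactly as one does for $T=\Riem$ --- they collapse, by the very definition \eqref{Gam_def}--\eqref{Gam_Ric} of $\Gamma$, into $\tfrac12(\Gamma T)_{ijkt}$, so that $\Box T_{ijkt}=\tfrac12(\Gamma T)_{ijkt}+\nabla^l B(T)_{ijktl}-\nabla_t(\div T)_{kij}+\nabla_k(\div T)_{tij}$. I expect this curvature bookkeeping to be the main obstacle: it is lengthy but routine, with no conceptual difficulty beyond careful index manipulation.

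It remains to contract the previous display with $T^{ijkt}$ and to integrate the first-order terms by parts, i.e.\ to use the product rule for the divergence. The term $T^{ijkt}\nabla^l B(T)_{ijktl}$ equals $\nabla_l\!\big(T^{ijkt}B(T)_{ijkt}{}^{l}\big)-B(T)_{ijktl}\nabla^l T^{ijkt}$, and $B(T)_{ijktl}\nabla^l T^{ijkt}=\tfrac13|B(T)|^2$ since $B(T)$ is the sum of the three cyclic copies of $\nabla T$ in its last three slots; using the antisymmetry $T_{ijkt}=-T_{ijtk}$ the two $\div T$ terms combine into $2\,T^{ijkt}\nabla_k(\div T)_{tij}=\nabla_k\!\big(2T^{ijkt}(\div T)_{tij}\big)-2|\div T|^2$, after a pair-symmetry relabelling identifying $\nabla_k T^{ijkt}$ as a component of $\div T$. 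Collecting the two exact divergences into $\div X(T)$ with $X(T)$ as in \eqref{X_def}, and substituting back into $\tfrac12\Delta|T|^2=|\nabla T|^2+\langle\Box T,T\rangle$, yields the asserted identity; the coefficients $\tfrac12$, $-\tfrac13$ and $-2$ are traceable, respectively, to the definition of $\Gamma$, to the threefold cyclic structure of $B(T)$, and to the combination of the two $\div T$ contributions.
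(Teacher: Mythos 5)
Your proposal is correct and follows essentially the same path as the paper's own proof: start from $\tfrac12\Delta|T|^2=|\nabla T|^2+T^{ijkt}\,T_{ijkt,}{}^l{}_{,l}$, express the Bochner Laplacian through $\nabla^l B(T)_{ijktl}$, derivatives of $\div T$, and a Ricci-identity commutator (which, after contraction with $T^{ijkt}$ and use of the pair symmetries \eqref{T_sym} together with \eqref{Gam_Ric}, produces the $\tfrac12\langle\Gamma T,T\rangle$ term), and finally integrate the first-order pieces by parts to obtain $-\tfrac13|B(T)|^2$, $-2|\div T|^2$ and the divergence of $X(T)$. One small remark worth recording: the sign in front of $\div X(T)$ in the Proposition's statement is a typo, and your derivation, like the paper's equation \eqref{Boc_T} and the computation in the body of the proof, actually gives $+\div X(T)$.
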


\begin{proof}
	We compute
	$$
		\frac{1}{2}\Delta|T|^2 = \div(\nabla|T|^2) = (T^{ijkt}T^{\phantom{ijkt,}\!l}_{ijkt,})_{,l} = T^{ijkt}_{\phantom{ijkt},l} T^{\phantom{ijkt,}\!l}_{ijkt,} + T^{ijkt}T^{\phantom{ijkt,}\!l}_{ijkt,\;l}
	$$
	and $T^{ijkt}_{\phantom{ijkt},l} T^{\phantom{ijkt,}\!l}_{ijkt,} = |\nabla T|^2$. Looking at the second term, we rewrite
	$$
		T^{\phantom{ijkt,}\!l}_{ijkt,\;l} = T^{\phantom{ijkt,}\!l}_{ijkt,\;l} + T^{\phantom{ij}l}_{ij\;k,tl} + T^{\phantom{ijt}l}_{ijt\;,kl} - T^{\phantom{ij}l}_{ij\;k,tl} - T^{\phantom{ijt}l}_{ijt\;,kl} = B(T)^{\phantom{ijkt}\,l}_{ijkt\;,l} + T^{\phantom{ijk}l}_{ijk\;,tl} - T^{\phantom{ijt}l}_{ijt\;,kl}
	$$
	so that, using the symmetry $T^{ijtk} = - T^{ijkt}$,
	$$
		T^{ijkt}T^{\phantom{ijkt,}\!l}_{ijkt,\;l} = T^{ijkt} B(T)^{\phantom{ijkt}\,l}_{ijkt\;,l} + 2 T^{ijkt} T^{\phantom{ijk}l}_{ijk\;,tl} \, .
	$$
	We further rewrite
	$$
		T^{\phantom{ijk}l}_{ijk\;,tl} = T^{\phantom{ijk}l}_{ijk\;,lt} + T^{\phantom{ijk}l}_{ijk\;,tl} - T^{\phantom{ijk}l}_{ijk\;,lt}
	$$
	and summing up we obtain
	\begin{equation} \label{BT0}
		\frac{1}{2} \Delta|T|^2 = |\nabla T|^2 + T^{ijkt} B(T)^{\phantom{ijkt}\,l}_{ijkt\;,l} + 2 T^{ijkt} T^{\phantom{ijk}l}_{ijk\;,lt} + 2 T^{ijkt} ( T^{\phantom{ijk}l}_{ijk\;,tl} - T^{\phantom{ijk}l}_{ijk\;,lt} ) \, .
	\end{equation}
	``Integrating by parts'' we get
	\begin{align*}
		T^{ijkt} B(T)^{\phantom{ijkt}\,l}_{ijkt\;,l} & = \div \left( T^{ijkt} B(T)^{\phantom{ijkt}\,l}_{ijkt} \, e_l \right) - T^{ijkt,l} B(T)_{ijktl} \\
		& = \div \left( T^{ijkt} B(T)^{\phantom{ijkt}\,l}_{ijkt} \, e_l \right) - \frac{1}{3} |B(T)|^2 \, , \\
		T^{ijkt} T^{\phantom{ijk}l}_{ijk\;,lt} & = \div \left( T^{ijkt} T^{\phantom{ijk}l}_{ijk\;,l} \, e_t \right) - T^{ijkt}_{\phantom{ijkt},t} T^{\phantom{ijk}l}_{ijk\;,l} \\
		& = \div \left( T^{ijkt} T^{\phantom{ijk}l}_{ijk\;,l} \, e_t \right) - |\div T|^2
	\end{align*}
	hence
	\begin{equation} \label{BT1}
		T^{ijkt} B(T)^{\phantom{ijkt,}l}_{ijkt,\;l} + 2 T^{ijkt} T^{\phantom{ijk}l}_{ijk\;,lt} = \div X(T) - \frac{1}{3} |B(T)|^2 - 2 |\div T|^2 \, .
	\end{equation}
	On the other hand, by the symmetries of $T$ and \eqref{Gam_Ric} we have
	\begin{align*}
		4 T^{ijkt} ( T^{\phantom{ijk}l}_{ijk\;,tl} - T^{\phantom{ijk}l}_{ijk\;,lt} ) & = T^{ijkt} ( T^{\phantom{ijk}l}_{ijk\;,tl} - T^{\phantom{ijk}l}_{ijk\;,lt} ) + T^{ijtk} ( T^{\phantom{ij}l}_{ij\;k,tl} - T^{\phantom{ij}l}_{ij\;k,lt} ) \\
		& \phantom{=\;} + T^{ktij} ( T^{\;l}_{k\;ij,tl} - T^{\;l}_{k\;ij,lt} ) + T^{tkij} ( T^l_{\;kij,tl} - T^l_{\;kij,lt} ) \\
		& = \frac{1}{2} \langle \Gamma T,T \rangle \, .
	\end{align*}
	Substituting this and \eqref{BT1} into \eqref{BT0} we obtain the desired conclusion.
\end{proof}

We now turn to \eqref{Gam_TT}, that is a consequence of the following

\begin{proposition} \label{Gam_TT_gen}
	Let $T$, $\tilde T$ be algebraic tensor fields. Then
	$$
		\langle \Gamma T,\tilde T \rangle = \langle \Gamma W,\tilde W \rangle + \frac{4}{m-2} \langle \Gamma Z,\tilde Z \rangle
	$$
	where $W$, $\tilde W$ are the Weyl parts of $T$, $\tilde T$ and $Z$, $\tilde Z$ are the traceless parts of their respective Ricci contractions $E$, $\tilde E$.
\end{proposition}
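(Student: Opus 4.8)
The plan is to reduce the polarized identity to the level of the Ricci contractions of $T$ and $\tilde T$, exploiting that an algebraic curvature tensor is, up to its Weyl part, a Kulkarni--Nomizu product with the metric. First I would fix algebraic curvature tensors $T$ and $\tilde T$ and decompose them via \eqref{TWA_dec} as $T = W + \frac{1}{m-2}A\owedge\metric$ and $\tilde T = \tilde W + \frac{1}{m-2}\tilde A\owedge\metric$. By bilinearity and self-adjointness of $\Gamma$,
\[
	\langle\Gamma T,\tilde T\rangle = \langle\Gamma W,\tilde W\rangle + \frac{1}{m-2}\langle\Gamma W,\tilde A\owedge\metric\rangle + \frac{1}{m-2}\langle\Gamma\tilde W,A\owedge\metric\rangle + \frac{1}{(m-2)^2}\langle\Gamma(A\owedge\metric),\tilde A\owedge\metric\rangle ,
\]
so the task becomes to show that the two mixed terms vanish and that the last term equals $\frac{4}{m-2}\langle\Gamma Z,\tilde Z\rangle$.

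Two elementary facts about $\Gamma$ will do the job. (a) For every algebraic curvature tensor $X$ and every symmetric two-tensor $p$ one has $\langle X,\,p\owedge\metric\rangle = 4\langle E_X,p\rangle$, where $E_X$ is the Ricci contraction of $X$: expanding $p\owedge\metric$ into its four summands and contracting each against $X$, every one of the four resulting traces of $X$ reduces to $\langle E_X,p\rangle$ by the symmetries \eqref{T_sym} and the first Bianchi identity \eqref{T_B1}. In particular $X\perp p\owedge\metric$ whenever $E_X=0$, which also recovers the orthogonality in \eqref{T_dec}. (b) The operator $\Gamma$ preserves the bundle of algebraic curvature tensors and commutes with metric contraction: the Ricci contraction of $\Gamma_4 X$ equals $\Gamma_2$ of the Ricci contraction of $X$, and moreover $\Gamma_2\metric=0$ (a short computation from \eqref{Gam_def}). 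Fact (b) is the naturality of the Lichnerowicz operators and can alternatively be verified head-on from \eqref{Gam_def}.

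Granting (a) and (b), the computation closes as follows. Since the Weyl part is totally traceless, $E_W=0$, so by (b) $\Gamma W$ is again an algebraic curvature tensor with $E_{\Gamma W}=\Gamma_2 E_W=0$; hence by (a) $\langle\Gamma W,\tilde A\owedge\metric\rangle = 4\langle E_{\Gamma W},\tilde A\rangle = 0$, and likewise $\langle\Gamma\tilde W,A\owedge\metric\rangle=0$. For the last term, the Ricci contraction of $A\owedge\metric$ equals $(\trace A)\metric+(m-2)A$, which is $(m-2)E$ by \eqref{AT_def} since $\trace A=\frac{m-2}{2(m-1)}S$; therefore by (b) and then (a),
\[
	\langle\Gamma(A\owedge\metric),\tilde A\owedge\metric\rangle = 4\langle E_{\Gamma(A\owedge\metric)},\tilde A\rangle = 4\langle\Gamma_2((m-2)E),\tilde A\rangle = 4(m-2)\langle\Gamma_2 E,\tilde A\rangle .
\]
Finally $\Gamma_2\metric=0$ and self-adjointness give $\langle\Gamma_2 E,\tilde A\rangle=\langle\Gamma_2 Z,\tilde Z\rangle$, because $E-Z$ and $\tilde A-\tilde Z$ are multiples of $\metric$. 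Substituting back into the displayed expansion yields $\langle\Gamma T,\tilde T\rangle=\langle\Gamma W,\tilde W\rangle+\frac{4}{m-2}\langle\Gamma Z,\tilde Z\rangle$, and \eqref{Gam_TT} is the case $\tilde T=T$.

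The genuinely routine part is (a); the step I expect to be the main obstacle is (b), specifically checking that $\Gamma$ commutes with the Ricci contraction with the correct normalization. Doing this directly from \eqref{Gam_def} is a somewhat lengthy index manipulation: after contracting the two sums in \eqref{Gam_def} one must track, term by term, which slot each dummy index occupies and invoke the first Bianchi identity (for both $\Riem$ and $X$) to recognize the outcome as $\Gamma_2 E_X$. A way to sidestep part of this is to first note $\Gamma_4(\metric\owedge\metric)=0$ (a consequence of $\Gamma_2\metric=0$), which reduces everything to the case of a traceless $A=Z$, and then to compute $\Gamma_4(Z\owedge\metric)$ in local coordinates and match it slot by slot against $(\Gamma_2 Z)\owedge\metric$ --- longer, but completely elementary.
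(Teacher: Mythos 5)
Your proposal is correct in outline and takes a genuinely different route from the paper. The paper's proof of Proposition \ref{Gam_TT_gen} first derives a closed polarization formula $\langle\Gamma T,\tilde T\rangle = 4R_{is}T^s_{\;jkt}\tilde T^{ijkt} - 4R_{isjl}T^{sl}_{\;\;kt}\tilde T^{ijkt} - 8R_{iskl}T^{s\;l}_{\;j\;t}\tilde T^{ijkt}$, then proves three separate lemmas by substituting $W$, $E\owedge g$ and their combinations into this expression, tracking roughly two pages of index algebra. You instead reduce everything to two structural facts about $\Gamma$: the pairing identity $\langle X,p\owedge\metric\rangle = 4\langle E_X,p\rangle$, and the commutation $E_{\Gamma_4 X}=\Gamma_2 E_X$ together with $\Gamma_2\metric=0$. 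Given these, the expansion of $\langle\Gamma T,\tilde T\rangle$ along $T=W+\frac{1}{m-2}A\owedge\metric$ collapses cleanly: the cross terms vanish because $\Gamma W$ is Ricci-flat, and the $A\owedge\metric$ term evaluates to $4(m-2)\langle\Gamma_2 E,\tilde A\rangle=4(m-2)\langle\Gamma_2 Z,\tilde Z\rangle$. I verified fact (a), the intermediate computation $E_{A\owedge\metric}=(m-2)E$, $\trace(\Gamma_2 Z)=0$, and $\Gamma_2 g=0$; all are correct, and the final bookkeeping using self-adjointness is also fine. One small inaccuracy: fact (a) needs only the symmetries \eqref{T_sym} (pair antisymmetry and pair swap), not the first Bianchi identity, which is good news because then it applies directly to $\Gamma_4(A\owedge\metric)$ without having to argue that $\Gamma_4$ preserves the first Bianchi identity.

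The genuine gap is that fact (b), which you correctly identify as the main obstacle, is invoked but not proved. The commutation $E_{\Gamma_4 X}=\Gamma_2 E_X$ is true, and it is indeed what underlies the naturality of the Lichnerowicz Laplacian with respect to metric contractions, but the paper nowhere states or proves it, so you cannot cite it "for free" in this context. Verifying it from \eqref{Gam_def} is a sixteen-term index computation of comparable length to the paper's Lemma giving \eqref{Gam_EEKN}; so in honest accounting the two approaches shift rather than eliminate the computational burden. Your alternative of directly comparing $\Gamma_4(Z\owedge\metric)$ with $(\Gamma_2 Z)\owedge\metric$ slot by slot is riskier than you suggest: that is a strictly stronger claim (it says $\Gamma_4(Z\owedge g)$ has no Weyl part), and I do not see why it should hold in general; only the Ricci contractions need to agree. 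Until fact (b) is either established by a self-contained calculation or attributed to a precise reference (e.g.\ the commutation of $\Delta_L$ with contractions, from which it follows since $\Delta_B$ trivially commutes), the proof is a correct plan but not a complete argument.
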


The proof of Proposition \ref{Gam_TT_gen} is essentially a long computation, that we split into the proofs of several lemmas.

\begin{lemma}
	Let $T,\tilde T$ be algebraic curvature tensors. Then
	\begin{equation} \label{Gam_TT0}
		\langle \Gamma T,\tilde T \rangle = 4 R_{is} T^s_{\;\;jkt} \tilde T^{ijkt} - 4 R_{isjl} T^{sl}_{\;\;\;kt} \tilde T^{ijkt} - 8 R_{iskl} T^{s\;l}_{\;\;j\;t} \tilde T^{ijkt} \, .
	\end{equation}
\end{lemma}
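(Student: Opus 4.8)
The plan is to expand $\langle\Gamma T,\tilde T\rangle=(\Gamma T)_{ijkt}\,\tilde T^{ijkt}$ directly from the definition \eqref{Gam_def} of the Lichnerowicz operator $\Gamma=\Gamma_4$ and then regroup the resulting terms using only the pair symmetries $T_{ijkt}=-T_{jikt}=T_{ktij}$ (and the same for $\tilde T$). With $q=4$, formula \eqref{Gam_def} writes $(\Gamma T)_{ijkt}$ as a sum of four ``Ricci terms'', one for each position $\alpha\in\{1,2,3,4\}$ carrying the index contracted against $R_{i_\alpha s}$, minus a sum of twelve ``Riemann terms'', one for each ordered pair $(\alpha,\beta)$ of distinct positions carrying the indices contracted against $R_{i_\alpha s\,i_\beta l}$.

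First I would handle the four Ricci terms. Contracting each of them against $\tilde T^{ijkt}$ and using the antisymmetry in the first two and in the last two indices together with the pair-exchange symmetry, a short relabelling of dummy indices shows that all four contributions coincide with $R_{is}\,T^{s}{}_{jkt}\,\tilde T^{ijkt}$; this produces the term $4R_{is}\,T^{s}{}_{jkt}\,\tilde T^{ijkt}$ in \eqref{Gam_TT0}.

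Next I would split the twelve ordered pairs $(\alpha,\beta)$ into two families according to whether $\{\alpha,\beta\}$ is contained in one of the index-pairs $\{1,2\}$, $\{3,4\}$ (the four ``intra-pair'' cases $(1,2),(2,1),(3,4),(4,3)$) or straddles them (the remaining eight ``cross'' cases). Contracting against $\tilde T^{ijkt}$ and applying pair-exchange and the relevant antisymmetries, one checks that each intra-pair term reduces to $R_{isjl}\,T^{sl}{}_{kt}\,\tilde T^{ijkt}$, while each cross term reduces to $R_{iskl}\,T^{s}{}_{j}{}^{l}{}_{t}\,\tilde T^{ijkt}$. Taking into account the overall minus sign in \eqref{Gam_def}, this yields the remaining two terms $-4R_{isjl}\,T^{sl}{}_{kt}\,\tilde T^{ijkt}$ and $-8R_{iskl}\,T^{s}{}_{j}{}^{l}{}_{t}\,\tilde T^{ijkt}$ of \eqref{Gam_TT0}, completing the proof.

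There is no genuine obstacle here: the argument is a careful but purely mechanical bookkeeping of index positions under raising, lowering and renaming. The one point worth isolating is the verification that within each family the individual terms are actually equal --- for some of the ``cross'' ordered pairs this requires two successive antisymmetrizations, one in the $\{1,2\}$ positions and one in the $\{3,4\}$ positions, performed simultaneously on $T$ and $\tilde T$ so that the two sign changes cancel. It is also worth remarking that the first Bianchi identity \eqref{T_B1} plays no role in this particular identity: only the symmetries \eqref{T_sym} of $T$ and $\tilde T$ (and of the Riemann tensor of $M$) are used.
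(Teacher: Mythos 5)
Your proof is correct and takes essentially the same route as the paper's: both simply write out the definition of $\Gamma=\Gamma_4$ for a $4$-covariant tensor, contract against $\tilde T^{ijkt}$, and use the antisymmetry in positions $\{1,2\}$ and $\{3,4\}$ together with pair exchange (of both $T$ and the Riemann tensor) to collapse the four Ricci terms into one and the twelve curvature terms into two types. The only cosmetic difference is that the paper organizes the eight ``cross'' ordered pairs into two groups of four (those containing position $1$ and those containing position $2$), while you treat them as a single family of eight; both give $-8R_{iskl}T^{s}{}_{j}{}^{l}{}_{t}\tilde T^{ijkt}$. Your closing observation that only the symmetries \eqref{T_sym} are used, and not the first Bianchi identity, is also accurate.
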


\begin{proof}
	From the very definition \eqref{Gam_def} we have
	\begin{align*}
		(\Gamma T)_{ijkt} & = R_{is} T^s_{\;\,jkt} + R_{js} T^{\,s}_{i\;kt} + R_{ks} T^{\;\;s}_{ij\;t} + R_{ts} T^{\;\;\;\;s}_{ijk} \\
		& \phantom{=\;} - R_{isjl} T^{sl}_{\;\;\;kt} - R_{jsil} T^{ls}_{\;\;\;kt} - R_{iskl} T^{s\;l}_{\;\,j\;t} - R_{ksil} T^{l\;s}_{\;j\;t} - R_{istl} T^{s\;\;\,l}_{\;\,jk} - R_{tsil} T^{l\;\;\,s}_{\;jk} \\
		& \phantom{=\;} - R_{jskl} T^{\,sl}_{i\;\;t} - R_{ksjl} T^{\,ls}_{i\;\;t} - R_{jstl} T^{\,s\;l}_{i\;k} - R_{tsjl} T^{\,l\;s}_{i\;k} - R_{kstl} T^{\;\;\,sl}_{ij} - R_{tskl} T^{\;\;\,ls}_{ij} \, .
	\end{align*}
	We contract with $\tilde T^{ijkt}$. Using the symmetries of $T$, $\tilde T$ and renaming indexes we get
	\begin{align*}
		& R_{is} T^s_{\;\,jkt} \tilde T^{ijkt} + R_{js} T^{\,s}_{i\;kt} \tilde T^{ijkt} + R_{ks} T^{\;\;s}_{ij\;t} \tilde T^{ijkt} + R_{ts} T^{\;\;\;\;s}_{ijk} \tilde T^{ijkt} \\
		= \; & R_{is} T^s_{\;\,jkt} \tilde T^{ijkt} + R_{js} T^{s}_{\;ikt} \tilde T^{jikt} + R_{ks} T^{s}_{\;tij} \tilde T^{ktij} + R_{ts} T^s_{\;\,kij} \tilde T^{tkij} \\
		= \; & 4 R_{is} T^s_{\;\,jkt} \tilde T^{ijkt} \, , \\
		& R_{isjl} T^{sl}_{\;\;\;kt} \tilde T^{ijkt} + R_{jsil} T^{ls}_{\;\;\;kt} \tilde T^{ijkt} + R_{kstl} T^{\;\;\,sl}_{ij} \tilde T^{ijkt} + R_{tskl} T^{\;\;\,ls}_{ij} \tilde T^{ijkt} \\
		= \; & R_{isjl} T^{sl}_{\;\;\;kt} \tilde T^{ijkt} + R_{jsil} T^{sl}_{\;\;\;kt} \tilde T^{jikt} + R_{kstl} T^{sl}_{\;\;\,ij} \tilde T^{ktij} + R_{tskl} T^{sl}_{\;\;\,ij} \tilde T^{tkij} \\
		= \; & 4 R_{isjl} T^{sl}_{\;\;\;kt} \tilde T^{ijkt} \, , \\
		& R_{iskl} T^{s\;l}_{\;\,j\;t} \tilde T^{ijkt} + R_{ksil} T^{l\;s}_{\;j\;t} \tilde T^{ijkt} + R_{istl} T^{s\;\;\,l}_{\;\,jk} \tilde T^{ijkt} + R_{tsil} T^{l\;\;\,s}_{\;jk} \tilde T^{ijkt} \\
		= \; & R_{iskl} T^{s\;l}_{\;\,j\;t} \tilde T^{ijkt} + R_{ksil} T^{s\;l}_{\;\,t\;j} \tilde T^{ktij} + R_{istl} T^{s\;l}_{\;\,j\;k} \tilde T^{ijtk} + R_{tsil} T^{s\;l}_{\;\,k\;j} \tilde T^{tkij} \\
		= \; & 4 R_{iskl} T^{s\;l}_{\;\,j\;t} \tilde T^{ijkt} \, , \\
		& R_{jskl} T^{\,sl}_{i\;\;t} \tilde T^{ijkt} + R_{ksjl} T^{\,ls}_{i\;\;t} \tilde T^{ijkt} + R_{jstl} T^{\,s\;l}_{i\;k} \tilde T^{ijkt} + R_{tsjl} T^{\,l\;s}_{i\;k} \tilde T^{ijkt} \\
		= \; & R_{jskl} T^{s\;l}_{\;\,i\;t} \tilde T^{jikt} + R_{ksjl} T^{s\;l}_{\;\,t\;i} \tilde T^{ktji} + R_{jstl} T^{s\;l}_{\;\,i\;k} \tilde T^{jitk} + R_{tsjl} T^{s\;l}_{\;\,k\;i} \tilde T^{tkji} \\
		= \; & 4 R_{jskl} T^{s\;l}_{\;\,i\;t} \tilde T^{jikt} = 4 R_{iskl} T^{s\;l}_{\;\,j\;t} \tilde T^{ijkt} \, .
	\end{align*}
	Summing up we obtain \eqref{Gam_TT0}.
\end{proof}

In the next three Lemmas we denote by $g = \metric$ the metric tensor of $M$.

\begin{lemma}
	Let $W$ be a totally traceless algebraic curvature tensor and $E$ a symmetric $2$-covariant tensor. Then
	\begin{equation} \label{Gam_TE}
		\langle \Gamma W, E\owedge g \rangle = 0 \, .
	\end{equation}
\end{lemma}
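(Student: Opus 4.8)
The plan is to derive \eqref{Gam_TE} from the identity \eqref{Gam_TT0} established in the previous lemma. Since $E$ is symmetric, the Kulkarni--Nomizu product $E \owedge g$ is an algebraic curvature tensor (as recorded after \eqref{TWA_dec}), so I may apply \eqref{Gam_TT0} with $T$ replaced by the totally traceless tensor $W$ and $\tilde T$ replaced by $E \owedge g$. Substituting the expansion $(E\owedge g)^{ijkt} = E^{ik}g^{jt} + E^{jt}g^{ik} - E^{it}g^{jk} - E^{jk}g^{it}$ into the three terms on the right-hand side of \eqref{Gam_TT0} produces twelve summands.

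The first step is to discard the summands that vanish because $W$ is totally traceless. In each summand, one of the metric factors furnished by $E \owedge g$ (or, in the pieces carrying a factor $E^{ab}$, that factor together with the symmetry of $E$) contracts a pair of indices both of which belong to $W$; such a contraction is a trace of an algebraic curvature tensor, hence zero --- either at once, because it contracts one of the antisymmetric index pairs of $W$, or because every trace of an algebraic curvature tensor equals, up to sign, its Ricci contraction, which vanishes for $W$. What remains are exactly the summands in which each metric coefficient coming from $E\owedge g$ contracts at least one index of the curvature tensors $R_{ij}$, $R_{ijkt}$ of $M$ occurring in \eqref{Gam_TT0}.

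In these surviving summands the metric factors only relabel indices, so one is left with a short linear combination of contractions of $W$ against $R_{ij}$ and against $R_{ijkt}$, each then contracted with $E$; the claim is that this combination is identically zero. The verification uses the symmetries $R_{ij} = R_{ji}$ and $R_{ijkt} = R_{ktij} = -R_{jikt}$, the matching symmetries of $W$, and --- decisively --- repeated use of the first Bianchi identity for $W$ (equivalently, for $R_{ijkt}$), which is what makes the surviving contributions cancel. The main obstacle is precisely this final cancellation: it is an elementary but lengthy index computation in which the bookkeeping of signs and combinatorial factors must be done with care. (One could instead invoke self-adjointness of $\Gamma$ to write $\langle \Gamma W, E\owedge g\rangle = \langle W, \Gamma(E\owedge g)\rangle$ and compute $\Gamma(E\owedge g)$ directly from \eqref{Gam_def}; but showing that the totally traceless part of $\Gamma(E\owedge g)$ vanishes is equivalent to the present lemma, so this route offers no real shortcut.)
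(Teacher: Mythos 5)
Your overall strategy matches the paper's: apply \eqref{Gam_TT0} with $T=W$ and $\tilde T = E\owedge g$, expand the Kulkarni--Nomizu product, discard the summands killed by $\trace W = 0$, and then show the six surviving contractions cancel by symmetry. Up to that point the outline is sound.

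However, you misidentify the mechanism of the final cancellation. You assert that it rests ``decisively'' on the first Bianchi identity for $W$ (or for $R_{ijkt}$); this is not the case, and since you explicitly defer the computation (``the main obstacle is precisely this final cancellation''), the claim is unverified. In the paper's proof, the six surviving terms are first paired off using only $R_{ij}=R_{ji}$, the pair-swap symmetry $R_{isjl}=R_{jlis}$, and the antisymmetries of $W$ in each index pair; two terms cancel outright and the remaining four combine into $(R_{iskl}+R_{islk})\,W^{skl}{}_{t}\,E^{it}$, which vanishes because $\Riem$ is antisymmetric in its last two indices. The first Bianchi identity never enters. (It \emph{is} used elsewhere in the paper, e.g.\ in the proof of Lemma~\ref{lem_That}, which may be the source of the confusion.) Also, as a side remark, ``the first Bianchi identity for $W$, equivalently for $R_{ijkt}$'' is a misstatement: these are two independent facts, both true, but neither implies the other. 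If you carry the index bookkeeping through as you propose, you will find the cancellation falls out of the basic (anti)symmetries alone; leaning on Bianchi would send you looking for a relation that is not actually what closes the argument.
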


\begin{proof}
	We apply \eqref{Gam_TT0} with $T = W$ and $\tilde T = E\owedge g$. We write
	$$
		(E\owedge g)_{ijkt} = E_{ik} g_{jt} + E_{jt} g_{ik} - E_{it} g_{jk} - E_{jk} g_{it} \, .
	$$
	We separately compute the three terms in \eqref{Gam_TT0}. Since $W$ is totally traceless, we have
	\begin{align*}
		R_{is} W^s_{\;\;jkt} (E\owedge g)^{ijkt} & = R_{is} W^s_{\;\;jkt} E^{jt} g^{ik} - R_{is} W^s_{\;\;jkt} E^{jk} g^{it} \\
		& = R_{is} W^{s\;i}_{\;\,j\;t} E^{jt} - R_{is} W^{s\;\;i}_{\;\,jk} E^{jk} \\
		& = 2 R_{is} W^{s\;i}_{\;\,j\;t} E^{jt} \, , \\
		R_{isjl} W^{sl}_{\;\;\;kt} (E\owedge g)^{ijkt} & = R_{isjl} W^{sl}_{\;\;\;kt} (E^{ik} g^{jt} - E^{it} g^{jk}) + R_{isjl} W^{sl}_{\;\;\;kt} (E^{jt} g^{ik} - E^{jk} g^{it}) \\
		& = 2 R_{isjl} W^{sl}_{\;\;\;kt} E^{ik} g^{jt} + 2 R_{isjl} W^{sl}_{\;\;\;kt} E^{jt} g^{ik} \\
		& = 2 R_{isjl} W^{sl\;j}_{\;\;\;k} E^{ik} + 2 R_{isjl} W^{sli}_{\;\;\;\;t} E^{jt} \, , \\
%		& = 2 R_{isjl} W^{sl\;j}_{\;\;\;k} E^{ik} - 2 R_{isjl} W^{sl\;i}_{\;\;\;t} E^{jt} \\
%		& = 2 R_{isjl} W^{sl\;j}_{\;\;\;k} E^{ik} - 2 R_{jsil} W^{sl\;j}_{\;\;\;k} E^{ik} \, , \\
		R_{iskl} W^{s\;l}_{\;\;j\;t} (E\owedge g)^{ijkt} & = R_{iskl} W^{s\;l}_{\;\;j\;t} E^{jt} g^{ik} - R_{iskl} W^{s\;l}_{\;\;j\;t} E^{it} g^{jk} - R_{iskl} W^{s\;l}_{\;\;j\;t} E^{jk} g^{it} \\
		& = R_{sl} W^{s\;l}_{\;\;j\;t} E^{jt} - R_{iskl} W^{skl}_{\;\;\;\;\,t} E^{it} - R_{iskl} W^{s\;li}_{\;\;j} E^{jk} \, .
	\end{align*}
	Summing up, we obtain
	\begin{align*}
		\frac{1}{8} \langle \Gamma W, E\owedge g \rangle & = R_{is} W^{s\;i}_{\;\,j\;t} E^{jt} - R_{isjl} W^{sl\;j}_{\;\;\;k} E^{ik} - R_{isjl} W^{sli}_{\;\;\;\;t} E^{jt} \\
		& \phantom{=\;} - R_{sl} W^{s\;l}_{\;\;j\;t} E^{jt} + R_{iskl} W^{skl}_{\;\;\;\;\,t} E^{it} + R_{iskl} W^{s\;li}_{\;\;j} E^{jk} \, .
	\end{align*}
	A few algebraic manipulations yield
	\begin{align*}
		R_{is} W^{s\;i}_{\;\,j\;t} E^{jt} & \equiv R_{ls} W^{s\;l}_{\;\,j\;t} E^{jt} = R_{sl} W^{s\;l}_{\;\,j\;t} E^{jt} \, , \\
		R_{isjl} W^{sl\;j}_{\;\;\;k} E^{ik} & = R_{jlis} W^{lsj}_{\;\;\;\;k} E^{ik} \equiv R_{isjl} W^{sli}_{\;\;\;\;t} E^{jt} \\
		R_{iskl} W^{skl}_{\;\;\;\;\,t} E^{it} & = R_{klis} W^{l\;sk}_{\;\;t} E^{ti} \equiv R_{iskl} W^{s\;li}_{\;\;j} E^{jk} \, ,
	\end{align*}
	where ``$\equiv$'' denotes mere renaming of indexes while ``$=$'' indicates the use of some symmetry of $\Ricc$, $\Riem$, $W$ or $E$. Substituting and manipulating a little more we get
	\begin{align*}
		\frac{1}{16} \langle \Gamma W, E\owedge g \rangle & = R_{iskl} W^{skl}_{\;\;\;\;\,t} E^{it} - R_{isjl} W^{sl\;j}_{\;\;\;k} E^{ik} \\
		& = R_{iskl} W^{skl}_{\;\;\;\;\,t} E^{it} + R_{isjl} W^{slj}_{\;\;\;\;\,k} E^{ik} \\
		& \equiv R_{iskl} W^{skl}_{\;\;\;\;\,t} E^{it} + R_{islk} W^{skl}_{\;\;\;\;\,t} E^{it} \\
		& = (R_{iskl} + R_{islk}) W^{skl}_{\;\;\;\;\,t} E^{it} \\
		& = 0 \, .
	\end{align*}
\end{proof}

\begin{lemma}
	Let $E,\tilde E$ be symmetric $2$-covariant tensors and let $Z,\tilde Z$ be their respective traceless parts. Then
	\begin{equation} \label{Gam_EE0}
		\langle \Gamma Z,\tilde Z \rangle = \langle \Gamma E,\tilde E \rangle = 2 R_{is} E^s_{\;j} \tilde E^{ij} - 2 R_{isjl} E^{ls} \tilde E^{ij} \, .
	\end{equation}
\end{lemma}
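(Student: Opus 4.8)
The plan is to first specialize the definition of $\Gamma$ to symmetric $2$-tensors, and then to peel off the trace part. Setting $q = 2$ in \eqref{Gam_def} and using the symmetry $E_{ij} = E_{ji}$ together with the pair symmetry $R_{isjt} = R_{jtis}$ of the Riemann tensor, the two curvature terms coming from the pairs $(l,h) = (1,2)$ and $(2,1)$ coalesce, and one obtains
\[
	(\Gamma E)_{ij} = R_{is} E^{s}_{\;j} + R_{js} E^{s}_{\;i} - 2 R_{isjt} E^{st} \, .
\]
Contracting this with $\tilde E^{ij}$: after relabelling $i \leftrightarrow j$ the first two terms merge into $2 R_{is} E^{s}_{\;j} \tilde E^{ij}$ (here one uses $\tilde E^{ij} = \tilde E^{ji}$), while in the last term renaming the dummy index $t \to l$ and using $E^{st} = E^{ts}$ gives $-2 R_{isjl} E^{ls} \tilde E^{ij}$. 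This proves the second equality in \eqref{Gam_EE0}.

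For the first equality, I would first compute $\Gamma g$: applying the displayed formula above with $E$ replaced by the metric $g$ and contracting the last pair of indices with $g^{st}$ — which returns $R_{ij} = R^{k}_{\;ikj}$ by the definition of the Ricci tensor — one finds $(\Gamma g)_{ij} = R_{ij} + R_{ij} - 2 R_{ij} = 0$. Now write $Z = E - \tfrac{1}{m}(\trace_g E)\, g$ and $\tilde Z = \tilde E - \tfrac{1}{m}(\trace_g \tilde E)\, g$ and expand $\langle \Gamma Z, \tilde Z \rangle$ by bilinearity. Besides the main term $\langle \Gamma E, \tilde E \rangle$ there appear three cross terms: the two containing $\Gamma g$ in one slot vanish because $\Gamma g = 0$, and the remaining one, of the form $(\trace_g \tilde E) \langle \Gamma E, g \rangle$, vanishes either by self-adjointness of $\Gamma$ (it equals $(\trace_g \tilde E)\langle E, \Gamma g\rangle$) or, equivalently, because $\langle \Gamma E, g \rangle = \trace_g(\Gamma E) = 2\langle \Ricc, E \rangle - 2 \langle \Ricc, E \rangle = 0$. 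Hence $\langle \Gamma Z, \tilde Z \rangle = \langle \Gamma E, \tilde E \rangle$.

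The whole argument is routine index bookkeeping; the only place that deserves attention is checking that the contraction $g^{st} R_{isjt}$ really reproduces the Ricci tensor with the sign conventions fixed in the preliminaries — one uses the pair symmetry $R_{isjt} = R_{jtis}$ and the antisymmetry within each index pair to move the trace into the slots defining $R_{ij} = R^{k}_{\;ikj}$. No substantive obstacle is anticipated.
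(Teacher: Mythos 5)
Your proof is correct and follows essentially the same route as the paper: specialize \eqref{Gam_def} to $q=2$ to obtain $(\Gamma E)_{ij}=R_{is}E^s_{\;j}+R_{js}E_i^{\;s}-2R_{isjl}E^{sl}$, contract with $\tilde E^{ij}$ for the second equality, verify $\Gamma g=0$, and deduce the first equality from bilinearity and self-adjointness of $\Gamma$. The only (harmless) imprecision is the phrase ``contracting the last pair of indices'' in your computation of $\Gamma g$ --- the contraction is actually between the second and fourth slots ($g^{sl}R_{isjl}$), which indeed reduces to $R_{ij}$ via the pair symmetry $R_{isjl}=R_{jlis}$ as you indicate.
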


\begin{proof}
	From \eqref{Gam_def} we have
	\begin{equation} \label{Gam_E}
		(\Gamma E)_{ij} = R_{is} E^s_{\;j} + R_{js} E_i^{\;s} - 2 R_{isjl} E^{sl} \, .
	\end{equation}
	Contracting with $\tilde E^{ij}$ we get the second equality \eqref{Gam_EE0}. In case $E = g$, from \eqref{Gam_E} we deduce $\Gamma g = 0$. The first equality in \eqref{Gam_EE0} then follows by linearity and self-adjointness of $\Gamma$ on $T^0_2 M$.
\end{proof}

\begin{lemma}
	Let $E,\tilde E$ be symmetric $2$-covariant tensors and let $Z,\tilde Z$ be their respective traceless parts. Then
	\begin{equation} \label{Gam_EEKN}
		\langle \Gamma(E\owedge g),\tilde E\owedge g \rangle = 4(m-2) \langle \Gamma E,\tilde E \rangle = 4(m-2) \langle \Gamma Z,\tilde Z \rangle \, .
	\end{equation}
\end{lemma}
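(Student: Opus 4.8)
\emph{The second equality} in \eqref{Gam_EEKN} is exactly the content of \eqref{Gam_EE0}, so only the first equality $\langle \Gamma(E\owedge g),\tilde E\owedge g\rangle = 4(m-2)\langle\Gamma E,\tilde E\rangle$ requires an argument.

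The plan is to apply the identity \eqref{Gam_TT0} with $T = E\owedge g$ and $\tilde T = \tilde E\owedge g$ — both are algebraic curvature tensors — and to reduce the outcome to the right-hand side of \eqref{Gam_EE0} multiplied by $4(m-2)$. Writing
$$
(E\owedge g)_{ijkt} = E_{ik}g_{jt} + E_{jt}g_{ik} - E_{it}g_{jk} - E_{jk}g_{it}
$$
together with its raised versions $(E\owedge g)^{s}_{\;jkt}$, $(E\owedge g)^{sl}_{\;\;kt}$ and $(E\owedge g)^{s\;l}_{\;j\;t}$, and the analogous formulas for $\tilde E\owedge g$, one substitutes these into the three summands $R_{is}(E\owedge g)^{s}_{\;jkt}(\tilde E\owedge g)^{ijkt}$, $R_{isjl}(E\owedge g)^{sl}_{\;\;kt}(\tilde E\owedge g)^{ijkt}$ and $R_{iskl}(E\owedge g)^{s\;l}_{\;j\;t}(\tilde E\owedge g)^{ijkt}$ of \eqref{Gam_TT0}. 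Each summand expands into $16$ scalar contractions. Using only the symmetry $E_{ij}=E_{ji}$ (and the analogous one for $\tilde E$), the symmetries and the first Bianchi identity of $\Riem$, and the elementary contractions $g^{jt}R_{isjt}=0$, $g^{ij}R_{isjl}=R_{sl}$ and $g^{sl}R_{isjl}=R_{ij}$, every contraction that appears equals, up to sign and an integer or $m$-dependent coefficient, one of
$$
R_{is}E^s_{\;j}\tilde E^{ij},\qquad R_{isjl}E^{sl}\tilde E^{ij},\qquad R_{ij}E^{ij}\,\trace\tilde E,\qquad R_{ij}\tilde E^{ij}\,\trace E,\qquad \big(g^{ab}R_{ab}\big)\langle E,\tilde E\rangle,\qquad \big(g^{ab}R_{ab}\big)\,\trace E\,\trace\tilde E,
$$
the first two of which are precisely the terms occurring in \eqref{Gam_EE0}. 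Collecting coefficients, all contributions of the last four types cancel against each other — as the pure trace terms did in the proof of \eqref{Gam_TE} — and what survives is $4(m-2)\big(2R_{is}E^s_{\;j}\tilde E^{ij} - 2R_{isjl}E^{sl}\tilde E^{ij}\big) = 4(m-2)\langle\Gamma E,\tilde E\rangle$, which is the claim.

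I expect the only real difficulty to be the bookkeeping of the many contractions, and in particular the handling of the $R_{isjl}$-type pieces produced by the middle and last summands of \eqref{Gam_TT0}: there one must repeatedly use the pair-exchange symmetry of $\Riem$ and the first Bianchi identity to bring a contraction of the shape $R_{iskt}E^{sk}$ (with $s,k$ in the second and third slots of $\Riem$) into the canonical form $R_{isjl}E^{sl}$ (indices in the second and fourth slots) appearing in \eqref{Gam_EE0}. There is no conceptual obstacle.

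A cleaner alternative route is to first prove the pointwise identity $\Gamma(E\owedge g) = (\Gamma E)\owedge g$ directly from \eqref{Gam_def}, observing that the contributions to $\Gamma(E\owedge g)$ carried by the metric factors of $E\owedge g$ cancel because they pair a curvature with $g$ in an antisymmetric way. Then \eqref{Gam_EEKN} follows from the elementary bilinear identity $\langle A\owedge g, B\owedge g\rangle = 4(m-2)\langle A,B\rangle + 4(\trace A)(\trace B)$, valid for all symmetric $2$-covariant tensors $A,B$, together with $\trace(\Gamma E) = 2R_{is}E^{is} - 2R_{sl}E^{sl} = 0$, which is immediate from \eqref{Gam_E}.
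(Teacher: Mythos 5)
Your main route — expanding \eqref{Gam_TT0} with $T=E\owedge g$, $\tilde T=\tilde E\owedge g$, sorting the resulting contractions into a short list of invariants, and verifying that everything except the two terms of \eqref{Gam_EE0} cancels after weighting by the coefficients $4,-4,-8$ — is exactly the paper's computation, and your reading of the structure (the surviving coefficient $8(m-2)$ on $R_{ij}E^j_{\;k}\tilde E^{ik}-R_{iskl}E^{sl}\tilde E^{ik}$) is correct. One small inaccuracy in your catalogue: the type $S\,\trace E\,\trace\tilde E$ never actually occurs in any of the three summands; the trace-type contributions are only your types $R_{ij}E^{ij}\trace\tilde E$, $R_{ij}\tilde E^{ij}\trace E$ and $S\langle E,\tilde E\rangle$, each appearing with coefficient $8$ from the first summand and $-8$ from the third.

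The alternative route you sketch is a genuinely different and, once justified, cleaner argument than the paper's: combine the pairing formula $\langle A\owedge g,B\owedge g\rangle = 4(m-2)\langle A,B\rangle + 4\,\trace A\,\trace B$ with $\trace(\Gamma E)=0$ (both of which you verify correctly) and the commutation $\Gamma(E\owedge g)=(\Gamma E)\owedge g$. The point of friction is that last step. It is true, but "the metric factors cancel because they pair a curvature with $g$ antisymmetrically" is not a proof, and the direct verification of $\Gamma(E\owedge g)=(\Gamma E)\owedge g$ from \eqref{Gam_def} is an index computation of roughly the same length as the paper's, with the disadvantage of carrying two extra free indices. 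Alternatively, one can derive the commutation indirectly: first observe that the pointwise operator $\Gamma$ preserves the bundle of algebraic curvature tensors (for instance because $\Gamma=2(\Delta_L-\Delta_B)$ and both Laplacians do), so $\Gamma(E\owedge g)$ is again of Kulkarni--Nomizu type; then self-adjointness of $\Gamma$ together with \eqref{Gam_TE} shows its Weyl part vanishes, and the pairing formula pins down the symmetric tensor to be $\Gamma E$. That is elegant, but it uses the lemma and \eqref{Gam_TE} as inputs, so it cannot replace the direct computation without being partly circular. In short: route A is the paper's proof and is fine as you describe it; route B is a nice observation that would need the commutation lemma fleshed out to stand on its own.
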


\begin{proof}
	We separately compute the three terms in \eqref{Gam_TT0} for $T = E\owedge g$, $\tilde T = \tilde E\owedge g$. We have
%	\begin{align*}
%		R_{is} (E\owedge g)^s_{\;\;jkt} (\tilde E\owedge g)^{ijkt} & = (R_{is} E^s_{\;k} g_{jt} - R_{is} E^s_{\;t} g_{jk} + R_{ik} E_{jt} - R_{it} E_{jk}) (\tilde E\owedge g)^{ijkt} \\
%		& = 2 R_{is} E^s_{\;k} g_{jt} (\tilde E\owedge g)^{ijkt} + 2 R_{ik} E_{jt} (\tilde E\owedge g)^{ijkt} \\
%		& = 2 R_{is} E^s_{\;k} ( \tilde E^{ik} g_{jt} g^{jt} - \tilde E^{it} g_{jt} g^{jk} + \tilde E^{jt} g_{tj} g^{ik} - \tilde E^{jk} g_{jt} g^{it} ) \\
%		& \phantom{=\;} + 2 R_{ik} E_{jt} (\tilde E^{ik} g^{jt} - \tilde E^{it} g^{jk} + \tilde E^{jt} g^{ik} - \tilde E^{jk} g^{it}) \\
%		& = 2 m R_{is} E^s_{\;k} \tilde E^{ik} - 2 R_{is} E^s_{\;k} \tilde E^{ik} + 2 R_{is} E^{si} \tilde E^j_{\;j} - 2 R_{is} E^s_{\;k} \tilde E^{ik} \\
%		& \phantom{=\;} + 2 R_{ik} \tilde E^{ik} E^j_{\;j} - 2 R_{ik} E^k_{\;t} \tilde E^{it} + 2 R^i_{\;i} E_{jt} \tilde E^{jt} - 2 R_{ik} E_j^{\;i} \tilde E^{jk} \\
%		& = 2 (m-4) R_{is} E^s_{\;k} \tilde E^{ik} + 2 R_{is} E^{is} \tilde E^j_{\;j} + 2 R_{ik} \tilde E^{ik} E^j_{\;j} + 2 R^i_{\;i} E_{jt} \tilde E^{jt} \, ,
%	\end{align*}
	\begin{align*}
		(E\owedge g)^{sl}_{\;\;\;kt} (\tilde E\owedge g)^{ijkt} & = E^s_{\;k} (\tilde E\owedge g)^{ijkl} - E^s_{\;t} (\tilde E\owedge g)^{ijlt} + E^l_{\;t} (\tilde E\owedge g)^{ijst} - E^l_{\;k} (\tilde E\owedge g)^{ijks} \\
		& = 2 E^s_{\;k} (\tilde E\owedge g)^{ijkl} - 2 E^l_{\;k} (\tilde E\owedge g)^{ijks} \\
		& = 2 E^s_{\;k} \tilde E^{ik} g^{jl} + 2 E^{si} \tilde E^{jl} - 2 E^s_{\;k} \tilde E^{jk} g^{il} - 2 E^{sj} \tilde E^{il} \\
		& \phantom{=\;} - 2 E^l_{\;k} \tilde E^{ik} g^{js} - 2 E^{li} \tilde E^{js} + 2 E^l_{\;k} \tilde E^{jk} g^{is} + 2 E^{lj} \tilde E^{is} \, .
	\end{align*}
	Hence,
	\begin{align*}
		R_{is} (E\owedge g)^s_{\;\;jkt} & (\tilde E\owedge g)^{ijkt} = R_{is} g_{lj} (E\owedge g)^{sl}_{\;\;\;kt} (\tilde E\owedge g)^{ijkt} \\
		& = 2 m R_{is} E^s_{\;k} \tilde E^{ik} + 2 R_{is} E^{si} \tilde E^j_{\;j} - 2 R_{is} E^s_{\;k} \tilde E^{ik} - 2 R_{is} E^s_{\;l} \tilde E^{il} \\
		& \phantom{=\;} - 2 R_{il} E^l_{\;k} \tilde E^{ik} - 2 R_{is} E^{\;i}_j \tilde E^{js} + 2 R^i_{\;i} E_{jk} \tilde E^{jk} + 2 R_{is} \tilde E^{is} E^j_{\;j} \\
		& = 2 (m-4) R_{ij} E^j_{\;k} \tilde E^{ik} + 2 R_{ij} E^{ij} \tilde E^k_{\;k} + 2 R_{ij} \tilde E^{ij} E^k_{\;k} + 2 R^i_{\;i} E_{jk} \tilde E^{jk} \, , \\
		R_{isjl} (E\owedge g)^{sl}_{\;\;\;kt} & (\tilde E\owedge g)^{ijkt} = 4 R_{sj} E^s_{\;k} \tilde E^{jk} + 4 R_{islj} E^{sj} \tilde E^{il} \, . \\
	\end{align*}
	Similarly
	\begin{align*}
		(E\owedge g)^{s\;l}_{\;j\;t} (\tilde E\owedge g)^{ijkt} & = E^{sl} (\tilde E\owedge g)^{ijk}_{\;\;\;\;j} - E^s_{\;t} (\tilde E\owedge g)^{ilkt} - E_j^{\;l} (\tilde E\owedge g)^{ijks} + E_{jt} g^{sl} (\tilde E\owedge g)^{ijkt} \\
		& = E^{sl} \tilde E^{ik} g^j_{\;j} + E^{sl} \tilde E^j_{\;j} g^{ik} - E^{sl} \tilde E^i_{\;j} g^{jk} - E^{sl} \tilde E^{jk} g^i_{\;j} \\
		& \phantom{=\;} - E^s_{\;t} \tilde E^{ik} g^{lt} - E^s_{\;t} \tilde E^{lt} g^{ik} + E^s_{\;t} \tilde E^{it} g^{lk} + E^s_{\;t} \tilde E^{lk} g^{it} \\
		& \phantom{=\;} - E^{\;l}_j \tilde E^{ik} g^{js} - E^{\;l}_j \tilde E^{js} g^{ik} + E^{\;l}_j \tilde E^{is} g^{jk} + E^{\;l}_j \tilde E^{jk} g^{is} \\
		& \phantom{=\;} + E_{jt} \tilde E^{ik} g^{sl} g^{jt} + E_{jt} \tilde E^{jt} g^{sl} g^{ik} - E_{jt} \tilde E^{it} g^{sl} g^{jk} - E_{jt} \tilde E^{jk} g^{sl} g^{it} \\
		& = m E^{sl} \tilde E^{ik} + E^{sl} \tilde E^j_{\;j} g^{ik} - E^{sl} \tilde E^{ik} - E^{sl} \tilde E^{ik} \\
		& \phantom{=\;} - E^{sl} \tilde E^{ik} - E^s_{\;t} \tilde E^{lt} g^{ik} + E^s_{\;t} \tilde E^{it} g^{lk} + E^{si} \tilde E^{lk} \\
		& \phantom{=\;} - E^{sl} \tilde E^{ik} - E^l_{\;j} \tilde E^{js} g^{ik} + E^{kl} \tilde E^{is} + E^l_{\;j} \tilde E^{jk} g^{is} \\
		& \phantom{=\;} + \tilde E^{ik} E^j_{\;j} g^{sl} + E_{jt} \tilde E^{jt} g^{sl} g^{ik} - E^k_{\;t} \tilde E^{it} g^{sl} - E^i_{\;j} \tilde E^{jk} g^{sl}
	\end{align*}
	that is
	\begin{align*}
		(E\owedge g)^{s\;l}_{\;j\;t} & (\tilde E\owedge g)^{ijkt} = (m-4) E^{sl} \tilde E^{ik} - E^s_{\;t} \tilde E^{lt} g^{ik} - E^l_{\;j} \tilde E^{js} g^{ik} - E^k_{\;t} \tilde E^{it} g^{sl} - E^i_{\;j} \tilde E^{jk} g^{sl} \\
		& \phantom{=\;} + E^{sl} \tilde E^j_{\;j} g^{ik} + \tilde E^{ik} E^j_{\;j} g^{sl} + E_{jt} \tilde E^{jt} g^{sl} g^{ik} + E^s_{\;t} \tilde E^{it} g^{lk} + E^{si} \tilde E^{lk} + E^{kl} \tilde E^{is} + E^l_{\;j} \tilde E^{jk} g^{is} \, .
	\end{align*}
	We contract with $R_{iskl}$ to get
	\begin{align*}
		R_{iskl} (E\owedge g)^{s\;l}_{\;j\;t} (\tilde E\owedge g)^{ijkt} & = (m-4) R_{iskl} E^{sl} \tilde E^{ik} - 4 R_{sl} E^s_{\;t}\tilde E^{lt} \\
		& \phantom{=\;} + R_{sl} E^{sl} \tilde E^j_{\;j} + R_{ik} \tilde E^{ik} E^j_{\;j} + R^i_{\;i} E_{jt} \tilde E^{jt}
	\end{align*}
	Summing up,
	\begin{align*}
		\langle \Gamma(E\owedge g),\tilde E\owedge g \rangle & = 8 (m-4) R_{ij} E^j_{\;k} \tilde E^{ik} + 8 R_{ij} E^{ij} \tilde E^k_{\;k} + 8 R_{ij} \tilde E^{ij} E^k_{\;k} + 8 R^i_{\;i} E_{jk} \tilde E^{jk} \\
		& \phantom{=\;} - 16 R_{sj} E^s_{\;k} \tilde E^{jk} - 16 R_{islj} E^{sj} \tilde E^{il} - 8(m-4)R_{iskl} E^{sl} \tilde E^{ik} + 32 R_{sl} E^s_{\;t}\tilde E^{lt} \\
		& \phantom{=\;} - 8 R_{sl} E^{sl} \tilde E^j_{\;j} - 8 R_{ik} \tilde E^{ik} E^j_{\;j} - 8 R^i_{\;i} E_{jt} \tilde E^{jt} \\
		& = 8(m-2) R_{ij} E^j_{\;k} \tilde E^{ik} - 8(m-2) R_{iskl} E^{sl} \tilde E^{ik}
	\end{align*}
	and by \eqref{Gam_EE0} we obtain \eqref{Gam_EEKN}.
\end{proof}

%\begin{corollary}
%	Let $T,\tilde T$ be algebraic tensor fields. Then
%	\begin{equation}
%		\langle \Gamma T,\tilde T \rangle = \langle \Gamma W,\tilde W \rangle + \frac{4}{m-2} \langle \Gamma Z,\tilde Z \rangle
%	\end{equation}
%	where $W,\tilde W$ are the Weyl parts of $T,\tilde T$ and $Z,\tilde Z$ are the traceless parts of their respective Ricci contractions $E,\tilde E$.
%\end{corollary}

\begin{proof}[Proof of Proposition \ref{Gam_TT_gen}]
	As in \eqref{TWA_dec}, we write
	$$
		T = W + \frac{1}{m-2} A\owedge g \, , \qquad \tilde T = \tilde W + \frac{1}{m-2} \tilde A \owedge g
	$$
	where $A$, $\tilde A$ are the Schouten-like tensors associated to $T$, $\tilde T$ as in \eqref{AT_def}. Then we apply the previous Lemmas with the choices $E = \frac{1}{m-2} A$, $\tilde E = \frac{1}{m-2} \tilde A$, noting that $Z$, $\tilde Z$ are also the traceless parts of $A$, $\tilde A$.
\end{proof}

\subsection{Lower bounds on $\langle \Gamma \,\cdot\, , \,\cdot\, \rangle$}

We now proceed to prove that a lower bound on $\mathfrak R^{(\lfloor\frac{m-1}{2}\rfloor)}$ ensures a lower bound on $\langle \Gamma T,T \rangle$ for any algebraic curvature tensor $T$. Among the results stated and proved in this subsection, our original contribution is represented by Proposition \ref{ZRic} and Theorem \ref{thm_m-1/2_R}. Lemma \ref{lem_That} is formally equivalent to Lemma 3 in \cite{tachi74}, while the statements of Lemma \ref{lem_PW_1} and Proposition \ref{m-1/2_R} are covered by Lemma 2.1, Lemma 2.2(c) and Proposition 2.5(b) of \cite{pw20}. Since the formalism (as well as the choice of normalization constants in the definition of the norm of an anti-symmetric tensor) adopted here differs from that of \cite{pw20}, we provide self-contained proofs along the lines of those in \cite{pw20} for ease of the reader.

\begin{lemma}
	Let $N\geq2$ be a positive integer and let $\{a_i\}_{1\leq i\leq N}$, $\{b_i\}_{1\leq i\leq N}$ be sequences of non-negative real numbers such that
	\begin{equation} \label{ab_ass}
		a_i \leq a_{i+1} \quad \text{for } \, 1 \leq i < N \qquad \text{and} \qquad b_i \geq 0 \quad \text{for } \, i = 1,\dots,N \, .
	\end{equation}
	Let $1 \leq k < N$ be an integer such that
	\begin{equation} \label{b_pinch}
		b_i \leq \frac{1}{k} \sum_{j=1}^N b_j \qquad \text{for } \, i = 1,\dots, N \, .
	\end{equation}
	Then
	\begin{equation} \label{ab_lower}
		\sum_{i=1}^N a_i b_i \geq \frac{1}{k} \sum_{i=1}^{k} a_i \sum_{j=1}^N b_j \, .
	\end{equation}
\end{lemma}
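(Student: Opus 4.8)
The plan is to exploit the pinching hypothesis \eqref{b_pinch} to replace each $b_i$ by a "defect" and then use a rearrangement-type argument. First I would rewrite $\sum_i a_i b_i - \frac{1}{k}\sum_{i=1}^k a_i \sum_j b_j$ and try to express it as a sum of products of differences, each of which is manifestly non-negative. Concretely, set $B = \sum_{j=1}^N b_j$ and consider the quantity
\begin{equation*}
	D = \sum_{i=1}^N a_i b_i - \frac{1}{k} \Big(\sum_{i=1}^{k} a_i\Big) B \, .
\end{equation*}
Since $\frac{1}{k}\sum_{i=1}^k a_i \le a_k$ by monotonicity (this is just Lemma \ref{lem_average} with $h=k$, or a one-line estimate), and since $\sum_{i=1}^N b_i = B$, it would be convenient to compare against $a_k B$ as an intermediate step; but the real work is the lower bound on $\sum_i a_i b_i$, so I expect the cleanest route is a direct manipulation using an ``Abel summation'' / telescoping of the $a_i$.

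The key step I anticipate: write $a_i = a_k - \sum_{\ell=i}^{k-1}(a_{\ell+1}-a_\ell)$ for $i \le k$ and $a_i = a_k + \sum_{\ell=k}^{i-1}(a_{\ell+1}-a_\ell)$ for $i \ge k$, and substitute into $\sum_i a_i b_i$. The $a_k$-terms give $a_k B$ on the left and, after the analogous expansion on the right, $a_k B$ there too, so they cancel in $D$. What remains is
\begin{equation*}
	D = \sum_{\ell=k}^{N-1} (a_{\ell+1}-a_\ell)\Big(\sum_{i=\ell+1}^N b_i\Big) - \sum_{\ell=1}^{k-1}(a_{\ell+1}-a_\ell)\Big(\sum_{i=1}^{\ell} b_i - \tfrac{\ell}{k}B\Big) \, ,
\end{equation*}
or a close variant of this. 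Every difference $a_{\ell+1}-a_\ell$ is $\ge 0$ by \eqref{ab_ass}; the tail sums $\sum_{i=\ell+1}^N b_i$ are $\ge 0$ since the $b_i$ are non-negative; and the crucial point is that for $\ell \le k-1$ the bracket $\sum_{i=1}^\ell b_i - \frac{\ell}{k}B = \sum_{i=1}^\ell\big(b_i - \frac{1}{k}B\big) \le 0$ \emph{precisely} by the pinching hypothesis \eqref{b_pinch}, which says $b_i \le \frac{1}{k}B$ for every $i$. Hence $D$ is a sum of non-negative terms, giving \eqref{ab_lower}.

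The main obstacle is bookkeeping: getting the Abel-summation indices exactly right so that the $a_k$-terms cancel cleanly and the sign of each surviving summand is transparent. An alternative, possibly less error-prone, approach is induction on $N$ (peeling off $a_N b_N$ and reducing $N$ by one while checking the hypotheses \eqref{ab_ass}, \eqref{b_pinch} persist for a modified sequence), mirroring the inductive style used for Lemma \ref{lem_average}; but that requires care because removing $b_N$ can violate the pinching condition for the remaining $b_i$ unless one redistributes mass appropriately, so the direct telescoping argument above seems safer. Either way, the only hypothesis doing real work is \eqref{b_pinch}, which forces each partial sum $\sum_{i=1}^\ell b_i$ (for $\ell<k$) to be small relative to $\frac{\ell}{k}B$, exactly the sign needed.
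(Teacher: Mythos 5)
Your proposal is correct, and your claimed Abel--summation identity is in fact exact, not just ``a close variant'': with $B=\sum_j b_j$ one has
\begin{equation*}
	D \;=\; \sum_{\ell=k}^{N-1} (a_{\ell+1}-a_\ell)\sum_{i=\ell+1}^N b_i \;+\; \sum_{\ell=1}^{k-1}(a_{\ell+1}-a_\ell)\sum_{i=1}^{\ell}\Bigl(\tfrac{1}{k}B-b_i\Bigr) \, ,
\end{equation*}
and every summand is non-negative by \eqref{ab_ass} and \eqref{b_pinch}. This is a genuinely different decomposition from the one in the paper. The paper splits the sum at index $k+1$, subtracts the pivot $a_{k+1}$ from each $a_i$ with $i\le k$ so that $(a_i-a_{k+1})b_i\ge (a_i-a_{k+1})B/k$ term by term, and then observes that the pivot contributions recombine to exactly $a_{k+1}B$; it is a two-line estimate with no telescoping. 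Your route produces a structural identity that exhibits the deficit $D$ as a manifestly non-negative sum of increments of $a$ weighted by partial-sum defects of $b$; it is slightly heavier on index bookkeeping (as you anticipated) but has the aesthetic advantage of showing exactly where the slack sits. Both are equally elementary, use the same two hypotheses in the same places, and are correct. Your caution about the inductive alternative is well placed: naively dropping $b_N$ can break \eqref{b_pinch} for the remaining entries, and the direct argument avoids having to patch this.
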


\begin{proof}
	We separately estimate
	\begin{align*}
		\sum_{i=k+1}^N a_i b_i & \geq a_{k+1} \sum_{i=k+1}^N b_i \, , \\
		\sum_{i=1}^k a_i b_i & = \sum_{i=1}^k (a_i - a_{k+1})b_i + a_{k+1} \sum_{i=1}^k b_i \geq \frac{1}{k} \sum_{i=1}^k (a_i - a_{k+1}) \sum_{j=1}^N b_j + a_{k+1} \sum_{i=1}^k b_i \, ,
	\end{align*}
	where we have used \eqref{b_pinch} and the fact that $a_i - a_{k+1} \leq 0$ for $i \leq k$. Summing up,
	$$
		\sum_{i=1}^N a_i b_i \geq \left[ \frac{1}{k} \sum_{i=1}^k (a_i - a_{k+1}) + a_{k+1} \right] \sum_{j=1}^N b_j = \left[ \frac{1}{k} \sum_{i=1}^k a_i \right] \sum_{j=1}^N b_j \, .
	$$
\end{proof}

\begin{remark} \label{rem_ab}
	If \eqref{b_pinch} holds with $1 \leq k < N$ a real number then it also holds with $k$ replaced by $\lfloor k\rfloor$, which is an integer in the range $\{1,\dots,N-1\}$. Hence, if $\{a_i\},\{b_i\}$ are as in \eqref{ab_ass} then for any real number $1 \leq k < N$ we have the implication
	$$
		b_i \leq \frac{1}{k} \sum_{j=1}^N b_j \quad \forall \, i = 1,\dots,N \qquad \Rightarrow \qquad \sum_{i=1}^N a_i b_i \geq \frac{1}{\lfloor k\rfloor} \sum_{i=1}^{\lfloor k\rfloor} a_i \sum_{j=1}^N b_j \, .
	$$
\end{remark}

\begin{proposition} \label{ZRic}
	Let $x\in M$, $C\in\R$ and assume that for every collection $\{\pi_1,\dots,\pi_{\lfloor\frac{m}{2}\rfloor}\}$ of mutually orthogonal $2$-dimensional subspaces of $T_x M$ it holds
	\begin{equation} \label{m2_sect}
		\frac{1}{\lfloor\frac{m}{2}\rfloor} \sum_{i=1}^{\lfloor\frac{m}{2}\rfloor} \Sect(\pi_i) \geq C \, .
	\end{equation}
	Then for any traceless symmetric $2$-covariant tensor $Z$ we have
	\begin{equation}
		\langle \Gamma Z,Z \rangle \geq 2mC|Z|^2 \qquad \text{at } \, x .
	\end{equation}
\end{proposition}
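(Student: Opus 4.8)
The plan is to turn $\langle\Gamma Z,Z\rangle$ into a weighted average of sectional curvatures of \emph{coordinate} $2$-planes and then apply the combinatorial estimate recorded in Remark \ref{rem_ab}. Fix an orthonormal basis $\{e_i\}_{i=1}^m$ of $T_xM$ diagonalizing $Z$, with eigenvalues $z_1,\dots,z_m$; tracelessness gives $\sum_i z_i=0$, while $|Z|^2=\sum_i z_i^2$. By \eqref{Gam_hat} together with the computation of $\langle\mathfrak R^{T^0_2M}\hat E,\hat E\rangle$ carried out in the Remark of the previous subsection (applied to $E=Z$), and since $R_{ijij}=\Sect(e_i\wedge e_j)$ for an orthonormal frame, one has
$$\langle\Gamma Z,Z\rangle=\sum_{i,j=1}^m(z_i-z_j)^2R_{ijij}=2\sum_{i<j}(z_i-z_j)^2\Sect(e_i\wedge e_j).$$

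Next I would record two elementary facts. From $\sum_i z_i=0$ one gets $\sum_{i,j}(z_i-z_j)^2=2m|Z|^2$, hence $\sum_{i<j}(z_i-z_j)^2=m|Z|^2$, which pins down the total weight. From $(z_i+z_j)^2\ge0$ one gets $(z_i-z_j)^2\le 2(z_i^2+z_j^2)\le 2|Z|^2$ for every pair, and since $2\lfloor\tfrac m2\rfloor\le m$ this yields the uniform estimate $(z_i-z_j)^2\le\frac1{\lfloor m/2\rfloor}\sum_{h<l}(z_h-z_l)^2$. Finally, any two \emph{distinct} coordinate $2$-planes $e_i\wedge e_j$, $e_h\wedge e_l$ are mutually orthogonal: they satisfy condition (i) after the definition of mutual orthogonality when $\{i,j\}\cap\{h,l\}=\emptyset$, and condition (ii) when the two pairs share exactly one index. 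Consequently every family of distinct coordinate planes is admissible in \eqref{m2_sect}; in particular so is any sub-family of $\lfloor\tfrac m2\rfloor$ of them.

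Assume $m\ge3$ (the case $m=2$ is immediate, as then $\lfloor\tfrac m2\rfloor=\binom m2=1$ and the hypothesis reads $R_{1212}=\Sect\ge C$). Enumerate the $N:=\binom m2$ unordered pairs as $\{i_1,j_1\},\dots,\{i_N,j_N\}$ so that $R_{i_1j_1i_1j_1}\le\cdots\le R_{i_Nj_Ni_Nj_N}$, and set $a_\ell=R_{i_\ell j_\ell i_\ell j_\ell}$, $b_\ell=(z_{i_\ell}-z_{j_\ell})^2$. Then $\{a_\ell\}$ is nondecreasing, $b_\ell\ge0$, $\sum_\ell b_\ell=m|Z|^2$, and $b_\ell\le\frac1{\lfloor m/2\rfloor}\sum_h b_h$ by the bound above. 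The $\lfloor\tfrac m2\rfloor$ planes realizing $a_1,\dots,a_{\lfloor m/2\rfloor}$ form a mutually orthogonal collection, so \eqref{m2_sect} gives $\frac1{\lfloor m/2\rfloor}\sum_{\ell=1}^{\lfloor m/2\rfloor}a_\ell\ge C$. Since $1\le\lfloor\tfrac m2\rfloor<N$, Remark \ref{rem_ab} (with $k=\lfloor\tfrac m2\rfloor$) yields
$$\tfrac12\langle\Gamma Z,Z\rangle=\sum_{\ell=1}^Na_\ell b_\ell\ \ge\ \frac1{\lfloor m/2\rfloor}\sum_{\ell=1}^{\lfloor m/2\rfloor}a_\ell\cdot\sum_{h=1}^Nb_h\ \ge\ C\cdot m|Z|^2,$$
i.e. $\langle\Gamma Z,Z\rangle\ge2mC|Z|^2$, as claimed.

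I do not anticipate a serious obstacle once the identity $\langle\Gamma Z,Z\rangle=\sum_{i,j}(z_i-z_j)^2R_{ijij}$ is in hand. The two points that require genuine attention are the observation that \emph{distinct} coordinate planes are automatically mutually orthogonal — so no disjointness of index pairs is needed to invoke \eqref{m2_sect}, and the hypothesis is in fact used only on the $\lfloor\tfrac m2\rfloor$ coordinate planes of least sectional curvature — and the normalization check $2\lfloor\tfrac m2\rfloor\le m$, which is precisely what makes the weights $(z_i-z_j)^2$ satisfy the pinching hypothesis of the combinatorial lemma for the specific value $k=\lfloor\tfrac m2\rfloor$. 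Everything else is bookkeeping.
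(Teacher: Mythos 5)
Your proof is correct and follows essentially the same route as the paper's: reduce $\langle\Gamma Z,Z\rangle$ to $\sum_{i,j}(z_i-z_j)^2R_{ijij}$ in a $Z$-diagonalizing orthonormal frame, compute $\sum_{i<j}(z_i-z_j)^2=m|Z|^2$, bound each term by $2|Z|^2$ to obtain the pinching hypothesis of the rearrangement lemma, and then apply Remark \ref{rem_ab}. The only cosmetic differences are that you obtain the key identity via \eqref{Gam_hat} and the remark on $\langle\mathfrak R^{T^0_2M}\hat E,\hat E\rangle$ rather than directly from \eqref{Gam_E} as the paper does, and you make explicit (which the paper leaves implicit) why the $\lfloor m/2\rfloor$ coordinate $2$-planes of least sectional curvature form an admissible mutually orthogonal family, as well as the trivial $m=2$ case.
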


\begin{proof}
	Consider a coframe $\{\theta^i\}$ whose dual frame $\{e_i\}$ consists of eigenvectors of $Z$, with corresponding eigenvalues $\{\zeta_i\}_{1\leq i\leq m}$. From \eqref{Gam_E} we obtain (no summation is intended on $i$)
	$$
		(\Gamma Z)_{ii} = 2 R_{ii} \zeta_i - 2 \sum_{j=1}^m R_{ijij} \zeta_j = 2 \sum_{j=1}^m R_{ijij}(\zeta_i - \zeta_j) \qquad \text{for } \, i = 1,\dots,m
	$$
%	In particular,
%	$$
%		(\Gamma E)_{ii} = 2 R_{ii} \eps_i - 2 \sum_{k=1}^m R_{ikik} \eps_k \, , \qquad (\Gamma E)_{ij} = R_{ij} (\eps_j + \eps_i) - 2 \sum_{k=1}^m R_{ikjk} \eps_k
%	$$
	hence
	\begin{equation} \label{GamZZ_zeta}
		\langle \Gamma Z,Z \rangle = \sum_{i=1}^m (\Gamma Z)_{ii} Z^{ii} = \sum_{i=1}^m (\Gamma Z)_{ii} \zeta_i = 2 \sum_{i,j=1}^m R_{ijij} \zeta_i(\zeta_i - \zeta_j)
	\end{equation}
	and this can be rewritten as
	$$
		\langle \Gamma Z,Z \rangle = \sum_{i,j=1}^m R_{ijij} \zeta_i(\zeta_i - \zeta_j) + \sum_{i,j=1}^m R_{jiji} \zeta_j(\zeta_j - \zeta_i) = \sum_{i,j=1}^m R_{ijij} (\zeta_i - \zeta_j)^2 \, .
	$$
	Since $\sum_{i=1}^m \zeta_i = 0$, we have $\sum_{i,j=1}^m \zeta_i \zeta_j = 0$ and therefore
	\begin{equation} \label{Z2}
		\sum_{i,j=1}^m (\zeta_i-\zeta_j)^2 = m\sum_{i=1}^m \zeta_i^2 + m\sum_{j=1}^m \zeta_j^2 - 2\sum_{i,j=1}^m \zeta_i\zeta_j = 2 m \sum_{i=1}^m \zeta_i^2 = 2m|Z|^2 \, .
	\end{equation}
	Moreover, for any $1 \leq k < t \leq m$ we have
	\begin{equation} \label{curv_term_Z}
		(\zeta_k - \zeta_t)^2 \leq 2(\zeta_k^2 + \zeta_t^2) \leq 2|Z|^2 = \frac{2}{m} \sum_{1\leq i<j\leq m} (\zeta_i-\zeta_j)^2 \, .
	\end{equation}
	We order the set $\{(i,j) \in \N\times\N : 1 \leq i<j \leq m\}$ as a sequence $\{(i_\alpha,j_\alpha)\}_{1\leq\alpha\leq\binom{m}{2}}$ so that
	$$
		R_{i_\alpha j_\alpha i_\alpha j_\alpha} \leq R_{i_\beta j_\beta i_\beta j_\beta} \qquad \forall \, 1 \leq \alpha \leq \beta \leq \binom{m}{2}
	$$
	and we set $\kappa_\alpha = R_{i_\alpha j_\alpha i_\alpha j_\alpha}$, $c_\alpha = (\zeta_{i_\alpha}-\zeta_{j_\alpha})^2$ for every $1\leq \alpha\leq\binom{m}{2}$. Then \eqref{curv_term_Z} reads as
	\begin{equation} \label{curv_term_Z2}
		c_\alpha \leq \frac{2}{m} \sum_{\beta=1}^{\binom{m}{2}} c_\beta \qquad \forall \, 1 \leq \alpha \leq \binom{m}{2}
	\end{equation}
	and \eqref{GamZZ_zeta}, \eqref{Z2} can be expressed as
	$$
		\langle \Gamma Z,Z \rangle = 2\sum_{1\leq i<j\leq m} R_{ijij} (\zeta_i - \zeta_j)^2 = 2 \sum_{\alpha=1}^{\binom{m}{2}} \kappa_\alpha c_\alpha \, , \qquad \sum_{\alpha=1}^{\binom{m}{2}} c_\alpha = m|Z|^2 \, .
	$$
	Applying Remark \ref{rem_ab} with $N = \binom{m}{2}$, $k = \frac{m}{2} < \binom{m}{2}$ and using \eqref{m2_sect} we conclude
	$$
		\langle \Gamma Z,Z \rangle \geq \frac{2}{\lfloor\frac{m}{2}\rfloor} \sum_{\alpha=1}^{\lfloor\frac{m}{2}\rfloor} \kappa_\alpha \sum_{\beta=1}^{\binom{m}{2}} c_\beta \geq 2 C \sum_{\beta=1}^{\binom{m}{2}} c_\beta = 2mC|Z|^2 \, .
	$$
\end{proof}

\begin{lemma}[\cite{tachi74}] \label{lem_That}
	Let $T$ be an algebraic curvature tensor. Then
	\begin{equation} \label{That_P}
		|\hat T|^2 = 2(m-1) |P|^2
	\end{equation}
	where $P$ is the tensor defined in \eqref{P_def} and $\hat T$ is defined as in \eqref{Qhat}. In particular, if $T = W$ is totally traceless then
	\begin{equation} \label{What_W}
		|\hat W|^2 = 2(m-1) |W|^2 \, .
	\end{equation}
\end{lemma}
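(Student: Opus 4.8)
The plan is to avoid expanding $|\hat T|^2$ into its full $64$-term form and instead exploit the linearity of the map $T\mapsto\hat T$ together with the symmetries of $T$. First I would specialize \eqref{Qhat} to $q=4$, $Q=T$, obtaining
$$
2\hat T_{ijktsr} = T_{sjkt}g_{ir} + T_{iskt}g_{jr} + T_{ijst}g_{kr} + T_{ijks}g_{tr} - T_{rjkt}g_{is} - T_{irkt}g_{js} - T_{ijrt}g_{ks} - T_{ijkr}g_{ts} \, .
$$
From this it is immediate that $\hat T$ is antisymmetric in the two ``form'' indices $(s,r)$ and inherits in its first four indices the symmetries \eqref{T_sym} of $T$ (antisymmetry in the pair $(i,j)$ and in the pair $(k,t)$, and pair symmetry). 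Using the antisymmetry in $(s,r)$ to check that the second group of four terms contributes the same as the first, one reduces to
$$
|\hat T|^2 = \hat T^{ijktsr}\big(T_{sjkt}g_{ir} + T_{iskt}g_{jr} + T_{ijst}g_{kr} + T_{ijks}g_{tr}\big) \, .
$$

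The heart of the argument is the identity
$$
g^{ir}\hat T_{ijktsr} = \frac{m-1}{2}\, P_{sjkt} \, ,
$$
where $P$ is the tensor of \eqref{P_def}: contracting $\hat T$ over its first index with the last form index reproduces $P$ up to the factor $\frac{m-1}{2}$. To establish it I would plug the displayed formula for $\hat T$ into the left-hand side and contract with $g^{ir}$. Of the eight resulting terms, one vanishes because $g^{ir}T_{irkt}=0$ by \eqref{T_sym}, two produce the Ricci contraction $E=E_T$ (via $E_{jt}=T^{i}{}_{jit}$), and the remaining curvature terms are $mT_{sjkt}$, $T_{jskt}=-T_{sjkt}$ and $T_{kjst}+T_{tjks}$; the key point is that the first Bianchi identity \eqref{T_B1} together with \eqref{T_sym} collapses $T_{kjst}+T_{tjks}$ to $T_{sjkt}$, so that the curvature part adds up to $(m-1)T_{sjkt}$. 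Collecting everything gives $(m-1)T_{sjkt}-g_{ks}E_{jt}+g_{ts}E_{jk}=(m-1)P_{sjkt}$.

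With this identity available, each of the four terms in the reduced expression for $|\hat T|^2$ equals $\frac{m-1}{2}\langle T,P\rangle$: the first is immediate, and for the other three I would first use an inherited symmetry of $\hat T$ (antisymmetry in $(i,j)$, pair symmetry, or both) to move the contracted pair of indices into the first and last slots, apply the identity, and then transfer the symmetry back onto the $T$ factor — a short index manipulation in each case. Hence $|\hat T|^2=2(m-1)\langle T,P\rangle$. Finally, a direct two-line computation (contracting $T$ against the definition \eqref{P_def} of $P$) gives $\langle T,P\rangle=|T|^2-\frac{2}{m-1}|E|^2$, which is exactly $|P|^2$ by \eqref{|P|}; this proves \eqref{That_P}, and the totally traceless case $T=W$ (so $E=0$ and $P=W$) gives \eqref{What_W}. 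I expect the only real obstacle to be the bookkeeping in the crux identity, specifically the correct application of the first Bianchi identity to reduce $T_{kjst}+T_{tjks}$ to $T_{sjkt}$; everything else is elementary index juggling.
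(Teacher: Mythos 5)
Your proof is correct, and the underlying computation is essentially the paper's: both reduce $|\hat T|^2$ to four equal copies of $\hat T^{ijktsr}T_{sjkt}g_{ir}$ using the antisymmetry of $\hat T$ in $(s,r)$ together with the inherited symmetries in its first four indices, both invoke the first Bianchi identity at the same juncture, and both finish via \eqref{|P|}. What you do differently is organizational, but genuinely clarifying: the paper expands $2\hat T^{ijktsr}T_{sjkt}g_{ir}$ in one eight-term pass directly to $(m-1)|T|^2-2|E|^2$, whereas you isolate the partial contraction $g^{ir}\hat T_{ijktsr}=\tfrac{m-1}{2}P_{sjkt}$ as the key identity, so that the pseudo-projective tensor $P$ is produced before any norm is taken, and the remaining work is only the one-line observation $\langle T,P\rangle=|T|^2-\tfrac{2}{m-1}|E|^2=|P|^2$. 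This makes the appearance of $P$ in \eqref{That_P} look structural rather than accidental, and it localizes the use of the first Bianchi identity to exactly the spot you correctly flagged as the only nontrivial step, namely the reduction $T_{kjst}+T_{tjks}=T_{sjkt}$ (which indeed follows from pair symmetry, Bianchi cyclic in $(j,s,t)$, and antisymmetry). The transfer of the three remaining terms onto the first by moving the contracted index pair into the $(i,r)$ slots also works as you describe.
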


\begin{proof}
	From the defining formula \eqref{Qhat} we have
	$$
		2 \hat T_{ijktsr} = T_{sjkt} g_{ir} + T_{iskt} g_{jr} + T_{ijst} g_{kr} + T_{ijks} g_{tr} - T_{rjkt} g_{is} - T_{irkt} g_{js} - T_{ijrt} g_{ks} - T_{ijkr} g_{ts} \, .
	$$
	A direct computation, using the symmetries of $T$, yields
	\begin{align*}
		\hat T^{ijktsr} \hat T_{ijktsr} & = \hat T^{ijktsr} ( T_{sjkt} g_{ir} + T_{iskt} g_{jr} + T_{ijst} g_{kr} + T_{ijks} g_{tr} ) = 4 \hat T^{ijktsr} T_{sjkt} g_{ir}
	\end{align*}
	and then
	\begin{align*}
		2 \hat T^{ijktsr} T_{sjkt} g_{ir} & = T^{sjkt} T_{sjkt} g^{ir} g_{ir} + T^{iskt} T_{sjkt} g^{jr} g_{ir} + T^{ijst} T_{sjkt} g^{kr} g_{ir} + T^{ijks} T_{sjkt} g^{tr} g_{ir} \\
		& \phantom{=\;} - T^{rjkt} T_{sjkt} g^{is} g_{ir} - T^{irkt} T_{sjkt} g^{js} g_{ir} - T^{ijrt} T_{sjkt} g^{ks} g_{ir} - T^{ijkr} T_{sjkt} g^{ts} g_{ir} \\
		& = m T^{sjkt} T_{sjkt} + T^{iskt} T_{sikt} + T^{ijst} T_{sjit} + T^{ijks} T_{sjki} - T^{rjkt} T_{rjkt} - 2 E^{jt} E_{jt} \\
		& = (m-1)|T|^2 - 2|E|^2 + T^{iskt}(T_{sikt} + T_{ksit} + T_{tski}) \\
		& = (m-1)|T|^2 - 2|E|^2 + T^{iskt}(T_{sikt} + T_{ksit} + T_{ikst}) \\
		& = (m-1)|T|^2 - 2|E|^2
	\end{align*}
	where in the last equality we have used the fact that $T$ satisfies the first Bianchi identity. The conclusion then follows by \eqref{|P|}.
\end{proof}

\begin{lemma}[\cite{pw20}] \label{lem_PW_1}
	Let $T$ be an algebraic curvature tensor and $\omega$ a $2$-form. Then
	\begin{equation} \label{PW_Tomega}
		\omega_{ij} \omega_{kt} \hat T_{abcd}^{\phantom{abcd}ij} \hat T^{abcdkt} \leq 4 \omega_{ij} \omega^{ij} T_{abcd} T^{abcd} \, .
	\end{equation}
\end{lemma}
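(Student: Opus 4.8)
The plan is to read the left-hand side of \eqref{PW_Tomega} as the squared norm of $T$ under the natural action of $\omega$ (viewed as a skew-symmetric endomorphism via the metric), and then to estimate the operator norm of that action. Working at a fixed point, raise an index of $\omega$ to obtain the skew-symmetric endomorphism with components $A^s{}_a = \omega^{sr}g_{ra}$ (so $A_{sa} = \omega_{sa}$), and extend $A$ as a derivation to $(0,4)$-tensors by $(\rho(\omega)T)_{abcd} = A^s{}_a T_{sbcd} + A^s{}_b T_{ascd} + A^s{}_c T_{absd} + A^s{}_d T_{abcs}$. Expanding $\hat T$ from \eqref{Qhat} and using the antisymmetry of $\omega$, a short computation — each of the four slots of $T$ pairs with both slots of the $2$-form part of $\hat T$, the two contributions coinciding and absorbing the factors $\tfrac12$ in \eqref{Qhat} — gives $\omega_{ij}\hat T_{abcd}{}^{ij} = (\rho(\omega)T)_{abcd}$. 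Hence the left-hand side of \eqref{PW_Tomega} equals $|\rho(\omega)T|^2$, and since $\omega_{ij}\omega^{ij} = |\omega|^2$ and $T_{abcd}T^{abcd} = |T|^2$, it remains to prove $|\rho(\omega)T|^2 \le 4|\omega|^2 |T|^2$, i.e. that $\rho(\omega)$ has operator norm at most $2|\omega|$ on algebraic curvature tensors.

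Next I would estimate this operator norm. Choose an orthonormal coframe putting $\omega$ in canonical form, $\omega = \sum_k \mu_k\, \theta^{2k-1}\wedge\theta^{2k}$ with $\mu_k \ge 0$; then $|\omega|^2 = \sum_k 2\mu_k^2$ and $A$ has eigenvalues $\{\pm i\mu_k\}$ (together with $0$ if $m$ is odd). The essential structural input is that an algebraic curvature tensor $T$ is antisymmetric in its first pair and in its second pair of indices, so it lies in $\wedge^2_x M \otimes \wedge^2_x M \subseteq (T^\ast_x M)^{\otimes 4}$; on this subspace the derivation $\rho(\omega)$ restricts to $A^{(2)} \otimes \mathrm{Id} + \mathrm{Id} \otimes A^{(2)}$, where $A^{(2)}$ is the induced action of $A$ on $2$-forms. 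Since the eigenvalues of $A^{(2)}$ are the pairwise sums of eigenvalues of $A$, each has modulus at most $\mu_{(1)} + \mu_{(2)}$, the sum of the two largest among the $\mu_k$; hence $\|A^{(2)}\|_{\mathrm{op}} \le \mu_{(1)} + \mu_{(2)}$, and by Cauchy--Schwarz $(\mu_{(1)}+\mu_{(2)})^2 \le 2\sum_k\mu_k^2 = |\omega|^2$. Therefore $\|\rho(\omega)\|_{\mathrm{op}} \le 2\|A^{(2)}\|_{\mathrm{op}} \le 2|\omega|$ on algebraic curvature tensors, and \eqref{PW_Tomega} follows by applying this pointwise.

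The part requiring care is the constant. The first Bianchi identity plays no role here, but the pair antisymmetries of $T$ are crucial: regarding $T$ merely as an element of $(T^\ast_x M)^{\otimes 4}$ would yield only $\|\rho(\omega)\|_{\mathrm{op}} \le 4\|A\|_{\mathrm{op}} \le 2\sqrt2\,|\omega|$ and the weaker bound $8|\omega|^2|T|^2$; it is precisely the factorization through $\wedge^2 \otimes \wedge^2$ that improves this to the sharp constant $4$ (sharpness can be checked for $m=4$ with $\omega$ having two equal blocks and $T$ built from a top eigenvector of $A^{(2)}$). The only remaining effort is bookkeeping — verifying the sign and normalization in the identity $\omega_{ij}\hat T_{abcd}{}^{ij} = (\rho(\omega)T)_{abcd}$, and keeping the normalization $|\omega|^2 = \omega_{ij}\omega^{ij}$ consistent throughout, which differs from the convention of \cite{pw20} and is the reason a self-contained computation is warranted.
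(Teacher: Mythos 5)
Your proof is correct, and it takes a genuinely different route from the paper's. Both arguments rest on the same structural fact --- the pair antisymmetries of $T$ realize it as an element of $\wedge^2_x M \otimes \wedge^2_x M$ --- and neither uses the first Bianchi identity, but they extract the constant $4$ differently. The paper works directly at the component level in an orthonormal frame adapted to $\omega$: it writes $\omega_{ij}\hat T_{abcdij}$ as a four-term sum indexed by the pairs $\{a,a'\},\{b,b'\},\{c,c'\},\{d,d'\}$, applies Cauchy--Schwarz, and then improves the coefficient from $\omega_{a'a}^2+\omega_{b'b}^2+\omega_{c'c}^2+\omega_{d'd}^2$ to the full $\sum_{i,j}\omega_{ij}^2$ by a combinatorial observation: after discarding summands that vanish by antisymmetry of $T$, the surviving index pairs can coincide at most twice, which is absorbed by the factor $2$ already built into $\sum_{i,j}\omega_{ij}^2$; one then sums over $a,b,c,d$ using that $a\mapsto a'$ is a bijection. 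Your approach instead identifies $\omega_{ij}\hat T_{abcd}{}^{ij}$ with the derivation $\rho(\omega)T$, restricts $\rho(\omega)$ to the invariant subspace $\wedge^2\otimes\wedge^2$ where it factors as $A^{(2)}\otimes\mathrm{Id}+\mathrm{Id}\otimes A^{(2)}$, and reads off the operator norm from the spectrum of the skew-symmetric $A^{(2)}$, closing with $(\mu_{(1)}+\mu_{(2)})^2\le 2\sum_k\mu_k^2=|\omega|^2$. The spectral route is more conceptual and makes the provenance of the constant transparent (including where sharpness sits), at the modest cost of passing through the complexified spectrum of the induced action on $2$-forms; the paper's argument is more elementary and entirely index-level, consistent with the style of the surrounding section. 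Both are valid and yield exactly \eqref{PW_Tomega}.
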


\begin{proof}
	The values appearing on both sides \eqref{PW_Tomega} do not depend on the local coframe $\{\theta^i\}$ chosen to perform computations. For the sake of simplicity, we assume that $\{\theta^i\}$ is an orthonormal coframe. Note that in this case we can avoid raising and lowering indexes to denote contraction with $g$ or $g^{-1}$, since $g^{ij} = g_{ij} = \delta_{ij}$, the Kronecker symbol.
	
	First, we observe that
	\begin{equation} \label{Tomega}
		\omega_{ij} \widehat T_{abcdij} = \omega_{ia} T_{ibcd} + \omega_{ib} T_{aicd} + \omega_{ic} T_{abid} + \omega_{id} T_{abci} \, .
	\end{equation}
	Then, we assume that the coframe $\{\theta^i\}$ is chosen so that $\omega$ can be expressed as
	$$
		\omega = \omega_{12} \, \theta^1 \wedge \theta^2 + \omega_{34} \, \theta^3 \wedge \theta^4 + \cdots + \omega_{2k-1,2k} \, \theta^{2k-1} \wedge \theta^{2k}
	$$
	with $k = \lfloor\frac{m}{2}\rfloor$. For every $1 \leq a \leq m$, set
	$$
		a' = \begin{cases}
			a - 1 & \text{if } \, a \leq 2k, \, a \text{ even,} \\
			a + 1 & \text{if } \, a \leq 2k, \, a \text{ odd,} \\
			a & \text{otherwise.}
		\end{cases}
	$$
	Then, \eqref{Tomega} rewrites as (no summation is intended over repeated indexes on the RHS)
	\begin{equation} \label{Tomega1}
		\omega_{ij}\widehat T_{abcdij} = \omega_{a'a} T_{a'bcd} + \omega_{b'b}T_{ab'cd} + \omega_{c'c}T_{abc'd} + \omega_{d'd}T_{abcd'} \, .
	\end{equation}
	By Cauchy's inequality we can bound
	$$
		(\omega_{ij}\widehat T_{abcdij})^2 \leq \left( \omega_{a'a}^2 + \omega_{b'b}^2 + \omega_{c'c}^2 + \omega_{d'd}^2 \right) \left( T_{a'bcd}^2 + T_{ab'cd}^2 + T_{abc'd}^2 + T_{abcd'}^2 \right)
	$$
	but in fact we also have the more effective bound
	\begin{equation} \label{Tomega2}
		(\omega_{ij}\widehat T_{abcdij})^2 \leq \left( \sum_{i,j=1}^m \omega_{ij}^2 \right) \left( T_{a'bcd}^2 + T_{ab'cd}^2 + T_{abc'd}^2 + T_{abcd'}^2 \right) \, .
	\end{equation}
	Since for every $1\leq i,j\leq m$ we have $\omega_{ij}^2 = \omega_{ji}^2$, to justify deduction of \eqref{Tomega2} from \eqref{Tomega1} one observes that, up to dropping out vanishing terms from the RHS of \eqref{Tomega1}, for every $i\neq j$ there are at most two sets, amongst $\{a,a'\}$, $\{b,b'\}$, $\{c,c'\}$ and $\{d,d'\}$, that coincide with $\{i,j\}$. Indeed, if $a=b$ then $a'=b'$, and $\omega_{a'a} = \omega_{b'b}$ while $T_{a'bcd} = T_{b'acd} = - T_{ab'cd}$; if $a=b'$ then $b=a'$ and $T_{a'bcd} = 0 = T_{ab'cd}$. Hence, for any $a,b,c,d$ we have
	\begin{align*}
		\omega_{a'a} T_{a'bcd} + \omega_{b'b} T_{ab'cd} \neq 0 \qquad & \Rightarrow \qquad \{a,a'\}\cap\{b,b'\} = \emptyset \\
		\intertext{and similarly}
		\omega_{c'c} T_{abc'd} + \omega_{d'd} T_{abcd'} \neq 0 \qquad & \Rightarrow \qquad \{c,c'\}\cap\{d,d'\} = \emptyset \, .		
	\end{align*}
	Summing over all tuples $(a,b,c,d)$, we obtain
	$$
		\omega_{ij}\omega_{kt} \widehat T_{abcdij}\widehat T_{abcdkt} \leq \omega_{ij}\omega_{ij} \sum_{a,b,c,d=1}^m \left( T_{a'bcd}^2 + T_{ab'cd}^2 + T_{abc'd}^2 + T_{abcd'}^2 \right) = 4 \omega_{ij} \omega_{ij} T_{abcd} T_{abcd}
	$$
	where equality follows since the map $a\mapsto a'$ is a bijection of $\{1,\dots,m\}$ into itself, so that
	$$
		\sum_{a,b,c,d=1}^m T_{a'bcd}^2 = \sum_{a,b,c,d=1}^m T_{abcd}^2
	$$
	and similarly for the other terms.
\end{proof}

\begin{proposition}[\cite{pw20}] \label{m-1/2_W}
	Let $x \in M$, $C\in\R$ and assume that
	$$
		\mathfrak R^{(\lfloor\frac{m-1}{2}\rfloor)}(x) \geq C \, .
	$$
	Then for every totally traceless algebraic curvature tensor $W$ we have
	$$
		\langle \Gamma W,W \rangle \geq 2(m-1)C|W|^2 \qquad \text{at } \, x .
	$$
\end{proposition}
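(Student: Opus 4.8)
The plan is to combine the identity $\langle \Gamma W, W \rangle = \langle \mathfrak R^{T^0_4 M} \hat W, \hat W \rangle$ (the $q=4$ case of \eqref{Gam_hat}) with the spectral decomposition of $\mathfrak R_x$ and then invoke the averaging statement of Remark \ref{rem_ab}. We may assume $m \geq 4$, since for $m = 3$ the totally traceless $W$ vanishes identically and there is nothing to prove. Fix an orthonormal basis $\{\omega^\alpha\}_{\alpha=1}^{\binom{m}{2}}$ of $\wedge^2_x M$ consisting of eigenvectors of $\mathfrak R_x$, ordered so that $\lambda_1 \leq \cdots \leq \lambda_{\binom{m}{2}}$; thus $R_{srkt} = \sum_\alpha \lambda_\alpha \omega^\alpha_{sr}\omega^\alpha_{kt}$ and $\sum_\alpha \omega^\alpha_{sr}\omega^\alpha_{kt} = \tfrac12(g_{sk}g_{rt} - g_{st}g_{rk})$, the latter acting as the identity on tensors antisymmetric in $(s,r)$. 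Since $\hat W \in T^0_4 M \otimes \wedge^2 M$ is antisymmetric in its last two slots, expanding along $\{\omega^\alpha\}$ gives
$$
	\hat W_{abcd\,sr} = \sum_{\alpha=1}^{\binom{m}{2}} c^\alpha_{abcd}\,\omega^\alpha_{sr} \, , \qquad c^\alpha_{abcd} := \hat W_{abcd\,pq}\,\omega^{\alpha pq} \, .
$$
Substituting this together with the spectral formula for $R_{srkt}$ into $\langle \mathfrak R^{T^0_4 M}\hat W,\hat W\rangle = R_{srkt}\,\hat W_{abcd}^{\phantom{abcd}sr}\,\hat W^{abcd\,kt}$ and using the orthonormality of the $\omega^\alpha$, I obtain the two Parseval-type identities
$$
	\langle \Gamma W,W \rangle = \sum_{\alpha=1}^{\binom{m}{2}} \lambda_\alpha\,|c^\alpha|^2 \, , \qquad \sum_{\alpha=1}^{\binom{m}{2}} |c^\alpha|^2 = |\hat W|^2 \, .
$$

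Next I would feed in the two lemmas. By Lemma \ref{lem_That} applied to the totally traceless tensor $W$, $|\hat W|^2 = 2(m-1)|W|^2$, hence $\sum_\alpha |c^\alpha|^2 = 2(m-1)|W|^2$. Applying Lemma \ref{lem_PW_1} with $\omega = \omega^\alpha$, and recalling that $|\omega^\alpha|^2 = 1$, gives for every $\alpha$
$$
	|c^\alpha|^2 = \omega^\alpha_{ij}\,\omega^\alpha_{kt}\,\hat W_{abcd}^{\phantom{abcd}ij}\,\hat W^{abcd\,kt} \leq 4|W|^2 = \frac{2}{m-1}\sum_{\beta=1}^{\binom{m}{2}} |c^\beta|^2 \, ,
$$
which is exactly the pinching hypothesis \eqref{b_pinch} with $N = \binom{m}{2}$ and the real number $k = \tfrac{m-1}{2}$.

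Finally, I would apply Remark \ref{rem_ab} with the nondecreasing sequence $a_\alpha = \lambda_\alpha$, the nonnegative sequence $b_\alpha = |c^\alpha|^2$, and $k = \tfrac{m-1}{2} \in [1, \binom{m}{2})$: the pinching just obtained yields
$$
	\langle \Gamma W,W \rangle = \sum_{\alpha=1}^{\binom{m}{2}} \lambda_\alpha\,|c^\alpha|^2 \geq \frac{1}{\lfloor\frac{m-1}{2}\rfloor}\sum_{\alpha=1}^{\lfloor\frac{m-1}{2}\rfloor}\lambda_\alpha \cdot \sum_{\beta=1}^{\binom{m}{2}}|c^\beta|^2 = \mathfrak R^{(\lfloor\frac{m-1}{2}\rfloor)}(x)\cdot 2(m-1)|W|^2 \geq 2(m-1)C|W|^2 \, ,
$$
where the final inequality uses $\mathfrak R^{(\lfloor\frac{m-1}{2}\rfloor)}(x) = \tfrac{1}{\lfloor\frac{m-1}{2}\rfloor}\sum_{\alpha=1}^{\lfloor\frac{m-1}{2}\rfloor}\lambda_\alpha \geq C$. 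The step I expect to be the main obstacle is the bookkeeping behind these estimates under the paper's normalization of the inner product on $\wedge^2$ (where ``orthonormal'' means $\omega^\alpha_{ij}\omega^{\beta ij} = \delta^{\alpha\beta}$ and the completeness relation carries the factor $\tfrac12$), and in particular verifying that Lemma \ref{lem_PW_1} evaluated on the eigenvectors $\omega^\alpha$ produces the pinching with exactly $k = \tfrac{m-1}{2}$ rather than with a larger, hence weaker, constant; everything else is direct substitution.
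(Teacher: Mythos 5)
Your proposal is correct and follows essentially the same path as the paper's own proof: spectral decomposition of $\mathfrak R_x$, the identity $\langle \Gamma W,W\rangle = \langle \mathfrak R^{T^0_4 M}\hat W,\hat W\rangle$, Lemma \ref{lem_That} for $\sum_\alpha c_\alpha = |\hat W|^2 = 2(m-1)|W|^2$, Lemma \ref{lem_PW_1} for the pinching $c_\alpha \leq 4|W|^2 = \frac{2}{m-1}\sum_\beta c_\beta$, and Remark \ref{rem_ab} with $k = \frac{m-1}{2}$. The only cosmetic difference is that you introduce the explicit expansion $\hat W_{abcd\,sr} = \sum_\alpha c^\alpha_{abcd}\omega^\alpha_{sr}$; the paper simply names the quantity $c_\alpha = \omega^\alpha_{ij}\omega^\alpha_{kt}\hat W_{abcd}^{\phantom{abcd}ij}\hat W^{abcdkt}$, which is precisely your $|c^\alpha|^2$.
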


\begin{proof}
	Let $\{\omega^\alpha\}_\alpha$ be an orthonormal basis of $\wedge^2_x M$ consisting of eigenvectors of $\mathfrak R$ with corresponding eigenvalues $\{\lambda_\alpha\}_\alpha$. Then, with respect to any local coframe $\{\theta^i\}$ we have
	\begin{equation} \label{R_gg_omega}
		R_{ijkt} = \sum_\alpha \lambda_\alpha \omega^\alpha_{ij} \omega^\alpha_{kt} \, , \qquad g_{ik} g_{jt} - g_{it} g_{jk} = 2 \sum_\alpha \omega^\alpha_{ij}\omega^\alpha_{kt} \, .
	\end{equation}
%	$$
%		(g_{ik}g_{jt} - g_{it}g_{jk}) \omega^\beta_{ij} \omega^\gamma_{kt} = 2 \omega^\beta_{ij} \omega^\gamma_{ij} = 2\delta^{\beta\gamma} = 2\delta^{\alpha\beta} \delta^{\beta\gamma}
%	$$
	From \eqref{Gam_hat} then we have
	\begin{equation} \label{PW_Gam_WW}
		\langle \Gamma W,W \rangle = \langle \mathfrak R^{T^0_4 M} \hat W, \hat W \rangle = \sum_{\alpha=1}^{\binom{m}{2}} \lambda_\alpha \omega^\alpha_{ij}\omega^\alpha_{kt} \hat W_{abcd}^{\phantom{abcd}ij} \hat W^{abcdkt} = \sum_{\alpha=1}^{\binom{m}{2}} \lambda_\alpha c_\alpha
	\end{equation}
	where we have set $c_\alpha = \omega^\alpha_{ij}\omega^\alpha_{kt} \hat W_{abcd}^{\phantom{abcd}ij} \hat W^{abcdkt}$. By \eqref{What_W} and the second in \eqref{R_gg_omega} we have
	$$
		2(m-1)|W|^2 = |\hat W|^2 = g_{ij} g_{kt} \hat W_{abcd}^{\phantom{abcd}ij} \hat W^{abcdkt} = \frac{1}{2}(g_{ij} g_{kt} - g_{it}g_{jk}) \hat W_{abcd}^{\phantom{abcd}ij} \hat W^{abcdkt} = \sum_{\alpha=1}^{\binom{m}{2}} c_\alpha
	$$
	and then by \eqref{PW_Tomega} for every $\alpha$
	\begin{equation} \label{PW_|W|}
		c_\alpha \leq 4 |W|^2 = \frac{2}{m-1} |\hat W|^2 = \frac{2}{m-1} \sum_{\beta=1}^{\binom{m}{2}} c_\beta \qquad \forall \, 1\leq \alpha\leq\binom{m}{2} \, .
	\end{equation}
	Applying Remark \ref{rem_ab} we obtain the desired conclusion.
\end{proof}

\begin{theorem} \label{thm_m-1/2_R}
	Let $x \in M$, $C\in\R$ and assume that
	\begin{equation} \label{m-1/2_R}
		\mathfrak R^{(\lfloor\frac{m-1}{2}\rfloor)}(x) \geq C \, .
	\end{equation}
	Then for every algebraic curvature tensor $T$ we have
	\begin{equation}
		\langle \Gamma T,T \rangle \geq 2(m-1)C|P|^2 \qquad \text{at x,}
	\end{equation}
	where $P$ is the tensor defined in \eqref{P_def}.
\end{theorem}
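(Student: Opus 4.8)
The plan is to exploit the orthogonal-type decomposition of $\langle \Gamma T,T\rangle$ furnished by Proposition \ref{Gam_TT_gen}, bound the two resulting pieces separately using results already established, and then check that the constants reassemble exactly into $2(m-1)C|P|^2$ via formula \eqref{|P|}. Concretely, applying Proposition \ref{Gam_TT_gen} with $\tilde T = T$ gives
$$
\langle \Gamma T,T\rangle = \langle \Gamma W,W\rangle + \frac{4}{m-2}\langle \Gamma Z,Z\rangle \, ,
$$
where $W$ is the Weyl part of $T$ and $Z$ the traceless part of its Ricci contraction, so it suffices to estimate $\langle \Gamma W,W\rangle$ and $\langle \Gamma Z,Z\rangle$ from below at $x$.

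For the Weyl piece, since $W$ is a totally traceless algebraic curvature tensor and \eqref{m-1/2_R} holds, Proposition \ref{m-1/2_W} applies verbatim and yields $\langle \Gamma W,W\rangle \geq 2(m-1)C|W|^2$. For the traceless-Ricci piece, the point is that \eqref{m-1/2_R} forces a lower bound on averages of sectional curvatures: taking $k = \lfloor\frac{m-1}{2}\rfloor$ and $h = \lfloor\frac m2\rfloor$ in \eqref{Riem_K_bound} (one has $1 \le k \le h \le \binom m2$), we get
$$
\Sect^{(\lfloor\frac m2\rfloor)}(x) \;\ge\; \Sect^{(\lfloor\frac{m-1}{2}\rfloor)}(x) \;\ge\; \tfrac12\,\mathfrak R^{(\lfloor\frac{m-1}{2}\rfloor)}(x) \;\ge\; \tfrac{C}{2} \, ,
$$
so the hypothesis \eqref{m2_sect} of Proposition \ref{ZRic} is met with $C$ replaced by $C/2$; since that proposition is stated for an arbitrary real constant, it gives $\langle \Gamma Z,Z\rangle \geq 2m\cdot\tfrac{C}{2}\,|Z|^2 = mC|Z|^2$ at $x$.

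Finally I would recombine: plugging the two bounds into the decomposition,
$$
\langle \Gamma T,T\rangle \;\ge\; 2(m-1)C|W|^2 + \frac{4m}{m-2}\,C|Z|^2 \;=\; 2(m-1)C\Bigl(|W|^2 + \frac{2m}{(m-2)(m-1)}|Z|^2\Bigr) \, ,
$$
and by \eqref{|P|} the quantity in brackets equals $|P|^2$, which is the assertion. I do not expect any real obstacle here, as the analytic substance is entirely carried by Propositions \ref{Gam_TT_gen}, \ref{m-1/2_W} and \ref{ZRic}; the only delicate point is the bookkeeping of normalization constants in the $Z$-estimate — passing from $\mathfrak R^{(\lfloor\frac m2\rfloor)}$ to $\Sect^{(\lfloor\frac m2\rfloor)}$ costs a factor $\tfrac12$, and one must verify that this loss is precisely compensated, in the final sum, by the coefficient $\tfrac{2m}{(m-2)(m-1)}$ of $|Z|^2$ in $|P|^2$, so that the Weyl and traceless-Ricci contributions merge into the single constant $2(m-1)C$.
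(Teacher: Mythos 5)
Your proof is correct and follows essentially the same route as the paper's: decompose $\langle\Gamma T,T\rangle$ via Proposition \ref{Gam_TT_gen}, bound the Weyl piece by Proposition \ref{m-1/2_W} and the traceless-Ricci piece by Proposition \ref{ZRic} after deducing $\Sect^{(\lfloor\frac m2\rfloor)} \geq C/2$, then reassemble with \eqref{|P|}. The only cosmetic difference is that the paper passes through $\mathfrak R^{(\lfloor\frac{m}{2}\rfloor)} \geq C$ via \eqref{R(h)(k)} before invoking \eqref{Riem_K_bound}, whereas you apply \eqref{Riem_K_bound} directly with $k=\lfloor\frac{m-1}{2}\rfloor$, $h=\lfloor\frac m2\rfloor$ — both are trivially equivalent.
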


\begin{proof}
	First, recall from \eqref{Gam_TT} that
	\begin{equation} \label{Gam_TT1}
		\langle \Gamma T,T \rangle = \langle \Gamma W,W \rangle + \frac{4}{m-2} \langle \Gamma Z,Z \rangle \, .
	\end{equation}
	By Proposition \ref{m-1/2_W} we have
	\begin{equation} \label{Gam_TT2}
		\langle \Gamma W,W \rangle \geq 2(m-1)C |W|^2 \, .
	\end{equation}
	By \eqref{R(h)(k)}, from \eqref{m-1/2_R} we deduce $\mathfrak R^{(\lfloor\frac{m}{2}\rfloor)}(x) \geq C$ and then by \eqref{Riem_K_bound}
	$$
		\frac{1}{\lfloor\frac{m}{2}\rfloor} \sum_{i=1}^{\lfloor\frac{m}{2}\rfloor} \Sect(\pi_i) \geq \frac{C}{2}
	$$
	for every set $\{\pi_1,\dots,\pi_{\lfloor\frac{m}{2}\rfloor}\}$ of mutually orthogonal $2$-planes in $T_x M$. Then, by Proposition \ref{ZRic}
	\begin{equation} \label{Gam_TT3}
		\langle \Gamma Z,Z \rangle \geq m C|Z|^2 \, .
	\end{equation}
	Putting together \eqref{Gam_TT1}, \eqref{Gam_TT2}, \eqref{Gam_TT3} and using \eqref{|P|} we conclude
	$$
		\langle \Gamma T,T \rangle \geq C \left[ 2(m-1) |W|^2 + \frac{4m}{m-2} |Z|^2 \right] = 2(m-1) C|P|^2 \, .
	$$
\end{proof}

The combination of Proposition \ref{Boch_T_prop1} and Theorem \ref{thm_m-1/2_R} yields Theorem \ref{Boch_T}.

\section{Tachibana-type theorems} \label{sec_Tachi}

\subsection{The compact case}

\begin{theorem} \label{cpt_Tachi}
	Let $(M,\metric)$ be a compact Riemannian manifold of dimension $m\geq 3$ satisfying $\mathfrak R^{(\lfloor\frac{m-1}{2}\rfloor)} \geq 0$. If $T$ is a harmonic algebraic curvature tensor on $M$, then $\nabla T\equiv 0$. Moreover, if $\mathfrak R^{(\lfloor\frac{m-1}{2}\rfloor)} > 0$ at some point then $T$ is a constant multiple of $\metric \owedge \metric$.
\end{theorem}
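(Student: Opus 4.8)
The plan is to feed the harmonic algebraic curvature tensor $T$ into the Bochner machinery of Section~\ref{sec_Boch} and then exploit compactness via the divergence theorem.

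First I would apply the harmonic case of Theorem~\ref{Boch_T} with $a(x) = \mathfrak R^{(\lfloor\frac{m-1}{2}\rfloor)}(x) \geq 0$. This gives, with $P = P_T$ the tensor defined in \eqref{P_def},
\[
	\tfrac{1}{2}\Delta|T|^2 \;\geq\; |\nabla T|^2 + (m-1)\,\mathfrak R^{(\lfloor\frac{m-1}{2}\rfloor)}\,|P|^2 \;\geq\; |\nabla T|^2 \;\geq\; 0 .
\]
Integrating over the compact manifold $M$, the left-hand side integrates to zero by the divergence theorem, so both $\int_M |\nabla T|^2 = 0$ and $\int_M (m-1)\mathfrak R^{(\lfloor\frac{m-1}{2}\rfloor)}|P|^2 = 0$. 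The first identity yields $\nabla T \equiv 0$, which is the first assertion; the second, the integrand being non-negative, yields the pointwise identity $\mathfrak R^{(\lfloor\frac{m-1}{2}\rfloor)}|P|^2 \equiv 0$ on $M$.

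For the second statement, observe that $\nabla T \equiv 0$ propagates to $\nabla E_T \equiv 0$, since the Ricci contraction commutes with the Levi-Civita connection; hence $\nabla S_T \equiv 0$ and, by \eqref{P_def}, $\nabla P \equiv 0$, so $|P|$ is constant on the connected manifold $M$. If $\mathfrak R^{(\lfloor\frac{m-1}{2}\rfloor)} > 0$ at some point $x_0$, then by continuity it is positive on a neighbourhood of $x_0$, and the identity $\mathfrak R^{(\lfloor\frac{m-1}{2}\rfloor)}|P|^2 \equiv 0$ forces $P = 0$ there; combined with the constancy of $|P|$ this gives $P \equiv 0$ on all of $M$. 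By the Lemma stated right after \eqref{P_def}, $P \equiv 0$ is equivalent to $T = \tfrac{S_T}{2m(m-1)}\,\metric\owedge\metric$, and since $S_T$ is constant this exhibits $T$ as a constant multiple of $\metric\owedge\metric$.

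I do not expect a serious obstacle here: the substantive work is the curvature estimate of Theorem~\ref{thm_m-1/2_R}, already in hand. The only points needing a little care are the passage from ``$P$ vanishes on an open set'' to ``$P$ vanishes identically'' (which rests on $\nabla P \equiv 0$ together with connectedness of $M$) and the verification that $\nabla T \equiv 0$ indeed forces $\nabla E_T \equiv 0$, $\nabla S_T \equiv 0$ and thence $\nabla P \equiv 0$ — all of which follow because contraction with the parallel metric tensor commutes with $\nabla$.
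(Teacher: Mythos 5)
Your proposal is correct and follows essentially the same route as the paper: the Bochner inequality of Theorem~\ref{Boch_T}, compactness to kill $\int_M \Delta|T|^2$, then parallelism of $T$ (hence of $E_T$, $S_T$, and $P_T$) combined with the vanishing of $\mathfrak R^{(\lfloor\frac{m-1}{2}\rfloor)}|P_T|^2$ to propagate $P_T\equiv 0$ from an open set to all of $M$. The only cosmetic difference is that the paper phrases the first step as ``the subharmonic function $|T|^2$ is constant on the compact $M$'' rather than integrating the inequality directly, which amounts to the same thing.
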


\begin{proof}
	By Theorem \ref{Boch_T}, we have
	$$
		\frac{1}{2}\Delta|T|^2 \geq |\nabla T|^2 + (m-1) \mathfrak R^{(\lfloor\frac{m-1}{2}\rfloor)} |P_T|^2 \geq 0
	$$
	where $P_T$ is the pseudo-projective tensor field associated to $T$ as in \eqref{P_def}. By compactness of $M$, the subharmonic function $|T|^2$ must be constant, hence $|\nabla T|^2 \equiv 0$ and $\mathfrak R^{(\lfloor\frac{m-1}{2}\rfloor)} |P_T|^2 \equiv 0$ since $\mathfrak R^{(\lfloor\frac{m-1}{2}\rfloor)} \geq 0$. In particular we have $\nabla T \equiv 0$ as claimed.
	
	By parallelism of the metric, $\nabla T \equiv 0$ implies $\nabla E_T \equiv 0$, where $E_T$ is the Ricci contraction of $T$, and then $\nabla P_T \equiv 0$. In particular, $|P_T|$ is constant. If $\mathfrak R^{(\lfloor\frac{m-1}{2}\rfloor)} > 0$ at some point then necessarily $|P_T|\equiv 0$ on $M$, that is, $P_T\equiv 0$ and this yields $T = c \metric \owedge \metric$, where $c = \frac{\trace_g E_T}{2m(m-1)} = \frac{S_T}{2m(m-1)}$ is also constant by Proposition \ref{B(T)_cons}.
\end{proof}

As a direct consequence we have the following generalization of Tachibana's theorem.

\begin{theorem} \label{cor_cpt_tachi}
	Let $M$ be a compact Riemannian manifold of dimension $m\geq 3$ with harmonic curvature and $\mathfrak R^{(\lfloor\frac{m-1}{2}\rfloor)} \geq 0$. Then $M$ is locally symmetric. Moreover, if $\mathfrak R^{(\lfloor\frac{m-1}{2}\rfloor)} > 0$ somewhere then $M$ is isometric to a quotient of $\sphere^m$.
\end{theorem}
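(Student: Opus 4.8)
The plan is to apply Theorem \ref{cpt_Tachi} to the algebraic curvature tensor $T = \Riem$ itself. The one thing to observe first is that the Riemann curvature tensor of any Riemannian manifold is an algebraic curvature tensor which automatically satisfies the second Bianchi identity (cf. \eqref{T_B2}), so the hypothesis that $M$ has harmonic curvature, i.e.\ $\div\Riem = 0$, is precisely the statement that $\Riem$ is a \emph{harmonic} algebraic curvature tensor in the sense of Subsection \ref{subsec_T}. Hence Theorem \ref{cpt_Tachi}, applied under the assumption $\mathfrak R^{(\lfloor\frac{m-1}{2}\rfloor)} \geq 0$, yields $\nabla\Riem \equiv 0$, which is exactly the assertion that $M$ is locally symmetric.

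For the ``moreover'' part I would argue as follows. Assume $\mathfrak R^{(\lfloor\frac{m-1}{2}\rfloor)}(x_0) > 0$ at some point $x_0 \in M$. The last sentence of Theorem \ref{cpt_Tachi} then gives $\Riem = c\,\metric\owedge\metric$ for a (necessarily constant) $c \in \R$. From this expression I would read off, via \eqref{R_spec}, that all eigenvalues of the curvature operator are equal to $2c$, so that $\mathfrak R^{(\lfloor\frac{m-1}{2}\rfloor)} \equiv 2c$ on all of $M$; since this function is positive at $x_0$ we get $c > 0$, and by \eqref{Riem_K_basic} every sectional curvature of $M$ equals $c$. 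Thus $M$ is a compact (hence complete) Riemannian manifold of constant positive sectional curvature.

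It then remains only to invoke the classical Killing--Hopf classification of space forms: the universal Riemannian cover of such an $M$ is a round sphere of curvature $c$, which is homothetic to the standard $\sphere^m$, and $M$ is the quotient of this cover by a finite group of deck transformations acting by isometries. Therefore $M$ is isometric to a quotient of $\sphere^m$, as claimed.

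I do not expect a genuine obstacle in this proof: all of the analytic work has already been carried out in the Bochner estimate of Theorem \ref{Boch_T} and its compact consequence Theorem \ref{cpt_Tachi}. The only points requiring a little care are the bookkeeping that identifies $\Riem = c\,\metric\owedge\metric$ with constant sectional curvature $c$ and that forces $c > 0$ (rather than merely $c \neq 0$), together with the appeal to the standard classification of compact constant-curvature manifolds.
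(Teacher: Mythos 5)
Your proof is correct and follows exactly the paper's route: Theorem \ref{cor_cpt_tachi} is deduced by applying Theorem \ref{cpt_Tachi} to $T = \Riem$, using that $\Riem$ automatically satisfies the second Bianchi identity so ``harmonic curvature'' is the same as $\Riem$ being a harmonic algebraic curvature tensor. The only nitpick is a harmless factor-of-$2$ slip stemming from the Kulkarni--Nomizu normalization (with the paper's conventions, $\Riem = c\,\metric\owedge\metric$ gives $\lambda_\alpha \equiv 4c$ and $\Sect \equiv 2c$, not $2c$ and $c$), but this does not affect the conclusion that $c>0$ and $M$ has constant positive sectional curvature.
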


In cases $m=3,4$ we have $\lfloor\frac{m-1}{2}\rfloor = 1$, hence the assumption on the curvature operator reduce to $\mathfrak R \geq 0$ (and possibly $\mathfrak R > 0$ somewhere, for the second part of the theorem) as in the standard Tachibana's theorem. However, we point out that in case $m=3$ the non-negativity (resp., positivity) of the curvature operator can be relaxed to the milder condition that the Ricci curvature is non-negative (resp., positive).

\begin{theorem} \label{thm_tachi_3}
	Let $M^3$ be a compact $3$-dimensional Riemannian manifold with harmonic curvature and $\Ricc \geq 0$. Then $M$ is isometric to a quotient of $\sphere^3$, $\sphere^2\times\R$ or $\R^3$. Moreover, if $\Ricc > 0$ somewhere then $M$ is isometric to a quotient of $\sphere^3$.
\end{theorem}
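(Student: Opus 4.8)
The plan is to run the Bochner technique directly on $T=\Riem$, exploiting the fact that in dimension three the Riemann tensor is algebraically determined by $\Ricc$, so that the quadratic curvature term in the Bochner identity collapses to a symmetric polynomial in the Ricci eigenvalues which is governed by Schur's inequality. First I would recall that when $m=3$ the Weyl part of $\Riem$ vanishes and $\Riem$ automatically satisfies the second Bianchi identity, so ``harmonic curvature'' here means exactly $\div\Riem=0$, i.e.\ $\Ricc$ is a Codazzi tensor, equivalently $M$ is locally conformally flat with constant scalar curvature. In particular $\Riem$ is a harmonic algebraic curvature tensor, so \eqref{Boc_T_harm} gives
\[
	\tfrac12\Delta|\Riem|^2=|\nabla\Riem|^2+\tfrac12\langle\Gamma\Riem,\Riem\rangle .
\]
Since the Weyl part of $\Riem$ is zero, Proposition~\ref{Gam_TT_gen} (that is, \eqref{Gam_TT}) together with \eqref{Gam_EE0} reduces this to $\langle\Gamma\Riem,\Riem\rangle=\frac{4}{m-2}\langle\Gamma Z,Z\rangle=4\langle\Gamma\Ricc,\Ricc\rangle$, where $Z$ is the traceless Ricci tensor.

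The key step is to evaluate $\langle\Gamma\Ricc,\Ricc\rangle$ pointwise. Working in an orthonormal frame of $\Ricc$-eigenvectors with eigenvalues $\rho_1,\rho_2,\rho_3$, one uses the three-dimensional identity $R_{ijij}=\tfrac12(\rho_i+\rho_j-\rho_k)$ (which follows from $\rho_i=\sum_{j\neq i}R_{ijij}$) to rewrite the curvature contraction in \eqref{Gam_EE0}. A short computation then gives
\[
	\langle\Gamma\Ricc,\Ricc\rangle=2\Big[\rho_1(\rho_1-\rho_2)(\rho_1-\rho_3)+\rho_2(\rho_2-\rho_1)(\rho_2-\rho_3)+\rho_3(\rho_3-\rho_1)(\rho_3-\rho_2)\Big],
\]
i.e.\ twice the Schur polynomial of $(\rho_1,\rho_2,\rho_3)$, which is non-negative whenever all $\rho_i\ge0$, that is whenever $\Ricc\ge0$. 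Hence $\tfrac12\Delta|\Riem|^2\ge|\nabla\Riem|^2\ge0$. This is the analytic core of the argument and makes rigorous the remark (attributed to Hamilton) that in dimension three the Bochner curvature term for $\Riem$ is controlled by $\Ricc\ge0$ alone.

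To finish, since $M$ is compact the subharmonic function $|\Riem|^2$ is constant, so $\nabla\Riem\equiv0$ (hence $M$ is locally symmetric) and the Schur term vanishes identically. Being compact, locally conformally flat, locally symmetric and with $\Ricc\ge0$, $M$ falls under Noronha's Theorem~\ref{noronha}, whence its universal cover is isometric to $\sphere^3$, $\sphere^2\times\R$ or $\R^3$ and $M$ is a quotient of one of these. For the last assertion, $\nabla\Ricc\equiv0$ forces the eigenvalues $\rho_i$ to be constant; if $\Ricc>0$ at one point then $\rho_1>0$, hence all $\rho_i>0$ everywhere, and the vanishing of the Schur polynomial with all $\rho_i>0$ forces $\rho_1=\rho_2=\rho_3>0$. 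Thus $M$ is Einstein with positive scalar curvature, hence of constant positive sectional curvature in dimension three, i.e.\ a quotient of $\sphere^3$ (alternatively, $\sphere^2\times\R$ and $\R^3$ have a flat direction and so do their quotients, so none of them carries a metric with $\Ricc>0$ somewhere).

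The main obstacle is the pointwise identity identifying $\langle\Gamma\Ricc,\Ricc\rangle$ with twice the Schur polynomial in the Ricci eigenvalues: this is where the dimension-three algebra is essential. Once it is established, non-negativity is exactly classical Schur's inequality, and the remainder is the standard compact Bochner argument together with the already-cited classification theorems.
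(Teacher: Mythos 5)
Your proof is correct and follows essentially the same route as the paper: the Bochner identity for $\Riem$, reduction via the vanishing of the Weyl tensor to $\langle\Gamma\Ricc,\Ricc\rangle$, identification of this with twice the Schur polynomial $\sum_i \rho_i(\rho_i-\rho_j)(\rho_i-\rho_k)$ in the Ricci eigenvalues, non-negativity under $\Ricc\geq 0$, and the compact Bochner argument finished off by Noronha's classification. The only cosmetic differences are that you compute $\langle\Gamma\Ricc,\Ricc\rangle$ via the $3$-dimensional identity $R_{ijij}=\tfrac12(\rho_i+\rho_j-\rho_k)$ rather than by expanding $R_{ijkt}$ in terms of $\Ricc$, $S$ and $g$, and that you invoke Schur's inequality rather than the paper's explicit convex-combination parametrization $\mu=t\nu+(1-t)\lambda$; both are correct verifications of the same final inequality.
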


\begin{proof}
	Since $M$ has harmonic curvature, by Proposition \ref{Boch_T_prop1}
	\begin{equation} \label{Riem_3_Boch}
		\frac{1}{2} \Delta|\Riem|^2 = |\nabla\Riem|^2 + \frac{1}{2} \langle \Gamma\Riem,\Riem \rangle \, .
	\end{equation}
	As $\dim M=3$, the Weyl tensor vanishes. So, by Proposition \ref{Gam_TT_gen} and formula \eqref{Gam_EE0}
	\begin{equation} \label{Gam3}
		\frac{1}{2} \langle \Gamma\Riem,\Riem \rangle = \frac{2}{m-2} \langle \Gamma\Ricc,\Ricc \rangle = 2 \langle \Gamma\Ricc,\Ricc \rangle = 4 (R_{ij} R^j_{\;k} R^{ki} - R_{ijkt} R^{ik} R^{jt}) \, .
	\end{equation}
	Moreover, again since $W\equiv 0$, we have
	$$
		R_{ijkt} = R_{ik} g_{jt} + R_{jt} g_{ik} - R_{it} g_{jk} - R_{jk} g_{it} - \frac{S}{2} (g_{ik} g_{jt} - g_{it} g_{jk})
	$$
	and substituting this into \eqref{Gam3} we obtain
	\begin{align*}
		\frac{1}{8} \langle \Gamma\Riem,\Riem \rangle & = R_{ij} R^j_{\;k} R^{ki} - 2 S R_{ij} R^{ij} + 2 R_{it} R^{ik} R_k^{\;\,t} + \frac{S^3}{2} - \frac{S}{2} R^{ik} R_{ik} \\
		& = 3 R_{ij} R^j_{\;k} R^{ki} - \frac{5}{2} S R_{ij} R^{ij} + \frac{S^3}{2} \, .
	\end{align*}
	Denoting by $\lambda,\mu,\nu$ the eigenvalues of the Ricci operator, we have
	\begin{alignat*}{2}
		R_{ij} R^j_{\;k} R^{ki} & = \trace(\Ricc^3) && = \lambda^3 + \mu^3 + \nu^3 \\
		R_{ij} R^{ij} & = \trace(\Ricc^2) && = \lambda^2 + \mu^2 + \nu^2 \\
		S & = \trace(\Ricc) && = \lambda + \mu + \nu
	\end{alignat*}
	and therefore
	\begin{align*}
		\frac{1}{4} \langle \Gamma\Riem,\Riem \rangle & = 6 (\lambda^3 + \mu^3 + \nu^3) - 5 (\lambda + \mu + \nu) (\lambda^2 + \mu^2 + \nu^2) + (\lambda + \mu + \nu)^3 \\
		& = 6 (\lambda^3 + \mu^3 + \nu^3) - (\lambda + \mu + \nu) [ 5 (\lambda^2 + \mu^2 + \nu^2) - (\lambda + \mu + \nu)^2 ] \\
		& = 6 (\lambda^3 + \mu^3 + \nu^3) - (\lambda + \mu + \nu) [ 4 (\lambda^2 + \mu^2 + \nu^2) - 2 (\lambda\mu + \lambda\nu + \mu\nu) ] \\
		& = 6 (\lambda^3 + \mu^3 + \nu^3) \\
		& \phantom{=\;} - 4 (\lambda^3 + \mu^3 + \nu^3) - 4(\lambda\mu^2 + \lambda\nu^2 + \mu\lambda^2 + \mu\nu^2 + \nu\lambda^2 + \nu\mu^2) \\
		& \phantom{=\;} + 2(\lambda^2\mu + \lambda^2\nu + \lambda\mu^2 + \mu^2\nu + \lambda\nu^2 + \mu\nu^2) + 6 \lambda\mu\nu \\
		& = 2 [ \lambda^3 + \mu^3 + \nu^3 - \lambda\mu^2 - \lambda\nu^2 - \mu\lambda^2 - \mu\nu^2 - \nu\lambda^2 - \nu\mu^2 + 3 \lambda\mu\nu ] \\
		& = 2 [ \lambda(\lambda-\mu)(\lambda-\nu) + \mu(\mu-\lambda)(\mu-\nu) + \nu(\nu-\lambda)(\nu-\mu) ] \, .
	\end{align*}
	We show that this term is non-negative if $\Ricc\geq0$. Without loss of generality, we can assume $\lambda\leq\mu\leq\nu$. Then we can write
	$$
		\mu = t\nu + (1-t)\lambda \equiv \lambda + ts \equiv \nu - (1-t)s
	$$
	for some $0\leq t\leq 1$, where $s = \nu-\lambda\geq0$. Thus
	\begin{align*}
		\lambda(\lambda-\mu)(\lambda-\nu) & = \lambda( \lambda - \lambda - ts ) (-s) = \lambda t s^2 \\
		\mu(\mu-\lambda)(\mu-\nu) & = \mu (\lambda + ts - \lambda)(\nu - (1-t)s - \nu) = -\mu t(1-t)s^2 \\
		\nu(\nu-\lambda)(\nu-\mu) & = \nu s (\nu - \nu + (1-t)s) = \nu(1-t)s^2
	\end{align*}
	and then
	\begin{align*}
		\frac{1}{4} \langle \Gamma\Riem,\Riem \rangle & = 2 [ \lambda t - \mu t (1-t) + \nu (1-t) ] s^2 \\
		& = 2 [ \lambda t - (t\nu + (1-t)\lambda) t(1-t) + \nu (1-t) ] s^2 \\
		& = 2 [ \lambda t - t(1-t)^2 \lambda - t^2 (1-t) \nu + \nu (1-t) ] s^2 \\
		& = 2 [ t^2(2-t) \lambda + (1-t)^2(1+t) \nu ] s^2 \\
		& \geq 0
	\end{align*}
	where the inequality holds since $2-t\geq0$ and $1+t\geq0$ by construction, while $\lambda,\nu\geq 0$ since we have assumed $\Ricc\geq0$. Moreover, equality holds if and only if one the following cases occurs:
	\begin{itemize}
		\item [(i)] $s=0$, that is, $\lambda = \mu = \nu$;
		\item [(ii)] $\lambda=0$ and $t=1$, that is, $0 = \lambda \leq \mu = \nu$.
	\end{itemize}
	Having established $\langle \Gamma\Riem,\Riem \rangle \geq 0$, applying the divergence theorem (or the maximum principle) to \eqref{Riem_3_Boch} we see that $\nabla\Riem\equiv0$, that is, $M$ is locally symmetric. The Cotton tensor of $M$ is zero by harmonicity of the curvature, hence $M$ is locally conformally flat and by Noronha's Theorem \ref{noronha} we conclude that $M$ is isometric to a quotient of either $\sphere^3$, $\sphere^2\times\R$ or $\R^3$. Note that all three cases are compatible with either condition (i) or (ii) mentioned above. Lastly, if $\Ricc>0$ at some point then $M$ is necessarily a quotient of $\sphere^3$.
\end{proof}

If $m = \dim M \geq 4$ and $M$ is locally conformally flat, the conclusions of Tachibana's theorem also hold under the assumption that $\Sect^{(\lfloor\frac{m}{2}\rfloor)} \geq 0$, with possibly strict inequality at some point. % instead of $\mathfrak R^{(\lfloor\frac{m-1}{2}\rfloor)} \geq 0$.
Recall that $\Sect^{(k)}$ has been defined in Definition \ref{def_Scalk} and that condition $\Sect^{(\lfloor\frac{m}{2}\rfloor)} \geq c$ is weaker than $\mathfrak R^{(\lfloor\frac{m-1}{2}\rfloor)} \geq c$ by \eqref{Riem_K_bound}. %Propositions \ref{W0_Sc_class} and \ref{cpt_E_para} also have the following

\begin{proposition} \label{cor_W0_Sc_class}
	Let $(M^m,\metric)$ be a compact, locally conformally flat Riemannian manifold with constant scalar curvature $S$. If $\Sect^{(\lfloor\frac{m}{2}\rfloor)} \geq 0$ then $M$ is locally symmetric, and if $\Sect^{(\lfloor\frac{m}{2}\rfloor)} > 0$ at some point then $M$ is a quotient of $\sphere^m_S$.
\end{proposition}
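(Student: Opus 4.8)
The plan is to run the Bochner technique on $T=\Riem$, using local conformal flatness to annihilate the Weyl contribution to the curvature term $\langle\Gamma\Riem,\Riem\rangle$ and the hypothesis $\Sect^{(\lfloor\frac{m}{2}\rfloor)}\geq0$ to control the only remaining contribution, which comes from the traceless Ricci tensor $Z=Z_{\Riem}$. First I would check that $\Riem$ is a harmonic algebraic curvature tensor: its Weyl part $W_{\Riem}$ vanishes (identically if $m=3$, by local conformal flatness if $m\geq4$), so $W_{\Riem}$ is trivially harmonic, and since $S$ is constant Propositions~\ref{B(T)_cons1} and~\ref{B(T)_cons} yield $\div\Riem=0$. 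Thus the Bochner identity \eqref{Boc_T_harm} applies, and since $W_{\Riem}=0$ Proposition~\ref{Gam_TT_gen} reduces it to
\[
\frac{1}{2}\Delta|\Riem|^2 = |\nabla\Riem|^2 + \frac{2}{m-2}\langle\Gamma Z,Z\rangle .
\]

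Next, for each $x\in M$ the very definition of $\Sect^{(\lfloor\frac{m}{2}\rfloor)}$ guarantees that any $\lfloor\frac{m}{2}\rfloor$ mutually orthogonal $2$-planes at $x$ have average sectional curvature at least $\Sect^{(\lfloor\frac{m}{2}\rfloor)}(x)$, so Proposition~\ref{ZRic}, applied with $C=\Sect^{(\lfloor\frac{m}{2}\rfloor)}(x)$, gives $\langle\Gamma Z,Z\rangle\geq 2m\,\Sect^{(\lfloor\frac{m}{2}\rfloor)}(x)|Z|^2\geq0$ at $x$. Hence $|\Riem|^2$ is a subharmonic function on the compact manifold $M$, so it is constant; the Bochner identity above then forces both $|\nabla\Riem|^2$ and $\langle\Gamma Z,Z\rangle$ to vanish (being nonnegative with zero sum), whence $\nabla\Riem\equiv0$ — so $M$ is locally symmetric — and, using once more $\langle\Gamma Z,Z\rangle\geq 2m\,\Sect^{(\lfloor\frac{m}{2}\rfloor)}|Z|^2\geq0$, also $\Sect^{(\lfloor\frac{m}{2}\rfloor)}|Z|^2\equiv0$ on $M$. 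For the second assertion, suppose $\Sect^{(\lfloor\frac{m}{2}\rfloor)}(x_0)>0$ at some $x_0$. Since $\nabla\Riem\equiv0$ and $\nabla\metric=0$ we have $\nabla Z\equiv0$, so $|Z|$ is constant; evaluating $\Sect^{(\lfloor\frac{m}{2}\rfloor)}|Z|^2\equiv0$ at $x_0$ forces $|Z(x_0)|=0$, hence $Z\equiv0$ and $\Ricc=\frac{S}{m}\metric$. With $W_{\Riem}=0$ and $Z_{\Riem}=0$, the decomposition \eqref{TWA_dec} gives $\Riem=\frac{S}{2m(m-1)}\metric\owedge\metric$, i.e.\ $M$ has constant sectional curvature $\frac{S}{m(m-1)}$; this common value equals $\Sect^{(\lfloor\frac{m}{2}\rfloor)}$, which is positive at $x_0$, so $S>0$, and a compact Riemannian manifold of constant positive sectional curvature is isometric to a quotient of the corresponding round sphere $\sphere^m_S$.

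The whole argument is a chain of results from Section~\ref{sec_Boch} combined with the maximum principle on the compact manifold $M$; the only step that requires a little care is checking that local conformal flatness together with constancy of $S$ makes $\Riem$ harmonic, so that the clean identity \eqref{Boc_T_harm} is available — and this is precisely what Propositions~\ref{B(T)_cons1} and~\ref{B(T)_cons} provide. I do not expect any genuine obstacle beyond this bookkeeping.
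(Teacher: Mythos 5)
Your argument is correct and follows essentially the same route as the paper: observe that $W\equiv 0$ plus constant $S$ makes $\Riem$ harmonic (via Proposition~\ref{B(T)_cons}), reduce the Bochner curvature term to $\langle\Gamma Z,Z\rangle$ via Proposition~\ref{Gam_TT_gen}, estimate it from below by $2m\,\Sect^{(\lfloor m/2\rfloor)}|Z|^2$ via Proposition~\ref{ZRic}, and close with the maximum principle on the compact $M$. The paper merely condenses the final part into ``reasoning as in the proof of Theorem~\ref{cpt_Tachi}''; you have spelled that out, working with $Z$ rather than with the pseudo-projective tensor $P$, but since $W\equiv 0$ formula~\eqref{|P|} makes these equivalent, so the two presentations coincide.
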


\begin{proof}
	Since $W\equiv0$ and $S$ is constant, $M$ has harmonic curvature and thus
	$$
		\frac{1}{2} \Delta|\Riem|^2 = |\nabla\Riem|^2 + \frac{1}{2} \langle \Gamma\Riem, \Riem \rangle \, .
	$$
	As $W=0$ we have $\langle\Gamma\Riem,\Riem\rangle = \frac{4}{m-2} \langle \Gamma\Ricc,\Ricc \rangle$, so by Proposition \ref{ZRic} we estimate
	$$
		\frac{1}{2} \Delta|\Riem|^2 \geq |\nabla\Riem|^2 + \frac{8m}{m-2} \Sect^{(\lfloor\frac{m}{2}\rfloor)} |\overset{\circ}{\Ricc}|^2
	$$
	where $\overset{\circ}{\Ricc}$ is the traceless part of $\Ricc$. Then the desired conclusion follows reasoning as in the proof of Theorem \ref{cpt_Tachi}.
%	Since $\lfloor\frac{m}{2}\rfloor \leq m-1$, we have that $\Ricc \geq 0$ everywhere on $M$, and $\Ricc > 0$ at some point. By Proposition \ref{W0_Sc_class} we deduce that $M$ is compact. Also, $M$ has harmonic curvature since $W\equiv 0$ and $\nabla S=0$, hence the Ricci tensor of $M$ is Codazzi and divergence free. From Proposition \ref{cpt_E_para} we deduce that $M$ is Einstein, with necessarily positive Ricci curvature. But $W\equiv0$ and then $M$ has constant positive curvature.
\end{proof}

\subsection{Bounded subharmonic functions on complete manifolds with $\Ricc\geq0$}

In this subsection we collect a series of results that will be useful in the following one to deal with the case of complete manifolds with harmonic curvature. We first have the following mean value inequalities for subharmonic functions due to P. Li, \cite{li86}, and Li-Schoen, \cite{ls84}.

\begin{proposition}[\cite{li86}, Theorem 4] \label{prop_li_mean}
	Let $(M,\metric)$ be a complete Riemannian manifold with $\Ricc\geq0$. Let $f\in L^\infty(M)$ be a subharmonic function. Then for any $x\in M$
	\begin{equation}
		\lim_{R\to+\infty} \frac{1}{|B_R(x)|}\int_{B_R(x)} f = \sup_M f \, .
	\end{equation}
\end{proposition}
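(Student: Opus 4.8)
The plan is to reduce the statement to a weak Harnack inequality for nonnegative superharmonic functions on manifolds with $\Ricc\geq0$. Since $f\in L^\infty(M)$ we have $\sup_M f<\infty$; replacing $f$ by $f-\sup_M f$ we may assume $\sup_M f=0$, so that $g:=-f$ is a nonnegative, bounded, \emph{superharmonic} function (i.e.\ $\Delta g\leq0$ in the weak sense) with $\inf_M g=0$. The claim becomes
$$
	\lim_{R\to+\infty} \frac{1}{|B_R(x)|}\int_{B_R(x)} g = 0 .
$$
If $M$ is compact the maximum principle forces $g$ to be constant, hence $g\equiv0$ and there is nothing to prove; so I assume $M$ noncompact. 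Note the trivial lower bound $\frac{1}{|B_R(x)|}\int_{B_R(x)} g\geq0$, so only a matching upper $\limsup$ estimate is needed.

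The key analytic input is a \emph{scale-invariant} weak Harnack inequality: on a complete manifold with $\Ricc\geq0$ the Bishop--Gromov volume doubling property and the Neumann--Poincar\'e inequality (Buser) hold with constants depending only on $m=\dim M$, and De Giorgi--Nash--Moser iteration (see Saloff-Coste, or Li--Schoen) then yields a constant $C=C(m)>0$ such that every nonnegative superharmonic $g$ on $M$ satisfies
$$
	\frac{1}{|B_{2\rho}(y)|}\int_{B_{2\rho}(y)} g \;\leq\; C \inf_{B_\rho(y)} g
	\qquad \text{for all } y\in M,\ \rho>0 .
$$
(Here one uses the exponent $p=1<\tfrac{m}{m-2}$ in the standard weak Harnack statement when $m\geq3$; for $m\leq2$ the inequality is even more elementary.) Crucially, because $\Ricc\geq0$ the doubling and Poincar\'e constants do not degenerate as $\rho\to+\infty$, so $C$ is genuinely independent of the radius and of the center.

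Granting this, fix $\eps>0$. Since $\inf_M g=0$ there is $y_\eps\in M$ with $g(y_\eps)<\eps$. For every $\rho\geq d(x,y_\eps)$ we have $y_\eps\in B_\rho(x)$, hence $\inf_{B_\rho(x)} g\leq g(y_\eps)<\eps$, and the weak Harnack inequality centered at $x$ gives $\frac{1}{|B_{2\rho}(x)|}\int_{B_{2\rho}(x)} g<C(m)\eps$; writing $R=2\rho$ this reads $\frac{1}{|B_R(x)|}\int_{B_R(x)} g<C(m)\eps$ for all $R\geq 2\,d(x,y_\eps)$. Therefore $\limsup_{R\to+\infty}\frac{1}{|B_R(x)|}\int_{B_R(x)} g\leq C(m)\eps$, and letting $\eps\downarrow0$ together with the trivial lower bound gives the limit $0$; undoing the normalization yields $\frac{1}{|B_R(x)|}\int_{B_R(x)} f\to\sup_M f$. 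The main obstacle is exactly the second paragraph: one must invoke (or reprove) the weak Harnack inequality for supersolutions with a constant uniform in the radius, which is where the only real analytic machinery enters — everything else is formal. An alternative route, closer to Li's original argument, replaces this step by the large-time behaviour of the heat semigroup $e^{t\Delta}f$ combined with the Li--Yau Gaussian heat kernel bounds and Yau's Liouville theorem for bounded harmonic functions, but that requires comparably heavy input.
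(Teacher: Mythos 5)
The paper does not supply its own proof of this proposition; it simply cites Li's 1986 paper, where the result is obtained via the heat semigroup: one studies $e^{t\Delta}f$, uses the Li--Yau Gaussian two-sided heat kernel bounds to relate large-time averages of the heat flow to volume averages over balls, and invokes Yau's Liouville theorem for bounded harmonic functions to identify the limit. Your argument is correct but follows a genuinely different route. After the (harmless) normalization $\sup_M f = 0$, you reduce to proving that the volume averages of the bounded nonnegative superharmonic function $g = -f$ tend to $\inf_M g = 0$, and you obtain this from the scale-invariant weak Harnack inequality for nonnegative supersolutions on spaces satisfying volume doubling and a Neumann--Poincar\'e inequality, with constant depending only on $m$ (via Bishop--Gromov and Buser under $\Ricc\geq 0$), applied with exponent $p=1$ and balls $B_{2\rho}(x)\supset B_\rho(x)$ centered at the fixed point $x$. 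Choosing $\rho$ large enough that $B_\rho(x)$ captures a point (or a positive-measure set, if one works with essential infima) where $g < \eps$ gives $\frac{1}{|B_{2\rho}(x)|}\int_{B_{2\rho}(x)}g \leq C(m)\eps$, and letting $\eps\to 0$ finishes. This is a clean elliptic substitute for Li's parabolic machinery: what you trade away is the quantitative large-time information about $e^{t\Delta}f$ that Li's method also yields; what you gain is that only the uniform weak Harnack inequality (Moser--Grigor'yan--Saloff-Coste) is needed, with no recourse to heat kernel estimates. Both inputs are of comparable depth, and both ultimately rest on the same structural facts (doubling and Poincar\'e under $\Ricc\geq 0$), so this is a legitimate alternative proof rather than a simplification. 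Two minor remarks: you should make precise that in case $f$ is merely an $L^\infty$ weak subsolution, $\sup_M f$, $\inf_M g$ and $\inf_{B_\rho(x)} g$ are all essential suprema/infima — which is exactly the form in which the weak Harnack inequality is stated — so the existence of a point $y_\eps$ with $g(y_\eps)<\eps$ should be replaced by $|B_\rho(x)\cap\{g<\eps\}|>0$ for $\rho$ large, after which the argument is unchanged; and the compact case, which you dispatch via the strong maximum principle, is indeed a separate (trivial) case since there $|B_R(x)|=|M|$ eventually and the claim forces $f$ constant.
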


\begin{proposition}[\cite{ls84}, Theorem 2.1] \label{prop_li_schoen}
	Let $(M^m,\metric)$ be a complete Riemannian manifold with $\Ricc\geq -(m-1)\kappa^2$. Let $R>0$, $x\in M$ and let $f\geq0$ be a subharmonic function defined on $B_R(x)$. There exists a constant $C=C(m,p)>0$ such that
	\begin{equation}
		\sup_{B_{(1-\tau)R}(x)} f^p \leq \tau^{-C(1+\kappa R)} \frac{1}{|B_R(x)|} \int_{B_R(x)} f^p
	\end{equation}
	for every $\tau \in (0,1/2)$.
\end{proposition}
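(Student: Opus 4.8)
The plan is to prove the estimate by the De Giorgi--Nash--Moser iteration, using the lower Ricci bound solely to generate the functional inequalities on balls that feed the iteration. Fix $x\in M$ and write $B_\rho = B_\rho(x)$. One may assume $p\geq1$, so that $f^p$ is again a non-negative subharmonic function, and that $f^p\in L^1(B_R)$, for otherwise the right-hand side is infinite and there is nothing to prove; the range $0<p<1$ reduces to the case $p=1$ by a routine absorption argument in the parameter $\tau$. The first ingredient I would establish is a \emph{Neumann--Sobolev inequality on $B_R$}: there exist $\nu>2$ and $C_0>0$, both depending only on $m$, such that
\[
	\left( \frac{1}{|B_R|}\int_{B_R} |u|^{\frac{2\nu}{\nu-2}} \right)^{\frac{\nu-2}{\nu}} \leq e^{C_0(1+\kappa R)}\,\frac{R^2}{|B_R|}\int_{B_R} |\nabla u|^2 \qquad \text{for all } u\in C^\infty_c(B_R).
\]
This comes out of $\Ricc\geq -(m-1)\kappa^2$ in two moves: the Bishop--Gromov comparison theorem yields the volume doubling bound $|B_{2\rho}(y)|\leq e^{C(1+\kappa R)}|B_\rho(y)|$ for $y\in B_R$ and $0<\rho\leq R$, while Buser's inequality yields a scale-invariant $L^2$ Poincar\'e inequality on sub-balls of $B_R$ with constant of size $C(1+\kappa R)$; doubling together with the Poincar\'e inequality self-improves to the displayed Sobolev inequality by the Saloff-Coste equivalence, which in this quantitative form is essentially the content of Li--Schoen's original argument.

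Next I would derive the Caccioppoli-type reverse inequality: testing the inequality $\Delta f\geq0$ (in the weak sense) against $\varphi^2 f^{2\beta-1}$ for $\beta\geq1$ and a Lipschitz cut-off $\varphi$, integrating by parts and absorbing a gradient term via Young's inequality, gives $\int_{B_R}|\nabla(\varphi f^\beta)|^2 \leq C_1 \int_{B_R}|\nabla\varphi|^2 f^{2\beta}$ with $C_1$ absolute. Feeding $u=\varphi f^\beta$ into the Sobolev inequality and combining, I obtain, for concentric balls $B_{\rho'}\subseteq B_{\rho}\subseteq B_R$ with $\varphi\equiv1$ on $B_{\rho'}$, $\supp\varphi\subseteq B_\rho$ and $|\nabla\varphi|\leq 2(\rho-\rho')^{-1}$, the reverse-H\"older step
\[
	\left( \frac{1}{|B_R|}\int_{B_{\rho'}} f^{2\beta\chi} \right)^{1/\chi} \leq C_2\, e^{C_0(1+\kappa R)}\,\frac{R^2}{(\rho-\rho')^2}\cdot\frac{1}{|B_R|}\int_{B_{\rho}} f^{2\beta}, \qquad \chi := \frac{\nu}{\nu-2}>1,
\]
with $C_2 = C_2(m)$.

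The iteration itself then runs as follows. For $k\geq0$ put $\rho_k = (1-\tau)R + 2^{-k}\tau R$, so that $\rho_0 = R$, $\rho_k\downarrow(1-\tau)R$ and $\rho_k - \rho_{k+1} = 2^{-k-1}\tau R$, and choose $\beta = \beta_k := \tfrac{p}{2}\chi^k$ in the step above; abbreviating $\Phi_k := \big(\tfrac{1}{|B_R|}\int_{B_{\rho_k}} f^{p\chi^k}\big)^{1/(p\chi^k)}$ this becomes the recursion $\Phi_{k+1} \leq \big( C_3\, e^{C_0(1+\kappa R)}\, 4^{k}\, \tau^{-2} \big)^{1/(p\chi^k)}\Phi_k$ with $C_3 = C_3(m)$. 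Since $\sum_{k\geq0}\chi^{-k}$ and $\sum_{k\geq0}k\chi^{-k}$ converge to quantities depending only on $m$, iterating over $k$ and letting $k\to\infty$ --- so that $\Phi_k\to\big(\sup_{B_{(1-\tau)R}} f^p\big)^{1/p}$ while $\Phi_0 = \big(\tfrac{1}{|B_R|}\int_{B_R} f^p\big)^{1/p}$ --- yields $\sup_{B_{(1-\tau)R}} f^p \leq C_4\, e^{C_5(1+\kappa R)}\,\tau^{-C_6}\,\tfrac{1}{|B_R|}\int_{B_R} f^p$ with $C_4, C_5, C_6$ depending only on $m$ and $p$. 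Finally, since $\tau<1/2$ forces $\log(1/\tau)>\log2>0$, the prefactor $C_4\, e^{C_5(1+\kappa R)}\,\tau^{-C_6}$ is dominated by $\tau^{-C(1+\kappa R)}$ for a suitable $C = C(m,p)$, which is the assertion.

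The step I expect to be the main obstacle is the quantitative bookkeeping of constants: one has to check carefully that the geometry-dependent Sobolev constant $e^{C_0(1+\kappa R)}$ and the cut-off contributions $(\rho_k-\rho_{k+1})^{-2}$ accumulate through the infinitely many iteration steps into a prefactor of exactly the advertised shape $\tau^{-C(1+\kappa R)}$, uniformly in $\tau\in(0,1/2)$ and independently of $R$ and of the normalization by $|B_R|$. Logically upstream of that, the genuinely nontrivial geometric input is the Neumann--Sobolev inequality on $B_R$ with the correct $(1+\kappa R)$-dependence, which rests on Bishop--Gromov comparison, Buser's Poincar\'e/isoperimetric inequality, and the Saloff-Coste passage from volume doubling plus Poincar\'e to the Sobolev inequality.
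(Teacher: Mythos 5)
The paper does not prove this statement; it is imported verbatim as Theorem~2.1 of Li--Schoen~\cite{ls84}, so there is no internal proof against which to compare. Your sketch is a Moser iteration, which is indeed the engine of Li--Schoen's own argument, and the overall architecture --- local Neumann--Sobolev inequality with constant of order $e^{C(1+\kappa R)}$, Caccioppoli reverse inequality, geometric radii $\rho_k=(1-\tau)R+2^{-k}\tau R$, convergence of $\sum k\chi^{-k}$, absorption of the prefactor into $\tau^{-C(1+\kappa R)}$ using $\log(1/\tau)>\log 2$ --- is sound and lands on the claimed estimate. Two caveats are worth recording. First, your derivation of the local Sobolev inequality via Bishop--Gromov doubling, Buser's Poincar\'e inequality and the Saloff-Coste equivalence is a perfectly valid modern route, but it is not the one in~\cite{ls84}: that paper predates Saloff-Coste's theorem and obtains the needed Sobolev--Neumann inequality on balls directly from volume comparison and a covering/isoperimetric argument, so what you present is a rephrasing with later technology rather than a reconstruction of Li--Schoen's Lemma~1.1. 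Second, the Caccioppoli constant $C_1$ in the step $\int|\nabla(\varphi f^\beta)|^2\leq C_1\int|\nabla\varphi|^2 f^{2\beta}$ is \emph{not} absolute: keeping track of the Young's inequality absorption gives $C_1\sim 2+8\beta^2/(2\beta-1)^2$, which is uniformly bounded for $\beta\geq1$ but blows up as $\beta\to\tfrac12^{+}$. Since your starting exponent is $\beta_0=p/2$, this is harmless for $p\geq2$ and contributes only a $p$-dependent factor (still of the allowed form $C(m,p)$) when $1<p<2$, but for $p=1$ the first step degenerates and you must invoke the same absorption lemma you reserve for $0<p<1$; it would be cleaner to run the iteration from $\beta_0=1$ (i.e.\ prove the $p=2$ case) and reduce to arbitrary $p>0$ by the $M(\rho')\leq\tfrac12 M(\rho)+N$ interpolation--absorption trick in one stroke, which is also how Li--Schoen arrange it.
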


\begin{corollary} \label{cor_li}
	Let $(M,\metric)$ be a complete Riemannian manifold with $\Ricc\geq0$. Let $f\geq0$ be a nonnegative subharmonic function. Then for any $p\in[1,+\infty)$
	\begin{equation} \label{li_p_mean}
		\lim_{R\to+\infty} \frac{1}{|B_R(x)|} \int_{B_R(x)} f^p = \sup_M f^p \, .
	\end{equation}
\end{corollary}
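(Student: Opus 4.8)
The plan is to reduce the statement to Propositions \ref{prop_li_mean} and \ref{prop_li_schoen}, after observing that $f^p$ is itself a nonnegative subharmonic function: since $f\geq0$ and the function $t\mapsto t^p$ is convex and nondecreasing on $[0,+\infty)$ for $p\geq1$, the composition $f^p$ is subharmonic on $M$. I would then split into two cases according to whether $f$ is bounded.

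Suppose first that $\sup_M f<+\infty$. Then $f^p\in L^\infty(M)$ is subharmonic, and Proposition \ref{prop_li_mean} applied with $f$ replaced by $f^p$ gives directly
$$
	\lim_{R\to+\infty}\frac{1}{|B_R(x)|}\int_{B_R(x)} f^p = \sup_M f^p \, ,
$$
which is precisely \eqref{li_p_mean}. It remains to treat the case $\sup_M f=+\infty$, in which $\sup_M f^p=+\infty$ as well, so one must show that the averages diverge. Since $\frac{1}{|B_R(x)|}\int_{B_R(x)} f^p\leq\sup_{B_R(x)} f^p$ always holds, it suffices to produce a lower bound for the average that tends to $+\infty$. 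Applying Proposition \ref{prop_li_schoen} with $\kappa=0$ (legitimate because $\Ricc\geq0$), to the subharmonic function $f$ with exponent $p$ and with the choice $\tau=1/4$, we obtain a constant $C=C(m,p)>0$ such that
$$
	\sup_{B_{3R/4}(x)} f^p \leq 4^{C}\,\frac{1}{|B_R(x)|}\int_{B_R(x)} f^p \qquad \text{for every } \, R>0 \, .
$$
As $R\to+\infty$ the left-hand side increases to $\sup_M f^p=+\infty$, hence so does the right-hand side, and \eqref{li_p_mean} holds in this case as well.

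The only step requiring any care is the second one, where no boundedness of $f$ is assumed and Proposition \ref{prop_li_mean} does not apply; the Li--Schoen mean value inequality of Proposition \ref{prop_li_schoen} is exactly what bridges this gap, and nothing further is needed beyond the elementary observation that a convex nondecreasing function of a nonnegative subharmonic function is subharmonic.
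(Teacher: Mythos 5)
Your proof is correct and follows essentially the same route as the paper's: observe that $f^p$ is subharmonic, apply Proposition \ref{prop_li_mean} when $f$ is bounded, and in the unbounded case invoke Proposition \ref{prop_li_schoen} to see that the averages diverge. The paper merely states the unbounded case more tersely ("both sides equal $+\infty$"), whereas you make explicit the choice $\kappa=0$, $\tau=1/4$ and the resulting lower bound — a harmless amount of extra detail.
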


\begin{proof}
	First observe that if $f\geq0$ is subharmonic, then $f^p$ is also subharmonic for any $p\geq 1$. If $f$ is bounded then $f^p$ is also bounded and the conclusion follows by Proposition \ref{prop_li_mean}. If $f$ is unbounded then by Proposition \ref{prop_li_schoen} both sides of \eqref{li_p_mean} equal $+\infty$ and the conclusion follows.
\end{proof}

The next Proposition \ref{prop_ccmtype}, together with its proof, rephrases in general terms an observation contained in \cite{ccm95}. %We provide a proof for the reader's convenience.

\begin{proposition}[\cite{ccm95}] \label{prop_ccmtype}
	Let $(M,\metric)$ be a complete Riemannian manifold with $\Ricc\geq0$. Let $f\in L^\infty(M)$ be a subharmonic function. Then for any $x\in M$
	\begin{equation} \label{ccm}
		\lim_{R\to+\infty} \frac{R^2}{|B_R(x)|} \int_{B_R(x)} \Delta f = 0 \, .
	\end{equation}
\end{proposition}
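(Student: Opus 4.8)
The plan is to combine a standard cutoff argument resting on the Laplacian comparison theorem with Li's mean value result, Proposition~\ref{prop_li_mean}. First I would set $M_0 := \sup_M f$, which is finite since $f \in L^\infty(M)$, and put $u := M_0 - f$, so that $u \geq 0$ is bounded while $-u = f - M_0$ is a bounded subharmonic function with $\sup_M(-u) = 0$. Applying Proposition~\ref{prop_li_mean} to $-u$ then gives
\[
	\lim_{R \to +\infty} \frac{1}{|B_R(x)|} \int_{B_R(x)} u = 0 \, .
\]

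Next, I would fix $\psi \in C^\infty([0,+\infty))$ with $\psi \equiv 1$ on $[0,1]$, $\psi \equiv 0$ on $[2,+\infty)$, $\psi' \leq 0$ and $|\psi'| + |\psi''| \leq C$ for an absolute constant $C$, and set $\phi_R := \psi(r/R)$ with $r = d(\,\cdot\,,x)$. Since $\Delta f \geq 0$ (as a measure) and $\mathbf{1}_{B_R(x)} \leq \phi_R \leq 1$, Green's identity (integration by parts) yields
\[
	0 \leq \int_{B_R(x)} \Delta f \leq \int_M \phi_R \, \Delta f = \int_M f \, \Delta \phi_R = - \int_M u \, \Delta \phi_R \, ,
\]
where in the last equality I used that $\phi_R$ is Lipschitz with compact support, so $\int_M \Delta \phi_R = 0$. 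Now $\Delta \phi_R = R^{-2} \psi''(r/R) + R^{-1} \psi'(r/R) \Delta r$, and on the support of $\psi'(r/R)$ and of $\psi''(r/R)$ one has $R \leq r \leq 2R$. Using that $u \geq 0$ and $-\psi'(r/R) \geq 0$, together with the Laplacian comparison $\Delta r \leq (m-1)/r$ (valid because $\Ricc \geq 0$, interpreted distributionally away from the cut locus), one obtains a bound of the form
\[
	- \int_M u \, \Delta \phi_R \leq \frac{C_m}{R^2} \int_{B_{2R}(x)} u
\]
for a constant $C_m$ depending only on $m$.

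Finally, by the Bishop--Gromov inequality $|B_{2R}(x)| \leq 2^m |B_R(x)|$, so that
\[
	0 \leq \frac{R^2}{|B_R(x)|} \int_{B_R(x)} \Delta f \leq C_m \, \frac{|B_{2R}(x)|}{|B_R(x)|} \cdot \frac{1}{|B_{2R}(x)|} \int_{B_{2R}(x)} u \leq 2^m C_m \cdot \frac{1}{|B_{2R}(x)|} \int_{B_{2R}(x)} u \, ,
\]
and the right-hand side tends to $0$ as $R \to +\infty$ by the first step; combined with the nonnegativity of $\int_{B_R(x)} \Delta f$, this gives \eqref{ccm}.

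The one point requiring care is the justification of Green's identity and of the pointwise use of $\Delta r \leq (m-1)/r$ when $f$ is only continuous and subharmonic and $r$ is only Lipschitz: I would handle this by reading $\Delta f$ and $\Delta r$ as signed Radon measures, so that $\Delta r \leq \frac{m-1}{r}\, dV$ becomes an inequality of measures on $M \setminus \{x\}$, after which every step above remains valid precisely because the weights $\phi_R$ and $u\,(-\psi'(r/R))$ are nonnegative — alternatively, one first mollifies $f$ and uses Calabi's trick to avoid the cut locus.
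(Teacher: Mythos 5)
Your proof is correct and follows essentially the same strategy as the paper: define the nonnegative bounded function $\sup_M f - f$, integrate $\Delta f$ against a cutoff supported near scale $R$, control the resulting error term via the Laplacian comparison theorem and Bishop--Gromov, and conclude using Proposition~\ref{prop_li_mean}. The only cosmetic difference is the choice of cutoff — the paper uses $(1-r^2/R^2)$ on $B_R(x)$ together with $\Delta r^2 \leq 2m$, whereas you use a smooth $\psi(r/R)$ together with $\Delta r \leq (m-1)/r$ — which leads to the same estimate.
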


\begin{proof}
	Let $r$ denote the distance function from $x$. By the Laplacian comparison theorem we have
	\begin{equation} \label{comp_r}
		\Delta r^2 \leq 2m
	\end{equation}
	where $m = \dim M$. Define $h = \sup_M f - f \geq 0$. Green's identities give
	\begin{align*}
		\int_{B_R(x)} \left(1-\frac{r^2}{R^2}\right) \Delta h + \frac{1}{R^2} \int_{B_R(x)} h \Delta r^2 = \frac{1}{R^2} \int_{\partial B_R(x)} h \langle \nabla r^2,\nu \rangle \geq 0
	\end{align*}
	for almost every $R>0$. Since $h\geq0$ and $-\Delta h = \Delta f \geq 0$, by \eqref{comp_r} we estimate
	$$
	\frac{2m}{R^2} \int_{B_R(x)} h \geq \frac{1}{R^2} \int_{B_R(x)} h \Delta r^2 \geq \int_{B_R(x)} \left(1-\frac{r^2}{R^2}\right) \Delta f \geq \frac{3}{4} \int_{B_{R/2}(x)} \Delta f \, .
	$$
	Hence, dividing by $|B_{R/2}(x)|$ we have
	\begin{equation} \label{ccm1}
		\frac{1}{|B_{R/2}(x)|} \int_{B_R(x)} h \geq \frac{3}{8m} \frac{R^2}{|B_{R/2}(x)|} \int_{B_{R/2}(x)} \Delta f \geq 0 \, .
	\end{equation}
	By Bishop-Gromov theorem we also have $|B_{R/2}(x)| \geq 2^m |B_R(x)|$, hence
	\begin{equation} \label{ccm2}
		\frac{2^m}{|B_R(x)|} \int_{B_R(x)} h \geq \frac{1}{|B_{R/2}(x)|} \int_{B_R(x)} h
	\end{equation}
	and by Proposition \ref{prop_li_mean}
	\begin{equation} \label{ccm3}
		\frac{1}{|B_R(x)|} \int_{B_R(x)} h = \sup_M f - \frac{1}{|B_R(x)|} \int_{B_R(x)} f \to 0 \qquad \text{as } \, R \to +\infty \, .
	\end{equation}
	Putting together \eqref{ccm1}, \eqref{ccm2} and \eqref{ccm3} we obtain \eqref{ccm}.
\end{proof}

Let $(M,\metric)$ be a complete Riemannian manifold. For every $x\in M$, $R>0$ and for every measurable function $\psi$ on $B_R(x)$ we define
$$
	\psi_{x,R} = \frac{1}{|B_R(x)|} \int_{B_R(x)} \psi
$$
whenever the RHS of this inequality happens to be well defined. From the work of P. Buser, \cite{bus82}, combined with Cheeger's inequality it is known (see for instance L. Saloff-Coste, \cite[page 439]{sc92}) that geodesic balls of a complete Riemannian manifold with non-negative Ricci curvature support the following Poincaré inequality.

\begin{proposition}[\cite{bus82},\cite{sc92}] \label{prop_poinc}
	Let $(M^m,\metric)$ be a complete Riemannian manifold with $\Ricc\geq0$. Then, there exists $C=C(m)>0$ such that for every $x\in M$ and $R>0$
	\begin{equation}
		\int_{B_R(x)} |f-f_{x,R}|^2 \leq C R^2 \int_{B_R(x)}|\nabla f|^2 \qquad \forall \, f \in C^\infty(B_R(x)) \, .
	\end{equation}
\end{proposition}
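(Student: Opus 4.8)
The plan is to reduce the Poincaré inequality to a spectral lower bound and then invoke Cheeger's inequality together with Buser's isoperimetric estimate. First I would record that, writing $B_R = B_R(x)$ and using the Rayleigh-quotient characterization of the first nonzero Neumann eigenvalue,
\[
	\lambda_1^N(B_R) = \inf\left\{ \frac{\int_{B_R}|\nabla u|^2}{\int_{B_R} u^2} \ : \ u \in C^\infty(B_R)\setminus\{0\}, \ \int_{B_R} u = 0 \right\},
\]
the asserted inequality with constant $CR^2$ is \emph{equivalent} to the bound $\lambda_1^N(B_R) \geq (CR^2)^{-1}$: indeed, for $f \in C^\infty(B_R)$ the function $u = f - f_{x,R}$ has zero mean, $|\nabla u| = |\nabla f|$, and $\int_{B_R} u^2 = \int_{B_R}|f - f_{x,R}|^2$. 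Since a geodesic ball need not have smooth boundary, this reformulation is to be read in a mild sense --- e.g.\ by first proving the estimate on slightly smaller concentric balls and letting the radius increase, or by working throughout with the Maz'ya-type formulation below --- but this is a routine technicality. So the target becomes: produce $c = c(m) > 0$ with $\lambda_1^N(B_R) \geq c\, R^{-2}$ for every $R > 0$.

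Next I would apply Cheeger's inequality $\lambda_1^N(B_R) \geq \tfrac14 h(B_R)^2$, where the Neumann--Cheeger constant is
\[
	h(B_R) = \inf_{\Sigma} \frac{\mathrm{Area}_{m-1}(\Sigma)}{\min\{|\Omega_1|,|\Omega_2|\}},
\]
the infimum ranging over hypersurfaces $\Sigma$ that split $B_R$ into two open pieces $\Omega_1,\Omega_2$. It therefore suffices to prove the isoperimetric bound $h(B_R) \geq c(m)/R$, and for this I would appeal to Buser's inequality: under $\Ricc \geq -(m-1)\kappa^2$, every such separating $\Sigma$ with $|\Omega_1| \leq |\Omega_2|$ satisfies $\mathrm{Area}_{m-1}(\Sigma) \geq c_1(m)\, e^{-c_2(m)\kappa R}\, R^{-1}\, |\Omega_1|$; in the present case $\kappa = 0$, so this reads simply $\mathrm{Area}_{m-1}(\Sigma) \geq c(m)\,|\Omega_1|/R$. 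Chaining the three steps yields $\lambda_1^N(B_R) \geq \tfrac14 c(m)^2 R^{-2}$, hence the Poincaré inequality with $C = C(m)$.

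The heart of the matter --- and the step I expect to be the main obstacle --- is Buser's inequality itself. The proof sweeps a tubular neighbourhood of $\Sigma$ by unit-speed geodesics issuing normally from $\Sigma$ and compares the volume swept on either side with $\mathrm{Area}_{m-1}(\Sigma)$ via the Heintze--Karcher / Bishop--Gromov Jacobian comparison for the normal exponential map; the lower Ricci bound is precisely what forces this Jacobian to be controlled (nonincreasing when $\kappa = 0$, up to the usual polynomial factor), and this is where the $R^{-1}$ and the purely dimensional constant originate. The technical nuisances are the familiar ones --- focal points and the cut locus of $\Sigma$, together with the non-smoothness of $\partial B_R$ --- and are handled by standard cut-locus arguments.

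A technically lighter alternative, which bypasses the spectral reformulation, is to establish directly the $L^1$ (``segment'') Poincaré inequality $\int_{B_R}|f - f_{x,R}| \leq C(m)\, R \int_{B_{2R}}|\nabla f|$: one writes $f(y) - f(z) = \int_0^{d(y,z)}\langle\nabla f,\dot\gamma(s)\rangle\,\di s$ along a minimizing geodesic $\gamma$ from $y$ to $z$, integrates over $(y,z) \in B_R \times B_R$, and bounds the resulting double integral of $\int_\gamma|\nabla f|$ by $C(m)\, R\, |B_{2R}|\int_{B_{2R}}|\nabla f|$ using the volume-comparison control of the Jacobian of the parametrization $(p,v,t)\mapsto \exp_p(tv)$; one then upgrades $L^1$ to $L^2$ by a truncation/Maz'ya argument together with the Bishop--Gromov volume doubling. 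Either way, $\Ricc \geq 0$ enters only through Bishop--Gromov-type Jacobian estimates, which is why $C$ can be taken to depend on the dimension $m$ alone.
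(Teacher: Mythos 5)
The paper provides no proof here --- it simply attributes the result to Buser's isoperimetric estimate combined with Cheeger's inequality, citing Saloff-Coste for the statement on balls --- and your primary route (Neumann eigenvalue $\to$ Cheeger $\to$ Buser) is precisely the chain the paper points to. The one caveat worth flagging is that Buser's normal-sweep comparison is cleanest on closed manifolds; run inside $B_R$ one must contend with normal (or minimizing) geodesics from the separating hypersurface $\Sigma$ exiting the ball before reaching the cut locus, which is why careful treatments, including Saloff-Coste's at the cited page, first establish a weak Poincar\'e inequality with $B_{2R}$ on the right-hand side and then upgrade via a Whitney covering and Bishop--Gromov doubling --- this is essentially your alternative segment-inequality route, which is the more airtight of the two.
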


We are now in the position to prove the next Liouville-type theorem.

\begin{theorem} \label{pr_liou}
	Let $(M,\metric)$ be a complete, noncompact Riemannian manifold with $\Ricc\geq0$. Let $a\geq 0$ be a measurable function on $M$ and let $0 \leq f \in L^\infty(M)$ satisfy
	$$
	\Delta f \geq a f \qquad \text{on } \, M \, .
	$$
	Assume that for some $x\in M$ one of the following conditions is satisfied:
	\begin{itemize}
		\item [i)] $f_{x,R} \to 0$ as $R\to+\infty$,
		\item [ii)] for some constant $C_0>0$ and for some compact set $K\subsetneq M$
		\begin{equation} \label{aC0}
			a \geq \frac{C_0^2}{r^2} \qquad \text{on } \, M \setminus K
		\end{equation}
		where $r$ is the distance function from $x$,
		\item [iii)] for some constants $C_1,C_2>0$
		\begin{equation} \label{aC12}
			a_{x,R} \geq \frac{C_1}{R^2} \, , \qquad \frac{1}{|B_R(x)|} \int_{B_R(x)} |a-a_{x,R}|^2 \leq \frac{C_2}{R^4}
		\end{equation}
		for all sufficiently large $R>0$.
	\end{itemize}
	Then $f\equiv 0$.
\end{theorem}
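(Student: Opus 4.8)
The plan is to reduce all three cases to a single mechanism. Since $a\geq0$ and $f\geq0$ we have $\Delta f\geq af\geq0$, so $f$ is a bounded, non-negative, subharmonic function on a complete manifold with $\Ricc\geq0$; by Li's mean value theorem (Proposition \ref{prop_li_mean}), $f_{x,R}\to f^\ast:=\sup_M f<+\infty$ as $R\to+\infty$, for the given reference point $x$. Case i) is then immediate: the hypothesis $f_{x,R}\to0$ forces $f^\ast=0$, hence $f\equiv0$. For cases ii) and iii) I would argue by contradiction, assuming $f^\ast>0$. The key tool is Proposition \ref{prop_ccmtype} applied to the bounded subharmonic function $f$, which gives
$$
	\lim_{R\to+\infty}\frac{R^2}{|B_R(x)|}\int_{B_R(x)}\Delta f=0\,;
$$
since $0\leq af\leq\Delta f$ (read in the distributional sense), this forces $\dfrac{R^2}{|B_R(x)|}\displaystyle\int_{B_R(x)}af\to0$, and the whole point is to contradict this by bounding the left-hand side away from $0$ for $R$ large.

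For case ii), fix $R_0$ with $K\subseteq B_{R_0}(x)$, so that $a\geq C_0^2/r^2\geq C_0^2/R^2$ on $B_R(x)\setminus B_{R_0}(x)$ for every $R>R_0$. Writing $\int_{B_R(x)\setminus B_{R_0}(x)}f=|B_R(x)|\,f_{x,R}-\int_{B_{R_0}(x)}f$ and using $f_{x,R}\to f^\ast>0$ together with $|B_R(x)|\to+\infty$, one gets $\int_{B_R(x)\setminus B_{R_0}(x)}f\geq\tfrac12 f^\ast|B_R(x)|$ for $R$ large; hence
$$
	\int_{B_R(x)}af\geq\frac{C_0^2}{R^2}\int_{B_R(x)\setminus B_{R_0}(x)}f\geq\frac{C_0^2 f^\ast}{2R^2}\,|B_R(x)|\,,
$$
so that $\frac{R^2}{|B_R(x)|}\int_{B_R(x)}af\geq\tfrac12 C_0^2 f^\ast>0$ for $R$ large, a contradiction. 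Therefore $f^\ast=0$ and $f\equiv0$.

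For case iii), split $a=a_{x,R}+(a-a_{x,R})$ on $B_R(x)$. The main term is handled by the first bound in \eqref{aC12}: $a_{x,R}\int_{B_R(x)}f=a_{x,R}|B_R(x)|\,f_{x,R}\geq\frac{C_1}{R^2}|B_R(x)|\,f_{x,R}\geq\frac{C_1 f^\ast}{2R^2}|B_R(x)|$ for $R$ large. For the remainder, the crucial observation is that $\int_{B_R(x)}(a-a_{x,R})=0$ by definition of $a_{x,R}$; writing $h=f^\ast-f\geq0$ this yields $\int_{B_R(x)}(a-a_{x,R})f=-\int_{B_R(x)}(a-a_{x,R})h$, and by Cauchy--Schwarz, the second bound in \eqref{aC12}, and $\int_{B_R(x)}h^2\leq f^\ast\int_{B_R(x)}h=f^\ast|B_R(x)|\,h_{x,R}$ (using $0\leq h\leq f^\ast$), one estimates
$$
	\Big|\int_{B_R(x)}(a-a_{x,R})f\Big|\leq\Big(\frac{C_2}{R^4}|B_R(x)|\Big)^{1/2}\big(f^\ast|B_R(x)|\,h_{x,R}\big)^{1/2}=\big(C_2 f^\ast h_{x,R}\big)^{1/2}\,\frac{|B_R(x)|}{R^2}\,,
$$
which is $o(1)\cdot R^{-2}|B_R(x)|$ because $h_{x,R}=f^\ast-f_{x,R}\to0$ by Li's theorem. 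Combining the two, $\frac{R^2}{|B_R(x)|}\int_{B_R(x)}af\geq\tfrac14 C_1 f^\ast>0$ for $R$ large, again contradicting the limit above; hence $f^\ast=0$ and $f\equiv0$.

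The main obstacle is conceptual rather than computational: a plain maximum-principle or barrier argument does not close, because $f$ is not known a priori to decay at infinity, so one has to use the quantitative form of subharmonicity furnished by Proposition \ref{prop_ccmtype} in place of the naive maximum principle. The second point requiring care is spotting the cancellation $\int_{B_R(x)}(a-a_{x,R})=0$ in case iii), which allows one to replace $f$ by $h=f^\ast-f$ in the error term and so inherit the decay $h_{x,R}\to0$ coming from Li's theorem; without this, the crude bound $|a-a_{x,R}|\,f\leq f^\ast|a-a_{x,R}|$ would only close under an unwanted relation between $C_1$ and $C_2$. The only technical caveat is to interpret $\Delta f$ and the inequality $af\leq\Delta f$ distributionally, so that Proposition \ref{prop_ccmtype} applies verbatim.
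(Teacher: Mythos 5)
Your proof is correct, and the high-level strategy is the same as the paper's: deduce from Propositions \ref{prop_li_mean} and \ref{prop_ccmtype} that $f_{x,R}\to\ell:=\sup_M f$ and $R^2(af)_{x,R}\to0$, and then show that under ii) or iii), $\ell>0$ would force $\liminf_R R^2(af)_{x,R}>0$, a contradiction. Case i) and case ii) coincide with the paper's argument (in ii) you implicitly correct a harmless $C_0$ vs.\ $C_0^2$ slip in the paper's displayed chain).

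Where you genuinely diverge is in case iii), and your variant is actually more economical. The paper writes $f=f_{x,R}+(f-f_{x,R})$ and then must prove $\frac{1}{|B_R|}\int_{B_R}|f-f_{x,R}|^2\to0$; it does this by applying Proposition \ref{prop_ccmtype} to the bounded subharmonic function $f^2$ (giving $\frac{R^2}{|B_R|}\int_{B_R}|\nabla f|^2\to0$) and then invoking Buser's Poincar\'e inequality, Proposition \ref{prop_poinc}, to convert the gradient decay into $L^2$-oscillation decay. You instead exploit the orthogonality $\int_{B_R}(a-a_{x,R})=0$ by replacing $f$ with the \emph{constant} shift $f-f^\ast=-h$ rather than $f-f_{x,R}$, and then use the pointwise bound $0\le h\le f^\ast$ to get $\int_{B_R}h^2\le f^\ast\int_{B_R}h=f^\ast|B_R|\,h_{x,R}$, with $h_{x,R}=f^\ast-f_{x,R}\to0$ directly from Li's mean value theorem. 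This trades the Poincar\'e inequality and the detour through $\Delta f^2$ for a one-line $L^\infty$ interpolation: you avoid Proposition \ref{prop_poinc} entirely, while the paper's route is slightly more robust in that it would also work without the a priori two-sided bound on $h$. Both are valid; yours is shorter given the hypotheses at hand.
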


\begin{proof}
	Since $a\geq0$ and $f\geq0$, we have that $f$ is a bounded, nonnegative subharmonic function on $M$. Hence, by Proposition \ref{prop_li_mean} the limit
	$$
	\ell = \lim_{R\to+\infty} f_{x,R}
	$$
	exists and equals $\sup_M f \in [0,+\infty)$, and by Proposition \ref{prop_ccmtype} we also have
	\begin{equation} \label{liou_af}
		\lim_{R\to+\infty} R^2 (af)_{x,R} = 0 \, .
	\end{equation}
	Since $f\geq0$, the desired conclusion $f\equiv 0$ is equivalent to having $\ell=0$. Note that this is, in turn, equivalent to i). Hence let us assume, by contradiction, that $\ell>0$. Then we must be in either case ii) or iii). In both cases we aim at showing that \eqref{liou_af} cannot hold, hence concluding the proof by contradiction. 
	
	If ii) is in force, then fix $R_0>0$ large enough so that $K\subsetneq B_{R_0}(x)$. For every $R>R_0$ we have, using \eqref{aC0},
	$$
		R^2(af)_{x,R} \geq \frac{R^2}{|B_R(x)|} \int_{B_R(x)\setminus K} af \geq \frac{C_0}{|B_R(x)|} \int_{B_R(x)\setminus K} f = C_0 f_{x,R} - \frac{C_0}{|B_R(x)|} \int_K f \, .
	$$
	$M$ has infinite volume as it is a complete noncompact manifold with $\Ricc\geq0$, see for instance \cite[page 25]{sy94}, hence letting $R\to+\infty$ in the above inequality we obtain
	$$
	\liminf_{R\to+\infty} R^2(af)_{x,R} \geq C_0 \ell > 0 \, ,
	$$
	contradicting \eqref{liou_af}.
	
	If iii) is in force, then writing $a = a_{x,R} + (a-a_{x,R})$ and $f = f_{x,R} + (f-f_{x,R})$ one has
	$$
	(af)_{x,R} = a_{x,R}f_{x,R} + \frac{1}{|B_R(x)|} \int_{B_R(x)} (a-a_{x,R})(f-f_{x,R})
	$$
	for every $R>0$. Using Cauchy-Schwarz inequality together with \eqref{aC12} we further estimate
	\begin{equation} \label{liou_1}
		R^2(af)_{x,R} \geq C_1 f_{x,R} - \sqrt{C_2}\left( \frac{1}{|B_R(x)|} \int_{B_R(x)} |f-f_{x,R}|^2 \right)^{1/2} \, .
	\end{equation}
	The function $f^2$ is also bounded and subharmonic. In particular,
	$$
	\Delta f^2 = 2f\Delta f + 2|\nabla f|^2 \geq 2|\nabla f|^2
	$$
	and by Proposition \ref{prop_ccmtype} we get
	$$
	\lim_{R\to+\infty} \frac{R^2}{|B_R|} \int_{B_R} |\nabla f|^2 = 0 \, .
	$$
	Hence, by proposition \eqref{prop_poinc}
	\begin{equation} \label{liou_2}
		\lim_{R\to+\infty} \frac{1}{|B_R(x)|} \int_{B_R(x)} |f-f_{x,R}|^2 = 0
	\end{equation}
	and by \eqref{liou_1} and \eqref{liou_2} we obtain
	$$
	\liminf_{R\to+\infty} R^2(af)_{x,R} \geq C_1 \ell > 0 \, ,
	$$
	again contradicting \eqref{liou_af}.
	
	%then by Proposition \ref{prop_ccmtype} we have
	%\begin{equation}
	%	\lim_{R\to+\infty} \frac{R^2}{|B_R|} \int_{B_R} af = 0
	%\end{equation}
	%If iii) is in force, then we estimate
	%\begin{equation} \label{liou_2}
	%	\begin{split}
	%		\frac{R^2}{|B_R|}\int_{B_R} af & = R^2 a_{x,R} f_{x,R} + \frac{R^2}{|B_R|}\int_{B_R} (a - a_{x,R})(f - f_{x,R}) \\
	%		& \geq C_1 f_{x,R} - C_2 \left( \frac{1}{|B_R|} |f-f_{x,R}|^2 \right)^{1/2} \, .
	%	\end{split}
	%\end{equation}
\end{proof}

We remark that if $M$ is a complete \emph{parabolic} Riemannian manifold, in the sense of the subsequent Definition \ref{def_para}, then the analogue of Theorem \ref{pr_liou} holds with less restrictive conditions on $a$ and $f$ and no requirements on the Ricci tensor.

\begin{definition} \label{def_para}
	We say that a complete Riemannian manifold $M$ is parabolic if every upper bounded subharmonic function on $M$ is constant.
\end{definition}

%Recall that a complete Riemannian manifold is said to be parabolic if the only upper bounded subharmonic functions on $M$ are the constant functions.

This terminology originates from the complex analytic classification of (noncompact) Riemann surfaces, where the function theoretic property expressed by Definition \ref{def_para} distinguishes the parabolic from the hyperbolic ones, see \cite[Section IV.1.6]{as60}. %This are respectively conformally covered by the Riemann sphere, the complex plane and the hyperbolic plane. In this context, the function theoretic property expressed by Definition \ref{def_para} distinguishes the parabolic type from the hyperbolic type in the complete, non-compact setting.

%The above function theoretic property. parabolic surfaces are conformally covered by the complex plane, in contrast with elliptic and hyperbolic surfaces whose universal covers are conformally equivalent to the Riemann sphere and the hyperbolic plane, respectively.) 

For $M$ a complete Riemannian manifold of any dimension, $M$ is parabolic in the sense of Definition \ref{def_para} if and only if it does not admit any positive Green's function. A sufficient condition for parabolicity, which is also necessary for manifolds with non-negative Ricci curvature, see \cite{litam95,var81}, is that
\begin{equation} \label{para_cond}
	\limsup_{R\to+\infty} \int_1^R \frac{t}{|B_t|} \, \di t = +\infty
\end{equation}
where $|B_t|$ is the volume of the geodesic ball $B_t$ of radius $t$ centered at a fixed point $o\in M$.

\begin{theorem} \label{para_liou}
	Let $(M,\metric)$ be a complete, parabolic Riemannian manifold. Let $a\geq 0$ be a measurable function on $M$ and let $0 \leq f \in L^\infty(M)$ satisfy
	\begin{equation} \label{Dfaf}
		\Delta f \geq a f \qquad \text{on } \, M.
	\end{equation}
	Then $f$ is constant. Moreover, if $a>0$ somewhere then $f\equiv 0$.
\end{theorem}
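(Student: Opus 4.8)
The plan is to exploit the defining property of parabolicity directly. Since $a \geq 0$ and $f \geq 0$, inequality \eqref{Dfaf} gives $\Delta f \geq 0$, so $f$ is a subharmonic function which, by hypothesis, is bounded above (indeed $f \in L^\infty(M)$). By Definition \ref{def_para}, $f$ must be constant on $M$; write $f \equiv c$ for some $c \geq 0$. This settles the first assertion with essentially no work beyond unwinding the definitions.

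For the second assertion, suppose $a(x_0) > 0$ at some point $x_0 \in M$. Plugging the constant $f \equiv c$ back into \eqref{Dfaf} yields $0 = \Delta f \geq a f = a c$ pointwise on $M$; evaluating at $x_0$ gives $0 \geq a(x_0) c$ with $a(x_0) > 0$, hence $c \leq 0$. Combined with $c \geq 0$ this forces $c = 0$, i.e. $f \equiv 0$.

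I expect no real obstacle here: the argument is a one-line application of parabolicity followed by substituting the conclusion back into the differential inequality. The only point requiring a modicum of care is that $\Delta f$ is interpreted in the appropriate weak sense (distributional, or in the sense of viscosity/barriers) if $f$ is only assumed to be, say, continuous rather than smooth; but since the subharmonicity property used in Definition \ref{def_para} is precisely the one satisfied by solutions of $\Delta f \geq 0$ in whichever sense \eqref{Dfaf} is posed, the constancy of $f$ follows immediately, and the subsequent pointwise evaluation $\Delta f = 0 \geq af$ is then licit wherever $f$ is constant (which is everywhere). No completeness-specific tools, Ricci bounds, or volume estimates are needed — this is exactly the advantage of the parabolic hypothesis over the setting of Theorem \ref{pr_liou}.
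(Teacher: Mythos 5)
Your argument is correct and is essentially identical to the paper's: use $a\geq 0$, $f\geq 0$ to see $f$ is a bounded subharmonic function, conclude constancy from parabolicity, and then substitute back into $\Delta f \geq af$ to force $af\equiv 0$, hence $f\equiv 0$ wherever $a>0$. No discrepancies.
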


\begin{proof}
	The function $f$ is bounded and subharmonic, hence it is constant by parabolicity of $M$ and from \eqref{Dfaf} it follows that $af\equiv 0$. If there exists $x\in M$ such that $a(x)>0$, then $f(x) = 0$ and thus $f\equiv 0$ on $M$.
\end{proof}

\subsection{The complete case}

In this subsection we prove Theorems \ref{int_cplt_t3}, \ref{int_para_tach1}, \ref{int_cpl_tach1} and \ref{int_cpl_tach2} from the Introduction. The following Proposition \ref{W0_Sc_class} is instrumental to the proof of all of them. It strengthens the thesis of Carron-Herzlich's classification Theorem \ref{carherz} for complete, locally conformally flat manifolds with $\Ricc\geq0$ under the additional assumption of constant scalar curvature.

\begin{proposition} \label{W0_Sc_class}
	Let $(M,g)$ be a complete and locally conformally flat Riemannian manifold of dimension $m\geq 3$ with $\Ricc \geq 0$ and constant scalar curvature $S$.
	\begin{itemize}
		\item[i)] If $S=0$ then $M$ is flat.
		\item[ii)] If $S>0$ then $M$ is either isometric to a quotient of $\R\times\sphere^{m-1}_{S/(m-1)(m-2)}$ or conformally equivalent to a quotient of $\sphere^m$.
	\end{itemize}
	In particular, if $\Ricc>0$ at some point then $M$ is conformally equivalent to a quotient of $\sphere^m$.
\end{proposition}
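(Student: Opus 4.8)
The plan is to reduce to the universal Riemannian cover $\tilde M$ and apply the Carron--Herzlich classification (Theorem \ref{carherz}), using constancy of the positive scalar curvature to rule out the incompatible alternatives; part i) will instead be a direct pointwise argument that does not even use completeness. For part i), since $\Ricc \geq 0$ and $S = \trace_g\Ricc \equiv 0$ one gets $\Ricc \equiv 0$ pointwise; because $W \equiv 0$ (automatically so if $m = 3$, and by the definition of local conformal flatness if $m \geq 4$), the decomposition \eqref{TWA_dec} applied to $T = \Riem$ gives $\Riem = W + \frac{1}{m-2}\bigl(\Ricc - \frac{S}{2(m-1)}g\bigr)\owedge g \equiv 0$, so $M$ is flat.

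For part ii) I pass to $\tilde M$, which is complete, locally conformally flat, has $\Ricc \geq 0$, dimension $m \geq 3$ and the same constant scalar curvature $S > 0$, and I go through the four alternatives of Theorem \ref{carherz}. Alternative (i) is excluded because $\R^m$ has vanishing scalar curvature. In alternative (ii), $\tilde M$ is a metric product of $\R$ with a round $(m-1)$-sphere, whose scalar curvature equals that of the spherical factor; setting this equal to $S$ forces the sphere to have sectional curvature $S/[(m-1)(m-2)]$, so $M$ is a quotient of $\R \times \sphere^{m-1}_{S/(m-1)(m-2)}$, the first alternative in the statement. In alternative (iii), $\tilde M$ is globally conformally equivalent to $\sphere^m$, and since the deck transformations act by isometries, hence conformally, $M$ is globally conformally equivalent to a quotient of $\sphere^m$, the second alternative.

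It remains to rule out alternative (iv). Here I would write the metric of $\tilde M$ as $u^{4/(m-2)}\delta$ on $\R^m$, with $\delta$ the Euclidean metric and $u > 0$ smooth; the condition that the scalar curvature be the positive constant $S$ becomes the critical semilinear equation $\Delta u + c_m S\, u^{(m+2)/(m-2)} = 0$ on $\R^m$, with $c_m = \frac{m-2}{4(m-1)} > 0$. By the Caffarelli--Gidas--Spruck classification of the entire positive solutions of this equation (\cite[Corollary 8.2]{cgs89}), $u$ is, up to a positive scaling and a translation, of the form $u(x) = \bigl(a/(b + |x|^2)\bigr)^{(m-2)/2}$, so the corresponding metric is, up to homothety, the round metric of $\sphere^m$ transported to $\R^m \cong \sphere^m \setminus \{\mathrm{pt}\}$ by inverse stereographic projection --- an \emph{incomplete} metric on $\R^m$. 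This contradicts completeness of $\tilde M$, so alternative (iv) cannot occur. (Equivalently, one may invoke directly the consequence of \cite{cgs89} used in \cite{prs07}, namely that $\R^m$ admits no complete metric of constant positive scalar curvature conformal to the Euclidean one.) I expect this to be the main obstacle, since it is the only step requiring nontrivial analysis: the Caffarelli--Gidas--Spruck rigidity together with the observation that the resulting spherical metric on $\R^m$ is incomplete.

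Finally, for the ``in particular'' clause: if $\Ricc > 0$ at some $p \in M$ then $S(p) = \trace_g\Ricc_p > 0$, hence $S > 0$ by constancy and we are in case ii); the first alternative there is excluded because $\R \times \sphere^{m-1}$, and hence every quotient of it, has $\Ricc(\partial_t,\,\cdot\,) = 0$ and so a nowhere positive-definite Ricci tensor, whereas lifting $p$ to $\tilde M$ would give a point of $\tilde M$ where $\Ricc > 0$. Therefore $\tilde M$ falls under alternative (iii) and $M$ is conformally equivalent to a quotient of $\sphere^m$.
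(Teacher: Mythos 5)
Your proof is correct and follows essentially the same route as the paper's: pointwise rigidity via $\Ricc \equiv 0$, $W \equiv 0$ for part i); and for part ii), the Zhu/Carron--Herzlich classification of the universal cover combined with the Caffarelli--Gidas--Spruck rigidity to exclude the conformally Euclidean alternative, exactly as in the paper's quoted argument from \cite{prs07}. You are merely slightly more explicit than the paper in discarding the flat alternative of Theorem \ref{carherz} and in spelling out the ``in particular'' clause.
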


\begin{proof}
	i) If $S=0$ then $\Ricc\equiv 0$, and since we also have $W\equiv0$ we conclude that $\Riem\equiv0$.
	
	ii) If $S>0$, by the work of Zhu \cite{zhu94} and Carron, Herzlich \cite{ch06}, see Theorem \ref{carherz} from the Introduction, we know that the universal cover $(\tilde M,\tilde g)$ of $(M,g)$ satisfies one of the following:
	\begin{itemize}
		\item [a)] $(\tilde M,\tilde g)$ is isometric to $\R\times\sphere^{m-1}_{S/(m-1)(m-2)}$,
		\item [b)] $(\tilde M,\tilde g)$ is conformally equivalent to $\sphere^m$,
		\item [c)] $(\tilde M,\tilde g)$ is conformally equivalent to $\R^m$.
	\end{itemize}
	We repeat the argument of Theorem 1.1 of \cite{prs07} to show that c) cannot occur. Suppose, by contradiction, that c) holds. Then $\tilde M = \R^m$ and $\tilde g = u^{\frac{4}{m-2}}g_{\R^m}$ for some $0<u\in C^\infty(\R^m)$ satisfying the Yamabe equation
	\begin{equation} \label{yameq}
		c_m \Delta_{g_{\R^m}} u + S u^{\frac{m+2}{m-2}} = 0 \qquad \text{on } \, \R^m \, ,
	\end{equation}
	where $g_{\R^m}$ is the canonical Euclidean metric on $\R^m$. Since $S$ is a positive constant, by the celebrated work of Caffarelli, Gidas and Spruck \cite[Corollary 8.2]{cgs89} it follows that $u$ is radially symmetric around some point $x_0\in\R^m$ and has the expression
	$$
	u(x) = A(B+|x-x_0|^2)^{-\frac{m-2}{2}}
	$$
	for some positive constants $A,B$ only depending on $m$ and $S$. In particular, $(\tilde M,\tilde g)$ is an $m$-sphere of constant curvature with one point removed, hence it is not complete. But $(\tilde M,\tilde g)$ is the universal Riemannian cover of the complete manifold $(M,g)$, contradiction.
\end{proof}

We first deal with the $3$-dimensional case.

\begin{theorem} \label{cpl_tachi_dim3}
	Let $M^3$ be a complete Riemannian manifold of dimension $3$ with harmonic curvature and $\Ricc\geq0$. Then $M$ is isometric to a quotient of $\R^3$, $\sphere^2\times\R$ or $\sphere^3$. If $\Ricc>0$ at some point, then $M$ is isometric to a quotient of $\sphere^3$.
\end{theorem}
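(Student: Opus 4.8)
The plan is to reduce the statement to Proposition~\ref{W0_Sc_class} together with the compact Tachibana theorem Theorem~\ref{thm_tachi_3}.

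First I would observe that harmonic curvature is extremely restrictive in dimension three. Since $\Riem$ always satisfies the second Bianchi identity, harmonicity of the curvature of $M$ amounts to $\div\Riem = 0$; applying Proposition~\ref{B(T)_cons1} with $T = \Riem$ (so that $E_T = \Ricc$, $S_T = S$ and $A_T$ is the Schouten-type tensor $A$ of $M$) this is equivalent to $C(A) = 0$ together with $\nabla S = 0$. In dimension three the Weyl tensor vanishes identically, and $C(A) = 0$ is then precisely the condition that $M$ be locally conformally flat, the Cotton tensor being the conformal-flatness obstruction. Hence $M$ is a complete, locally conformally flat $3$-manifold with $\Ricc \geq 0$ and constant scalar curvature $S$, which is exactly the setting of Proposition~\ref{W0_Sc_class}.

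Next I would split according to the sign of $S$. If $S = 0$ then Proposition~\ref{W0_Sc_class}(i) gives that $M$ is flat; being complete, its universal cover is isometric to $\R^3$, so $M$ is a quotient of $\R^3$. If $S > 0$, Proposition~\ref{W0_Sc_class}(ii) offers two alternatives: either $M$ is isometric to a quotient of $\R \times \sphere^2_{S/2}$, in which case we are done, or the universal Riemannian cover $\tilde M$ is conformally equivalent to $\sphere^3$. In the latter case $\tilde M$ is in particular diffeomorphic to $\sphere^3$, hence compact, so $M = \tilde M / \pi_1(M)$ is compact as well; then $M$ is a \emph{compact} $3$-manifold with harmonic curvature and $\Ricc \geq 0$, and Theorem~\ref{thm_tachi_3} shows that $M$ is isometric to a quotient of $\sphere^3$, $\sphere^2 \times \R$ or $\R^3$. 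This settles the first assertion.

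For the last statement, if $\Ricc > 0$ at some point $x$ then $S(x) = \trace \Ricc_x > 0$, and since $S$ is constant we have $S > 0$ everywhere; the final clause of Proposition~\ref{W0_Sc_class} then gives that $M$ is conformally equivalent to a quotient of $\sphere^3$, hence (arguing as above) compact. The ``moreover'' part of Theorem~\ref{thm_tachi_3}, applicable since $\Ricc > 0$ somewhere, then forces $M$ to be isometric to a quotient of $\sphere^3$. I expect the only slightly delicate point to be the $S > 0$ branch: the alternative ``$\tilde M$ conformally equivalent to $\sphere^3$'' does not by itself yield an isometric model, and it is upgraded precisely by observing that it forces compactness and then invoking the compact case already proved in Theorem~\ref{thm_tachi_3}; beyond that there is no analytic difficulty, the substantive work residing entirely in the quoted results.
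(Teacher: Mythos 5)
Your proof is correct and follows essentially the same route as the paper's: harmonic curvature in dimension three yields local conformal flatness and constant scalar curvature via Proposition~\ref{B(T)_cons1}, after which Proposition~\ref{W0_Sc_class} and the compact Theorem~\ref{thm_tachi_3} are invoked exactly as you do. Your explicit case split on the sign of $S$ and your justification that the conformal-to-$\sphere^3$ alternative forces compactness are slightly more detailed than, but substantively identical to, the paper's argument.
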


\begin{proof}
	The Ricci tensor of $M$ is Codazzi by the second Bianchi identity. In particular $M$ has constant scalar curvature and vanishing Cotton tensor (this is a particular case of Proposition \ref{B(T)_cons1}, for $T=\Riem$). Since $\dim M = 3$, the latter means that $M$ is locally conformally flat, see \cite[page 92]{eis49}. Then we can we can apply Proposition \ref{W0_Sc_class} to infer that either $M$ is isometric to a quotient of $\R^3$ or $\sphere^2\times\R$, or it is globally conformal to a quotient of $\sphere^3$. In the third scenario $M$ is compact, thus Theorem \ref{thm_tachi_3} applies and $M$ is in fact isometric to a quotient of $\sphere^3$. So the first part of the proposition is proved, and if $\Ricc>0$ at some point then the only possible conclusion is that $M$ is isometric to a quotient of $\sphere^3$.
\end{proof}

%\begin{proof}
%	The Ricci tensor of $M$ is Codazzi by the second Bianchi identity. In particular $M$ has constant scalar curvature and vanishing Cotton tensor (this is a particular case of Proposition \ref{B(T)_cons1}, for $T=\Riem$). Since $\dim M = 3$, the latter means that $M$ is locally conformally flat, see \cite[page 92]{eis49}. As $M$ has constant scalar curvature and non-negative Ricci curvature, we can apply Proposition \ref{W0_Sc_class} to infer that either $M$ is isometric to a quotient of $\R^3$ or $\sphere^2\times\R$, or it is globally conformal to a quotient of $\sphere^3$. In the third scenario, $M$ is compact and so by Tachibana's theorem it is locally symmetric. By Noronha's Theorem \ref{noronha}, it follows that $M$ is in fact isometric to a quotient of $\sphere^3$ and this concludes the proof of the first part of the present proposition. If $\mathfrak R>0$ at some point, then also $\Ricc>0$ at the same point and the only possible conclusion is that $M$ is isometric to a quotient of $\sphere^3$.
%\end{proof}

We now turn to the case $\dim M \geq 4$.

\begin{theorem} \label{cplt_tachi_1}
	Let $M$ be a complete Riemannian manifold of dimension $m\geq 4$ with harmonic curvature and $\mathfrak R^{(\lfloor\frac{m-1}{2}\rfloor)} \geq 0$. If the Weyl tensor satisfies
	$$
		\lim_{R\to+\infty} \frac{1}{|B_R(x)|} \int_{B_R(x)} |W|^p = 0
	$$
	for some $p\in[1,+\infty)$, then $M$ is isometric to a quotient of either $\sphere^m$, $\sphere^{m-1}\times\R$ or $\R^m$. Moreover, if $\mathfrak R^{(\lfloor\frac{m-1}{2}\rfloor)} > 0$ somewhere then $M$ is isometric to a quotient of $\sphere^m$.
\end{theorem}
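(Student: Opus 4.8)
The plan is to run the Bochner technique on the Weyl tensor $W$ of $M$, along the lines sketched in the Introduction, and then to conclude with the classification theorems quoted above. Before that, I would unwind the hypotheses. Since $1 \le \lfloor\frac{m-1}{2}\rfloor \le m-1$, estimate \eqref{Riem_Ric_bound} gives $\Ricc \ge 0$ on $M$, which makes available the tools for complete manifolds with non-negative Ricci curvature developed in the previous subsections. The Riemann tensor is a harmonic algebraic curvature tensor (it always satisfies the second Bianchi identity, and $\div\Riem = 0$ by hypothesis), so Proposition \ref{B(T)_cons} tells us that the scalar curvature $S$ is constant and $W$ is harmonic. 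Since $\lfloor\frac{m-1}{2}\rfloor < \binom{m}{2}$, Corollary \ref{S_Riem_bound} and \eqref{|T|_1} give $|W|^2 \le |\Riem|^2 \le \lfloor\frac{m-1}{2}\rfloor^2\binom{m}{2}S^2$, and as $S$ is constant $|W|$ is \emph{bounded}.

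Next I would apply Theorem \ref{Boch_T} to the harmonic, totally traceless tensor $T = W$, for which $P_W = W$ since $E_W = 0$ in the notation of \eqref{P_def}, obtaining $\frac{1}{2}\Delta|W|^2 \ge |\nabla W|^2 + (m-1)\mathfrak R^{(\lfloor\frac{m-1}{2}\rfloor)}|W|^2$. Combined with $\frac{1}{2}\Delta|W|^2 = |W|\Delta|W| + |\nabla|W||^2$ and Kato's inequality $|\nabla|W|| \le |\nabla W|$, this gives $\Delta|W| \ge (m-1)\mathfrak R^{(\lfloor\frac{m-1}{2}\rfloor)}|W| \ge 0$ wherever $|W| > 0$, hence weakly on all of $M$, so that $|W|$ is a bounded non-negative subharmonic function. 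Corollary \ref{cor_li} then identifies $\sup_M|W|^p$ with $|B_R(x)|^{-1}\int_{B_R(x)}|W|^p$ in the limit $R\to+\infty$, which vanishes by hypothesis; hence $W \equiv 0$ and $M$ is locally conformally flat.

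Now $M$ is complete, locally conformally flat, with $\Ricc \ge 0$ and constant scalar curvature, so Proposition \ref{W0_Sc_class} applies. If $S = 0$ then $M$ is flat, hence a quotient of $\R^m$; if $S > 0$ then $M$ is either isometric to a quotient of $\sphere^{m-1}\times\R$ (of the appropriate radius) or conformally equivalent to a quotient of $\sphere^m$. In the latter case $M$ is diffeomorphic to a quotient of $\sphere^m$, so $M$ is compact; then Theorem \ref{cor_cpt_tachi} makes $M$ locally symmetric, and the final assertion of Noronha's Theorem \ref{noronha}, applied together with the compactness of the universal cover, forces that cover to be isometric to $\sphere^m$, so that $M$ is a quotient of $\sphere^m$. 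This gives the first statement. For the ``moreover'' part, both $\R^m$ and $\sphere^{m-1}\times\R$ have $\mathfrak R^{(\lfloor\frac{m-1}{2}\rfloor)} \equiv 0$ — the curvature operator of $\R^m$ vanishes identically, while that of $\sphere^{m-1}\times\R$ has at least $m-1 \ge \lfloor\frac{m-1}{2}\rfloor$ zero eigenvalues, those attached to the $2$-forms involving the flat direction — so positivity of $\mathfrak R^{(\lfloor\frac{m-1}{2}\rfloor)}$ anywhere forces $M$ to be a quotient of $\sphere^m$.

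The step I expect to demand the most care is the passage from the analytic conclusion $W \equiv 0$ to the rigid geometric statement: one must observe that the ``conformally equivalent to $\sphere^m$'' alternative in Carron--Herzlich's classification occurs only for compact $M$, so that the compact Tachibana theorem and Noronha's rigidity apply and the conformal equivalence can be improved to an isometry; otherwise one would be left with a merely conformal conclusion. On the Bochner side everything is routine given Theorem \ref{Boch_T}, the only non-obvious ingredient there being the boundedness of $|W|$, which relies on harmonic curvature forcing $S$ to be constant.
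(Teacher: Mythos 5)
Your proposal is correct and follows essentially the same route as the paper: apply Theorem \ref{Boch_T} to the harmonic Weyl tensor (with $P_W = W$), pass to the subharmonicity of $|W|$ via Kato's inequality, invoke the mean-value inequality for nonnegative subharmonic functions on manifolds with $\Ricc\geq 0$ to conclude $W\equiv 0$, then apply Proposition \ref{W0_Sc_class} and, in the compact sphere case, Theorem \ref{cor_cpt_tachi} plus Noronha's rigidity. The only cosmetic difference is that you establish boundedness of $|W|$ via Corollary \ref{S_Riem_bound} before invoking Corollary \ref{cor_li}; this step is superfluous here, since Corollary \ref{cor_li} holds for arbitrary nonnegative subharmonic functions (and the paper instead cites Proposition \ref{prop_li_schoen} directly) — the boundedness of $|W|$ is what is needed in the parabolic and pointwise-lower-bound variants, Theorems \ref{para_cplt_tachi} and \ref{cplt_tachi_2}, but not in this one.
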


\begin{proof}
	Since $M$ has harmonic curvature, the Weyl tensor is also harmonic and by Theorem \ref{Boch_T} we have
	\begin{equation} \label{divW1}
		\frac{1}{2}\Delta|W|^2 \geq |\nabla W|^2 \qquad \text{on } \, M.
	\end{equation}
	At any point where $|W|\neq 0$ we have
	$$
		\frac{1}{2}\Delta|W|^2 = \div(|W|\nabla|W|) = |W|\Delta|W| + |\nabla|W||^2 \qquad \text{and} \qquad |\nabla|W||^2 \leq |\nabla W|^2
	$$
	hence
	$$
		|W|\Delta|W| \geq |\nabla W|^2 - |\nabla|W||^2 \geq 0
	$$
	that is,
	$$
		\Delta|W| \geq 0 \, .
	$$
	Since $|W|\geq 0$ on $M$, any point where $|W|=0$ is a global mininum point for $|W|$, hence $\Delta|W|\geq0$ holds in the weak sense on the whole $M$. By Proposition \ref{prop_li_schoen} we deduce that $|W|\equiv 0$, hence $M$ is locally conformally flat. Also, $M$ has constant scalar curvature and nonnegative Ricci curvature, hence by Proposition \ref{W0_Sc_class} we have that one of the following cases occurs:
	\begin{itemize}
		\item [a)] $M$ is a quotient of $\R^m$,
		\item [b)] $M$ is a quotient of $\sphere^{m-1}\times\R$,
		\item [c)] $M$ is conformally equivalent to a quotient of $\sphere^m$.
	\end{itemize}
	If c) is in force, then $M$ is necessarily compact, so we can apply Theorem \ref{cor_cpt_tachi} to deduce that $M$ is locally symmetric. Since we also know that $M$ is locally conformally flat and we have $\Ricc\geq0$ as a consequence of $\mathfrak R^{(\lfloor\frac{m-1}{2}\rfloor)} \geq 0$, by Noronha's Theorem \ref{noronha} we conclude that $M$ is in fact \emph{isometric} to a quotient of $\sphere^m$, and this proves the first part of the thesis. If $\mathfrak R^{(\lfloor\frac{m-1}{2}\rfloor)} > 0$ at some point $x\in M$, then we also have $\Ricc > 0$ at $x$, so alternatives a) and b) are ruled out and the only possibility is that $M$ is a quotient of $\sphere^m$.
%	
%	Suppose that $M$ is compact. The Ricci tensor is Codazzi and divergence free, and $M$ has $\lfloor\frac{m}{2}\rfloor$-nonnegative sectional curvature. By Proposition \ref{cpt_E_para} we deduce that $M$ has parallel Ricci tensor, hence it is locally symmetric and c) is verified. In particular, if $\mathfrak R$ is $\lfloor\frac{m-1}{2}\rfloor$-positive at some point $x\in M$, then $M$ has $\lfloor\frac{m}{2}\rfloor$-positive sectional curvature at $x$ and by Proposition \ref{cpt_E_para} we conclude that $\Ricc$ is a constant (positive) multiple of $\metric$, that is, $M$ has constant positive curvature.
\end{proof}

\begin{theorem} \label{cplt_tachi_2}
	Let $M$ be a complete Riemannian manifold of dimension $m\geq 4$ with harmonic curvature. Assume that $\mathfrak R^{(\lfloor\frac{m-1}{2}\rfloor)} \geq a$ for some measurable function $a\geq0$ satisfying either condition ii) or iii) of Theorem \ref{pr_liou}. Then $M$ is compact, and in particular it is a quotient of $\sphere^m$.
\end{theorem}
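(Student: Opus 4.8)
The plan is to run the Bochner technique on the Weyl tensor $W$ of $M$, exactly as in the proof of Theorem \ref{cplt_tachi_1}, but this time retaining the curvature term, and then to invoke the Liouville-type Theorem \ref{pr_liou} in place of the Li--Schoen mean value inequalities.

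Since $M$ has harmonic curvature, Proposition \ref{B(T)_cons} tells us that $W$ is harmonic and that the scalar curvature $S$ is constant. Applying Theorem \ref{Boch_T} to $T = W$ — for which $P_W = W$, as $W$ is totally traceless — gives $\frac12\Delta|W|^2 \geq |\nabla W|^2 + (m-1)a\,|W|^2$ on $M$, and the standard argument passing from $\Delta|W|^2$ to $\Delta|W|$ (exactly as in the proof of Theorem \ref{cplt_tachi_1}) upgrades this to $\Delta|W| \geq (m-1)a\,|W|$ in the weak sense on all of $M$. To feed this into Theorem \ref{pr_liou} I still need $|W|$ bounded and $\Ricc \geq 0$; both come from $\mathfrak R^{(\lfloor\frac{m-1}{2}\rfloor)} \geq a \geq 0$. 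Indeed $\mathfrak R^{(\lfloor\frac{m-1}{2}\rfloor)} \geq 0$ forces $\Ricc \geq 0$ via \eqref{Riem_Ric_bound}, and it forces $|W|^2 \leq |\Riem|^2 \leq k^2\binom{m}{2}S^2$ via Corollary \ref{S_Riem_bound} (applicable since $1 \leq k := \lfloor\frac{m-1}{2}\rfloor < \binom{m}{2}$ when $m \geq 4$) together with the constancy of $S$; hence $|W| \in L^\infty(M)$.

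Next I would observe that each of conditions ii) and iii) on $a$ forces $a$ to be positive on a set of positive measure — on the nonempty open set $M \setminus K$ in case ii), and because $a_{x,R} \geq C_1/R^2 > 0$ in case iii) — so that, since $\mathfrak R^{(\lfloor\frac{m-1}{2}\rfloor)} \geq a$ and $\mathfrak R^{(\lfloor\frac{m-1}{2}\rfloor)}$ is continuous, $\mathfrak R^{(\lfloor\frac{m-1}{2}\rfloor)} > 0$ at some point of $M$. Now suppose, for contradiction, that $M$ is noncompact. The function $(m-1)a$ still satisfies ii) or iii) (after rescaling the constants by suitable powers of $m-1$), so Theorem \ref{pr_liou} applied to $f = |W|$ yields $|W| \equiv 0$; thus $M$ is complete, noncompact, locally conformally flat, with $\Ricc \geq 0$ and constant scalar curvature. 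By Proposition \ref{W0_Sc_class}, $M$ is then either flat or isometric to a quotient of $\R \times \sphere^{m-1}$, the remaining alternative of Proposition \ref{W0_Sc_class} — conformal equivalence to a quotient of $\sphere^m$ — being ruled out since it would make $M$ compact. But any flat manifold has $\mathfrak R \equiv 0$, and $\R \times \sphere^{m-1}$ and its quotients have a curvature operator with exactly $m-1$ vanishing eigenvalues; in both cases $\mathfrak R^{(\lfloor\frac{m-1}{2}\rfloor)} \equiv 0$ because $\lfloor\frac{m-1}{2}\rfloor \leq m-1$, contradicting $\mathfrak R^{(\lfloor\frac{m-1}{2}\rfloor)} > 0$ somewhere. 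Hence $M$ must be compact, and then Theorem \ref{cor_cpt_tachi} applies (using $m \geq 4 \geq 3$, harmonic curvature, $\mathfrak R^{(\lfloor\frac{m-1}{2}\rfloor)} \geq 0$ with strict inequality somewhere) to conclude that $M$ is isometric to a quotient of $\sphere^m$.

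The argument is mostly a matter of assembling existing pieces; the one genuinely new ingredient compared with the proof of Theorem \ref{cplt_tachi_1} is the a priori boundedness of $|W|$, which relies on combining the pinching estimate of Corollary \ref{S_Riem_bound} with the constancy of the scalar curvature. The conceptual crux — and the step I would expect to require the most care — is the closing move of the contradiction: one has to notice that the only noncompact models surviving the Carron--Herzlich classification under our hypotheses are the flat spaces and the cylinder $\R \times \sphere^{m-1}$, and that both of these have $\mathfrak R^{(\lfloor\frac{m-1}{2}\rfloor)}$ identically zero, which is incompatible with the pointwise positivity of $a$ built into conditions ii) and iii).
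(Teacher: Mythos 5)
Your proposal is correct and follows essentially the same route as the paper: bound $|W|$ via Corollary \ref{S_Riem_bound} and the constancy of $S$, upgrade the Bochner inequality for $T=W$ to $\Delta|W|\geq(m-1)a|W|$, invoke Theorem \ref{pr_liou} to force $W\equiv 0$ under the noncompactness hypothesis, and then contradict the classification in Proposition \ref{W0_Sc_class}. The only cosmetic difference is in the closing step: you rule out the flat and cylindrical models by noting that $\mathfrak R^{(\lfloor\frac{m-1}{2}\rfloor)}\equiv 0$ on them, while the paper points instead at $\Ricc>0$ somewhere (implied by the strict positivity of $\mathfrak R^{(\lfloor\frac{m-1}{2}\rfloor)}$ at a point), but both exploit the same degeneracy of those spaces.
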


\begin{proof}
	By the assumptions on $a$, we have that $\mathfrak R$ is $\lfloor\frac{m-1}{2}\rfloor$-nonnegative on $M$, and $\lfloor\frac{m-1}{2}\rfloor$-positive at some point. If $M$ is compact, then the conclusion follows by Corollary \ref{cor_cpt_tachi}. Hence, let us suppose (by contradiction) that $M$ is noncompact. Since $|W| \leq |\Riem|$ and $M$ has constant scalar curvature, by Corollary \ref{S_Riem_bound} we see that $|W|$ is bounded on $M$. Arguing as in the proof of Theorem \ref{cplt_tachi_1} we see that $|W|$ satisfies
	$$
		\Delta|W| \geq (m-1)a|W| \qquad \text{on } \, M
	$$
	(note that for $T=W$ the tensor $P$ defined as in \eqref{P_def} is $W$ itself), so by Theorem \ref{pr_liou} we deduce $|W|\equiv 0$, that is, $M$ is locally conformally flat. From this point on, we proceed as in the proof of Theorem \ref{cplt_tachi_1} and, since the assumptions on $\mathfrak R$ imply that $\Ricc > 0$ somewhere, the only possible conclusion is that $M$ is a quotient of a sphere, that in fact contradicts the noncompactness assumption. This concludes the proof.
\end{proof}

\begin{theorem} \label{para_cplt_tachi}
	Let $M$ be a complete Riemannian manifold of dimension $m\geq 4$ with harmonic curvature and $\mathfrak R^{(\lfloor\frac{m-1}{2}\rfloor)} \geq 0$. Assume that for some fixed origin $o\in M$
	\begin{equation} \label{para_cond_tachi}
		\lim_{R\to+\infty} \int_1^R \frac{t}{|B_t|} \, \di t = +\infty \, .
	\end{equation}
	Then $M$ is locally symmetric. If $\mathfrak R^{(\lfloor\frac{m-1}{2}\rfloor)}>0$ somewhere, then $M$ is a quotient of $\sphere^m$.
\end{theorem}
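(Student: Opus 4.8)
The plan is to apply the Bochner technique directly to the full Riemann tensor $\Riem$, rather than only to the Weyl tensor; this yields local symmetry in one stroke. Since $M$ has harmonic curvature, $\Riem$ is a harmonic algebraic curvature tensor and, by Proposition \ref{B(T)_cons}, the scalar curvature $S$ of $M$ is constant. As $m\geq 4$ we have $\lfloor\frac{m-1}{2}\rfloor < \binom{m}{2}$, so the hypothesis $\mathfrak R^{(\lfloor\frac{m-1}{2}\rfloor)}\geq 0$ and Corollary \ref{S_Riem_bound} give the pointwise bound $|\Riem|^2 \leq k^2\binom{m}{2}S^2$ with $k=\lfloor\frac{m-1}{2}\rfloor$; since $S$ is constant, this shows $|\Riem|^2\in L^\infty(M)$.

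Next I would use the harmonic case of Theorem \ref{Boch_T} with $T=\Riem$ and $a(x)=\mathfrak R^{(\lfloor\frac{m-1}{2}\rfloor)}(x)\geq0$, which gives
$$
\tfrac{1}{2}\Delta|\Riem|^2 \geq |\nabla\Riem|^2 + (m-1)\mathfrak R^{(\lfloor\frac{m-1}{2}\rfloor)}|P_{\Riem}|^2 \geq 0 ,
$$
where $P_{\Riem}$ is the tensor of \eqref{P_def}. Thus $|\Riem|^2$ is a bounded subharmonic function on $M$; since \eqref{para_cond_tachi} implies that $M$ is parabolic (equivalently, one invokes Theorem \ref{para_liou} with $f=|\Riem|^2$ and $a\equiv0$), the function $|\Riem|^2$ must be constant. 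Hence $\Delta|\Riem|^2\equiv0$, and the inequality above collapses to $\nabla\Riem\equiv0$ together with $\mathfrak R^{(\lfloor\frac{m-1}{2}\rfloor)}|P_{\Riem}|^2\equiv0$ on $M$. The first relation says precisely that $M$ is locally symmetric, which is the first assertion.

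For the second assertion, from $\nabla\Riem\equiv0$ I get $\nabla\Ricc\equiv0$ and then $\nabla P_{\Riem}\equiv0$ by \eqref{P_def}, so $|P_{\Riem}|$ is constant on $M$. If $\mathfrak R^{(\lfloor\frac{m-1}{2}\rfloor)}>0$ at some point, evaluating $\mathfrak R^{(\lfloor\frac{m-1}{2}\rfloor)}|P_{\Riem}|^2\equiv0$ there forces $|P_{\Riem}|\equiv0$, i.e. $P_{\Riem}\equiv0$; recalling that $P_{\Riem}\equiv0$ is equivalent to $\Riem=\frac{S}{2m(m-1)}\metric\owedge\metric$ (see \eqref{|P|}), this means $M$ has constant sectional curvature. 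Finally, $\mathfrak R^{(\lfloor\frac{m-1}{2}\rfloor)}>0$ somewhere implies $\Ricc>0$ there by \eqref{Riem_Ric_bound}, whence $S>0$; so $M$ is a complete manifold of constant positive curvature, hence isometric to a quotient of $\sphere^m$.

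I do not foresee a real obstacle, since all the analytic ingredients are already in place; the only genuine choice is which tensor to feed into the Bochner identity. Working with $|W|$ (in the spirit of the proof of Theorem \ref{cplt_tachi_1}) would only give local conformal flatness, after which one would have to combine Theorem \ref{carherz}, Theorem \ref{cor_cpt_tachi} and Noronha's Theorem \ref{noronha}; working with $\Riem$ itself — whose norm is controlled precisely because $S$ is constant, via Corollary \ref{S_Riem_bound} — bypasses all of that and delivers local symmetry immediately. A minor point to verify is that $|\Riem|^2$ is subharmonic in the classical sense, so that parabolicity applies verbatim, but this is clear since $\Riem$ is smooth.
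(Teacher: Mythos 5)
Your proof is correct, and it takes a genuinely shorter route than the paper's. The paper applies the Bochner argument to the Weyl tensor $W$: parabolicity gives $|W|$ constant, then $\nabla W\equiv0$, then Derdzi\'nski--Roter's theorem to conclude $M$ is locally conformally flat or locally symmetric, and the conformally flat case is handled via the classification of Carron--Herzlich, the compact Theorem \ref{cor_cpt_tachi} and Noronha's Theorem \ref{noronha}. You instead feed the full Riemann tensor into Theorem \ref{Boch_T}: the harmonic-curvature hypothesis makes $S$ constant, so Corollary \ref{S_Riem_bound} bounds $|\Riem|$, parabolicity forces $|\Riem|^2$ to be constant, and the Bochner inequality then yields $\nabla\Riem\equiv0$ and $\mathfrak R^{(\lfloor\frac{m-1}{2}\rfloor)}|P_{\Riem}|^2\equiv0$ simultaneously. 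This bypasses Derdzi\'nski--Roter entirely (which the paper needs precisely because $\nabla W\equiv0$ alone does not give $\nabla\Riem\equiv0$) and also avoids the detour through locally conformally flat classification for the second assertion, since $P_{\Riem}\equiv0$ together with constant $S>0$ directly pins down the constant-curvature model. Your argument is, in effect, the parabolic analogue of the proof of the compact Theorem \ref{cpt_Tachi}; the paper's route via $|W|$ is better adapted to Theorems \ref{cplt_tachi_1} and \ref{cplt_tachi_2}, whose hypotheses are formulated in terms of $|W|$ and where one cannot conclude constancy of $|\Riem|^2$, but for the parabolic case your approach is cleaner.

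One small point worth making explicit: from $P_{\Riem}\equiv0$ you get constant curvature $\kappa = S/\big(m(m-1)\big)$, and since $S$ is constant and nonnegative (by $\mathfrak R^{(\lfloor\frac{m-1}{2}\rfloor)}\geq0$), the dichotomy is $\kappa=0$ (flat) or $\kappa>0$ (spherical); the pointwise condition $\mathfrak R^{(\lfloor\frac{m-1}{2}\rfloor)}>0$ somewhere rules out the flat case because on a flat manifold $\mathfrak R\equiv0$. Your phrasing via $\Ricc>0 \Rightarrow S>0$ says the same thing, and both are correct.
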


\begin{proof}
	As in the proof of Theorem \ref{cplt_tachi_2} we observe that $|W|$ is bounded and satisfies
	$$
		\Delta|W| \geq (m-1) \mathfrak R^{(\lfloor\frac{m-1}{2}\rfloor)} |W| \qquad \text{on } \, M.
	$$
	As remarked at the end of the previous subsection, condition \eqref{para_cond_tachi} implies that $M$ is parabolic in the sense of Definition \ref{def_para}. From parabolicity of $M$ and the assumption $\mathfrak R^{(\lfloor\frac{m-1}{2}\rfloor)} \geq 0$ we have that $|W|$ is constant on $M$. Then, by the Bochner inequality
	$$
		\frac{1}{2}\Delta|W|^2 \geq |\nabla W|^2
	$$
	it follows that $\nabla W \equiv 0$ and so, by Theorem 2 in Derdzi\'nski and Roter's paper \cite{dr77}, $M$ is either locally conformally flat or locally symmetric. But if $M$ is locally conformally flat, then we can argue as in the last part of the proof of Theorem \ref{cplt_tachi_1} to conclude that $M$ is isometric to a quotient of either $\R^m$, $\sphere^{m-1}\times\R$ or $\sphere^m$. Hence, in any case $M$ is locally symmetric. Lastly, if for some $x\in M$ we have $\mathfrak R^{(\lfloor\frac{m-1}{2}\rfloor)}(x)>0$ then the constant function $|W|$ must vanish, so $M$ is locally conformally flat. As just observed, in this case $M$ is isometric to a quotient of either $\R^m$, $\sphere^{m-1}\times\R$ or $\sphere^m$, but since also $\Ricc > 0$ we conclude that only the last possibility can occur.
\end{proof}

\subsection{The complete case: general curvature tensors}

\begin{theorem} \label{gen_cpl_T1}
	Let $M$ be a complete Riemannian manifold of dimension $m\geq 3$ satisfying $\mathfrak R^{(\lfloor\frac{m-1}{2}\rfloor)} \geq 0$. If $T$ is a harmonic algebraic curvature tensor on $M$ such that
	$$
		\lim_{R\to+\infty} \frac{1}{|B_R(x)|} \int_{B_R(x)} |W_T|^p + |Z_T|^p = 0
	$$
	for some $x\in M$ and $p\in[1,+\infty)$, where $W_T$ and $Z_T$ are the Weyl part of $T$ and the traceless part of the Ricci contraction of $T$. Then $T$ is a constant multiple of $\metric\owedge\metric$.
\end{theorem}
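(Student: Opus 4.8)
The plan is to mimic the argument in the proof of Theorem \ref{cplt_tachi_1}, after first replacing $T$ by a closely related harmonic tensor whose squared norm carries no additive constant. Since $\mathfrak R^{(\lfloor\frac{m-1}{2}\rfloor)}\geq0$, estimate \eqref{Riem_Ric_bound} (with $k=\lfloor\frac{m-1}{2}\rfloor\leq m-1$) gives $\Ricc\geq0$ on $M$, so the Li--Schoen mean value inequality (Proposition \ref{prop_li_schoen}) applies with $\kappa=0$. Moreover, since $T$ is harmonic, Proposition \ref{B(T)_cons1} gives that $S_T$ is constant, hence the algebraic curvature tensor $U_T=\frac{S_T}{2m(m-1)}\metric\owedge\metric$ is parallel. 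Set $\tau:=T-U_T$. Then $\tau$ is a smooth algebraic curvature tensor; it is harmonic, because $U_T$ is parallel and $T$ is harmonic; its Ricci contraction is $E_\tau=E_T-\frac{S_T}{m}\metric=Z_T$, so that $Z_\tau=Z_T$, $S_\tau=0$ and $W_\tau=W_T$; and, using orthogonality of the decomposition $T=W_T+V_T+U_T$ together with \eqref{|T|_1},
$$
	|\tau|^2=|T|^2-|U_T|^2=|W_T|^2+\frac{4}{m-2}|Z_T|^2 \, .
$$

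Applying Theorem \ref{Boch_T} to the harmonic tensor $\tau$ with $a\equiv0$, and then Kato's inequality $|\nabla\tau|\geq|\nabla|\tau||$, we obtain
$$
	\tfrac12\Delta|\tau|^2\geq|\nabla\tau|^2\geq|\nabla|\tau||^2 \qquad\text{on } M \, .
$$
Exactly as in the proof of Theorem \ref{cplt_tachi_1}, this forces $|\tau|\Delta|\tau|\geq0$ wherever $|\tau|\neq0$, while every zero of $|\tau|$ is a global minimum of $|\tau|$; hence $\Delta|\tau|\geq0$ holds weakly on $M$, so $|\tau|$ is a nonnegative subharmonic function. From $|\tau|^2\leq4(|W_T|^2+|Z_T|^2)\leq4(|W_T|+|Z_T|)^2$ we get $|\tau|^p\leq2^{2p-1}(|W_T|^p+|Z_T|^p)$ for every $p\in[1,+\infty)$, so the hypothesis yields
$$
	\frac{1}{|B_R(x)|}\int_{B_R(x)}|\tau|^p\longrightarrow0 \qquad\text{as } R\to+\infty \, .
$$

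Now apply Proposition \ref{prop_li_schoen} to $f=|\tau|$ with $\kappa=0$ and, say, $\tau_0=\tfrac13$: this gives $\sup_{B_{2R/3}(x)}|\tau|^p\leq3^{C}\,\frac{1}{|B_R(x)|}\int_{B_R(x)}|\tau|^p$, and letting $R\to+\infty$ we conclude $\sup_M|\tau|^p=0$, i.e. $|\tau|\equiv0$. Thus $W_T\equiv0$ and $Z_T\equiv0$, whence $V_T=\frac{1}{m-2}Z_T\owedge\metric\equiv0$, and the decomposition $T=W_T+V_T+U_T$ collapses to $T=U_T=\frac{S_T}{2m(m-1)}\metric\owedge\metric$, a constant multiple of $\metric\owedge\metric$ since $S_T$ is constant.

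The one step requiring genuine care is the passage from $T$ to $\tau=T-U_T$: one must check that subtracting the parallel tensor $U_T$ preserves harmonicity — this is precisely where the constancy of $S_T$ (Proposition \ref{B(T)_cons1}) is used — and that doing so removes exactly the additive constant $|U_T|^2$ from $|T|^2$, which is what makes the $L^p$-average of $|\tau|$, unlike that of $|T|$, vanish in the limit under the stated assumption. With this reduction in hand, the remainder is a routine adaptation of the proof of Theorem \ref{cplt_tachi_1}.
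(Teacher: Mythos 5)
Your proof is correct and takes essentially the same route as the paper: subtract the parallel piece $U_T$ to form the harmonic tensor $\tau = T' = W_T + V_T$, show $|\tau|$ is subharmonic via Theorem \ref{Boch_T} and Kato, bound $|\tau|^p$ by a constant times $|W_T|^p + |Z_T|^p$, and invoke the Li--Schoen mean value inequality to kill $\tau$. Incidentally, your citation of Proposition \ref{B(T)_cons1} for the constancy of $S_T$ is slightly more careful than the paper's appeal to Proposition \ref{B(T)_cons}, since the former covers $m\geq 3$ as required while the latter is stated only for $m\geq 4$.
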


\begin{proof}
	Let $T = W_T + V_T + U_T$ be the orthogonal decomposition of $T$ given by \eqref{T_dec}-\eqref{VU_def}. From Proposition \ref{B(T)_cons} we see that the total trace $S_T$ of $T$ is constant, hence $U_T = \frac{S_T}{2m(m-1)} \metric \owedge \metric$ is parallel. By linearity, the tensor field $T' = W_T + V_T \equiv T - U_T$ is again a harmonic algebraic curvature tensor and its standard orthogonal decomposition $T' = W_{T'} + V_{T'} + U_{T'}$ is given by $W_{T'} = W_T$, $V_{T'} = V_T$, $U_{T'} = 0$. In particular, the traceless part $Z_{T'}$ of the Ricci contraction of $T'$ coincides with the analogous tensor $Z_T$ associated to $T$. By Theorem \ref{Boch_T} we have
	$$
		\frac{1}{2} \Delta|T'|^2 \geq |\nabla T'|^2 + (m-1)\mathfrak R^{(\lfloor\frac{m-1}{2}\rfloor)} |P_{T'}|^2 \geq 0
	$$
	where $P_{T'}$ is the pseudo-projective curvature tensor associated to $T'$ according to \eqref{P_def}, which coincides with the one associated to $T$. Arguing as in the proof of Theorem \ref{cplt_tachi_1} we see that $|T'|$ is a subharmonic function on $M$, and in particular that
	\begin{equation} \label{T'_Boch}
		\Delta|T'| \geq (m-1)\mathfrak R^{(\lfloor\frac{m-1}{2}\rfloor)} |P_{T'}|
	\end{equation}
	pointwise on $\{|T'| > 0\}$ and in the weak sense on $M$. Note that
	\begin{equation} \label{T'_norm}
		|T'|^2 = |W_{T'}|^2 + |V_{T'}|^2 = |W_T|^2 + |V_T|^2 = |W_T|^2 + \frac{4}{m-2} |Z_T|^2 \leq \left( |W_T| + \frac{2}{\sqrt{m-2}} |Z_T| \right)^2
	\end{equation}
	thus, since $(a+b)^p \leq 2^{p-1}(a^p + b^p)$ for any $a,b\geq0$ and $p\geq 1$,
	$$
		|T'|^p \leq 2^{p-1} |W_T|^p + \frac{2^p}{(m-2)^{(p-1)/2}} |Z_T|^p \, .
	$$
	In particular, under the assumptions of the present theorem we have
	$$
		\lim_{R\to+\infty} \frac{1}{|B_R(x)|} \int_{B_R(x)} |T'|^p = 0
	$$
	and by Corollary \ref{cor_li} we get $T'\equiv 0$ on $M$, that is, $T \equiv U_T$.
\end{proof}

\begin{theorem} \label{gen_cpl_T2}
	Let $M$ be a complete Riemannian manifold of dimension $m\geq 3$. Assume that $\mathfrak R^{(\lfloor\frac{m-1}{2}\rfloor)} \geq 0$ and that either
	\begin{itemize}
		\item [(a)] $\mathfrak R^{(\lfloor\frac{m-1}{2}\rfloor)} > 0$ somewhere on $M$ and \eqref{para_cond_tachi} is satisfied for some $o\in M$, or
		\item [(b)] $\mathfrak R^{(\lfloor\frac{m-1}{2}\rfloor)} \geq a$ for some measurable function $a\geq0$ satisfying ii) or iii) in Theorem \ref{pr_liou}.
	\end{itemize}
	If $T$ is a harmonic algebraic curvature tensor on $M$ such that
	\begin{equation} \label{Rsup_WZ}
		\limsup_{R\to+\infty} \frac{1}{|B_R(x)|} \int_{B_R(x)} |W_T|^p + |Z_T|^p < +\infty
	\end{equation}
	for some $x\in M$ and $p\in[1,+\infty)$, then $T$ is a constant multiple of $\metric\owedge\metric$.
\end{theorem}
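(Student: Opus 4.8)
The plan is to adapt the proof of Theorem \ref{gen_cpl_T1}, replacing the use of Corollary \ref{cor_li} (which there forces $T'\equiv0$ from the \emph{vanishing} of the normalized $L^p$-averages) by a combination of the mean value inequalities of Subsection 4.2 with the Liouville-type Theorems \ref{pr_liou} and \ref{para_liou}, which require only the weaker finiteness condition \eqref{Rsup_WZ} at the price of the extra geometric hypotheses (a)--(b). First I would pass, exactly as in the proof of Theorem \ref{gen_cpl_T1}, to the tensor $T'=T-U_T$: since $S_T$ is constant by Proposition \ref{B(T)_cons}, $U_T=\frac{S_T}{2m(m-1)}\metric\owedge\metric$ is parallel and $T'$ is again a harmonic algebraic curvature tensor with $W_{T'}=W_T$, $V_{T'}=V_T$, $U_{T'}=0$, hence $Z_{T'}=Z_T$, $S_{T'}=0$ and $P_{T'}=P_T$. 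Because $\mathfrak R^{(\lfloor\frac{m-1}{2}\rfloor)}\geq0$ we have $\Ricc\geq0$ by \eqref{Riem_Ric_bound}, so the results of Subsection 4.2 apply to $(M,g)$; and from Theorem \ref{Boch_T}, arguing as in the proof of Theorem \ref{gen_cpl_T1} (which follows that of Theorem \ref{cplt_tachi_1}), $|T'|$ is a nonnegative subharmonic function satisfying
\[
	\Delta|T'|\ \geq\ (m-1)\,\mathfrak R^{(\lfloor\frac{m-1}{2}\rfloor)}\,|P_{T'}|
\]
pointwise on $\{|T'|>0\}$ and in the weak sense on $M$.

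Next I would turn the $\limsup$ hypothesis into a pointwise bound on $|T'|$. As in \eqref{T'_norm} one has $|T'|^2=|W_T|^2+\frac{4}{m-2}|Z_T|^2$, whence $|T'|^p\leq 2^{p-1}|W_T|^p+\frac{2^p}{(m-2)^{(p-1)/2}}|Z_T|^p$ for every $p\geq1$; together with \eqref{Rsup_WZ} this gives $\limsup_{R\to+\infty}\frac{1}{|B_R(x)|}\int_{B_R(x)}|T'|^p<+\infty$. Since $|T'|$ is nonnegative and subharmonic and $\Ricc\geq0$, Corollary \ref{cor_li} identifies this $\limsup$ (which is in fact a limit) with $\sup_M|T'|^p$, so $|T'|\in L^\infty(M)$. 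Moreover \eqref{|P|} applied to $T'$ gives $|P_{T'}|^2=|W_T|^2+\frac{2m}{(m-2)(m-1)}|Z_T|^2$, and since $\frac{2m}{(m-2)(m-1)}\leq\frac{4}{m-2}$ (which holds as $m\geq3$) we obtain $|P_{T'}|^2\geq\frac{m}{2(m-1)}|T'|^2$. Substituting into the Bochner inequality yields
\[
	\Delta|T'|\ \geq\ \gamma_m\,\mathfrak R^{(\lfloor\frac{m-1}{2}\rfloor)}\,|T'| \quad\text{weakly on }M,\qquad \gamma_m:=\sqrt{\tfrac{m(m-1)}{2}}>0.
\]

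Finally I would conclude in the two cases. In case (a), condition \eqref{para_cond_tachi} is \eqref{para_cond}, which was recalled to imply parabolicity of $M$; then Theorem \ref{para_liou}, applied to $f=|T'|\in L^\infty(M)$ and to the nonnegative measurable function $\gamma_m\mathfrak R^{(\lfloor\frac{m-1}{2}\rfloor)}$, which is positive somewhere by hypothesis, forces $|T'|\equiv0$. In case (b), if $M$ is compact then $\mathfrak R^{(\lfloor\frac{m-1}{2}\rfloor)}\geq a>0$ on a nonempty set (since $a$ satisfies ii) or iii) of Theorem \ref{pr_liou}), so Theorem \ref{cpt_Tachi} already gives that $T$ is a constant multiple of $\metric\owedge\metric$; if instead $M$ is noncompact, then $\Delta|T'|\geq\gamma_m\mathfrak R^{(\lfloor\frac{m-1}{2}\rfloor)}|T'|\geq(\gamma_m a)|T'|$ on $M$, and $\gamma_m a$ still satisfies ii) or iii) of Theorem \ref{pr_liou} whenever $a$ does (rescaling the constants $C_0,C_1,C_2$ by suitable powers of $\gamma_m$), so Theorem \ref{pr_liou} applied to $f=|T'|$ yields $|T'|\equiv0$. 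In every case $T'\equiv0$, i.e. $T=U_T=\frac{S_T}{2m(m-1)}\metric\owedge\metric$, which is a constant multiple of $\metric\owedge\metric$ since $S_T$ is constant.

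The reduction to $T'$, the Bochner inequality, and the elementary norm comparisons are routine (most of this already appears in the proof of Theorem \ref{gen_cpl_T1}). The one genuinely new, and hence most delicate, step is the boundedness argument: noticing that for a nonnegative subharmonic function on a complete manifold with $\Ricc\geq0$, the mere finiteness of the $\limsup$ of its normalized $L^p$-averages already forces $L^\infty$-boundedness via Corollary \ref{cor_li} (ultimately via Li--Schoen's mean value inequality). A secondary point of care is the separate treatment of the compact subcase of (b) through Theorem \ref{cpt_Tachi}, since Theorem \ref{pr_liou} is stated only for noncompact manifolds.
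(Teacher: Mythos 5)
Your proof is correct and follows essentially the same route as the paper: pass to $T' = T - U_T$, establish the Bochner inequality $\Delta|T'| \geq (m-1)\mathfrak R^{(\lfloor\frac{m-1}{2}\rfloor)}|P_{T'}|$, compare $|P_{T'}|$ with $|T'|$ via \eqref{|P|} and \eqref{T'_norm}, deduce boundedness of $|T'|$ from \eqref{Rsup_WZ} via the mean value machinery, and conclude with the appropriate Liouville-type theorem. Your constant $\gamma_m = \sqrt{m(m-1)/2}$ is marginally sharper than the paper's $(m-1)/\sqrt{2}$, but both suffice. Two minor observations: (1) the paper obtains boundedness of $|T'|$ through Remark \ref{rem_|T'|}, which separately establishes that $|W_T|$ and $|Z_T|$ are subharmonic (since $W_T$ and $Z_T\owedge\metric$ are each harmonic algebraic curvature tensors) and applies Corollary \ref{cor_li} to each, whereas you apply Corollary \ref{cor_li} directly to the subharmonic function $|T'|$ after a pointwise $L^p$ bound — both work, and yours is slightly more economical; (2) you explicitly split case (b) according to whether $M$ is compact, invoking Theorem \ref{cpt_Tachi} in the compact subcase since Theorem \ref{pr_liou} is stated only for noncompact $M$; the paper's proof omits this dichotomy here (though it appears in the analogous proof of Theorem \ref{cplt_tachi_2}), so your version is actually a touch more careful.
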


\begin{remark} \label{rem_|T'|}
	Under the assumptions of Theorem \ref{gen_cpl_T2} it is easy to check (using for instance Propositions \ref{B(T)_cons1} and \ref{B(T)_cons}) that the algebraic curvature tensor fields $W_T$ and $Z_T\owedge\metric$ are both harmonic, hence $|W_T|$ and $|Z_T|$ are subharmonic functions. So, the $\limsup$ in \eqref{Rsup_WZ} is in fact a limit and in particular
	$$
		\lim_{R\to+\infty} \frac{1}{|B_R(x)|} \int_{B_R(x)} |W_T|^p + |Z_T|^p = \sup_M |W_T|^p + \sup_M |Z_T|^p \, .
	$$
	Hence, in this setting \eqref{Rsup_WZ} is equivalent to boundedness of the non-scalar part $W_T+V_T = W_T+\frac{4}{m-2} Z_T\owedge\metric$ of $T$.
\end{remark}

\begin{proof}[Proof of Theorem \ref{gen_cpl_T2}]
	Letting $T'$ and $P_{T'}$ be as in the proof of Theorem \ref{gen_cpl_T1}, by \eqref{|P|} and \eqref{T'_norm} we compute
	$$
		|P_{T'}|^2 = |W_T|^2 + \frac{2m}{(m-2)(m-1)}|Z_T|^2 \geq \frac{1}{2} |T'|^2
	$$
	and therefore from \eqref{T'_Boch} we get
	$$
		\Delta|T'| \geq \frac{(m-1)}{\sqrt{2}} a |T'| \, .
	$$
	By the previous Remark \ref{rem_|T'|} we see that $|T'|$ is a bounded function, so we infer $|T'|\equiv 0$ applying Theorem \ref{para_liou} or Theorem \ref{pr_liou}, depending on which assumption among (a) and (b) is in force. This shows that $T$ is a scalar multiple of $\metric\owedge\metric$, and the conclusion follows since the total trace $S_T$ of $T$ is constant.
\end{proof}

\bibliographystyle{plain}

\end{document}